\documentclass[11pt,a4paper]{article}

\usepackage{authblk}

\usepackage[margin=2.5cm]{geometry}
\usepackage{amsmath,amssymb,amsthm,mathtools,bm}
\usepackage{algorithm}
\usepackage{algorithmic}
\usepackage{enumitem}
\usepackage{hyperref}
\usepackage{xcolor}
\usepackage{booktabs}
\usepackage{graphicx}
\usepackage{tikz-cd}
\usepackage{tikz}
\usetikzlibrary{arrows.meta,positioning,fit,calc,decorations.pathreplacing}
\allowdisplaybreaks
\hypersetup{colorlinks=true, linkcolor=blue!70!black, citecolor=green!50!black, urlcolor=blue!60!black}

\theoremstyle{plain}
\newtheorem{theorem}{Theorem}[section]
\newtheorem{proposition}[theorem]{Proposition}
\newtheorem{lemma}[theorem]{Lemma}

\newtheorem{assumption}{Assumption}

\theoremstyle{definition}

\theoremstyle{remark}
\newtheorem{remark}[theorem]{Remark}

\DeclareMathOperator{\rank}{rank}
\DeclareMathOperator{\diag}{diag}

\newcommand{\R}{\mathbb{R}}
\newcommand{\X}{\mathcal{X}}
\newcommand{\Y}{\mathcal{Y}}

\newcommand{\D}{\mathcal{D}}
\newcommand{\A}{\mathcal{A}}
\newcommand{\Pmap}{\mathcal{P}}
\newcommand{\J}{\mathrm{J}}
\newcommand{\rvec}{\boldsymbol{\rho}}

\newcommand{\norm}[1]{\lVert #1\rVert}          
\newcommand{\ip}[2]{\langle #1,#2\rangle}
\newcommand{\ran}{\operatorname{ran}}

\newcommand{\vecop}{\operatorname{vec}}
\newcommand{\Tfwd}{\overset{\scriptscriptstyle\to}{\mathcal T}}
\newcommand{\Tbwd}{\overset{\scriptscriptstyle\leftarrow}{\mathcal T}}

\newcommand{\W}{\mathcal{W}}
\newcommand{\M}{\mathcal{M}}
\newcommand{\Fmap}{\mathcal{F}}
\newcommand{\Res}{\mathcal{R}}
\newcommand{\Cmap}{\mathcal{C}}
\newcommand{\diff}{\mathrm{D}}
\newcommand{\tr}{\tau}                       
\newcommand{\Vq}{\mathsf{V}}

\newcommand{\bigip}[2]{\bigl\langle #1,#2\bigr\rangle}
\newcommand{\Bigip}[2]{\Bigl\langle #1,#2\Bigr\rangle}
\newcommand{\dsp}{d_{x}}
\newcommand{\dpa}{d_{\xi}}
\newcommand{\Lat}{\Gamma}

\newcommand\restr[2]{\ensuremath{\left.#1\right|_{#2}}}
\newcommand{\Neff}{N_{\mathrm{eff}}}
\newcommand{\Bgram}{\Upsilon}
\newcommand{\Cmat}{\mathsf{C}}
\newcommand{\Kmat}{\mathsf{K}}

\newcommand{\RA}{R_{\mathcal A}}
\newcommand{\ND}{N_{\mathcal D}}
\newcommand{\Diag}{\operatorname{Diag}}

\newcommand{\had}{\circ}
\newcommand{\DGram}{\widehat{\mathrm{G}}}

\title{\bfseries Tensor-Train Compressed Separable PINNs:\\[4pt]
A Curvature-Aware Optimization Framework for\\ Parametric PDEs
in High Dimensions}

\author[1]{Denis Korolev\thanks{Corresponding author. E-mail: \href{mailto:korolev@wias-berlin.de}{\texttt{korolev@wias-berlin.de}}}}
\author[1]{Martin Eigel\thanks{E-mail: \href{mailto: eigel@wias-berlin.de}{\texttt{eigel@wias-berlin.de}}}}

\affil[1]{Weierstrass Institute for Applied Analysis and Stochastics (WIAS),
Anton-Wilhelm-Amo-Stra{\ss}e~39, 10117 Berlin, Germany}

\date{\today}

\begin{document}
\maketitle

\noindent
\begin{abstract}
In this work, we develop a second-order optimization framework for physics-informed neural networks (PINNs) applied to high-dimensional parametric partial differential equations (PDEs). The framework is built on the Gauss--Newton pullback metric, which provides an operator-informed notion of curvature in parameter space and connects the method to the broader family of natural gradient schemes. We show that, for coordinate-separable neural architectures and linear differential operators (or linearized operators in the nonlinear case) admitting a finite separable representation, the residual Jacobian inherits a structured separable factorization. This yields an exact compressed formulation of the Gauss--Newton step in a reduced space, without assembling the full residual Jacobian on the exponentially large tensor-product collocation grid. The dimension of the reduced space (the effective compressed dimension) is determined by the local collocation grid sizes, the separable operator structure, and the contraction pattern of the architecture, thereby avoiding dependence on the full tensor-product grid size and replacing dense linear algebra in parameter space by a substantially smaller structured problem. Within our framework, we investigate canonical polyadic and tensor-train parametrizations and derive their full algebraic characterization relevant to the Gauss--Newton method, including the structure of the residual Jacobian, the resulting compressed system, and its effective compressed dimension. Numerical experiments on high-dimensional PDEs, including parametric problems, demonstrate the high efficiency of the proposed compressed Gauss--Newton method, which achieves substantially lower errors than tensor-compressed first-order baselines with orders of magnitude fewer iterations and only a fraction of the computing time.
\end{abstract}

\medskip
\noindent\textbf{Keywords:} physics-informed neural networks; parametric PDEs; high-dimensional PDEs; approximation; tensor train; tensor compression; natural gradient optimization; 

\medskip
\noindent\textbf{MSC 2020:} 65N35, 68T07, 15A69, 65K10


\bigskip

\section{Introduction}
\label{sec:introduction}


High-dimensional parametric PDEs arise in uncertainty quantification, quantum
chemistry, and PDE-constrained optimization with large parameter spaces
\cite{cohen2015approximation, garreis2017constrained, heinkenschloss2025optimization, khoromskaia2015tensor}.
In many such applications, the main computational challenge is not a single PDE
solve but the repeated evaluation of the parameter-to-solution map, for
instance to estimate quantities of interest or to optimize a design under
uncertainty
\cite{antil2023ttrisk, antil2025moreau, antil2025tensor, benner2016lowrank, garreis2017constrained, heinkenschloss2025optimization}.
Classical approaches address this challenge through sparse approximation,
reduced-order modeling, and low-rank tensor methods, exploiting structure of
the solution map to mitigate the curse of dimensionality
\cite{cohen2015approximation, KhoromskijSchwab2011,Nouy2015, quarteroni2015reduced}.
Neural approaches target the same object from a function-learning perspective: a
single network represents the solution for all parameters
\cite{bhattacharya2021model,kovachki2023neural}.
What distinguishes them from the low-rank methods above is therefore not the map
being approximated, but the way it is constructed. Neural operators are trained
on precomputed solution snapshots, whereas physics-informed neural solvers learn
directly from the governing equations and require no such data
\cite{kovachki2023neural,wang2021learning}. The computational burden is then shifted from repeated PDE solves to the
training problem itself. In high-dimensional parametric settings,
physics-informed objectives can be expensive and difficult to optimize to high
accuracy, making optimization a central bottleneck for neural PDE solvers
\cite{de2024operator, krishnapriyan2021failure, wang2022and}. Consistently,
neural surrogates for parametric PDEs reported so far have been less accurate
than classical alternatives or have required training times that offset the
savings from avoiding repeated PDE solves.
To bridge the gap between accurate neural optimization and computationally efficient high-dimensional approximation, we propose combining the natural gradient approach for neural solvers with tensor-structured neural ansatz classes.
The tensor structure provides a compact
representation of high-dimensional solution maps, while the natural gradient
geometry enables high accuracy curvature-aware second-order optimization. Our goal is therefore to
retain the structural efficiency that makes low-rank methods effective in high
dimensions
\cite{cohen2015approximation,khoromskaia2015tensor}
while making the optimization of the resulting neural models
computationally tractable and accurate.

We now briefly outline our setting and the main idea of the proposed approach. Let $V$ denote the spatial solution space of a parametric PDE,
and let $u(\cdot,\xi)\in V$ denote its solution for a parameter $\xi\in\Xi$. Here, we assume that
\begin{align*}
x=(x_1,\ldots,x_{d_x})\in\Omega\subset\R^{d_x},
\qquad
\xi=(\xi_1,\ldots,\xi_{d_\xi})\in\Xi\subset\R^{d_\xi},
\qquad
d=d_x+d_\xi,
\end{align*}
and that $d$ is sufficiently large, either due to large $d_x$, large $d_\xi$,
or both, making the respective PDE high-dimensional. It is then natural to regard the
parameter-to-solution map
\begin{equation}\label{eq:parametric_solution_map_intro}
\Xi\ni\xi\longmapsto u(\,\cdot\,,\xi)\in V
\end{equation}
as an element of the Bochner space
$U:=L^2(\Xi;V)$.  We consider a neural
parametrization and its associated model class
\[
\mathcal P:\Theta\subseteq\R^P\to U,
\qquad
\theta\longmapsto\mathcal P(\theta)=:u_\theta,
\qquad
\mathcal M:=\mathcal P(\Theta)
=\{u_\theta:\theta\in\Theta\},
\]
where $P$ is the dimensionality of the neural network parametrization. Our objective is to approximate \eqref{eq:parametric_solution_map_intro} within $\mathcal M$. The parameters are determined by minimizing the objective
\begin{equation}\label{eq:parametric_pinn_objective_intro}
\min_{\theta\in\Theta} \, \mathcal L(\theta)
:=
\frac12\bigl\|\mathcal F(u_\theta)\bigr\|_{\mathcal Y}^{2}
=
\frac12\bigl\|\mathcal A(u_\theta)-f\bigr\|_{\mathcal Y}^{2},
\end{equation}
where $\mathcal F(v):=\mathcal A(v)-f$ is the residual operator containing the space--parameter differential
operator $\mathcal A:U\to\mathcal Y$ associated with the parametric PDE, and $\mathcal Y$ is a suitable Bochner space over $\Xi$ with values in the
PDE residual space, e.g., $L^2(\Omega\times\Xi)$. 
In this way, a single neural network approximates the entire parameter-to-solution map. This formulation falls naturally within the class of physics-informed neural network methods (PINNs), which recast the solution of differential equations as an optimization problem by incorporating the governing equations and boundary conditions into a loss functional defined over a neural network model class \cite{raissi2019physics,shin2023error}.

Solving \eqref{eq:parametric_pinn_objective_intro} efficiently is hampered by two difficulties: the well-known challenges of PINN optimization \cite{de2024operator, wang2022and} and the underlying high dimensionality. To address the first, we follow \cite{muller2023achieving, muller2024optimization} and employ the (regularized, or damped) Gauss--Newton pullback metric to precondition the gradient descent step:
\begin{equation}\label{eq:intro_gn_step}
\theta_{k+1}
=
\theta_k
-
\eta_{k} \, \big[\mathcal{G}(\theta_k) + \mu \mathrm{I}_{P}\big]^{-1}
\nabla\mathcal L(\theta_k),
\qquad
\mathcal G(\theta)
=
\diff\Res(\theta)^*\diff\Res(\theta).
\end{equation}
Here,
$\diff\Res(\theta)
:=
\diff\Fmap(u_\theta)\circ\diff\Pmap(\theta):
\R^P\to\mathcal Y$
is the linearized residual map, $\mathcal G(\theta) \in \mathbb{R}^{P\times P}$ is the Gramian representing the pullback of the residual geometry to parameter space, and $\mu>0$ is the regularization parameter. The Gauss--Newton method yields an operator-informed notion of curvature in parameter space, which places it within the broader family of natural gradient methods \cite{amari1998natural, martens2020new, muller2023achieving,nurbekyan2023efficient}. This curvature-aware preconditioning approach has been shown to substantially improve accuracy and robustness in many applications \cite{guzman2026improving,jnini2026curvature,jnini2025gauss}, particularly compared with first-order methods, which are strongly affected by the ill-conditioning of PINN optimization \cite{de2024operator}. 

\begin{figure}[t]
\centering
\begin{tikzcd}[
    column sep=1.9cm,
    row sep=1.8cm,
    every label/.append style={font=\small}
]
\R^{P}
  \arrow[r, "\diff\Pmap(\theta)"]
  \arrow[d, equals]
  \arrow[rrr, bend left=24,
         start anchor=north, end anchor=north,
         "\mathcal G(\theta)"]
  \arrow[rr, bend right=20, "\diff\Res(\theta)"']
&
T_{u_\theta}\M\subset H
  \arrow[r, "\diff\Fmap(u_\theta)"]
&
\Y
  \arrow[r, "\diff\Res(\theta)^{*}"]
  \arrow[d, "\mathrm{ev}_{\X_\D}"]
&
\R^{P}
  \arrow[d, equals]
\\
\R^{P}
  \arrow[rr, "\J_{\rvec}(\theta)"']
  \arrow[rrr, bend right=24,
         start anchor=south, end anchor=south,
         "\DGram(\theta)"']
&
{}
&
\R^{N_\D}
  \arrow[r, "N_\D^{-1}\J_{\rvec}(\theta)^{\top}"']
&
\R^{P}
\end{tikzcd}
\caption{Continuous Gauss--Newton geometry (top row in the diagram) and its collocation
realization (bottom row) obtained by collocation evaluation $\mathrm{ev}_{\X_\D}$ and equal-weight quadrature on the tensor grid $\X_\D$ of cardinality $\mathcal{N}_{_\D}$.}
\label{fig:pullback_collocation}
\end{figure}

In numerical practice, the continuous gradient $\nabla\mathcal L(\theta)$
and Gramian $\mathcal G(\theta)$ entering the Gauss--Newton step
\eqref{eq:intro_gn_step} are discretized by collocation on a tensor-product
grid $\X_\D$ over $\Omega\times\Xi$, yielding
\begin{equation}\label{eq:intro_collocated_gn}
\nabla\widehat{\mathcal L}(\theta)
=
\frac{1}{N_\D}\mathrm{J}_{\rvec}(\theta)^\top \rvec(\theta),
\qquad
\DGram(\theta)
=
\frac{1}{N_\D}
\mathrm{J}_{\rvec}(\theta)^\top \mathrm{J}_{\rvec}(\theta),
\end{equation}
where $\rvec(\theta)\in\R^{N_\D}$ is the collocated residual vector and
$\mathrm{J}_{\rvec}(\theta)\in\R^{N_\D\times P}$ is its Jacobian,
obtained by collocating the residual push-forward $\diff\Res(\theta)$;
see Fig.~\ref{fig:pullback_collocation}. This yields the discrete damped
Gauss--Newton system to be solved at each optimization iteration:
\begin{equation}\label{eq:intro_damped_gn_system}
\bigl(\DGram(\theta)+\mu I_P\bigr)\delta\theta
=
\nabla\widehat{\mathcal L}(\theta).
\end{equation}
Since $\DGram(\theta)$ is generally dense and of size $P\times P$, solving
\eqref{eq:intro_damped_gn_system} becomes computationally demanding for large
parameter counts, and several approaches have been proposed to reduce this
cost. Kronecker-factored approximate curvature (KFAC) methods
\cite{dangel2025kronecker,dangel2024kronecker,martens2015optimizing}
replace $\DGram(\theta)$ by a Kronecker-factored approximation, which yields
an efficient inversion procedure. This technique applies to rather general
neural architectures and has performed well in second-order PINN optimization
(see, e.g., \cite{dangel2024kronecker}), but it solves an approximate
Gauss--Newton system rather than the exact one, and the corresponding
Kronecker factors are still assembled over the collocation set. Alternatively, Woodbury-type reformulations
\cite{guzman2026improving,jnini2025dual} solve the system \eqref{eq:intro_damped_gn_system} exactly by replacing the $P\times P$ parameter-space problem with an equivalent
sample-space formulation, whose dimension equals the size of the collocation
set. In both cases, the cost thus remains tied to the number of
collocation points. On a tensor-product grid with $n$ points per coordinate,
this number is $N_\D=n^d$ and grows exponentially with the dimension.
One is then left with two limiting options: accept an infeasible
optimization cost or resort to sparse collocation, which may degrade
generalization over $\Omega\times\Xi$ and, with it, the reliability of
online predictions for parametric PDEs.

\begin{figure}[H]
\centering
\begin{tikzcd}[
    column sep=1.9cm,
    row sep=1.5cm,
    every label/.append style={font=\small}
]
H=H_1\otimes\cdots\otimes H_d
  \arrow[rr, hook, "\mathrm{d}"]
  \arrow[dr, "\A|_H=\displaystyle\sum_{\beta=1}^{R_\A}\A_1^{(\beta)}\otimes\cdots\otimes\A_d^{(\beta)}"']
& &
U
  \arrow[dl, "\A"]
\\
&
L^2(\Omega\times\Xi)
&
\end{tikzcd}
\caption{Functional setting of the structured PDE formulation. The PDE operator $\A$ is defined on $U$, while its restriction to the densely embedded tensor-product space $H$ admits a separable representation; the diagram commutes under the embedding $H\hookrightarrow U$.}
\label{fig:tensor_operator_structure}
\end{figure}

A classical way to mitigate the curse of dimensionality is to exploit tensor
structure by separating the dependence on individual coordinates
\cite{bachmayr2023low,hackbusch2012tensor}. We adopt this viewpoint for neural
representations of \eqref{eq:parametric_solution_map_intro}. To this end, we use the Hilbert tensor-product space
$H=\bigotimes_{m=1}^{d}H_m$, obtained as the completion of the algebraic
tensor product of the univariate Hilbert spaces $H_m$ with respect to the
canonical Hilbert tensor norm.  We assume that $H$ is continuously and densely embedded in the
solution space $U$, and that the restriction of the differential operator
(or its linearization in the nonlinear case) $\A$ to $H$ can be written as
a sum of $R_{\A}$ tensor products of univariate operators
$\A_m^{(\beta)}$ acting on the corresponding factor spaces $H_m$; see
Fig.~\ref{fig:tensor_operator_structure}.
By the triangle inequality, this setting yields, for any $v,w\in H$, the approximation error decomposition
\begin{align}\label{eq:triangle_approximation}
\lVert u - u_{\theta}\rVert_{U}
\leq
\lVert u - v \rVert_{U}
+
C_{H\hookrightarrow U}\lVert v-w\rVert_{H}
+
C_{H\hookrightarrow U}\lVert w-u_{\theta}\rVert_{H},
\end{align}
where $C_{H\hookrightarrow U}$ is the embedding constant. The first term in \eqref{eq:triangle_approximation} can be made arbitrarily small due to the density of $H$ in $U$
by choosing a suitable $v\in H$. The premise is that $v$ admits an accurate low-rank tensor approximation
$w$ built from univariate factors that reflect the separation of coordinates, keeping the second term small at moderate tensor complexity. These factors are subsequently approximated by neural networks to obtain
$u_\theta$ through the optimization problem
\eqref{eq:parametric_pinn_objective_intro}, while sufficiently expressive univariate networks render the third term in \eqref{eq:triangle_approximation}  small under the universal approximation hypothesis.

As an example of such a structured neural tensor class, consider the rank-$R$
separable ansatz with the parameter decomposition
$\theta=(\theta^1,\ldots,\theta^d)$, where
$\theta^m\in\R^{P_m}$ denotes the trainable parameters of the $m$-th
univariate factor:
\begin{equation}\label{eq:spinn_reduced_form_intro}
u_\theta(x,\xi)
=
\sum_{\alpha=1}^{R}
\phi_\theta^\alpha(x)c_\theta^\alpha(\xi)
=
\sum_{\alpha=1}^{R}
\prod_{m=1}^{d_x}u_m^\alpha(x_m;\theta^m)
\prod_{m=d_x+1}^{d}u_m^\alpha(\xi_{m-d_x};\theta^m).
\end{equation}
Here, the spatial subnetworks learn the modes $\phi_\theta^\alpha$, while
the parametric subnetworks learn the corresponding coefficient maps
$c_\theta^\alpha$. Moreover, with $n$ sample points in each coordinate, the separation in \eqref{eq:spinn_reduced_form_intro} reduces the tensor-grid representation from $n^d$ full-grid values to only $\mathcal O(Rdn)$ factor values.  Physics-informed neural optimization with the ansatz \eqref{eq:spinn_reduced_form_intro} is also known as the
separable PINN approach \cite{cho2023separable} and has shown promise
in several applications \cite{es2025separable,oh2025separable}.
However, its underlying canonical polyadic (CP) format may become
inefficient in high dimensions, since complex coordinate interactions
can require a large global canonical rank $R$.
An alternative is the tensor-train (TT) format
\cite{oseledets2011tensor}:
\begin{equation}\label{eq:tt_intro}
\begin{aligned}
u_\theta(x,\xi)
={}&
G_1(x_1;\theta^1)\cdots
G_{d_x}(x_{d_x};\theta^{d_x})
G_{d_x+1}(\xi_1;\theta^{d_x+1})\cdots
G_d(\xi_{d_\xi};\theta^d).
\end{aligned}
\end{equation}
Here, each $G_m(\,\cdot\,;\theta^m)$ is a univariate matrix-valued core of size $r_{m-1}\times r_m$, with $r_0=r_d=1$.
The cores are coupled through the local TT-ranks $r_1,\ldots,r_{d-1}$, allowing TT to capture coordinate interactions while remaining efficient when these ranks are moderate. Beyond their success in numerical computations in quantum physics, tensor trains have in recent years also been shown to compress high-dimensional parametric PDEs as they appear in the field of Uncertainty Quantification (UQ).
Early works include~\cite{KhoromskijSchwab2011,Nouy2015}.
Subsequently, adaptive stochastic FEM were developed in~\cite{EigelMarschallPfefferSchneider2020, eigel2017adaptive} and nonlinear adaptive weighted least squares approaches in~\cite{EigelFarchminHeidenreichTrunschke2023, EigelSchneider2018}.
A thorough overview of low-rank approximations of PDEs is given in~\cite{bachmayr2023low}; see also~\cite{BachmayrCohenDahmen2017}.
A common UQ benchmark, the affine Darcy problem, is also considered in Section~\ref{sec:numerics}.

\begin{remark} We note that \eqref{eq:spinn_reduced_form_intro} resembles a classical reduced-basis representation \cite{haasdonk2017reduced,quarteroni2015reduced}. This class of methods
constructs spatial modes $\phi_{\mathrm{RB}}^\alpha$ from high-fidelity
solution snapshots $u_h(\cdot,\xi^{(j)})$ at selected parameter values
and approximate
\[
u(x,\xi) \approx u_{\mathrm{RB}}(x,\xi)
=
\sum_{\alpha=1}^{R}
\phi_{\mathrm{RB}}^\alpha(x)c_{\mathrm{RB}}^\alpha(\xi).
\]
For each $\xi \in \Xi$, the coefficients $c_{\mathrm{RB}}^1(\xi),\ldots,c_{\mathrm{RB}}^R(\xi)
$ are then determined by solving a reduced algebraic system, which is obtained by projecting the PDE onto the linear space spanned by the snapshots. This requires sampling the parameter domain to identify representative
snapshot locations. As the dimension of $\Xi$ grows, sufficiently exploring
the parameter space becomes increasingly difficult, even with adaptive or
greedy sampling. Moreover, each selected parameter value requires a
high-fidelity PDE solve, which can make the offline stage expensive. In contrast, \eqref{eq:spinn_reduced_form_intro} treats $\xi$ as additional input variables and learns both the spatial modes $\phi_\theta^\alpha$ and the coefficient maps $c_\theta^\alpha$ jointly through PINN optimization, without requiring solution snapshots or a reduced solve for each parameter. However, both approaches rely on the same underlying \emph{manifold hypothesis}: sufficient regularity of the parameter-to-solution map with respect to $\xi$ yields a solution set
$\mathcal M_{\mathrm{sol}}=\{u(\cdot,\xi):\xi\in\Xi\}\subset V$
that admits an efficient low-rank approximation.
\end{remark}

The key structural observation of this work is that the coordinate-separable representations
\eqref{eq:spinn_reduced_form_intro} and \eqref{eq:tt_intro}
induce structure not only in the solution ansatz but also in its
derivatives with respect to the neural network parameters.
To exploit this structure in Gauss--Newton optimization, the PDE operator $\mathcal{A}$
(or its linearization in the nonlinear case) must also be separable with
respect to the same coordinates, as outlined above. The connection between the ansatz and the Gauss--Newton Gramian then follows from three structural properties:

\begin{enumerate}[label=(S\arabic*),leftmargin=*,itemsep=1pt,topsep=2pt]
\item\label{it:S1}
the architecture represents $u_\theta$ through a fixed multilinear
contraction of univariate factors;

\item\label{it:S2}
separability survives linearization, so that the parametrization push-forward $\diff\Pmap(\theta)$
is assembled from univariate quantities;

\item\label{it:S3}
the differential operator $\mathcal{A}$ in the linearized residual $\diff\Res(\theta)$
preserves separability, so the residual push-forward remains separable and its Gauss--Newton Gramian can be assembled by low-dimensional contractions.
\end{enumerate}

\definecolor{fzA}{RGB}{0,114,178}
\definecolor{bloodyred}{RGB}{150,0,0}

\tikzset{
  fz frame/.style={draw=black!70, line width=0.8pt},
  fz sep/.style={draw=black!40, line width=0.4pt},
  fz lbl/.style={font=\normalsize},
  fz in/.style={font=\small},
  fz note/.style={font=\footnotesize, text=bloodyred},
  fz brace/.style={
    decorate,
    decoration={brace, amplitude=4pt, raise=1.5pt},
    draw=black!50,
    line width=0.5pt
  },
}

\begin{figure}
\centering
\begin{tikzpicture}[x=1cm, y=1cm]

\def\H{2.45}

\begin{scope}[shift={(0,0)}]

\def\JX{0}
\def\JW{0.78}

\foreach \k/\tint in {1/10,2/18,3/26,4/34}{
  \fill[fzA!\tint]
    (\JX+\JW*\k-\JW,0)
    rectangle
    (\JX+\JW*\k,\H);}
\foreach \k in {1,2,3}{
  \draw[fz sep]
    (\JX+\JW*\k,0)
    --
    (\JX+\JW*\k,\H);}
\draw[fz frame] (\JX,0) rectangle (\JX+4*\JW,\H);

\node[fz in] at (\JX+0.5*\JW,0.5*\H) {$\mathrm J_1$};
\node[fz in] at (\JX+3.5*\JW,0.5*\H) {$\mathrm J_d$};
\node[fz lbl, anchor=south] at (\JX+2*\JW,\H+0.30) {$\mathrm J_{\rvec}$};

\draw[fz brace] (\JX+4*\JW,-0.12) -- (\JX,-0.12);
\node[fz note, anchor=north] at (\JX+2*\JW,-0.30) {$P$};

\draw[fz brace] (\JX-0.14,0) -- (\JX-0.14,\H);
\node[fz note, rotate=90, anchor=south] at (\JX-0.34,0.5*\H) {$\ND$};

\node[fz lbl] at (3.70,0.5*\H) {$=$};

\def\PsiX{4.35}
\def\PsiW{0.62}

\foreach \k/\tint in {1/10,2/18,3/26,4/34}{
  \fill[fzA!\tint]
    (\PsiX+\PsiW*\k-\PsiW,0)
    rectangle
    (\PsiX+\PsiW*\k,\H);}
\foreach \k in {1,2,3}{
  \draw[fz sep]
    (\PsiX+\PsiW*\k,0)
    --
    (\PsiX+\PsiW*\k,\H);}
\draw[fz frame]
  (\PsiX,0)
  rectangle
  (\PsiX+4*\PsiW,\H);

\node[fz in] at (\PsiX+0.5*\PsiW,0.5*\H) {$\Psi_1$};
\node[fz in] at (\PsiX+3.5*\PsiW,0.5*\H) {$\Psi_d$};
\node[fz lbl, anchor=south] at (\PsiX+2*\PsiW,\H+0.30) {$\Psi$};

\draw[fz brace]
  (\PsiX+4*\PsiW,-0.12)
  --
  (\PsiX,-0.12);

\node[fz note, anchor=north, align=center] at
  (\PsiX+2*\PsiW,-0.30)
  {$\Neff$\\[-2pt] never assembled};

\def\CX{8.20}
\def\CW{0.95}
\def\Ct{1.90}
\def\CH{0.34}

\foreach \k/\tint in {1/10,2/18,3/26,4/34}{
  \fill[fzA!\tint]
    (\CX+\CW*\k-\CW,\Ct-\CH*\k)
    rectangle
    (\CX+\CW*\k,\Ct-\CH*\k+\CH);
  \draw[fz sep]
    (\CX+\CW*\k-\CW,\Ct-\CH*\k)
    rectangle
    (\CX+\CW*\k,\Ct-\CH*\k+\CH);}

\draw[fz frame]
  (\CX,\Ct-4*\CH)
  rectangle
  (\CX+4*\CW,\Ct);

\node[fz in] at (\CX+0.5*\CW,\Ct-0.17) {$\Cmat_1$};
\node[fz in] at (\CX+3.5*\CW,\Ct-1.19) {$\Cmat_d$};
\node[fz lbl, anchor=south] at (\CX+2*\CW,\H+0.30) {$\Cmat$};

\draw[fz brace]
  (\CX+4*\CW,\Ct-1.42)
  --
  (\CX,\Ct-1.42);

\node[fz note, anchor=north] at
  (\CX+2*\CW,\Ct-1.60)
  {$P$};

\draw[fz brace]
  (\CX-0.15,\Ct-4*\CH)
  --
  (\CX-0.15,\Ct);

\node[fz note, rotate=90, anchor=south] at
  (\CX-0.36,\Ct-2*\CH)
  {$\Neff$};


\end{scope}

\end{tikzpicture}

\caption{Structure exploited by the compressed Gauss--Newton step. The residual Jacobian admits a coordinate-wise factorization in which the local parameter derivatives are collected in the block-diagonal factor $\Cmat$, while $\Psi$ contains the remaining factor evaluations and is never assembled explicitly.}
\label{fig:intro_factorization}
\end{figure}

Furthermore, the collocation discretization does not destroy separability but
rather inherits it from the residual push-forward. For both CP and TT
architectures, this yields the exact factorization of the residual Jacobian
\begin{equation}\label{eq:intro_J_factorization}
\mathrm{J}_{\rvec}(\theta)
=
\Psi\Cmat,
\qquad
\Psi\in\R^{N_\D\times\Neff},
\qquad
\Cmat\in\R^{\Neff\times P},
\end{equation}
where $\Neff$ is what we call the \textit{effective compressed dimension}. This
factorization comes from the coordinate-wise structure of the residual
push-forward: differentiating $\Res(\theta)$ with respect to a parameter block $\theta^s$
acts only on the corresponding $s$-th univariate factor. The resulting derivative evaluations at the collocation points are collected in $\Cmat_s$, while $\Psi_s$ contains the remaining separable factor evaluations on the collocation grid. This yields $\mathrm J_s=\Psi_s\Cmat_s$ for each coordinate, and stacking these blocks
gives \eqref{eq:intro_J_factorization}, as illustrated in Fig.~\ref{fig:intro_factorization}. Consequently, the
discrete Gauss--Newton matrix admits the exact factorization
\begin{equation}\label{eq:intro_G_factorization}
\DGram(\theta)
=
\Cmat^\top\Bgram\Cmat,
\qquad
\Bgram
:=
\frac{1}{N_\D}\Psi^\top\Psi
\in\R^{\Neff\times\Neff},
\end{equation}
and, using the push-through identity, the solution of the Gauss--Newton system
\eqref{eq:intro_damped_gn_system} can be obtained from the compressed linear
system
\begin{equation}\label{eq:intro_compressed_gn_system}
\bigl(\Bgram\Cmat\Cmat^\top+\mu I_{\Neff}\bigr)y
=
\frac{1}{N_\D}\Psi^\top\rvec(\theta),
\qquad
\delta\theta=\Cmat^\top y.
\end{equation}
The solve in parameter space is thus replaced by a solve in the
$\Neff$-dimensional compressed residual space, which in the regimes of interest
is substantially smaller. Neither $\Psi$ nor the tensor-product grid of size
$N_\D$ is ever formed: the compressed quantities $\Bgram$ and
$\Psi^\top\rvec(\theta)$ are assembled by tensor contractions over the univariate
grids of sizes $n_s$, $s=1,\ldots,d$.

What makes our compressed Gauss--Newton strategy particularly attractive in high dimensions is that the effective compressed dimension grows additively across coordinates
\begin{equation}\label{eq:intro_neff_slotwise}
\Neff=\sum_{s=1}^{d}N_s,
\qquad
N_s=\RA\,k_s\,n_s,
\end{equation}
where $n_s$ is the univariate grid size and $k_s$ counts the number of features of the $s$-th univariate factor: $k_s=R$ for a rank-$R$ separable PINN and
$k_s=r_{s-1}r_s$ for a TT-PINN, which gives
\begin{equation}\label{eq:intro_neff_cp_tt}
\Neff^{\mathrm{CP}}
=
R \RA\sum_{s=1}^{d}n_s,
\qquad
\Neff^{\mathrm{TT}}
=
\RA\sum_{s=1}^{d}r_{s-1}r_s n_s.
\end{equation}
This has three immediate consequences:
\begin{itemize}
\item First, $\Neff$ is independent of the number
of trainable parameters used to represent the univariate factors. Thus, the
underlying networks may be widened or deepened without enlarging the
compressed linear system; consequently, the ratio $P/\Neff$, and hence the
benefit of the compressed solve, increases with the expressivity of the
univariate parametrizations.
\item Second, when the univariate grid sizes, local ranks, and operator separation rank are kept fixed as the dimension increases, $\Neff$ grows linearly in $d$, in sharp contrast to the tensor-grid size $N_\D=\prod_s n_s$.
\item Third, the local nature of $k_s$ is what distinguishes the two formats: TT is particularly advantageous here when the solution requires a large global CP rank but admits moderate local TT ranks.
\end{itemize}
Moreover, we exploit additional structure shared by the univariate operators in
the separable expansion: in each coordinate, many operator terms reuse the same
small set of univariate differential operations, such as the identity,
first derivative, or second derivative, differing only in their coefficient
functions. Grouping these common local differential channels replaces the
factor $\RA$ in \eqref{eq:intro_neff_slotwise} by the number $L_s$ of distinct
channels, which for second-order operators is at most three per coordinate and
independent of how many separable terms the operator has. We show that this
structure is available for a broad class of finite-order differential operators
with separable coefficients. Under this compression, our numerical experiments
demonstrate high accuracy in approximating the parameter-to-solution map
\eqref{eq:parametric_solution_map_intro} at very low computational cost.

Summarizing, the main contributions of this work are as follows:
\begin{itemize}[leftmargin=*,itemsep=2pt,topsep=3pt]

\item \textbf{Abstract framework.}
We develop a structured function-space framework for low-rank approximation of parametric PDEs using neural networks with separable structure. Keeping the tensor contraction pattern of the approximation class general, we formulate separability assumptions on the architecture and PDE operator and derive the separable structure of the continuous Gauss--Newton Gramian and loss gradient from the residual push-forward. We further show that tensor-product collocation preserves this structure, yielding exact algebraic factorizations of the residual Jacobian, Gramian, and loss gradient, as well as an efficient compressed formulation of the Gauss--Newton step.

\item \textbf{Algebraic characterization of the compressed Gauss--Newton system.}
We derive explicit representations of the Jacobian, compressed Gramian, and loss gradient required in practice for both CP and TT formats, characterize their effective compressed dimensions, and develop efficient assembly procedures.

\item \textbf{Applications to high-dimensional PDEs.}
We apply the developed framework to high-dimensional (parametric) PDEs, explicitly identifying the corresponding function spaces, separable operator structure, and compressed matrices required for computation. The numerical experiments show that the resulting tensor-train compressed Gauss--Newton method achieves high accuracy on these problems within seconds.
\end{itemize}

The rest of the paper is organized as follows.
Section~\ref{sec:formulation} develops the abstract function-space framework
and derives the structured Gauss--Newton factorization for a general contraction pattern.
Section~\ref{sec:woodbury} introduces the exact compressed Gauss--Newton solver. Section~\ref{sec:architectures} specializes the framework to two specific
contraction patterns corresponding to the CP and TT formats and derives their
algebraic characterization.
Section~\ref{sec:numerics} presents the model problems, including the parametric Darcy equations with coefficients of varying parametric complexity, together with their analysis and the corresponding numerical experiments.
Section~\ref{sec:conclusion} concludes with a summary and an outlook.

\section{Mathematical formulation}
\label{sec:formulation}

A point of the extended domain
$\D=\Omega\times\Xi$ is written as $z=(x,\xi)\in\D$ with $x\in\Omega$ and
$\xi\in\Xi$, and its scalar components are numbered consecutively:
\begin{equation}\label{eq:coordinates}
z=(z_1,\dots,z_d),
\qquad
z_m=x_m\ \ (m\le\dsp),
\qquad
z_{\dsp+j}=\xi_j\ \ (j\le\dpa).
\end{equation}
We refer to $m\le\dsp$ as \emph{spatial} slots and to $m>\dsp$ as \emph{parametric} slots. All statements in this work are formulated in terms of the running index $m$; where the distinction between spatial and parametric slots matters, we state it explicitly.

A linear map $\Vq:\R^{q}\to H$ with finite-dimensional domain and values in a Hilbert space $H$ is represented as a \emph{quasi-matrix}, whose columns are functions in $H$:
\begin{equation}\label{eq:quasimatrix}
\Vq=\begin{bmatrix}\varphi_1&\cdots&\varphi_q\end{bmatrix},
\qquad
\varphi_i=\Vq e_i\in H,
\qquad
\Vq v=\sum_{i=1}^{q}v_i\varphi_i .
\end{equation}
Its Hilbert adjoint $\Vq^{*}:H\to\R^{q}$ and the products with another quasi-matrix
$\W=[\psi_1\ \cdots\ \psi_p]$ or with a single element $g\in H$ are then given by
\begin{equation}\label{eq:quasimatrix_adjoint}
\bigl(\Vq^{*}g\bigr)_i=\ip{\varphi_i}{g}_H\in\R,
\qquad
\bigl(\Vq^{*}\W\bigr)_{ij}=\ip{\varphi_i}{\psi_j}_H\in\R,
\qquad
\bigl(g^{*}\W\bigr)_{j}=\ip{g}{\psi_j}_H .
\end{equation}
Thus $\Vq^{*}g\in\R^{q}$ is a column vector, $g^{*}\W\in\R^{1\times p}$ is a row
vector, and $\Vq^{*}\W\in\R^{q\times p}$ is a matrix.  Quasi-matrices obey the
usual rules of block matrix algebra with functions in place of columns.

Throughout, we adopt the notational conventions that are standard in the tensor-train literature; cf. \cite{corona2017tensor, dolgov2014alternating, oseledets2011tensor}. A tuple of indices 
\[
(i_1,\ldots,i_k),
\qquad
i_\ell=1,\ldots,n_\ell,
\] 
is referred to as a \emph{multi-index}. Tensor entries are accessed by parentheses rather than by subscripts, while the subscripts remain available for enumerating objects. Thus,
$\mathcal{H}(i_1,\ldots,i_d)$ is an entry of the tensor $\mathcal H$ and
$\mathsf A(i,a)$ an entry of the matrix $\mathsf A$. A colon denotes a full
index range, as in $\mathsf A(:,a)$, and $\mathcal H(:,\ldots,:,a)$ is the
corresponding tensor slice. For matrices (or vectors) $\mathsf A_1,\ldots,\mathsf A_q$ with the same
number of columns, we denote their vertical concatenation by
\begin{equation}\label{eq:col_notation}
\operatorname{col}\bigl(
\mathsf A_1,\ldots,\mathsf A_q
\bigr)
:=
\begin{bmatrix}
\mathsf A_1\\
\vdots\\
\mathsf A_q
\end{bmatrix},
\end{equation}
but sometimes we keep the vertical concatenation explicit. Groups of indices that are merged into a single row
or column index of a matrix are written with an overbar,
\begin{equation}\label{eq:multiindex}
\overline{i_1i_2\ldots i_k}
:=
i_k+(i_{k-1}-1)n_k+\cdots+(i_1-1)n_2n_3\cdots n_k
\in\{1,\ldots,n_1n_2\cdots n_k\},
\end{equation}
corresponding to the position of $(i_1,\ldots,i_k)$ in lexicographic order, with the
leftmost index varying slowest. We write $\vecop$ for the vectorization associated with
\eqref{eq:multiindex}, so that $\vecop(\mathsf A)(\overline{ij})=\mathsf A(i,j)$. The ordering \eqref{eq:multiindex} is compatible with the Kronecker product:
\begin{equation}\label{eq:kron_convention}
(\mathsf A\otimes\mathsf B)
\bigl(\overline{i_1i_2},\overline{j_1j_2}\bigr)
=
\mathsf A(i_1,j_1)\,\mathsf B(i_2,j_2).
\end{equation}
A colon inside a merged index selects a whole block: for a matrix with row index $\overline{i_sa_s}$ and column index $\overline{i_ta_t}$, we write $\mathsf A(\overline{i_s:},\overline{i_t:})$ for its $(i_s,i_t)$ block. 

Finally, $L^p(\Omega)$, $H^1(\Omega)$, $H_0^1(\Omega)$, $H^k(\Omega)$, $W^{k,p}(\Omega)$, etc., denote the standard Lebesgue and Sobolev spaces; see, e.g.,~\cite{adams2003sobolev}.

\subsection{Separable parametric PDEs}
\label{sub:assumptions}

Let $\Omega\subset\R^{\dsp}$ be a bounded spatial domain with Lipschitz boundary
$\partial\Omega$, and let $\Xi\subset\R^{\dpa}$ be the parameter set. Let $V$ be a Hilbert space on $\Omega$ for which the spatial trace operator $\tr:V\to L^{2}(\partial\Omega)$ is bounded. For every  $\xi\in\Xi$, we consider the boundary value problem
\begin{align}\label{eq:parametric_pde}
\A(\xi)u(\cdot,\xi)=f(\cdot,\xi)
\quad\text{in }\Omega, \qquad
u(\cdot,\xi)=g(\cdot,\xi)
\quad\text{on }\partial\Omega.
\end{align}
We assume that \eqref{eq:parametric_pde} is well-posed for every $\xi\in\Xi$ with solution $u(\cdot,\xi)\in V$. The family of solutions defines the \emph{parameter-to-solution map}
\begin{align}\label{eq:parameter_to_solution_map}
u:\Xi\to V,
\qquad
\xi\mapsto u(\cdot,\xi).
\end{align}
We regard $u$ as an element of the Hilbert--Bochner space $U: = L^{2}(\Xi;V)$, which is defined as the space of Bochner-measurable maps $v:\Xi\to V$ satisfying
\begin{align}\label{eq:bochner_norm}
\norm{v}_{U}^{2}
:=
\int_{\Xi}\norm{v(\cdot,\xi)}_{V}^{2}\,\mathrm{d}\xi
<\infty.
\end{align}
We assume that the family $\{\A(\xi)\}_{\xi\in\Xi}$ induces the bounded space--parameter operator
\begin{align*}
\A: L^{2}(\Xi;V)\to L^{2}\bigl(\Xi;L^{2}(\Omega)\bigr),
\qquad
(\A v)(\cdot,\xi):=\A(\xi)v(\cdot,\xi).
\end{align*}
Likewise, the family $\{\tr(\xi)\}_{\xi\in\Xi}$ induces the bounded space--parameter trace operator
\begin{align*}
\tr: L^{2}(\Xi;V)\to L^{2}\bigl(\Xi;L^{2}(\partial\Omega)\bigr),
\qquad
(\tr v)(\cdot,\xi):=\tr\bigl(v(\cdot,\xi)\bigr).
\end{align*}
For the interior and boundary residuals, we introduce the Hilbert space
\begin{align*}
\mathcal Y
:=
L^{2}\bigl(\Xi;L^{2}(\Omega)\bigr)
\times
L^{2}\bigl(\Xi;L^{2}(\partial\Omega)\bigr),
\end{align*}
which is equipped with the canonical product inner product. 

Within the framework of the extended coordinates \eqref{eq:coordinates}, it is convenient to treat the spatial and parametric variables on the same footing. For $\D=\Omega\times\Xi$ and the lateral boundary
$\Lat:=\partial\Omega\times\Xi$, we employ the standard isometric
identifications
\begin{align*}
L^{2}\bigl(\Xi;L^{2}(\Omega)\bigr)\cong L^{2}(\D),
\qquad
L^{2}\bigl(\Xi;L^{2}(\partial\Omega)\bigr)\cong L^{2}(\Lat).
\end{align*}
The corresponding Bochner norms induce the following norms on $L^{2}(\D)$ and $L^{2}(\Lat)$:
\begin{align*}
\norm{v}_{L^{2}(\D)}^{2}
=\int_{\Xi}\!\int_{\Omega}|v(x,\xi)|^{2}\,\mathrm{d}x\,\mathrm{d}\xi,
\qquad
\norm{w}_{L^{2}(\Lat)}^{2}
=\int_{\Xi}\!\int_{\partial\Omega}|w(x,\xi)|^{2}\,\mathrm{d}x\,\mathrm{d}\xi,
\end{align*}
so that $\mathcal Y\cong L^{2}(\D)\times L^{2}(\Lat)$ with the norm
\begin{align}\label{eq:Y_norm}
\norm{(v,w)}_{\mathcal Y}^{2}
=\norm{v}_{L^{2}(\D)}^{2}+\norm{w}_{L^{2}(\Lat)}^{2}.
\end{align}
Within this setting, finding $u\in U$ corresponds to solving the problem
\begin{align}\label{eq:bochner_pde}
\A u=f \quad\text{in } L^{2}(\D),
\qquad
\tr u=g \quad\text{in } L^{2}(\Lat).
\end{align}
Thus, the entire parametric family \eqref{eq:parametric_pde} is represented by a single problem on $\mathcal{D}$. 

We proceed by stating the structural assumptions on the problem \eqref{eq:bochner_pde}. The first concerns the geometry of the extended domain.
\begin{assumption}[Separable domain]\label{ass:domain}
The spatial domain and the parameter set are Cartesian products of bounded
intervals:
\begin{equation}\label{eq:factor_domains}
\Omega=\Omega_1\times\cdots\times\Omega_{\dsp},
\qquad
\Xi=\Xi_1\times\cdots\times\Xi_{\dpa}.
\end{equation}
The extended domain factorizes accordingly:
\begin{equation}\label{eq:product_domain}
\D=\D_1\times\cdots\times\D_d\subset\R^{d},
\qquad
\D_m=(z_m^{-},z_m^{+})\subset\R,
\qquad m=1,\dots,d,
\end{equation}
where $\D_m=\Omega_m$ for $m\le\dsp$ and $\D_{\dsp+j}=\Xi_j$ for $j\le\dpa$.
\end{assumption}
\noindent
The product structure of $\D$ in Assumption~\ref{ass:domain} yields the
canonical Hilbert space identification
\begin{align}\label{eq:hilbert_tensor}
L^{2}(\D)
\cong
L^{2}(\D_1)\,\otimes\,\cdots\,\otimes\,L^{2}(\D_d)
\end{align}
where $\otimes$ is the Hilbert tensor product. Under this canonical identification, an elementary tensor 
\begin{align*}
v_1\otimes\cdots\otimes v_d, \quad v_m\in L^2(\D_m)
\end{align*}
corresponds to the separable (or the so-called rank-one) function
\begin{align}\label{eq: separable function}
v(z_1,\ldots,z_d)=\prod_{m=1}^{d}v_m(z_m)
\end{align}
Thanks to Fubini's theorem, the inner product of two elementary tensors factorizes into univariate ones
\begin{equation}\label{eq:crossnorm}
\ip{v_1\otimes\cdots\otimes v_d}{w_1\otimes\cdots\otimes w_d}_{L^{2}(\D)}
=
\prod_{m=1}^{d}\ip{v_m}{w_m}_{L^{2}(\D_m)}.
\end{equation}
The second structural hypothesis concerns the tensor-product functional setting
underlying our approximation scheme and the PDE operator.
\begin{assumption}[Separable differential operator]
\label{ass:operator}
There exist univariate Hilbert spaces
$H_m\hookrightarrow L^2(\D_m)$, $m=1,\ldots,d$, such that the following properties hold:
\begin{enumerate}

\item The Hilbert tensor product $H:=H_1\otimes\cdots\otimes H_d$ with the norm $\|v\|_H:=\sqrt{\langle v,v\rangle_H}$ satisfies the crossnorm property
\begin{align}\label{eq:H_crossnorm}
\left\|
v_1\otimes\cdots\otimes v_d
\right\|_H
=
\prod_{m=1}^{d}\|v_m\|_{H_m}.
\end{align}

\item The space $H$ is continuously and densely embedded in $U$, i.e.,
\begin{align}\label{eq:H_dense_embedding}
H\overset{\mathrm d}{\hookrightarrow} U,
\qquad
\overline{H}^{\,\|\cdot\|_U}=U.
\end{align}

\item The restriction of $\A$ to $H$ admits the separable representation
\begin{align}\label{eq:separable_operator}
\left.\A\right|_H
=
\sum_{\beta=1}^{R_\A}
\A_1^{(\beta)}\otimes\cdots\otimes\A_d^{(\beta)},
\end{align}
where $R_\A\in\mathbb N$ and
$\A_m^{(\beta)}:H_m\to L^2(\D_m)$ are bounded linear operators.
\end{enumerate}
With a slight abuse of notation, we continue to denote this restriction by
$\A$ whenever it is clear from the context.
\end{assumption}
\noindent
The consequence of Assumption \ref{ass:operator} is that $\A:H \rightarrow L^{2}(\mathcal{D})$ maps separable functions to finite sums of separable functions. Indeed, by the definition of the tensor product of operators, we get
\begin{align}\label{eq: separable operator}
\bigl(\A_1^{(\beta)}\otimes\cdots\otimes\A_d^{(\beta)}\bigr)
(v_1\otimes\cdots\otimes v_d)=(\A_1^{(\beta)}v_1)\otimes\cdots\otimes(\A_d^{(\beta)}v_d)
\end{align}
so that for a rank-one function \eqref{eq: separable function} we obtain
\begin{equation}\label{eq:operator_action}
\A v(z_1,\dots,z_d)
=
\sum_{\beta=1}^{R_{\A}}\prod_{m=1}^{d}\bigl(\A_m^{(\beta)}v_m\bigr)(z_m).
\end{equation}
Therefore, $\A$ increases the separation rank of its argument by at most the factor
$R_{\A}$ and preserves the tensor product structure.

Many differential operators admit a representation in terms of a small common family of (local) differential operators. As a matter of highlighting this (very useful) fact, the following assumption imposes additional local structure on the
univariate operator factors in Assumption~\ref{ass:operator}.
\begin{assumption}[Local differential-channel representation]
\label{ass:local_differential_channels}
Let Assumption~\ref{ass:operator} hold. For every $s=1,\ldots,d$, the
univariate operator family
$\{\A_s^{(\beta)}\}_{\beta=1}^{R_{\A}}$ admits a common representation
\begin{equation}\label{eq:local_differential_channel_representation}
\A_s^{(\beta)}v
=
\sum_{\ell\in\Lambda_s}
b_{s,\ell}^{(\beta)}\,\mathcal D_{s}^{\ell}v,
\qquad
v\in H_s,
\qquad
\beta=1,\ldots,R_{\A},
\end{equation}
where $\Lambda_s$ is a finite index set with $L_s:=|\Lambda_s|$,
$\mathcal D_{s}^{\ell}:H_s\to L^2(\D_s)$ are bounded linear differential
operators, and
$b_{s,\ell}^{(\beta)}\in C(\overline{\D_s})$ are coefficient functions.
\end{assumption}

It is now worth taking a moment to discuss the choice of the univariate spaces $H_s$ in
Assumption~\ref{ass:operator}, particularly in view of
\eqref{eq:local_differential_channel_representation}. Observe that
multiplication by
$b_{s,\ell}^{(\beta)}\in C(\overline{\D_s})\subset L^\infty(\D_s)$
is bounded on $L^2(\D_s)$, so the regularity of $H_s$ is determined by
the differential channels $\mathcal D_s^\ell$. In many applications,
including our parametric examples, these channels are
$\mathcal D_s^\ell=\partial_{z_s}^\ell$ with
\begin{align*}
\Lambda_s\subset\{0,\ldots,q\}\quad(s\le\dsp),
\qquad
\Lambda_s=\{0\}\quad(s>\dsp).
\end{align*}
Accordingly, we may take $H_s=H^q(\Omega_s)$ in the spatial directions
and $H_s=L^2(\D_s)$ in the parametric directions. Tensorization of the
spatial spaces then yields the Sobolev space of dominating mixed smoothness
\cite{hackbusch2012tensor,hochmuth2000tensor}:
\begin{align*}
\bigotimes_{m=1}^{\dsp} H^q(\Omega_m)
\cong
H_{\mathrm{mix}}^q(\Omega)
:=
\left\{
v\in L^2(\Omega):
D^\alpha v\in L^2(\Omega)
\ \text{for all } \alpha\in\mathbb N_0^{\dsp}
\text{ with } \|\alpha\|_\infty\leq q
\right\},
\end{align*}
equipped with the norm
\begin{align}\label{eq:mixed_norm}
\|v\|_{H_{\mathrm{mix}}^q(\Omega)}^2
:=
\sum_{\|\alpha\|_\infty\leq q}
\|D^\alpha v\|_{L^2(\Omega)}^2.
\end{align}
Accordingly, in the case of $ u(\cdot, \xi) \in V=H^q(\Omega)$ with $\xi \in \Xi$, a natural tensor space is given by
\begin{align}\label{eq:tensor_space_mixed}
H=
\left(\bigotimes_{m=1}^{\dsp}H^q(\Omega_m)\right)
\otimes
\left(\bigotimes_{j=1}^{\dpa}L^2(\Xi_j)\right)\cong
L^2\bigl(\Xi;H_{\mathrm{mix}}^q(\Omega)\bigr).
\end{align} 
Observe that the mixed norm \eqref{eq:mixed_norm} is stronger than the standard $H^q(\Omega)$
norm, as every derivative entering the $H^q(\Omega)$ norm also appears in the
$H_{\mathrm{mix}}^q(\Omega)$ norm, while the latter additionally contains
mixed derivatives of higher total order. As a result, we obtain the continuous embedding
$H_{\mathrm{mix}}^q(\Omega)\hookrightarrow H^q(\Omega)$, i.e., $\lVert v \rVert_{H^q(\Omega)} \leq C_{\mathrm{mix}}\lVert v \rVert_{H_{\mathrm{mix}}^q(\Omega)}$ for all $ v\in H_{\mathrm{mix}}^q(\Omega)$. 

It remains to establish the corresponding embedding of the Bochner spaces in our setting. From the embedding $H_{\mathrm{mix}}^q(\Omega)\hookrightarrow H^q(\Omega)$ we immediately obtain the estimate
\begin{align*}
\|v\|_{L^2(\Xi;H^q(\Omega))}^2
=
\int_\Xi
\|v(\xi)\|_{H^q(\Omega)}^2\,\mathrm d\xi
\leq
C_{\mathrm{mix}}^2
\int_\Xi
\|v(\xi)\|_{H_{\mathrm{mix}}^q(\Omega)}^2\,\mathrm d\xi
=
C_{\mathrm{mix}}^2
\|v\|_{L^2(\Xi;H_{\mathrm{mix}}^q(\Omega))}^2
\end{align*}
for all $v \in L^{2}(\Xi, H_{\mathrm{mix}}^q(\Omega))$, which then renders the embedding 
\begin{align*}
L^2\big(\Xi;H_{\mathrm{mix}}^q(\Omega)\big)\hookrightarrow L^2
\big(\Xi;H^q(\Omega)\big)    
\end{align*}
continuous, as well. Consequently, for $V=H^q(\Omega)$ and
$U=L^2(\Xi;V)$, the tensor space \eqref{eq:tensor_space_mixed} is
continuously embedded in $U$. For the product domains in Assumption~\ref{ass:domain},
$H_{\mathrm{mix}}^q(\Omega)$ is dense in $H^q(\Omega)$, since
$C^\infty(\overline{\Omega})\subset H_{\mathrm{mix}}^q(\Omega)$ and is
dense in $H^q(\Omega)$. Therefore, the embedding $H\hookrightarrow U$
is also dense, as required by Assumption~\ref{ass:operator}. Observe that if the parameter-to-solution map
\eqref{eq:parameter_to_solution_map} possesses the corresponding spatial mixed
regularity, then restricting the approximation to $H$ introduces no additional
regularity barrier, makes the tensor-product structure directly available, and
eliminates the first term in \eqref{eq:triangle_approximation}.

As a matter of convenience, we use the following proposition to illustrate the dominating mixed smoothness property while also verifying Assumption~\ref{ass:operator} for the Poisson problem.
\begin{proposition}
\label{prop:poisson_separable_structure}
Let Assumption~\ref{ass:domain} hold with $d:=\dsp$ and $\D:=\Omega$. Consider
\begin{align}\label{eq:poisson_operator_abstract}
\A:U\to L^2(\Omega),
\qquad
\A v:=-\Delta_x v,
\qquad
U:=H^2(\Omega),
\qquad
H:=\bigotimes_{m=1}^{d}H_m,
\end{align}
where $H_m:=H^2(\Omega_m)$, and $U$ and $H$ are equipped with the norms
\begin{align}\label{eq:poisson_U_H_norms}
\|v\|_U^2
&:=
\sum_{|\alpha|_1\leq 2}
\|D^\alpha v\|_{L^2(\Omega)}^2,
\qquad
\|v\|_H^2
:=
\sum_{\alpha\in\{0,1,2\}^d}
\|D^\alpha v\|_{L^2(\Omega)}^2.
\end{align}
Then Assumption~\ref{ass:operator} is satisfied, and $R_\A=d$. In particular,
\begin{align}\label{eq:poisson_operator_separable}
\left.\A\right|_H
=
-\sum_{\beta=1}^{d}
I^{\otimes(\beta-1)}
\otimes
\partial_{x_\beta}^2
\otimes
I^{\otimes(d-\beta)}.
\end{align}
where $I_m:H^2(\Omega_m)\hookrightarrow L^2(\Omega_m)$
denotes the canonical embedding.
\end{proposition}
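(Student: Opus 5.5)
We verify the three items of Assumption~\ref{ass:operator} in turn, working first on the algebraic tensor product $H_{\mathrm{alg}}:=\bigotimes_{m=1}^{d}H^2(\Omega_m)$ (finite sums of rank-one functions) and then passing to the completion.

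\emph{Crossnorm.} The $H$-norm in \eqref{eq:poisson_U_H_norms} is the norm induced by the inner product
\[
\ip{v}{w}_H=\sum_{\alpha\in\{0,1,2\}^d}\ip{D^\alpha v}{D^\alpha w}_{L^2(\Omega)}.
\]
For an elementary tensor $v=v_1\otimes\cdots\otimes v_d$ we have $D^\alpha v=\bigotimes_m \partial^{\alpha_m}v_m$. By Fubini, i.e.\ \eqref{eq:crossnorm},
\[
\|D^\alpha v\|^2_{L^2(\Omega)}=\prod_m\|\partial^{\alpha_m}v_m\|^2_{L^2(\Omega_m)}.
\]
Since the index set $\{0,1,2\}^d$ is a full Cartesian product, the sum over $\alpha$ factorizes as
\[
\prod_{m=1}^d\sum_{k=0}^2\|\partial^k v_m\|^2_{L^2(\Omega_m)}=\prod_m\|v_m\|_{H^2(\Omega_m)}^2 ,
\]
which is \eqref{eq:H_crossnorm}. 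The same computation at the level of inner products shows that $\ip{\cdot}{\cdot}_H$ coincides with the canonical Hilbert tensor inner product on $H_{\mathrm{alg}}$. Hence the completion $H$ is the Hilbert tensor product and is identified with $H^2_{\mathrm{mix}}(\Omega)$, since the mixed-norm space is complete and smooth functions are dense in it.

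\emph{Dense continuous embedding.} Every multi-index with $|\alpha|_1\le2$ satisfies $\|\alpha\|_\infty\le2$, so the sum defining $\|v\|_U^2$ is a subsum of the one defining $\|v\|_H^2$. Thus $\|v\|_U\le\|v\|_H$, i.e.\ $C_{H\hookrightarrow U}=1$. For density, I will argue as in the discussion preceding the proposition: $C^\infty(\overline\Omega)\subset H^2_{\mathrm{mix}}(\Omega)$, and $C^\infty(\overline\Omega)$ is dense in $H^2(\Omega)$ because $\Omega$, a box, is Lipschitz. This density step is the only nontrivial analytic ingredient and the one I expect to need care. One must make sure that the abstract completion $H$ really contains $C^\infty(\overline\Omega)$, i.e.\ that the identification of $H$ with $H^2_{\mathrm{mix}}$ is not merely formal. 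To establish this, I would show that tensor-product polynomials, or tensorized smooth functions, lie in $H_{\mathrm{alg}}$ and are dense in $H^2(\Omega)$; for example, Weierstrass-type approximation of derivatives up to order two on a box gives density of polynomials in $C^2(\overline\Omega)$, and hence in $H^2(\Omega)$.

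\emph{Separable representation.} On an elementary tensor, $\partial_{x_\beta}^2(v_1\otimes\cdots\otimes v_d)=v_1\otimes\cdots\otimes\partial^2 v_\beta\otimes\cdots\otimes v_d$. This is exactly the action of $I^{\otimes(\beta-1)}\otimes\partial_{x_\beta}^2\otimes I^{\otimes(d-\beta)}$ in the sense of \eqref{eq: separable operator}, with $I_m$ the bounded embedding $H^2(\Omega_m)\hookrightarrow L^2(\Omega_m)$ and $\partial^2:H^2(\Omega_\beta)\to L^2(\Omega_\beta)$ bounded. Summing over $\beta$ gives \eqref{eq:poisson_operator_separable} on $H_{\mathrm{alg}}$ by linearity, and hence $R_\A=d$. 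Both sides are bounded from $H$ to $L^2(\Omega)$: the left side because $\|\Delta v\|_{L^2}\le\sqrt d\,\|v\|_U\le\sqrt d\,\|v\|_H$, and each right-side term because it is a tensor product of bounded operators, with norm equal to the product of the factor norms. Two bounded operators that agree on the dense subspace $H_{\mathrm{alg}}$ agree on all of $H$, which completes the argument.
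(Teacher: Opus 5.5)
Your proposal is correct and follows the paper's proof essentially step by step: the Fubini factorization over the full index set $\{0,1,2\}^d$ gives the crossnorm, the subsum inequality $\|v\|_U\le\|v\|_H$ together with density of $C^\infty(\overline\Omega)$ in $H^2(\Omega)$ gives the dense embedding, and the rank-one identity is extended from the algebraic tensor product to $H$ by boundedness. Your one addition is a refinement of the density step. You observe that tensor-product polynomials already lie in the algebraic tensor product and are dense in $H^2(\Omega)$, which makes rigorous the paper's bare assertion $C^\infty(\overline\Omega)\subset H$ without identifying the abstract completion with $H^2_{\mathrm{mix}}(\Omega)$.
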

\begin{proof}
We verify the three properties in the order of
Assumption~\ref{ass:operator}.

\textup{(i)} Let $v=v_1\otimes\cdots\otimes v_d$ with $v_m\in H^2(\Omega_m)$ be an elementary tensor. For every multi-index $\alpha\in\mathbb N_0^d$ with
$\|\alpha\|_\infty\leq2$, equivalently
$\alpha\in\{0,1,2\}^d$, under the canonical identification we obtain
\begin{align*}
D^\alpha v(x)
=
\prod_{m=1}^{d}
\partial_{x_m}^{\alpha_m}v_m(x_m).
\end{align*}
By Fubini's theorem,
\begin{align}\label{eq:Fubini_mixed_regularity}
\|D^\alpha v\|_{L^2(\Omega)}^2
=
\int_{\Omega}
\prod_{m=1}^{d}
\left|
\partial_{x_m}^{\alpha_m}v_m(x_m)
\right|^2
\,\mathrm dx=
\prod_{m=1}^{d}
\left\|
\partial_{x_m}^{\alpha_m}v_m
\right\|_{L^2(\Omega_m)}^2.
\end{align}
Therefore,
\begin{align*}
\|v\|_H^2
=
\sum_{\alpha_1=0}^{2}\cdots
\sum_{\alpha_d=0}^{2}
\prod_{m=1}^{d}
\left\|
\partial_{x_m}^{\alpha_m}v_m
\right\|_{L^2(\Omega_m)}^2
=
\prod_{m=1}^{d}
\left(
\sum_{j=0}^{2}
\|\partial_{x_m}^{j}v_m\|_{L^2(\Omega_m)}^2
\right)
=
\prod_{m=1}^{d}
\|v_m\|_{H_m}^2.
\end{align*}
Taking square roots proves the crossnorm property \eqref{eq:H_crossnorm}.

\textup{(ii)} As explicitly visible in \eqref{eq:Fubini_mixed_regularity}, the dominating
mixed smoothness of $H$ controls derivatives $D^\alpha v$ with
$\|\alpha\|_\infty\leq2$, while $U=H^2(\Omega)$ requires only those with
$|\alpha|_1\leq2$. Since $|\alpha|_1\leq2$ implies
$\|\alpha\|_\infty\leq2$, every derivative entering the $U$-norm also enters the $H$-norm. Hence
\begin{align*}
\|v\|_U^2
=
\sum_{|\alpha|_1\leq2}
\|D^\alpha v\|_{L^2(\Omega)}^2
\leq
\sum_{\alpha\in\{0,1,2\}^d}
\|D^\alpha v\|_{L^2(\Omega)}^2
=
\|v\|_H^2,
\end{align*}
and thus $H\hookrightarrow U$ continuously. By Assumption~\ref{ass:domain}, $\overline{C^\infty(\overline{\Omega})}^{\,\|\cdot\|_U}
=
H^2(\Omega)=U$. In addition, $C^\infty(\overline{\Omega})\subset H$ as every smooth function possesses all mixed derivatives occurring in
the $H$-norm. Thus, $U
=
\overline{C^\infty(\overline{\Omega})}^{\,\|\cdot\|_U}
\subset
\overline{H}^{\,\|\cdot\|_U}
\subset
U$, which implies $\overline{H}^{\,\|\cdot\|_U}=U$ and proves the dense embedding relation \eqref{eq:H_dense_embedding}.

\textup{(iii)}
First, $\A:U\to L^2(\Omega)$ is bounded, since $\|\A v\|_{L^2(\Omega)}
\leq \sqrt{d}\,\|v\|_U$. For a rank-one function $v(x_1,\ldots,x_d)
=
\prod_{m=1}^{d}v_m(x_m)$, we have
\begin{align}
\A v(x)
=
-\sum_{\beta=1}^{d}
v_\beta''(x_\beta)
\prod_{\substack{m=1\\ m\neq\beta}}^{d}
v_m(x_m),
\end{align}
which coincides with the action of
\eqref{eq:poisson_operator_separable}.
Since $\partial_{x_m}^2:H^2(\Omega_m)\to L^2(\Omega_m)$ and
$I_m:H^2(\Omega_m)\to L^2(\Omega_m)$ are bounded, the identity extends by
density from the algebraic tensor product to all of $H$. Therefore,
\eqref{eq:poisson_operator_separable} holds with $R_\A=d$.
\end{proof}
\noindent
We note that Assumption~\ref{ass:local_differential_channels} is satisfied by a broad class of finite-order differential operators.

\begin{proposition}[Finite-order differential operators with separable coefficients]
\label{prop:lc_scope_finite_order}
Let Assumption \ref{ass:domain} hold. Assume that
$\mathcal I\subset\mathbb N_0^d$ is nonempty and finite. Consider the bounded differential operator
\begin{equation}\label{eq:lc_scope_differential_operator}
\A:U\to L^2(\D), \qquad \A v
=
\sum_{\nu\in\mathcal I}a_\nu(z)\,\partial_z^\nu v,
\quad
\partial_z^\nu
:=
\partial_{z_1}^{\nu_1}\cdots\partial_{z_d}^{\nu_d},
\end{equation}
with variable coefficients satisfying the separable representation
\begin{equation}\label{eq:lc_scope_separated_coefficients}
a_\nu(z)
=
\sum_{j=1}^{R_\nu}\prod_{m=1}^{d}a_{\nu,j,m}(z_m),
\qquad
a_{\nu,j,m}\in C(\overline{\D_m}),
\qquad R_\nu\in\mathbb N.
\end{equation}
Set $q_s:=\max_{\nu\in\mathcal I}\nu_s$, and let $H_s$ be Hilbert
spaces such that
\begin{equation*}
H:=H_1\otimes\cdots\otimes H_d\overset{\mathrm d}{\hookrightarrow} U
\end{equation*}
and $\partial_{z_s}^{\ell}:H_s\to L^2(\D_s)$ is bounded
for $\ell=0,\ldots,q_s$, with $\partial_{z_s}^0=I$.

Then Assumptions~\ref{ass:operator}
and~\ref{ass:local_differential_channels} are satisfied. More precisely,
the following statements hold.

\begin{itemize}

\item[\textup{(i)}]
For every $\nu\in\mathcal I$, $j=1,\ldots,R_\nu$, and
$s=1,\ldots,d$, the univariate operator
\begin{equation}\label{eq:lc_scope_univariate_factors}
\A_s^{(\nu,j)}h
:=
a_{\nu,j,s}\,\partial_{z_s}^{\nu_s}h,
\qquad h\in H_s,
\end{equation}
defines a bounded linear map
$\A_s^{(\nu,j)}:H_s\to L^2(\D_s)$.

\item[\textup{(ii)}]
The restriction of the differential operator $\mathcal{A}$ to $H$ admits the separable representation
\begin{equation}\label{eq:lc_scope_operator_tensorization}
\left.\A\right|_H
=
\sum_{\nu\in\mathcal I}\sum_{j=1}^{R_\nu}
\A_1^{(\nu,j)}\otimes\cdots\otimes\A_d^{(\nu,j)}.
\end{equation}
Thus, one may take
$R_\A=\sum_{\nu\in\mathcal I}R_\nu$. This representation need not be minimal.

\item[\textup{(iii)}]
Upon identifying $\beta\equiv(\nu,j)$, the local differential-channel representation
\eqref{eq:local_differential_channel_representation} holds with
\begin{equation}\label{eq:lc_scope_explicit_channels}
\begin{aligned}
\Lambda_s&:=\{\nu_s:\nu\in\mathcal I\},
&\qquad
\mathcal D_{s}^{\ell}&:=\partial_{z_s}^{\ell},\\
b_{s,\ell}^{(\nu,j)}
&:=a_{\nu,j,s}\delta_{\ell\nu_s},
&
L_s:=|\Lambda_s|&\le q_s+1.
\end{aligned}
\end{equation}
\end{itemize}
\end{proposition}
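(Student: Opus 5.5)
The proof follows the template of Proposition~\ref{prop:poisson_separable_structure}(iii): verify boundedness of the univariate factors, check the operator identity on elementary tensors, extend it by density to all of $H$, and read off the channel structure. Items (i)--(iii) are handled in that order. Items (i) and (ii) give Assumption~\ref{ass:operator}; its first two properties (crossnorm and dense embedding) are taken as part of the hypotheses on $H_s$ and $H$. Item (iii) then gives Assumption~\ref{ass:local_differential_channels}.

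For (i), we factor $\A_s^{(\nu,j)}$ as the composition of $\partial_{z_s}^{\nu_s}:H_s\to L^2(\D_s)$ with the multiplication operator by $a_{\nu,j,s}$. The derivative is bounded by hypothesis, since $\nu_s\le q_s$. The multiplication operator is bounded on $L^2(\D_s)$ with norm at most $\|a_{\nu,j,s}\|_{L^\infty(\D_s)}$, which is finite because $a_{\nu,j,s}$ is continuous on the compact interval $\overline{\D_s}$.

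For (ii), we first take an elementary tensor $v=v_1\otimes\cdots\otimes v_d$ with $v_m\in H_m$, identified with $\prod_m v_m(z_m)$. Then $\partial_z^\nu v=\prod_m \partial_{z_m}^{\nu_m}v_m(z_m)$, so inserting the separated coefficients \eqref{eq:lc_scope_separated_coefficients} gives
\[
\A v=\sum_{\nu\in\mathcal I}\sum_{j=1}^{R_\nu}\prod_{m=1}^d a_{\nu,j,m}(z_m)\,\partial_{z_m}^{\nu_m}v_m(z_m)=\sum_{\nu,j}\bigl(\A_1^{(\nu,j)}v_1\bigr)\otimes\cdots\otimes\bigl(\A_d^{(\nu,j)}v_d\bigr),
\]
which is the action of the right-hand side of \eqref{eq:lc_scope_operator_tensorization} by \eqref{eq: separable operator}. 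By linearity the identity holds on the algebraic tensor product, and the extension to $H$ is the main step.
\begin{itemize}
\item Each $\A_1^{(\nu,j)}\otimes\cdots\otimes\A_d^{(\nu,j)}$ is a tensor product of bounded operators between Hilbert spaces. It therefore extends uniquely to a bounded operator $H\to L^2(\D)$ with norm equal to the product of the factor norms; this uses the crossnorm property \eqref{eq:H_crossnorm} together with \eqref{eq:hilbert_tensor}.
\item The restriction $\A|_H$ is bounded as a map $H\to L^2(\D)$, because $\A:U\to L^2(\D)$ is bounded and $H\hookrightarrow U$ continuously.
\item Two bounded operators that agree on the dense algebraic tensor product agree on all of $H$.
\end{itemize}
Counting the terms gives $R_\A=\sum_\nu R_\nu$. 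Non-minimality needs no proof, since the statement only asserts that the representation need not be minimal.

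For (iii), we fix $s$, set $\Lambda_s=\{\nu_s:\nu\in\mathcal I\}\subset\{0,\dots,q_s\}$ so that $L_s\le q_s+1$, and take $\mathcal D_s^\ell=\partial_{z_s}^\ell$, which is bounded since $\ell\le q_s$. With $b_{s,\ell}^{(\nu,j)}=a_{\nu,j,s}\delta_{\ell\nu_s}$, the sum $\sum_{\ell\in\Lambda_s}b_{s,\ell}^{(\nu,j)}\partial_{z_s}^\ell h$ collapses to the single term $\ell=\nu_s$, which is $\A_s^{(\nu,j)}h$. The coefficients lie in $C(\overline{\D_s})$, as Assumption~\ref{ass:local_differential_channels} requires. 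The main obstacle is the density extension in (ii); the remaining steps are bookkeeping.
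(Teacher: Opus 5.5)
Your proposal is correct and follows the paper's proof essentially step by step: you bound $\mathcal A_s^{(\nu,j)}$ using the derivative bounds and $\|a_{\nu,j,s}\|_{L^\infty(\D_s)}$, verify the identity on elementary tensors, extend it to $H$ because two bounded operators agreeing on the dense algebraic tensor product coincide, and collapse the channel sum to the single term $\ell=\nu_s$. The only cosmetic differences are that the paper derives the crossnorm property from the Hilbert tensor-product construction rather than taking it as a hypothesis, and it only uses the upper bound $\prod_m\|\mathcal A_m^{(\nu,j)}\|$ on the norm of each tensor-product term.
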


\begin{proof}
\noindent\textup{(i)}
Since the operators
$\partial_{z_s}^{\ell}:H_s\to L^2(\D_s)$ are bounded for
$\ell=0,\ldots,q_s$ and $q_s<\infty$, there exists a constant
\begin{equation*}
C_s
:=
\max_{\ell=0,\ldots,q_s}
\bigl\|
\partial_{z_s}^{\ell}
\bigr\|_{\mathcal L(H_s,L^2(\D_s))}
<\infty
\end{equation*}
such that $\|\partial_{z_s}^{\ell}h\|_{L^2(\D_s)}
\le
C_s\|h\|_{H_s}$ for all $h\in H_s$ and $\ell=0,\ldots,q_s$.
Moreover, $a_{\nu,j,s}\in C(\overline{\D_s})$ is bounded since
$\D_s$ is a bounded interval. Hence
\begin{equation*}
\begin{aligned}
\|\A_s^{(\nu,j)}h\|_{L^2(\D_s)}
\le
C_s\|a_{\nu,j,s}\|_{L^\infty(\D_s)}\|h\|_{H_s}.
\end{aligned}
\end{equation*}
Thus every univariate operator factor
$\A_s^{(\nu,j)}:H_s\to L^2(\D_s)$ is bounded.

\medskip
\noindent\textup{(ii)}
For an elementary tensor $v=v_1\otimes\cdots\otimes v_d$ with
$v_m\in H_m$, we have
\begin{equation}\label{eq:lc_scope_derivative_tensor}
\partial_z^\nu v(z)
=
\prod_{m=1}^{d}\partial_{z_m}^{\nu_m}v_m(z_m).
\end{equation}
Using \eqref{eq:lc_scope_separated_coefficients} and collecting the factors
coordinate-wise, we obtain
\begin{align*}
a_\nu(z)\partial_z^\nu v(z)
&=
\left(
\sum_{j=1}^{R_\nu}\prod_{m=1}^{d}a_{\nu,j,m}(z_m)
\right)
\left(
\prod_{m=1}^{d}\partial_{z_m}^{\nu_m}v_m(z_m)
\right)
=
\sum_{j=1}^{R_\nu}
\prod_{m=1}^{d}
\left(
a_{\nu,j,m}(z_m)\partial_{z_m}^{\nu_m}v_m(z_m)
\right).
\end{align*}
Summing the above expression over $\nu$ yields
\begin{align*}
\A v=
\left[
\sum_{\nu\in\mathcal I}\sum_{j=1}^{R_\nu}
\A_1^{(\nu,j)}\otimes\cdots\otimes\A_d^{(\nu,j)}
\right]v,
\end{align*}
which proves \eqref{eq:lc_scope_operator_tensorization} for every
elementary tensor, and hence by linearity for every finite linear combination
of elementary tensors.

By \eqref{eq:hilbert_tensor} and part~\textup{(i)}, the right-hand side of
\eqref{eq:lc_scope_operator_tensorization} defines a bounded operator
between $H$ and $L^{2}(\mathcal{D})$ satisfying
\begin{align*}
\left\|
\sum_{\nu\in\mathcal I}\sum_{j=1}^{R_\nu}
\A_1^{(\nu,j)}\otimes\cdots\otimes\A_d^{(\nu,j)}
\right\|_{\mathcal L(H,L^2(\D))}
&\le
\sum_{\nu\in\mathcal I}\sum_{j=1}^{R_\nu}
\prod_{m=1}^{d}
\|\A_m^{(\nu,j)}\|_{\mathcal L(H_m,L^2(\D_m))}
\\
&\le
\sum_{\nu\in\mathcal I}\sum_{j=1}^{R_\nu}
\prod_{m=1}^{d}
\left(
C_m\|a_{\nu,j,m}\|_{L^\infty(\D_m)}
\right)
<\infty.
\end{align*}
Since $H\hookrightarrow U$ and $\A:U\to L^2(\D)$ is bounded,
the restriction $\left.\A\right|_H:H\to L^2(\D)$ is bounded as well.
The two bounded operators coincide on the finite linear combinations of
elementary tensors, which are dense in $H$, and therefore coincide on all
of $H$. Thus, one may take
$R_\A=\sum_{\nu\in\mathcal I}R_\nu$.

The crossnorm property \eqref{eq:H_crossnorm} follows from the Hilbert
tensor-product construction, while the dense embedding
\eqref{eq:H_dense_embedding} holds by assumption. Together with
\eqref{eq:lc_scope_operator_tensorization}, this verifies
Assumption~\ref{ass:operator}.

\medskip
\noindent\textup{(iii)}
For fixed $s$ and $(\nu,j)$, we have $\nu_s\in\Lambda_s$.
The choices in \eqref{eq:lc_scope_explicit_channels} yield
\begin{align*}
\sum_{\ell\in\Lambda_s}
b_{s,\ell}^{(\nu,j)}\mathcal D_{s}^{\ell}h
&=
\sum_{\ell\in\Lambda_s}
a_{\nu,j,s}\delta_{\ell\nu_s}\partial_{z_s}^{\ell}h
=
a_{\nu,j,s}\partial_{z_s}^{\nu_s}h
=
\A_s^{(\nu,j)}h.
\end{align*}
Moreover, each $\mathcal D_{s}^{\ell}:H_s\to L^2(\D_s)$ is bounded by
assumption and each
$b_{s,\ell}^{(\nu,j)}\in C(\overline{\D_s})$.
Hence \eqref{eq:local_differential_channel_representation} holds.
Finally, $\Lambda_s\subset\{0,\ldots,q_s\}$ implies
$L_s\le q_s+1$, which completes the proof.
\end{proof}
We make the following remark concerning the trace operator in the formulation \eqref{eq:bochner_pde}.
\begin{remark}\label{rem:trace}
Under Assumption~\ref{ass:domain} the lateral boundary $\Lat$
consists of the $2\dsp$ faces
$\Lat_m^{-}=\D_1\times\cdots\times\{z_m^{-}\}\times\cdots\times\D_d$
and $\Lat_m^{+}$ defined analogously, with $m\le\dsp$; the parametric variables contribute no faces,
since \eqref{eq:parametric_pde} imposes no condition on $\partial\Xi$. Each face
is again a product domain, so that
\[
L^{2}(\Lat_m^{\pm})
\cong
\bigotimes_{k\neq m}L^{2}(\D_k).
\]
Whenever the endpoint evaluation
$\varepsilon_{z_m^{\pm}}:H_m\to\R$,
$\varepsilon_{z_m^{\pm}}v:=v(z_m^{\pm})$, is bounded,
the restriction of the trace to $H$ admits the rank-one tensor-product representation
\begin{equation}\label{eq:trace_separable}
\left.\tr_m^{\pm}\right|_H
=
I\otimes\cdots\otimes
\varepsilon_{z_m^{\pm}}
\otimes\cdots\otimes I.
\end{equation}
Thus, the trace operator has the same tensor-product structure as the operator $\A$, but with separation rank one and codomain
$L^2(\Lat_m^\pm)$ rather than $L^2(\D)$.
\end{remark}

\subsection{Residual minimization over separable neural network classes}
\label{sub:parametrization}

We seek to approximate the parameter-to-solution map $u \in U$ solving \eqref{eq:bochner_pde} by a neural network. The respective PINN formulation \cite{raissi2019physics} is then built from three ingredients. The first is the \emph{parametrization}
\begin{equation}\label{eq:parametrization}
\Pmap:\R^{P}\to U,
\qquad
\theta\mapsto\Pmap(\theta)=u_\theta,
\end{equation}
which specifies the neural architecture. We have the following structural assumption.
\begin{assumption}[Separable architecture]\label{ass:architecture}
The parameter vector splits into coordinate blocks
\begin{equation}\label{eq:parameter_blocks}
\theta=(\theta^{1},\dots,\theta^{d}),
\qquad
\theta^{m}\in\R^{P_m},
\qquad
P=\sum_{m=1}^{d}P_m.
\end{equation}
For each coordinate $m$, there exists a differentiable univariate parametrization
\begin{equation}\label{univariate networks}
\Phi_m^{k_m}:\R^{P_m}\to\W_m^{k_m},
\qquad
\Phi_m^{k_m}(\theta^{m})
=
\bigl(w_m^{1},\dots,w_m^{k_m}\bigr),
\qquad
k_m\in\mathbb N,
\end{equation}
where $\W_m^k$ denotes the product Hilbert
space\footnote{The superscript $k$ in $\W_m^k$ denotes the number of
components, not the Sobolev order.}:
\begin{equation}\label{eq:local_tuple_space}
\W_m^k
:=
\bigl\{(w_m^1,\ldots,w_m^k):
w_m^{a_m}\in H_m,\ a_m=1,\ldots,k\bigr\},
\qquad
\|w_m\|_{\W_m^k}^2
:=
\sum_{a_m=1}^k\|w_m^{a_m}\|_{H_m}^2.
\end{equation}
Moreover, there exists a fixed contraction pattern
$c\in\R^{k_1\times\cdots\times k_d}$ and a multilinear map
\begin{align}\label{eq:multilinear_contraction}
\Cmap_{\mathrm c}:\W_1^{k_1}\times\cdots\times\W_d^{k_d}\to H,
\qquad
\Cmap_{\mathrm c}(w_1,\dots,w_d)
:=
\sum_{a_1=1}^{k_1}\cdots\sum_{a_d=1}^{k_d}
c_{a_1\ldots a_d}\,
w_1^{a_1}\otimes\cdots\otimes w_d^{a_d}.
\end{align}
The parametrization is given by
\begin{equation}\label{eq:multilinear}
\Pmap(\theta)
=
\Cmap_{\mathrm c}\bigl(
\Phi_1^{k_1}(\theta^{1}),\dots,\Phi_d^{k_d}(\theta^{d})
\bigr),
\qquad
\theta\in\R^{P}.
\end{equation}
We write
$\nu:=\#\bigl\{(a_1,\dots,a_d):c_{a_1\ldots a_d}\neq0\bigr\}$
for the number of nonzero terms in the contraction.
\end{assumption}
\noindent
\noindent
To distinguish the restriction imposed by $c$ from the choice of local
networks in the architecture, we define the ambient contraction class
\begin{equation}\label{eq:ambient_contraction_class}
\mathfrak T_{\mathrm c}
:=
\Cmap_{\mathrm c}\bigl(
\W_1^{k_1}\times\cdots\times\W_d^{k_d}
\bigr)
\subset H.
\end{equation}
For fixed univariate parametrizations, the neural model class satisfies
\begin{equation}\label{eq:fixed_and_growing_model_classes}
\M=\ran\Pmap
=
\Cmap_{\mathrm c}\bigl(
\ran\Phi_1^{k_1}\times\cdots\times\ran\Phi_d^{k_d}
\bigr)
\subset\mathfrak T_{\mathrm c}\subset H.
\end{equation}
Neither \eqref{eq:ambient_contraction_class} nor
\eqref{eq:fixed_and_growing_model_classes} need be a linear subspace of $H$. We remind that the inclusion
$\M\subset\mathfrak T_{\mathrm c}\subset H$
is tightly related to the last two approximation steps in
\eqref{eq:triangle_approximation}. The second term in \eqref{eq:triangle_approximation} reflects the structural
low-rank approximation of $v\in H$ by the prescribed contraction class
$\mathfrak T_{\mathrm c}$, based on the hypothesis that the target admits
an accurate representation for moderate $k_m$.
The third term in \eqref{eq:triangle_approximation} is the
classical neural approximation error of $w\in\mathfrak T_{\mathrm c}$ by
$u_\theta\in\M$. It can be made arbitrarily small provided the univariate neural
classes are sufficiently expressive, as shown in the next proposition.  

\begin{proposition}[Universal Approximation]
\label{prop:architecture_universal_approximation}
Let Assumption~\ref{ass:architecture} hold. Suppose that for every $w_m\in\W_m^{k_m}$, where $m=1,\ldots,d$, and
$\delta>0$, there exist 
$\Phi_m^{k_m}$ of the form \eqref{univariate networks} and parameters
$\theta^m$ such that $\|w_m-\Phi_m^{k_m}(\theta^m)\|_{\W_m^{k_m}}<\delta.$
Then for every $w\in\mathfrak T_{\mathrm c}$ and $\varepsilon>0$,
there exists $u_\theta=\Pmap(\theta)\in\mathfrak T_{\mathrm c}$ such that
$\|w-u_\theta\|_H<\varepsilon$.
\end{proposition}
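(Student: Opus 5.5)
Since $w\in\mathfrak T_{\mathrm c}$, by \eqref{eq:ambient_contraction_class} we can write $w=\Cmap_{\mathrm c}(w_1,\dots,w_d)$ for some tuples $w_m\in\W_m^{k_m}$. The plan is to approximate each tuple $w_m$ by a network output $\widetilde w_m=\Phi_m^{k_m}(\theta^m)$, as the hypothesis allows. We then show that the multilinear map $\Cmap_{\mathrm c}$ is locally Lipschitz on the product $\W_1^{k_1}\times\cdots\times\W_d^{k_d}$. This makes $\|w-\Cmap_{\mathrm c}(\widetilde w_1,\ldots,\widetilde w_d)\|_H$ small once $\delta$ is chosen small enough relative to $\varepsilon$. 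The approximant $u_\theta=\Pmap(\theta)$ with $\theta=(\theta^1,\dots,\theta^d)$ lies in $\M\subset\mathfrak T_{\mathrm c}$ by \eqref{eq:fixed_and_growing_model_classes}.

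\emph{Step 1: boundedness of $\Cmap_{\mathrm c}$.} For any tuples, the crossnorm property \eqref{eq:H_crossnorm} and the triangle inequality give
\begin{equation*}
\|\Cmap_{\mathrm c}(v_1,\dots,v_d)\|_H\le\sum_{a_1,\dots,a_d}|c_{a_1\ldots a_d}|\prod_{m=1}^d\|v_m^{a_m}\|_{H_m}\le\|c\|_{1}\prod_{m=1}^d\|v_m\|_{\W_m^{k_m}},
\end{equation*}
since $\|v_m^{a_m}\|_{H_m}\le\|v_m\|_{\W_m^{k_m}}$ for each component by \eqref{eq:local_tuple_space}. Here $\|c\|_1$ is the (finite) sum of absolute entries of the pattern.

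\emph{Step 2: telescoping.} We use multilinearity to write
\begin{equation*}
\Cmap_{\mathrm c}(w_1,\dots,w_d)-\Cmap_{\mathrm c}(\widetilde w_1,\dots,\widetilde w_d)=\sum_{m=1}^d\Cmap_{\mathrm c}(\widetilde w_1,\dots,\widetilde w_{m-1},w_m-\widetilde w_m,w_{m+1},\dots,w_d).
\end{equation*}
Bounding each term by Step 1, and using $\|\widetilde w_j\|\le\|w_j\|+\delta$, we get
\begin{equation*}
\|w-u_\theta\|_H\le\|c\|_1\,d\,\delta\prod_{j}\bigl(\|w_j\|_{\W_j^{k_j}}+\delta\bigr).
\end{equation*}

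\emph{Step 3: choice of $\delta$.} We take $\delta\le1$ with $\delta<\varepsilon/\bigl(\|c\|_1\,d\prod_j(\|w_j\|+1)\bigr)$, which gives $\|w-u_\theta\|_H<\varepsilon$.

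The hypothesis lets the parametrization $\Phi_m^{k_m}$ be chosen depending on $w_m$ and $\delta$; this is harmless, since $\Pmap$ is built from whichever $\Phi_m^{k_m}$ are selected and retains the form \eqref{eq:multilinear}. The only step needing care is Step 1. There we must note that \eqref{eq:H_crossnorm} applies to elementary tensors, so the estimate is the triangle inequality applied to the finite sum, not an orthogonality argument. With that, the rest is routine.
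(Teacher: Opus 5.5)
Your route is the paper's: a multilinear bound for $\Cmap_{\mathrm c}$ from the crossnorm property, then the telescoping identity, then a small choice of $\delta$. The only difference is the constant. You bound $\sum_a|c_a|\prod_m\|v_m^{a_m}\|_{H_m}$ by $\|c\|_1\prod_m\|v_m\|_{\W_m^{k_m}}$. The paper instead uses Cauchy--Schwarz together with $\sum_a\prod_m\|v_m^{a_m}\|_{H_m}^2=\prod_m\|v_m\|_{\W_m^{k_m}}^2$, which gives the sharper constant $\|c\|_{\mathrm F}$ (for the $0/1$ CP and TT patterns, $\sqrt{\nu}$ instead of $\nu$). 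Both constants are finite, so this does not matter for the qualitative statement.

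There is, however, an error in the last display of Step~2. The $m$-th telescoping term is bounded by $\|c\|_1\,\delta\prod_{j\neq m}(\|w_j\|+\delta)$. You cannot insert the missing factor $\|w_m\|+\delta$, because it may be smaller than $1$.

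For a counterexample, take $d=2$, $k_1=k_2=1$, $c=1$, $w_1=0$ and $\|w_2\|_{H_2}=1$. Choose the approximants $\|\widetilde w_1\|_{H_1}=\delta/2$ and $\widetilde w_2=w_2$; both lie within $\delta$ of their targets. They give $\|w-u_\theta\|_H=\delta/2$, while your display claims the bound $2\delta^2(1+\delta)$, which is smaller for small $\delta$.

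The conclusion is unaffected. For $\delta\le 1$ one has $\prod_{j\neq m}(\|w_j\|+\delta)\le\prod_{j}(\|w_j\|+1)$, hence $\|w-u_\theta\|_H\le\|c\|_1\,d\,\delta\prod_j(\|w_j\|+1)$. This is exactly the quantity your Step~3 choice of $\delta$ controls, so the argument goes through once the Step~2 display is replaced by this bound. Compare the paper's $\delta\|c\|_{\mathrm F}\sum_s\prod_{m\neq s}(\|w_m\|+1)$.
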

\begin{proof}
The triangle inequality, the crossnorm property,
and the Cauchy--Schwarz inequality give
\begin{align*}
\|\Cmap_{\mathrm c}(w_1,\ldots,w_d)\|_H
&\leq
\sum_{a_1,\ldots,a_d}
|c_{a_1\ldots a_d}|
\prod_{m=1}^d\|w_m^{a_m}\|_{H_m}
\\
&\leq
\left(
\sum_{a_1,\ldots,a_d}|c_{a_1\ldots a_d}|^2
\right)^{1/2}
\left(
\sum_{a_1,\ldots,a_d}
\prod_{m=1}^d\|w_m^{a_m}\|_{H_m}^2
\right)^{1/2}
\\
&=
\|c\|_{\mathrm F}
\prod_{m=1}^d
\left(
\sum_{a_m=1}^{k_m}\|w_m^{a_m}\|_{H_m}^2
\right)^{1/2}
=
\|c\|_{\mathrm F}
\prod_{m=1}^d\|w_m\|_{\W_m^{k_m}}.
\end{align*}
Multilinearity gives the telescoping identity
\begin{align*}
\Cmap_{\mathrm c}(w_1,\ldots,w_d)
-
\Cmap_{\mathrm c}(\widetilde w_1,\ldots,\widetilde w_d)
=
\sum_{s=1}^d
\Cmap_{\mathrm c}\bigl(
\widetilde w_1,\ldots,\widetilde w_{s-1},
w_s-\widetilde w_s,
w_{s+1},\ldots,w_d
\bigr).
\end{align*}
Applying the preceding bound to each term yields
\begin{equation}\label{eq:contraction_approximation_bound}
\begin{aligned}
&\|\Cmap_{\mathrm c}(w_1,\ldots,w_d)
-
\Cmap_{\mathrm c}(\widetilde w_1,\ldots,\widetilde w_d)\|_H
\\
&\qquad\leq
\|c\|_{\mathrm F}\sum_{s=1}^d
\left(
\prod_{m<s}\|\widetilde w_m\|_{\W_m^{k_m}}
\right)
\|w_s-\widetilde w_s\|_{\W_s^{k_s}}
\left(
\prod_{m>s}\|w_m\|_{\W_m^{k_m}}
\right).
\end{aligned}
\end{equation}
Let $w=\Cmap_{\mathrm c}(w_1,\ldots,w_d)\in\mathfrak T_{\mathrm c}$.
By our assumption, for any $\delta>0$ and each $m$, we can choose
$\Phi_m^{k_m}$ and $\theta^m$ such that $\|w_m-\widetilde w_m\|_{\W_m^{k_m}}<\delta$ holds for $\widetilde w_m:=\Phi_m^{k_m}(\theta^m)$. Therefore,
$\|\widetilde w_m\|_{\W_m^{k_m}}
\leq \|w_m\|_{\W_m^{k_m}}+1$.
Hence, \eqref{eq:contraction_approximation_bound} yields
\begin{align*}
\|w-u_\theta\|_H
\leq
\delta\,\|c\|_{\mathrm F}
\sum_{s=1}^d
\prod_{m\neq s}
\left(\|w_m\|_{\W_m^{k_m}}+1\right),
\end{align*}
where
$u_\theta=\Cmap_{\mathrm c}(\widetilde w_1,\ldots,\widetilde w_d)
=\Pmap(\theta)$.
Choosing $\delta>0$ sufficiently small proves the claim.
\end{proof}
\noindent
Recall that a variety of universal approximation results
(see, e.g., \cite{de2021approximation,guhring2020error, hornik1991approximation})
provide conditions on neural architectures ensuring that the approximation assumption of
Proposition~\ref{prop:architecture_universal_approximation} is satisfied. We also note that the above result applies directly to the CP and TT neural
classes introduced in the subsequent sections, once the corresponding
contraction patterns are specified.  


We proceed to formulate the PINN problem. Having introduced the architecture and the neural class \eqref{eq:fixed_and_growing_model_classes}, the second key ingredient is the \emph{residual operator}
\begin{equation}\label{eq:residual_operator}
\Fmap: U \to\Y,
\qquad
\Fmap(u):=
\begin{bmatrix}
\A u-f\\[2pt]
\sqrt{\lambda_{\mathrm b}}\,(\tr u-g)
\end{bmatrix},
\end{equation}
which encodes the PDE and the boundary condition. Here $\lambda_{\mathrm b}>0$ weights
the boundary residual and $\Y$ is equipped with the inner product which agrees with \eqref{eq:Y_norm}. The
parameter-to-residual map is the composition
\begin{equation}\label{eq:residual_map}
\Res:=\Fmap\circ\Pmap:\R^{P}\to\Y,
\qquad
\Res(\theta)=
\begin{bmatrix}
\A u_\theta-f\\[2pt]
\sqrt{\lambda_{\mathrm b}}\,(\tr u_\theta-g)
\end{bmatrix}.
\end{equation}
We note that if $\A$ is nonlinear, its occurrence in the subsequent Gauss--Newton linearization is understood as the Fr\'echet derivative $\diff\A(u_\theta)$ evaluated at the current state $u_\theta$; cf. \cite{jnini2025gauss}.

The third ingredient is the \emph{least-squares objective}, given by the
squared residual norm
\begin{equation}\label{eq:continuous_loss}
\mathfrak L(\theta)
=\tfrac12\norm{\Res(\theta)}_{\Y}^{2}
=\tfrac12\norm{\A u_\theta-f}_{L^{2}(\D)}^{2}
+\tfrac{\lambda_{\mathrm b}}{2}\norm{\tr u_\theta-g}_{L^{2}(\Lat)}^{2}.
\end{equation}
Observe that minimizing \eqref{eq:continuous_loss} over $\theta \in \mathbb{R}^{P}$ fits the whole family
\eqref{eq:parametric_pde} at once.


Boundary conditions may also be imposed exactly instead of adding an additional objective term, which we adopt in the main derivations for clarity of exposition; cf. \cite{sukumar2022exact}. Let $\bar u_\theta$ denote the raw, unconstrained output of the neural ansatz. We then define
\begin{equation}\label{eq:hard_constraint}
u_\theta(z)=\ell(z)\,\bar u_\theta(z)+g_{\mathrm b}(z),
\qquad
\restr{\ell}{\Lat}=0,
\qquad
\ell>0\ \text{in }\D,
\qquad
\restr{g_{\mathrm b}}{\Lat}=g,
\end{equation}
Then $u_\theta$ satisfies the prescribed boundary condition exactly, so that the boundary residual in \eqref{eq:residual_map} vanishes and may be omitted, yielding $\Y=L^{2}(\D)$.  In the separable setting, the cutoff and
the lift are also chosen separable:
\begin{equation}\label{eq:separable_cutoff}
\ell(z)=\prod_{m=1}^{\dsp}\ell_m(z_m),
\quad
\restr{\ell_m}{\partial\D_m}=0,
\qquad
g_{\mathrm b}(z)=\sum_{\beta=1}^{R_g}\prod_{m=1}^{d}g_{\mathrm b,m}^{(\beta)}(z_m),
\quad R_g<\infty,
\end{equation}
where each satisfies $\ell_m \in C^{\infty}(\overline{\mathcal{D}_{m}})$. Therefore, such hard boundary constraint preserve the separable structure of the ansatz. However, keeping the boundary residual in \eqref{eq:residual_map} is equally admissible, since by
Remark~\ref{rem:trace} the boundary blocks are of the same separable type. Nevertheless, incorporating the boundary loss makes the practically relevant construction considerably more technical, and we defer the corresponding details to the appendix.

\subsection{Separable Gauss--Newton geometry: continuous case}
\label{sub:pushforward}

Equipped with the separable neural network classes, we are now ready to derive the properties of the Gauss--Newton geometry. First, observe that the parametrization \eqref{eq:parametrization} is a nonlinear mapping, whereas its differential is the linear push-forward map
\begin{equation}\label{eq:pushforward}
\diff\Pmap(\theta):\R^{P}\to H,
\qquad
\diff\Pmap(\theta)[v]=\sum_{j=1}^{P}v_j \,\partial_{\theta_j}u_\theta,
\end{equation}
which we regard as the quasi-matrix
$\diff\Pmap(\theta)=[\partial_{\theta_1}u_\theta\ \cdots\ \partial_{\theta_P}u_\theta]$
in the sense of \eqref{eq:quasimatrix}. Its range
$T_{u_{\theta}}\M\subset H$ is the so-called generalized tangent space of $\M$ at $u_\theta$ and is a linear subspace of $H$, whose dimension can be strictly smaller than $P$ due to the lack of injectivity of \eqref{eq:parametrization} yielding rank deficiency of $\diff\Pmap(\theta)$;  cf. \cite{muller2023achieving}.

The parameter grouping \eqref{eq:parameter_blocks} induces a block-column
partition of \eqref{eq:pushforward}.  For a coordinate direction $s$ we define the
\emph{block push-forward}
\begin{equation}\label{eq:block_pushforward}
\diff_{\theta^{s}}\Pmap(\theta)
:=
\begin{bmatrix}
\partial_{\theta^{s}_{1}}u_\theta & \cdots & \partial_{\theta^{s}_{P_s}}u_\theta
\end{bmatrix},
\qquad
\diff\Pmap(\theta)
=
\begin{bmatrix}
\diff_{\theta^{1}}\Pmap(\theta) & \cdots & \diff_{\theta^{d}}\Pmap(\theta)
\end{bmatrix}.
\end{equation}
For a linear operator $\A$, the chain rule applied to \eqref{eq:residual_map}
yields the \emph{residual push-forward}
\begin{equation}\label{eq:residual_pushforward}
\diff\Res(\theta)
=
\diff\Fmap(u_\theta)\circ\diff\Pmap(\theta)
=\A\,\diff\Pmap(\theta)
:\R^{P}\to\Y .
\end{equation}
This is where the compatibility \ref{it:S3} between the architecture and the operator enters. It ensures that applying $\A$ to the columns of $\diff_{\theta^{s}}\Pmap(\theta)$, and hence to those of $\diff\Pmap(\theta)$, preserves their representation as finite sums of elementary tensors.
\begin{proposition}[Separability of the residual push-forward]
\label{prop:separable_pushforward}
Let Assumptions~\ref{ass:domain}, \ref{ass:operator}, and
\ref{ass:architecture} hold.  Then, for every
coordinate direction $s$ and every $j\le P_s$, it holds
\begin{equation}\label{eq:separable_pushforward}
\A\,\partial_{\theta^{s}_{j}}u_\theta
=
\sum_{\beta=1}^{R_{\A}}\ \sum_{a_1,\ldots,a_d}c_{a_1\ldots a_d}\;
\bigl(\A_1^{(\beta)}w_1^{a_1}\bigr)\otimes\cdots\otimes
\bigl(\A_s^{(\beta)}\partial_{\theta^{s}_{j}}w_s^{a_s}\bigr)\otimes\cdots\otimes
\bigl(\A_d^{(\beta)}w_d^{a_d}\bigr).
\end{equation}
\end{proposition}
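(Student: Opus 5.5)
The plan is to compute $\partial_{\theta^s_j}u_\theta$ from the multilinear structure of Assumption~\ref{ass:architecture}, and then apply the separable representation \eqref{eq:separable_operator} termwise. First we write $u_\theta=\Cmap_{\mathrm c}(w_1,\ldots,w_d)$ with $w_m=\Phi_m^{k_m}(\theta^m)$. The $m$-th argument depends only on the block $\theta^m$, so differentiation with respect to $\theta^s_j$ acts only on the $s$-th argument. Since $\Cmap_{\mathrm c}$ is a bounded multilinear map, as shown by the bound in the proof of Proposition~\ref{prop:architecture_universal_approximation}, it is Fr\'echet differentiable. Its derivative in the $s$-th slot is the linear map $h\mapsto\Cmap_{\mathrm c}(w_1,\ldots,h,\ldots,w_d)$. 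The chain rule with the differentiable map $\Phi_s^{k_s}$ then gives
\begin{align*}
\partial_{\theta^s_j}u_\theta
=
\sum_{a_1,\ldots,a_d}c_{a_1\ldots a_d}\,
w_1^{a_1}\otimes\cdots\otimes\partial_{\theta^s_j}w_s^{a_s}\otimes\cdots\otimes w_d^{a_d}.
\end{align*}
Here $\partial_{\theta^s_j}w_s^{a_s}\in H_s$ denotes the $a_s$-th component of $\partial_{\theta^s_j}\Phi_s^{k_s}(\theta^s)\in\W_s^{k_s}$. This component lies in $H_s$ because $\W_s^{k_s}$ is a product of copies of the Hilbert space $H_s$ and the derivative of a map into it stays in it. Consequently $\partial_{\theta^s_j}u_\theta$ is a finite sum of elementary tensors in $H$, with at most $\nu$ terms.

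Second, we apply $\A$, more precisely its restriction to $H$, which is legitimate by Assumption~\ref{ass:operator}. Linearity of $\A$ moves it inside the finite sum over $(a_1,\ldots,a_d)$. On each elementary tensor we use \eqref{eq:separable_operator} together with the action of tensor-product operators on elementary tensors, \eqref{eq: separable operator}. This gives $\sum_\beta(\A_1^{(\beta)}w_1^{a_1})\otimes\cdots\otimes(\A_s^{(\beta)}\partial_{\theta^s_j}w_s^{a_s})\otimes\cdots\otimes(\A_d^{(\beta)}w_d^{a_d})$. Interchanging the two finite sums yields exactly \eqref{eq:separable_pushforward}.

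The main obstacle is the rigorous justification of the differentiation step. The derivative must be taken in the norm of $H$, so that the quasi-matrix $\diff\Pmap(\theta)$ has columns in $H$ and $\A$ may be applied to them. This follows from the continuity estimate $\|\Cmap_{\mathrm c}(w_1,\ldots,w_d)\|_H\le\|c\|_{\mathrm F}\prod_m\|w_m\|_{\W_m^{k_m}}$. That estimate makes $\Cmap_{\mathrm c}$ a bounded multilinear map, hence smooth. The chain rule with the differentiable $\Phi_s^{k_s}$ then applies, and the telescoping identity from the proof of Proposition~\ref{prop:architecture_universal_approximation} already exhibits the linear part. The remaining steps are purely algebraic, because all sums are finite.
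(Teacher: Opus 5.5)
Your proposal is correct and follows essentially the same route as the paper: differentiate the multilinear contraction so that only the $s$-th slot survives (each $w_m$ depends only on $\theta^m$), then apply $\A|_H$ termwise via the separable representation on elementary tensors. Where the paper simply invokes the product rule, you justify it through Fr\'echet differentiability of the bounded multilinear map $\Cmap_{\mathrm c}$ into $H$, which is a worthwhile refinement but not a different approach.
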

\begin{proof}
For each elementary tensor in the representation of
$u_\theta$, the product rule gives
\[
\partial_{\theta_j^s}
\bigl(
w_1^{a_1}\otimes\cdots\otimes w_d^{a_d}
\bigr)
=
\sum_{m=1}^{d}
w_1^{a_1}\otimes\cdots\otimes
\bigl(\partial_{\theta_j^s}w_m^{a_m}\bigr)
\otimes\cdots\otimes w_d^{a_d}.
\]
Since $w_m^{a_m}$ depends only on $\theta^m$,
we have $\partial_{\theta_j^s}w_m^{a_m}=0$ for $m\neq s$. Hence only the
$s$-th term survives, and we obtain
\begin{equation}\label{eq:multilinear_leibniz}
\partial_{\theta_j^s}u_\theta
=
\sum_{a_1,\ldots,a_d}c_{a_1\ldots a_d}\,
w_1^{a_1}\otimes\cdots\otimes
\bigl(\partial_{\theta_j^s}w_s^{a_s}\bigr)
\otimes\cdots\otimes w_d^{a_d}.
\end{equation}
As $\partial_{\theta_j^s}u_\theta\in H$ and the
restriction of $\A$ to $H$ admits the separable representation
\eqref{eq:separable_operator}, applying $\A$ termwise to
\eqref{eq:multilinear_leibniz} yields
\eqref{eq:separable_pushforward}. Moreover, since $c$ has $\nu$ nonzero entries, the latter contains at most $\nu R_{\A}$ elementary tensors.
\end{proof}
\noindent
For a fixed coordinate $s$, after applying the linear PDE operator, the corresponding block of the residual push-forward is given by
\begin{align}\label{eq:block_residual_pushforward}
\A\,\diff_{\theta^s}\Pmap(\theta)
=
\begin{bmatrix}
\A\partial_{\theta_1^s}u_\theta & \cdots & \A\partial_{\theta_{P_s}^s}u_\theta
\end{bmatrix}.
\end{align}
Accordingly, the full residual push-forward has the block structure:
\begin{align}\label{eq:residual_pushforward_final}
\mathcal{A} \diff \Pmap(\theta) = \begin{bmatrix}
\A\,\diff_{\theta^1}\Pmap(\theta) & \cdots & \A\,\diff_{\theta^d}\Pmap(\theta)
\end{bmatrix}.
\end{align}
The Gauss--Newton Gramian inherits the separable structure of the residual push-forward. Indeed, it follows directly from its construction
\begin{equation}\label{eq:continuous_gramian}
\mathcal G(\theta):=\diff\Res(\theta)^{*}\diff\Res(\theta)\in\R^{P\times P},
\qquad
\mathcal G(\theta)_{ij}
=
\bigip{\A\partial_{\theta_i}u_\theta}{\A\partial_{\theta_j}u_\theta}_{L^{2}(\D)},
\end{equation}
which is \eqref{eq:quasimatrix_adjoint} applied to the quasi-matrix
$\diff\Res(\theta)=\A\,\diff\Pmap(\theta)$. We have the following.
\begin{theorem}
[Separability of the Gramian]\label{cor:separable_gramian}
Let Assumptions~\ref{ass:domain}, \ref{ass:operator}, and
\ref{ass:architecture} hold and let $i\le P_s$
and $j\le P_t$ index parameters in the coordinate blocks $s$ and $t$.  Then
\begin{equation}\label{eq:gramian_expansion}
\mathcal G(\theta)_{ij}
=
\sum_{\beta,\gamma=1}^{R_{\A}}\ \sum_{a,a'}c_ac_{a'}
\prod_{m=1}^{d}
\Bigip{\A_m^{(\beta)}\pi_m^{s,i}(a)}{\A_m^{(\gamma)}\pi_m^{t,j}(a')}_{L^{2}(\D_m)},
\ \ 
\pi_m^{s,i}(a):=
\begin{cases}
\partial_{\theta^{s}_{i}}w_s^{a_s}, & m=s,\\[2pt]
w_m^{a_m}, & m\neq s.
\end{cases}
\end{equation}
Thus $\mathcal G(\theta)_{ij}$ is a sum of at most $\nu^{2}R_{\A}^{2}$ terms,
each a product of $d$ univariate inner products.
\end{theorem}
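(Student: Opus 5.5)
The plan is to start from the definition \eqref{eq:continuous_gramian} of the Gramian entries as $L^2(\D)$ inner products of the columns $\A\partial_{\theta_i}u_\theta$ and $\A\partial_{\theta_j}u_\theta$. Proposition~\ref{prop:separable_pushforward} already writes each of these columns as a finite sum of elementary tensors. With the notation $\pi_m^{s,i}(a)$ from the statement, \eqref{eq:separable_pushforward} reads
\[
\A\,\partial_{\theta^s_i}u_\theta=\sum_{\beta=1}^{R_\A}\sum_{a}c_a\,\bigl(\A_1^{(\beta)}\pi_1^{s,i}(a)\bigr)\otimes\cdots\otimes\bigl(\A_d^{(\beta)}\pi_d^{s,i}(a)\bigr).
\]
Here $a=(a_1,\ldots,a_d)$ runs over the multi-indices with $c_a\neq0$. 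The analogous expansion holds for $\A\,\partial_{\theta^t_j}u_\theta$, with summation indices $\gamma$ and $a'$. The first step is simply to rewrite both columns in this form. This is a bookkeeping step: $\pi_m^{s,i}(a)$ equals $w_m^{a_m}$ except in slot $m=s$, where the parameter derivative sits.

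The second step is to insert both expansions into the inner product and expand by bilinearity. This is legitimate because each sum is finite, with at most $\nu R_\A$ terms, and every term lies in $L^2(\D)$. The reason is that each $\A_m^{(\beta)}:H_m\to L^2(\D_m)$ is bounded, and the arguments $w_m^{a_m}$ and $\partial_{\theta^s_i}w_s^{a_s}$ lie in $H_m$. For the derivative this follows from the differentiability of $\Phi_s^{k_s}$ into $\W_s^{k_s}$, whose components lie in $H_s$. Bilinearity gives a double sum over $(\beta,a)$ and $(\gamma,a')$, with scalar coefficients $c_ac_{a'}$, of inner products of two elementary tensors in $L^2(\D_1)\otimes\cdots\otimes L^2(\D_d)$.

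The third step applies the factorization \eqref{eq:crossnorm}, which is justified by Fubini under the identification \eqref{eq:hilbert_tensor}. Each inner product of elementary tensors then becomes $\prod_{m=1}^d\langle \A_m^{(\beta)}\pi_m^{s,i}(a),\A_m^{(\gamma)}\pi_m^{t,j}(a')\rangle_{L^2(\D_m)}$, which yields \eqref{eq:gramian_expansion}. For the count, $(\beta,a)$ takes at most $\nu R_\A$ values and so does $(\gamma,a')$. This gives at most $\nu^2R_\A^2$ terms, each a product of $d$ univariate inner products.

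The only point needing care, and the main obstacle such as it is, is the justification in the first step. One must check that the parameter derivative of $u_\theta$, taken in $H$, commutes with the multilinear contraction \eqref{eq:multilinear_contraction}, and that $\A$ may be applied termwise. Both are already covered by the proof of Proposition~\ref{prop:separable_pushforward}. That proof combines the Leibniz rule for the bounded multilinear map $\Cmap_{\mathrm c}$, which is bounded by the estimate in Proposition~\ref{prop:architecture_universal_approximation}, with the separable representation \eqref{eq:separable_operator} of $\A|_H$. So I would cite that proposition and not repeat the argument. The case $s=t$ needs no separate treatment: the formula holds uniformly because $\pi$ is defined slotwise.
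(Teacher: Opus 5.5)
Your proposal is correct and matches the paper's proof: insert the expansion \eqref{eq:separable_pushforward} for both columns into \eqref{eq:continuous_gramian}, expand bilinearly, factor each inner product of elementary tensors via \eqref{eq:crossnorm}, and count at most $\nu R_{\A}$ choices of $(\beta,a)$ on each side. Your extra remarks, that each term lies in $L^2(\D)$ and that parameter differentiation commutes with the bounded multilinear contraction, are sound justifications that the paper leaves implicit.
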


\begin{proof}
Inserting the representation \eqref{eq:separable_pushforward} for
$\A\partial_{\theta_i^s}u_\theta$ and
$\A\partial_{\theta_j^t}u_\theta$ into \eqref{eq:continuous_gramian} gives
\begin{align*}
\mathcal G(\theta)_{ij}
&=
\sum_{\beta,\gamma=1}^{R_{\A}}\sum_{a,a'}c_ac_{a'}
\Biggl\langle
\bigotimes_{m=1}^{d}\A_m^{(\beta)}\pi_m^{s,i}(a),
\bigotimes_{m=1}^{d}\A_m^{(\gamma)}\pi_m^{t,j}(a')
\Biggr\rangle_{L^{2}(\D)}.
\end{align*}
Applying the tensor-product identity \eqref{eq:crossnorm} yields \eqref{eq:gramian_expansion}. Since $a$ and $a'$ each range over at most
$\nu$ nonzero entries of the contraction pattern, the expansion \eqref{eq:gramian_expansion} contains at most $\nu^{2}R_{\A}^{2}$ terms.
\end{proof}
\noindent
Theorem~\ref{cor:separable_gramian} is stated without committing to a specific contraction pattern, which will be specified in later sections for particular architectural choices.

\subsection{Separable Gauss--Newton geometry: discretization}
We next discretize the Gauss--Newton pullback and the associated quantities. To begin with, we exploit the separability of the domain in Assumption~\ref{ass:domain}, and define a collocation grid compatible with its tensor-product structure. For each
coordinate direction $m$, we sample $n_m$ points
\begin{equation}\label{eq:factor_grids}
\X_m=\{z_{m,1},\dots,z_{m,n_m}\}\subset\D_m,
\end{equation}
and form the tensor-product collocation grid:
\begin{equation}\label{eq:tensor grid}
\X_\D
=
\X_1\times\cdots\times\X_d,
\qquad
N_\D=\prod_{m=1}^{d}n_m.
\end{equation}
If the boundary term in \eqref{eq:continuous_loss} is included, the lateral
boundary $\Lat$ must be sampled as well. According to Remark~\ref{rem:trace},
$\Lat$ consists of the $2\dsp$ product faces $\Lat_b^\sigma$,
$b=1,\ldots,\dsp$, $\sigma\in\{-,+\}$, which we discretize by
\begin{equation}\label{eq:boundary_face_grid}
\X_{\Lat_b^\sigma}
=
\X_1\times\cdots\times
\{z_b^\sigma\}
\times\cdots\times\X_d,
\qquad
N_{b,\sigma}
:=
|\X_{\Lat_b^\sigma}|
=
\prod_{m\neq b}n_m.
\end{equation}
Consequently, the boundary discretization contains
\begin{align*}
N_{\partial\Omega}
=
\sum_{b=1}^{\dsp}\sum_{\sigma\in\{-,+\}}N_{b,\sigma}
=
2\sum_{b=1}^{\dsp}\prod_{m\neq b}n_m
\end{align*}
face-grid points. A tensor-grid point in \eqref{eq:tensor grid} is indexed by a multi-index
\begin{align*}
i=(i_1,\ldots,i_d),
\qquad
i_m\in\{1,\ldots,n_m\},
\quad
m=1,\ldots,d.
\end{align*}
Whenever a grid point is addressed by a single linear index, as in
$z_1,\ldots,z_{N_\D}$ below, that index is understood to be the merged
multi-index $\overline{i_1\ldots i_d}$ of \eqref{eq:multiindex}

\definecolor{cgA}{RGB}{0,114,178}
\definecolor{cgB}{RGB}{213,94,0}
\definecolor{cgbnd}{RGB}{0,158,115}
\tikzset{
  cg domain/.style={draw=black!70, line width=0.6pt, fill=black!3},
  cg guide/.style={draw=black!25, line width=0.35pt, densely dotted},
  cg node/.style={circle, fill=black!85, inner sep=0pt, minimum size=3.6pt},
  cg ghost/.style={circle, draw=black!30, fill=white, inner sep=0pt, minimum size=3.2pt, line width=0.4pt},
  cg facA/.style={circle, fill=cgA, inner sep=0pt, minimum size=3.4pt},
  cg facB/.style={circle, fill=cgB, inner sep=0pt, minimum size=3.4pt},
  cg bnd/.style={rectangle, fill=cgbnd, inner sep=0pt, minimum size=4.4pt},
  cg lbl/.style={font=\small},
}
\newcommand{\cgXone}{0.165,0.882,1.762,2.332,2.797,3.588,4.263}
\newcommand{\cgXtwo}{0.178,0.984,1.361,2.061,2.706,3.275}
\newcommand{\cgW}{4.6}
\newcommand{\cgH}{3.6}

\begin{figure}[H]
\centering
\begin{tikzpicture}[x=1cm, y=1cm, baseline={(0,0)}]
  \fill[cg domain] (0,0) rectangle (\cgW,\cgH);
  \foreach \a in \cgXone {
    \draw[cg guide] (\a,-0.32) -- (\a,\cgH);
  }
  \foreach \b in \cgXtwo {
    \draw[cg guide] (-0.32,\b) -- (\cgW,\b);
  }
  \draw[cg domain, fill=none] (0,0) rectangle (\cgW,\cgH);

  \draw[black!60, line width=0.5pt] (0,-0.32) -- (\cgW,-0.32);
  \draw[black!60, line width=0.5pt] (-0.32,0) -- (-0.32,\cgH);
  \foreach \a in \cgXone {
    \node[cg facA] at (\a,-0.32) {};
  }
  \foreach \b in \cgXtwo {
    \node[cg facB] at (-0.32,\b) {};
  }

  \foreach \a in \cgXone {
    \foreach \b in \cgXtwo {
      \node[cg node] at (\a,\b) {};
    }
  }

  \node[cg lbl, text=cgA, anchor=north]
    at (0.5*\cgW,-0.40) {$\X_1\subset\D_1$};
  \node[cg lbl, text=cgB, anchor=south, rotate=90]
    at (-0.40,0.5*\cgH) {$\X_2\subset\D_2$};
  \node[cg lbl, anchor=south]
    at (0.5*\cgW,\cgH+0.12)
    {$\X_\D=\X_1\times\X_2,\quad N_\D=n_1n_2$};

  \draw[black, line width=0.6pt] (2.797,2.061) circle (3.6pt);
  \node[cg lbl, anchor=south west, fill=black!3, inner sep=1pt]
    at (2.83,2.10) {$z_{(i_1,i_2)}$};
\end{tikzpicture}\hspace{0.4cm}%
\begin{tikzpicture}[x=1cm, y=1cm, baseline={(0,0)}]
  \fill[cg domain] (0,0) rectangle (\cgW,\cgH);
  \foreach \b in \cgXtwo {
    \draw[cg guide] (-0.32,\b) -- (\cgW,\b);
  }

  \draw[black!55, line width=0.7pt, dashed] (0,0) -- (\cgW,0);
  \draw[black!55, line width=0.7pt, dashed] (0,\cgH) -- (\cgW,\cgH);

  \draw[cgbnd, line width=1.6pt] (0,0) -- (0,\cgH);
  \draw[cgbnd, line width=1.6pt] (\cgW,0) -- (\cgW,\cgH);

  \foreach \a in \cgXone {
    \foreach \b in \cgXtwo {
      \node[cg ghost] at (\a,\b) {};
    }
  }

  \draw[black!60, line width=0.5pt] (-0.32,0) -- (-0.32,\cgH);
  \foreach \b in \cgXtwo {
    \node[cg facB] at (-0.32,\b) {};
    \node[cg bnd] at (0,\b) {};
    \node[cg bnd] at (\cgW,\b) {};
  }

  \node[cg lbl, text=cgB, anchor=south, rotate=90]
    at (-0.40,0.5*\cgH) {$\X_2\subset\D_2$};
  \node[cg lbl, text=cgbnd!80!black, anchor=west,
        fill=black!3, inner sep=1pt]
    at (0.12,1.711) {$\Lat_1^-$};
  \node[cg lbl, text=cgbnd!80!black, anchor=east,
        fill=black!3, inner sep=1pt]
    at (\cgW-0.12,1.711) {$\Lat_1^+$};
  \node[cg lbl, anchor=north, black!70]
    at (0,-0.1) {$z_1^-$};
  \node[cg lbl, anchor=north, black!70]
    at (\cgW,-0.1) {$z_1^+$};
  \node[cg lbl, anchor=north, black!55, font=\footnotesize]
    at (0.5*\cgW,-0.1) {$\partial\Xi$: no faces};
  \node[cg lbl, anchor=south]
    at (0.5*\cgW,\cgH+0.12)
    {$\X_{\Lat_1^\pm}=\{z_1^\pm\}\times\X_2,\quad N_{\partial\Omega}=2n_2$};
\end{tikzpicture}
\caption{Collocation grids for $\dsp=\dpa=1$, $z=(x_1,\xi_1)$.
Left: tensor-product grid $\X_\D=\X_1\times\X_2$.
Right: lateral boundary grids
$\X_{\Lat_1^\pm}=\{z_1^\pm\}\times\X_2$.
No boundary conditions are imposed on $\partial\Xi$ (dashed).}
\label{fig:collocation_grids}
\end{figure}
\noindent
Observe that explicitly forming objects on $\X_\D$ (see Fig.~\ref{fig:collocation_grids}) quickly becomes infeasible in high dimensions. To mitigate this curse of dimensionality in the assembly, we exploit the separable structure.

Under the hard constraint \eqref{eq:hard_constraint}, the boundary component
of the residual map \eqref{eq:residual_map} vanishes. Hence, the
collocated\footnote{Collocation is represented by the evaluation
$\mathrm{ev}_{\X_\D}:C(\overline{\D})\to\R^{N_\D}$ with
$\bigl(\mathrm{ev}_{\X_\D}v\bigr)_i:=v(z_i)$, where $z_i\in\X_\D$.}
parameter-to-residual map is given by
\begin{equation}\label{eq:discrete_residual}
\rvec(\theta)
:=
\bigl[
(\A u_\theta-f)(z_1),\ldots,
(\A u_\theta-f)(z_{N_\D})
\bigr]^\top .
\end{equation}
We assume throughout that $f$ admits pointwise evaluation, as is standard in the PINN setting. By separability of the ansatz and the operator, $\A u_\theta$ preserves a finite separable representation; whenever $f$ is given in the same format, so does $\rvec(\theta)$. Discretizing the
continuous loss \eqref{eq:continuous_loss} on $\X_\D$ by equal-weight
collocation averages gives the discrete PINN loss
\begin{equation}\label{eq:discrete_loss}
\widehat{\mathcal L}(\theta)
=
\frac{1}{2N_\D}\|\rvec(\theta)\|_2^2.
\end{equation}
\begin{remark}
Throughout the discretization, we use equal-weight Monte-Carlo
collocation. In particular, for uniformly sampled points 
$\{z_i\}_{i=1}^N\subset\D$, we have
\begin{align*}
\int_\D h(z)\,\mathrm dz
\approx
\frac{|\D|}{N}\sum_{i=1}^N h(z_i).
\end{align*}
As is standard in PINN formulations, we omit the constant measure factor
$|\D|$ and use the empirical average instead. In fact, the latter can be
viewed as approximating the normalized integral
$|\D|^{-1}\int_\D h(z)\,\mathrm dz$. 
\end{remark}
Evaluating the residual push-forward \eqref{eq:residual_pushforward_final} column-wise on
$\X_\D$ turns its quasi-matrix representation into the block Jacobian:
\begin{equation}\label{eq:discrete_block_jacobian}
\mathrm J_{\rvec}
=
\begin{bmatrix}
\mathrm J_1 & \cdots & \mathrm J_d
\end{bmatrix},
\qquad
\mathrm J_s\in\R^{N_\D\times P_s},
\end{equation}
where each block is obtained by collocating the corresponding block push-forward in \eqref{eq:block_residual_pushforward}:
\begin{align}\label{eq:collocated_jacobian_block}
\mathrm J_s
=
\begin{bmatrix}
(\A\partial_{\theta_1^s}u_\theta)(z_1)
&
\cdots
&
(\A\partial_{\theta_{P_s}^s}u_\theta)(z_1)
\\
\vdots
&
&
\vdots
\\
(\A\partial_{\theta_1^s}u_\theta)(z_{N_\D})
&
\cdots
&
(\A\partial_{\theta_{P_s}^s}u_\theta)(z_{N_\D})
\end{bmatrix}.
\end{align}
Under collocation, the continuous loss gradient is approximated by replacing the
residual and residual push-forward with their discrete counterparts. This gives
the discrete gradient
\begin{equation}\label{eq:discrete_gradient}
\nabla\widehat{\mathcal L}(\theta)
=
\frac{1}{N_\D}
\mathrm J_{\rvec}^{\top}\rvec(\theta).
\end{equation}
The discrete Gauss--Newton Gramian is given by
\begin{equation}\label{eq:discrete_gramian}
\DGram(\theta)
=
\frac{1}{N_\D}\mathrm J_{\rvec}^{\top}\mathrm J_{\rvec}
=
\begin{bmatrix}
\DGram_{11} & \cdots & \DGram_{1d}\\
\vdots & \ddots & \vdots\\
\DGram_{d1} & \cdots & \DGram_{dd}
\end{bmatrix},
\qquad
\DGram_{st}
:=
\frac{1}{N_\D}\mathrm J_s^{\top}\mathrm J_t
\in\R^{P_s\times P_t},
\end{equation}
which is the equal-weight quadrature approximation of the continuous Gramian \eqref{eq:continuous_gramian}.


To expose the algebraic structure of the blocks
\eqref{eq:collocated_jacobian_block}, we evaluate
\eqref{eq:separable_pushforward} at a grid point $z_i\in\X_\D$.
Each elementary tensor then reduces to a product of univariate factor
evaluations:
\begin{align}
\bigl(\A\partial_{\theta_j^s}u_\theta\bigr)(z_i)
&=
\sum_{\beta=1}^{\RA}
\sum_{a_1,\ldots,a_d}
 c_{a_1\ldots a_d}
\left[
\prod_{m\ne s}
\bigl(\A_m^{(\beta)}w_m^{a_m}\bigr)(z_{m,i_m})
\right]
\bigl(\A_s^{(\beta)}\partial_{\theta_j^s}w_s^{a_s}\bigr)(z_{s,i_s}).
\label{eq:en-eval-entry}
\end{align}
The key observation here is that the expression in brackets  contains all dependence on the inactive grid indices
$i_{-s}$ and is independent of the parameter index $j$. Conversely, the dependence
on $j$ is confined to the local sensitivity coefficients associated with the active $s$-th coordinate. 

We now exploit the above separation to factorize
\eqref{eq:collocated_jacobian_block}. For a fixed coordinate $s\in\{1,\ldots,d\}$, we distinguish the
active grid and local component indices
\begin{align*}
i_s\in\{1,\ldots,n_s\},
\qquad
a_s\in\{1,\ldots,k_s\},
\end{align*}
from the corresponding inactive multi-indices
\begin{align*}
i_{-s}
:=
(i_1,\ldots,i_{s-1},i_{s+1},\ldots,i_d),
\qquad
a_{-s}
:=
(a_1,\ldots,a_{s-1},a_{s+1},\ldots,a_d).
\end{align*}
We use the shorthands: 
\begin{align*}
\sum_{a_{-s}}
:=
\sum_{a_1=1}^{k_1}\cdots
\sum_{a_{s-1}=1}^{k_{s-1}}
\sum_{a_{s+1}=1}^{k_{s+1}}\cdots
\sum_{a_d=1}^{k_d}, \qquad N_{-s}:=\prod_{m\ne s}n_m, \quad N_\D=n_sN_{-s}.
\end{align*}
We merge the active sample index and the active local index into the
single index
\begin{equation}\label{eq:compressed_row_index}
\overline{i_sa_s}\in\{1,\ldots,n_sk_s\},
\qquad
i_s=1,\ldots,n_s,
\quad
a_s=1,\ldots,k_s,
\end{equation}
ordered lexicographically according to \eqref{eq:multiindex}. Finally, let
$\Pi_s\in\R^{N_\D\times N_\D}$ denote the permutation matrix that maps
the active-first ordering $\overline{i_si_{-s}}$ to the natural tensor-grid
ordering $\overline{i_1\ldots i_d}$ of \eqref{eq:multiindex}: for any
$v\in\R^{N_\D}$, it holds
\begin{align}\label{permutation}
(\Pi_s v)\bigl(\overline{i_1\ldots i_d}\bigr)
=
v\bigl(\overline{i_si_{-s}}\bigr).
\end{align}

The following lemma provides the main technical ingredient for our approach. It shows that, after collocation of \eqref{eq:block_residual_pushforward}, each residual Jacobian block \eqref{eq:collocated_jacobian_block} admits a tensor representation. A suitable matricization, grouping its active modes against the inactive ones, recovers the standard matrix form appearing in the Gauss--Newton step and yields a useful separable factorization.
\begin{lemma}\label{lem:collocated_jacobian_factorization}
Let Assumptions~\ref{ass:domain}, \ref{ass:operator}, and
\ref{ass:architecture} hold, and let $\X_\D$ be the tensor-product
collocation grid \eqref{eq:tensor grid}.  The following statements hold for every $s=1,\ldots,d$.
\begin{enumerate}[label=\textup{(\roman*)}]

\item
For each $\beta=1,\ldots,R_\A$ and $a_s=1,\ldots,k_s$, define the
inactive contraction
\begin{equation}\label{eq:en-e}
e_{s,\beta,a_s}(i_{-s})
:=
\sum_{a_{-s}}
c_{a_1\ldots a_d}
\prod_{m\ne s}
\bigl(
\A_m^{(\beta)}w_m^{a_m}
\bigr)(z_{m,i_m}).
\end{equation}
Collecting these quantities over the inactive tensor grid defines the tensor
\begin{equation}\label{eq:inactive_contraction_tensor}
\mathcal E_s^{(\beta)}
\in
\R^{
n_1\times\cdots\times n_{s-1}
\times n_{s+1}\times\cdots\times n_d
\times k_s},
\qquad
\mathcal E_s^{(\beta)}(i_{-s},a_s)
:=
e_{s,\beta,a_s}(i_{-s}).
\end{equation}
Vectorizing its slices according to \eqref{eq:multiindex} and stacking them
column-wise gives the matrix
\begin{equation}\label{eq:inactive_contraction_matrix}
\mathsf E_s^{(\beta)}
\in\R^{N_{-s}\times k_s},
\qquad
\mathsf E_s^{(\beta)}(:,a_s)
:=
\vecop\bigl(
\mathcal E_s^{(\beta)}(:,\ldots,:,a_s)
\bigr),
\end{equation}
whose entries are
\begin{equation}\label{eq:inactive_contraction_vector}
\mathsf E_s^{(\beta)}\bigl(\overline{i_{-s}},a_s\bigr)
=
e_{s,\beta,a_s}(i_{-s}).
\end{equation}

\item
The corresponding local residual sensitivity matrix is given by
\begin{equation}
\Cmat_s^{(\beta)}
\in\R^{n_sk_s\times P_s},
\qquad
\Cmat_s^{(\beta)}\bigl(\overline{i_sa_s},\,j\bigr)
:=
\bigl(
\A_s^{(\beta)}
\partial_{\theta_j^s}w_s^{a_s}
\bigr)(z_{s,i_s}).
\label{eq:C_beta}
\end{equation}

\item
Define $\Psi_s^{(\beta)}\in\R^{N_\D\times n_sk_s}$ entrywise by
\begin{equation}\label{eq:Psi_beta}
\Psi_s^{(\beta)}
\bigl(i,\overline{j_sa_s}\bigr)
:=
\delta_{i_sj_s}\,
\mathsf E_s^{(\beta)}\bigl(\overline{i_{-s}},a_s\bigr),
\qquad
i=\overline{i_1\ldots i_d},
\end{equation}
and let
\begin{equation}
\Psi_s
:=
\begin{bmatrix}
\Psi_s^{(1)} & \cdots & \Psi_s^{(R_\A)}
\end{bmatrix}
\in\R^{N_{\mathcal D}\times N_s},
\ \ 
\Cmat_s
:=
\begin{bmatrix}
\Cmat_s^{(1)}\\
\vdots\\
\Cmat_s^{(R_\A)}
\end{bmatrix}
\in\R^{N_s\times P_s},
\ \ 
N_s:=R_\A n_sk_s.
\label{eq:Psi_block}
\end{equation}
Then the collocated Jacobian block admits the factorization
\begin{equation}\label{eq:Js_factorization}
\J_s=\Psi_s\Cmat_s.
\end{equation}
\end{enumerate}
\end{lemma}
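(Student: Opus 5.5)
The plan is an entrywise verification: compare the $(i,j)$ entry of $\Psi_s\Cmat_s$ with the collocated entry \eqref{eq:en-eval-entry} of $\J_s$. Parts (i) and (ii) are definitions, so the content lies in (iii). Two facts are needed first. The quantities in \eqref{eq:en-e} and \eqref{eq:C_beta} are well defined. The pointwise formula \eqref{eq:en-eval-entry} itself follows from Proposition~\ref{prop:separable_pushforward}: evaluate the elementary tensors at $z_i=(z_{1,i_1},\ldots,z_{d,i_d})$, where under the canonical identification each tensor becomes the product of its univariate factors. Following the paper's standing convention, I assume the relevant univariate functions admit pointwise evaluation, as is implicit in collocation.

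Next, fix $s$, a row $i=\overline{i_1\ldots i_d}$ and a column $j\le P_s$. By the block structure \eqref{eq:Psi_block},
\[
(\Psi_s\Cmat_s)(i,j)=\sum_{\beta=1}^{R_\A}\bigl(\Psi_s^{(\beta)}\Cmat_s^{(\beta)}\bigr)(i,j)
=\sum_{\beta=1}^{R_\A}\sum_{j_s=1}^{n_s}\sum_{a_s=1}^{k_s}\Psi_s^{(\beta)}\bigl(i,\overline{j_sa_s}\bigr)\,\Cmat_s^{(\beta)}\bigl(\overline{j_sa_s},j\bigr).
\]
Inserting \eqref{eq:Psi_beta}, the Kronecker delta $\delta_{i_sj_s}$ collapses the sum over $j_s$ to $j_s=i_s$. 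This leaves
\[
\sum_{\beta}\sum_{a_s}\mathsf E_s^{(\beta)}(\overline{i_{-s}},a_s)\,\bigl(\A_s^{(\beta)}\partial_{\theta_j^s}w_s^{a_s}\bigr)(z_{s,i_s}).
\]
By \eqref{eq:inactive_contraction_vector} and \eqref{eq:en-e}, this equals
\[
\sum_\beta\sum_{a_s}\sum_{a_{-s}}c_{a_1\ldots a_d}\Bigl[\prod_{m\ne s}\bigl(\A_m^{(\beta)}w_m^{a_m}\bigr)(z_{m,i_m})\Bigr]\bigl(\A_s^{(\beta)}\partial_{\theta_j^s}w_s^{a_s}\bigr)(z_{s,i_s}).
\]
Merging $\sum_{a_s}\sum_{a_{-s}}$ into $\sum_{a_1,\ldots,a_d}$ reproduces \eqref{eq:en-eval-entry}, which is $\J_s(i,j)$ by \eqref{eq:collocated_jacobian_block}. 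Since all sums are finite, interchanging them is harmless.

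The main obstacle is index bookkeeping rather than analysis. One must check that the merged row index $\overline{i_{-s}}$ used in $\mathsf E_s^{(\beta)}$ and the natural grid ordering $\overline{i_1\ldots i_d}$ used for rows of $\J_s$ refer to the same grid point. Definition \eqref{eq:Psi_beta} sidesteps this, because it indexes $\Psi_s^{(\beta)}$ directly by the natural ordering and extracts $i_s$ and $i_{-s}$ from $i$. So no permutation is needed in the identity itself.

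As a consistency check, I would remark that in active-first ordering $\Psi_s^{(\beta)}$ becomes $\Pi_s^{\top}\Psi_s^{(\beta)}=I_{n_s}\otimes\mathsf E_s^{(\beta)}$ by \eqref{eq:kron_convention}. Hence $\Psi_s^{(\beta)}=\Pi_s(I_{n_s}\otimes\mathsf E_s^{(\beta)})$, which gives a compact matrix form of the factorization \eqref{eq:Js_factorization}. I would verify the direction of $\Pi_s$ against \eqref{permutation} carefully, since that is the one place a transpose error could slip in. Finally, I would note the dimension count $N_s=R_\A n_sk_s$.
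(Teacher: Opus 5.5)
Your proposal is correct and is essentially the paper's argument: both substitute the collocated separable push-forward \eqref{eq:en-eval-entry}, identify the bracketed inactive sum with $\mathsf E_s^{(\beta)}$ and the active factor with $\Cmat_s^{(\beta)}$, and sum over $\beta$ by block multiplication. The only difference is bookkeeping---you collapse $\delta_{i_sj_s}$ directly in the natural grid ordering, whereas the paper first stacks the active slices into $(I_{n_s}\otimes\mathsf E_s^{(\beta)})\Cmat_s^{(\beta)}$ and then applies $\Pi_s$; that intermediate form is the same Kronecker representation you recover in your consistency check.
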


\begin{proof}
We prove the three assertions in three steps. Throughout the proof, matrices indexed by the
full tensor grid use the natural ordering $\overline{i_1\ldots i_d}$.

\emph{Step 1:}
Collocation of the quasi-matrix \eqref{eq:block_residual_pushforward} produces
the tensor-valued Jacobian block
\begin{equation*}
\mathcal J_s(i_1,\ldots,i_d,j)
:=
\bigl(
\A\partial_{\theta_j^s}u_\theta
\bigr)(z_i), \qquad \mathcal J_s
\in
\R^{n_1\times\cdots\times n_d\times P_s}.
\end{equation*}
Separating the active local-component index $a_s$ from the inactive
indices $a_{-s}$ in \eqref{eq:separable_pushforward} gives
\begin{align}
\mathcal J_s(i_1,\ldots,i_d,j)
&=
\sum_{\beta=1}^{R_\A}
\sum_{a_s=1}^{k_s}
\left[
\sum_{a_{-s}}
c_{a_1\ldots a_d}
\prod_{m\ne s}
\bigl(
\A_m^{(\beta)}w_m^{a_m}
\bigr)(z_{m,i_m})
\right]
\bigl(
\A_s^{(\beta)}
\partial_{\theta_j^s}w_s^{a_s}
\bigr)(z_{s,i_s}).
\label{eq:en-eval-column}
\end{align}
The quantity in square brackets is the inactive contraction \eqref{eq:en-e}. Collecting its values over the inactive grid and vectorizing in the ordering $\overline{i_{-s}}$ gives
\eqref{eq:inactive_contraction_matrix}--\eqref{eq:inactive_contraction_vector},
which proves \textup{(i)}.

\medskip

\emph{Step 2:}
Define the local residual sensitivity tensor
\begin{equation*}
\mathcal C_s^{(\beta)}
\in
\R^{n_s\times k_s\times P_s},
\qquad
\mathcal C_s^{(\beta)}(i_s,a_s,j)
:=
\bigl(
\A_s^{(\beta)}
\partial_{\theta_j^s}w_s^{a_s}
\bigr)(z_{s,i_s}).
\end{equation*}
Stacking the tensor slices
$\mathcal C_s^{(\beta)}(i_s,:,:)\in\R^{k_s\times P_s}$ column-wise gives
\begin{equation}\label{eq:C_beta_stacking}
\Cmat_s^{(\beta)}
=
\operatorname{col}\bigl(
\mathcal C_s^{(\beta)}(1,:,:),
\ldots,
\mathcal C_s^{(\beta)}(n_s,:,:)
\bigr)
\in\R^{n_sk_s\times P_s}.
\end{equation}
which agrees with the row ordering prescribed in
\eqref{eq:compressed_row_index}, and therefore with \eqref{eq:C_beta}, proving \textup{(ii)}.

\medskip

\emph{Step 3:}
For each operator term $\beta$, define its contribution to the tensor-valued Jacobian by
\begin{equation}\label{eq:tensor_jacobian_contraction}
\mathcal J_s^{(\beta)}
\in
\R^{n_1\times\cdots\times n_d\times P_s},
\qquad
\mathcal J_s^{(\beta)}(i_1,\ldots,i_d,j)
:=
\sum_{a_s=1}^{k_s}
\mathcal E_s^{(\beta)}(i_{-s},a_s)\,
\mathcal C_s^{(\beta)}(i_s,a_s,j).
\end{equation}
For fixed $i_s$, let $\mathcal J_s^{(\beta)}(i_s,:,:)$ denote the
$N_{-s}\times P_s$ matrix whose rows are indexed by the inactive
multi-index $i_{-s}$, ordered lexicographically as $\overline{i_{-s}}$.
Then it holds:
\begin{equation}\label{eq:active_slice_factorization}
\mathcal J_s^{(\beta)}(i_s,:,:)
=
\mathsf E_s^{(\beta)}\mathcal C_s^{(\beta)}(i_s,:,:),
\qquad i_s=1,\ldots,n_s.
\end{equation}
Stacking these matrices lists the rows in the active-first ordering
$\overline{i_si_{-s}}$. Using \eqref{eq:C_beta_stacking} and
\eqref{eq:active_slice_factorization}, ordinary block matrix multiplication
gives
\begin{align*}
\operatorname{col}\bigl(
\mathcal J_s^{(\beta)}(1,:,:),\ldots,
\mathcal J_s^{(\beta)}(n_s,:,:)
\bigr)
=
\bigl(I_{n_s}\otimes\mathsf E_s^{(\beta)}\bigr)
\Cmat_s^{(\beta)}.
\end{align*}
To recover the natural tensor-grid ordering
$\overline{i_1\ldots i_d}$, let $\Pi_s\in\R^{N_\D\times N_\D}$ denote
the permutation matrix \eqref{permutation}. The natural-order matricization of
\eqref{eq:tensor_jacobian_contraction} then satisfies
\begin{equation}\label{eq:active_slice_stacking}
\begin{aligned}
\J_s^{(\beta)}
=
\Pi_s\,\operatorname{col}\bigl(
\mathcal J_s^{(\beta)}(1,:,:),\ldots,
\mathcal J_s^{(\beta)}(n_s,:,:)
\bigr)
=
\Pi_s\bigl(I_{n_s}\otimes\mathsf E_s^{(\beta)}\bigr)
\Cmat_s^{(\beta)}
=
\Psi_s^{(\beta)}\Cmat_s^{(\beta)}.
\end{aligned}
\end{equation}
The final equality agrees with \eqref{eq:Psi_beta}, since
\begin{align*}
\bigl[\Pi_s\bigl(I_{n_s}\otimes\mathsf E_s^{(\beta)}\bigr)\bigr]
\bigl(i,\overline{j_sa_s}\bigr)
=
\delta_{i_sj_s}\,
\mathsf E_s^{(\beta)}
\bigl(\overline{i_{-s}},a_s\bigr)
=
\Psi_s^{(\beta)}
\bigl(i,\overline{j_sa_s}\bigr),
\qquad
i=\overline{i_1\ldots i_d}.
\end{align*}
Finally,  \eqref{eq:Psi_block} and \eqref{eq:en-eval-column} give
\begin{equation}
\begin{aligned}
\J_s
=\sum_{\beta=1}^{R_\A}\J_s^{(\beta)}
=\sum_{\beta=1}^{R_\A}\Psi_s^{(\beta)}\Cmat_s^{(\beta)}
=\begin{bmatrix}\Psi_s^{(1)}&\cdots&\Psi_s^{(R_\A)}\end{bmatrix}
\operatorname{col}\bigl(\Cmat_s^{(1)},\ldots,\Cmat_s^{(R_\A)}\bigr)
=\Psi_s\Cmat_s.
\end{aligned}
\end{equation}
All matrices in this identity use the natural tensor-grid ordering,
which proves \textup{(iii)}.
\end{proof}
The following remark will be useful in computations appearing in later sections.
\begin{remark}\label{rem:active_ordering}
The permutation $\Pi_s$ acts only on the full tensor-grid row index of
$\Psi_s^{(\beta)}$; its columns, indexed by $\overline{j_sa_s}$, remain
unchanged. In particular, we have
\begin{equation}\label{eq:Psi_kron}
\Psi_s^{(\beta)}
=
\Pi_s\bigl(I_{n_s}\otimes\mathsf E_s^{(\beta)}\bigr),
\qquad
\Pi_s^\top\Psi_s^{(\beta)}
=
I_{n_s}\otimes\mathsf E_s^{(\beta)}.
\end{equation}
The permutation $\Pi_s$ is independent
of the operator index. Since $\Pi_s^\top\Pi_s=I_{N_\D}$, we have 
\begin{align*}
\bigl(\Psi_s^{(\beta)}\bigr)^\top\Psi_s^{(\gamma)}
=
\bigl(I_{n_s}\otimes\mathsf E_s^{(\beta)}\bigr)^\top
\Pi_s^\top\Pi_s
\bigl(I_{n_s}\otimes\mathsf E_s^{(\gamma)}\bigr)
=
I_{n_s}\otimes
\bigl[
\bigl(\mathsf E_s^{(\beta)}\bigr)^\top
\mathsf E_s^{(\gamma)}
\bigr],
\end{align*}
For $s\neq t$, the active-first row orderings are generally different, and
the corresponding Kronecker representations involve
$\Pi_s^\top\Pi_t$. Mixed-coordinate products are therefore further evaluated directly from the representation \eqref{eq:Psi_beta}.
\end{remark}

Lemma~\ref{lem:collocated_jacobian_factorization} lies at the heart of the structural
decomposition underlying the subsequent factorizations of the residual Jacobian
\eqref{eq:discrete_block_jacobian}, the discrete Gauss--Newton Gramian
\eqref{eq:discrete_gramian}, and the discrete gradient. The following theorem provides the key structural result of this work.
\begin{theorem}[Algebraic factorization of the Gauss--Newton system]\label{prop:discrete_jacobian_factorization}
Let Assumptions~\ref{ass:domain}, \ref{ass:operator}, and
\ref{ass:architecture} hold, and let $\X_\D$ be the tensor-product
collocation grid \eqref{eq:tensor grid}. The following statements hold.
\begin{enumerate}
\item
The full residual Jacobian
$\mathrm{J}_{\rvec}=[\mathrm{J}_1\ \cdots\ \mathrm{J}_d]\in\R^{N_\D\times P}$ admits the
factorization
\begin{equation}\label{eq:full_jacobian_factorization}
\mathrm{J}_{\rvec}
=
\Psi\Cmat,
\qquad
\Psi:=
[\Psi_1\ \cdots\ \Psi_d]
\in\R^{N_\D\times\Neff},
\qquad
\Cmat:=
\operatorname{diag}(\Cmat_1,\ldots,\Cmat_d)
\in\R^{\Neff\times P},
\end{equation}
where, in view of \eqref{eq:Psi_block}, the effective compressed dimension is defined by
\begin{equation}\label{eq:effective_compressed_dimension}
\Neff
:=
\sum_{s=1}^{d}N_s
=
R_\A\sum_{s=1}^{d}n_s k_s.
\end{equation}

\item
Define the compressed Gramian $\Upsilon\in\R^{\Neff\times\Neff}$ by
\begin{equation}\label{eq:compressed_gramian_blocks}
\Upsilon
:=
[\Upsilon_{st}]_{s,t=1}^{d}
=
\frac{1}{N_\D}\Psi^\top\Psi,
\qquad
\Upsilon_{st}
:=
\frac{1}{N_\D}\Psi_s^\top\Psi_t
\in\R^{N_s\times N_t}.
\end{equation}
Each block $\Upsilon_{st}$ further consists of the operator-indexed blocks
\begin{equation}\label{eq:compressed_gramian_operator_blocks}
\Upsilon_{st}
=
\left[
\Upsilon_{st}^{(\beta,\gamma)}
\right]_{\beta,\gamma=1}^{R_\A},
\qquad
\Upsilon_{st}^{(\beta,\gamma)}
:=
\frac{1}{N_\D}
\bigl(\Psi_s^{(\beta)}\bigr)^\top
\Psi_t^{(\gamma)}
\in
\R^{n_sk_s\times n_tk_t},
\end{equation}
whose rows and columns are addressed by the compressed row indices
$\overline{i_sa_s}$ and $\overline{i_ta_t}$ of \eqref{eq:compressed_row_index}.
Then the discrete Gauss--Newton Gramian satisfies
\begin{equation}\label{eq:discrete_gramian_factorization}
\DGram
=
\frac{1}{N_\D}\mathrm J_{\rvec}^\top\mathrm J_{\rvec}
=
\Cmat^\top\Upsilon\Cmat,
\qquad
\DGram_{st}
=
\Cmat_s^\top\Upsilon_{st}\Cmat_t.
\end{equation}
In particular, we have
\begin{equation}\label{eq:discrete_gramian_rank}
\rank(\DGram)
=
\rank(\mathrm J_{\rvec})
\le
\min\{N_\D,\Neff,P\}.
\end{equation}
\item
Define the compressed residual
$\rho_{\mathrm c}\in\R^{\Neff}$ by
\begin{equation}\label{eq:compressed_residual}
\rho_{\mathrm c}
:=
\operatorname{col}\bigl(
\rho_{\mathrm c,1},
\ldots,
\rho_{\mathrm c,d}
\bigr)
=
\frac{1}{N_\D}\Psi^\top\rvec(\theta),
\qquad
\rho_{\mathrm c,s}
:=
\frac{1}{N_\D}\Psi_s^\top\rvec(\theta)
\in\R^{N_s}.
\end{equation}
Each block $\rho_{\mathrm c,s}$ further consists of the operator-indexed
blocks
\begin{equation}\label{eq:compressed_residual_operator_blocks}
\rho_{\mathrm c,s}
=
\operatorname{col}\bigl(
\rho_{\mathrm c,s}^{(1)},
\ldots,
\rho_{\mathrm c,s}^{(R_\A)}
\bigr),
\qquad
\rho_{\mathrm c,s}^{(\beta)}
:=
\frac{1}{N_\D}
\bigl(\Psi_s^{(\beta)}\bigr)^\top
\rvec(\theta)
\in\R^{n_sk_s}.
\end{equation}
Then the discrete loss gradient admits the factorization
\begin{equation}\label{eq:compressed_gradient}
\nabla\widehat{\mathcal L}(\theta)
=
\Cmat^\top\rho_{\mathrm c},
\qquad
\nabla_{\theta^s}\widehat{\mathcal L}(\theta)
=
\Cmat_s^\top\rho_{\mathrm c,s},
\qquad
s=1,\ldots,d.
\end{equation}

\end{enumerate}
\end{theorem}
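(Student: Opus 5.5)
The proof is a direct assembly of the blockwise factorization in Lemma~\ref{lem:collocated_jacobian_factorization} into global block-matrix identities. I will carry out three steps in the order of the statement.

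\emph{Jacobian factorization.} By Lemma~\ref{lem:collocated_jacobian_factorization}(iii), $\J_s=\Psi_s\Cmat_s$ for every $s$, with $\Psi_s\in\R^{N_\D\times N_s}$ and $\Cmat_s\in\R^{N_s\times P_s}$. Block matrix multiplication with $\Psi=[\Psi_1\ \cdots\ \Psi_d]$ and the block-diagonal $\Cmat=\operatorname{diag}(\Cmat_1,\ldots,\Cmat_d)$ gives
\[
\Psi\Cmat=[\Psi_1\Cmat_1\ \cdots\ \Psi_d\Cmat_d]=[\J_1\ \cdots\ \J_d]=\J_{\rvec}.
\]
The sizes add up to $\Neff=\sum_s N_s=R_\A\sum_s n_sk_s$, which is \eqref{eq:effective_compressed_dimension}. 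Column ordering needs no extra care here, because Lemma~\ref{lem:collocated_jacobian_factorization} already expresses every $\Psi_s$ in the natural grid ordering $\overline{i_1\ldots i_d}$. The row indices of all blocks therefore agree, and horizontal concatenation is legitimate.

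\emph{Gramian and rank.} I substitute into \eqref{eq:discrete_gramian}:
\[
\DGram=\tfrac1{N_\D}\Cmat^\top\Psi^\top\Psi\Cmat=\Cmat^\top\Upsilon\Cmat.
\]
Reading off the $(s,t)$ block with the block-diagonal $\Cmat$ gives $\DGram_{st}=\Cmat_s^\top\Upsilon_{st}\Cmat_t$. The operator-indexed splitting \eqref{eq:compressed_gramian_operator_blocks} holds by definition, since $\Psi_s=[\Psi_s^{(1)}\ \cdots\ \Psi_s^{(R_\A)}]$. For the rank identity I use $\rank(\J^\top\J)=\rank(\J)$, which holds because $\ker(\J^\top\J)=\ker\J$: if $\J^\top\J v=0$ then $\norm{\J v}_2^2=v^\top\J^\top\J v=0$. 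The bound then follows from the matrix dimensions together with $\rank(\Psi\Cmat)\le\min\{\rank\Psi,\rank\Cmat\}\le\Neff$.

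\emph{Gradient.} By \eqref{eq:discrete_gradient},
\[
\nabla\widehat{\mathcal L}=\tfrac1{N_\D}\Cmat^\top\Psi^\top\rvec=\Cmat^\top\rho_{\mathrm c}.
\]
Restricting to the $s$-th block row of $\Cmat^\top$, which is $[0\ \cdots\ \Cmat_s^\top\ \cdots\ 0]$, yields $\nabla_{\theta^s}\widehat{\mathcal L}=\Cmat_s^\top\rho_{\mathrm c,s}$. The operator-indexed splitting \eqref{eq:compressed_residual_operator_blocks} again holds by definition.

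None of these steps is a real obstacle; the substantive work was done in the lemma. The only point that needs care is confirming that all $\Psi_s$ share the same natural row ordering, as noted in the first step, so that $\Psi^\top\Psi$ correctly reproduces $\J_s^\top\J_t$ even for $s\neq t$, where the individual active-first orderings differ (cf.\ Remark~\ref{rem:active_ordering}).
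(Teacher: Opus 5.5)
Your proposal is correct and matches the paper's proof step by step: you concatenate the blockwise factorizations $\mathrm J_s=\Psi_s\Cmat_s$ from Lemma~\ref{lem:collocated_jacobian_factorization}, substitute into the Gramian and gradient formulas, and use the standard rank argument, with your kernel justification of $\rank(\mathrm J_{\rvec}^\top\mathrm J_{\rvec})=\rank(\mathrm J_{\rvec})$ supplying a detail the paper leaves implicit. The only slip is wording: the concatenation $[\Psi_1\ \cdots\ \Psi_d]$ requires the blocks to share the same row (grid) ordering, not column ordering, which is what your next sentence correctly says.
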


\begin{proof}
From the block factorization \eqref{eq:Js_factorization} we readily obtain
\begin{align*}
\mathrm J_{\rvec}
=
[\mathrm J_1\ \cdots\ \mathrm J_d]
=
[\Psi_1\Cmat_1\ \cdots\ \Psi_d\Cmat_d]
=
[\Psi_1\ \cdots\ \Psi_d]
\operatorname{diag}(\Cmat_1,\ldots,\Cmat_d)
=
\Psi\Cmat,
\end{align*}
which proves \eqref{eq:full_jacobian_factorization}. Furthermore, \eqref{eq:Psi_block} yields
\begin{equation*}
\Psi_s^\top\Psi_t
=
\left[
\bigl(\Psi_s^{(\beta)}\bigr)^\top
\Psi_t^{(\gamma)}
\right]_{\beta,\gamma=1}^{R_\A},
\end{equation*}
which gives the nested block structure
\eqref{eq:compressed_gramian_operator_blocks}. Substituting \eqref{eq:full_jacobian_factorization} into
\eqref{eq:discrete_gramian} gives
\begin{align*}
\DGram
&=
\frac{1}{N_\D}
(\Psi\Cmat)^\top(\Psi\Cmat)
=
\Cmat^\top
\left(
\frac{1}{N_\D}\Psi^\top\Psi
\right)
\Cmat
=
\Cmat^\top\Upsilon\Cmat,
\end{align*}
and the identity
$\DGram_{st}=\Cmat_s^\top\Upsilon_{st}\Cmat_t$ follows directly from
\eqref{eq:compressed_gramian_blocks}. Finally, we have
\begin{align*}
\rank(\DGram)
=
\rank(\mathrm J_{\rvec}^{\top}\mathrm J_{\rvec})
=
\rank(\mathrm J_{\rvec}),
\end{align*}
while
\begin{align*}
\rank(\mathrm J_{\rvec})
=
\rank(\Psi\Cmat)
\le
\min\{\rank(\Psi),\rank(\Cmat)\}
\le
\min\{N_\D,\Neff,P\},
\end{align*}
which proves the rank bound \eqref{eq:discrete_gramian_rank}.

Similarly, the decomposition of $\Psi$ gives
\begin{equation*}
\Psi^\top\rvec(\theta)
=
\operatorname{col}\bigl(
\Psi_1^\top\rvec(\theta),
\ldots,
\Psi_d^\top\rvec(\theta)
\bigr),
\quad
\Psi_s^\top\rvec(\theta)
=
\operatorname{col}\bigl(
(\Psi_s^{(1)})^\top\rvec(\theta),
\ldots,
(\Psi_s^{(R_\A)})^\top\rvec(\theta)
\bigr).
\end{equation*}
which proves the block representations
\eqref{eq:compressed_residual}--%
\eqref{eq:compressed_residual_operator_blocks}. Substituting
\eqref{eq:full_jacobian_factorization} into \eqref{eq:discrete_gradient}
yields
\begin{align*}
\nabla\widehat{\mathcal L}(\theta)
&=
\frac{1}{N_\D}
(\Psi\Cmat)^\top\rvec(\theta)=
\Cmat^\top
\left(
\frac{1}{N_\D}\Psi^\top\rvec(\theta)
\right)
=
\Cmat^\top\rho_{\mathrm c},
\end{align*}
which readily proves \eqref{eq:compressed_gradient}.
\end{proof}
\noindent
The factorization \eqref{eq:full_jacobian_factorization} separates global
residual directions from local sensitivity coefficients. Accordingly, we sometimes call
$\Psi_s$ a \emph{direction matrix} and $\Cmat_s$ a \emph{sensitivity matrix}.
Indeed, $\ran(\J_s)\subseteq\ran(\Psi_s)$, thus the columns of $\Psi_s$ span a
structured residual subspace containing all directions generated by variations
of the parameter block $\theta^s$, while $\Cmat_s$ maps these variations to
coefficients with respect to these spanning columns. The compressed Gramian
$\Upsilon$, whose dimension is governed by $\Neff$, then encodes the interactions
between these residual directions.

\begin{remark}\label{rem:nested structure}
Observe that the compressed Gramian \eqref{eq:compressed_gramian_blocks}
has a nested block structure: the outer blocks $\Upsilon_{st}$ correspond to
the parameter blocks $(s,t)$, while
\eqref{eq:compressed_gramian_operator_blocks} gives the inner decomposition
with respect to the operator terms $(\beta,\gamma)$. This nested decomposition
is the finest structure available at the level of the abstract contraction
pattern. However, for a specific architecture, the Gramian blocks may admit
further algebraic structure inherited from the inactive-coordinate contractions
\eqref{eq:inactive_contraction_matrix}, which take a more explicit form after
specifying \eqref{eq:multilinear_contraction}. We study this in the later
sections.
\end{remark}

Under the additional local differential-channel structure of
Assumption~\ref{ass:local_differential_channels}, the preceding factorization
admits a further exact compression.
\begin{theorem}[Exact local differential-channel compression]
\label{prop:local_channel_compression}
Let Assumptions~\ref{ass:domain}--\ref{ass:architecture} hold, and let
$\X_\D$ be the tensor-product collocation grid \eqref{eq:tensor grid}.
For every $s=1,\ldots,d$ and $\ell\in\Lambda_s$, define the local
sensitivity matrix
\begin{equation}\label{eq:local_channel_sensitivities}
\mathsf H_{s,\ell}
\in\R^{n_sk_s\times P_s},
\qquad
\mathsf H_{s,\ell}\bigl(\overline{i_sa_s},j\bigr)
:=
\bigl(
\mathcal D_{s,\ell}\partial_{\theta_j^s}w_s^{a_s}
\bigr)(z_{s,i_s}).
\end{equation}
Using the inactive contractions
\eqref{eq:inactive_contraction_matrix}, define
$\Psi_{s,\ell}^{\mathrm{ch}}\in\R^{N_\D\times n_sk_s}$ entrywise by
\begin{equation}\label{eq:local_channel_direction_columns}
\Psi_{s,\ell}^{\mathrm{ch}}
\bigl(i,\overline{j_sa_s}\bigr)
:=
\delta_{i_sj_s}
\sum_{\beta=1}^{R_\A}
b_{s,\ell}^{(\beta)}(z_{s,j_s})\,
\mathsf E_s^{(\beta)}\bigl(\overline{i_{-s}},a_s\bigr),
\qquad
i=\overline{i_1\ldots i_d},
\end{equation}
Writing $\Lambda_s=\{\ell_1,\ldots,\ell_{L_s}\}$ in a fixed order, let
\begin{equation}\label{eq:local_channel_stacking}
\Psi_s^{\mathrm{ch}}
:=
\begin{bmatrix}
\Psi_{s,\ell_1}^{\mathrm{ch}} & \cdots &
\Psi_{s,\ell_{L_s}}^{\mathrm{ch}}
\end{bmatrix}
\in\R^{N_\D\times N_s^{\mathrm{ch}}},
\quad
\Cmat_s^{\mathrm{ch}}
:=
\operatorname{col}\bigl(
\mathsf H_{s,\ell_1},\ldots,\mathsf H_{s,\ell_{L_s}}
\bigr)
\in\R^{N_s^{\mathrm{ch}}\times P_s},
\end{equation}
where $N_s^{\mathrm{ch}}:=L_s n_sk_s$. The following statements hold.

\begin{enumerate}[label=\textup{(\roman*)}]

\item
The collocated Jacobian blocks admit the factorizations
\begin{equation}\label{eq:local_channel_local_jacobian}
\J_s=\Psi_s^{\mathrm{ch}}\Cmat_s^{\mathrm{ch}},
\qquad s=1,\ldots,d.
\end{equation}
The full residual Jacobian
$\mathrm J_{\rvec}\in\R^{N_\D\times P}$
 satisfies $\mathrm J_{\rvec}=\Psi^{\mathrm{ch}}\Cmat^{\mathrm{ch}}$ with
\begin{equation*}
\Psi^{\mathrm{ch}}
:=
[\Psi_1^{\mathrm{ch}}\ \cdots\ \Psi_d^{\mathrm{ch}}]
\in\R^{N_\D\times\Neff^{\mathrm{ch}}},
\quad
\Cmat^{\mathrm{ch}}
:=
\operatorname{diag}
(\Cmat_1^{\mathrm{ch}},\ldots,\Cmat_d^{\mathrm{ch}})
\in\R^{\Neff^{\mathrm{ch}}\times P}
\end{equation*}
The effective compressed dimension is given by
\begin{equation}\label{eq:local_channel_dimension}
\Neff^{\mathrm{ch}}
:=
\sum_{s=1}^d N_s^{\mathrm{ch}}
=
\sum_{s=1}^d L_s n_sk_s.
\end{equation}

\item
Define the compressed Gramian
$\Upsilon^{\mathrm{ch}}
\in\R^{\Neff^{\mathrm{ch}}\times\Neff^{\mathrm{ch}}}$ by
\begin{equation*}
\Upsilon^{\mathrm{ch}}
:=
[\Upsilon_{st}^{\mathrm{ch}}]_{s,t=1}^d
=
\frac{1}{N_\D}
(\Psi^{\mathrm{ch}})^\top\Psi^{\mathrm{ch}},
\qquad
\Upsilon_{st}^{\mathrm{ch}}
:=
\frac{1}{N_\D}
(\Psi_s^{\mathrm{ch}})^\top\Psi_t^{\mathrm{ch}}
\in\R^{N_s^{\mathrm{ch}}\times N_t^{\mathrm{ch}}}.
\end{equation*}
Each block $\Upsilon_{st}^{\mathrm{ch}}$ further consists of the
channel-indexed blocks
\begin{equation*}
\Upsilon_{st}^{\mathrm{ch}}
=
\bigl[
\Upsilon_{st}^{\mathrm{ch},(\ell,\ell')}
\bigr]_{\ell\in\Lambda_s,\ell'\in\Lambda_t},
\qquad
\Upsilon_{st}^{\mathrm{ch},(\ell,\ell')}
:=
\frac{1}{N_\D}
(\Psi_{s,\ell}^{\mathrm{ch}})^\top
\Psi_{t,\ell'}^{\mathrm{ch}}
\in\R^{n_sk_s\times n_tk_t},
\end{equation*}
whose rows and columns are addressed by the compressed indices
$\overline{i_sa_s}$ and $\overline{i_ta_t}$ of \eqref{eq:compressed_row_index}.
Then the discrete Gauss--Newton Gramian satisfies
\begin{equation}\label{eq:local_channel_gramian_factorization}
\DGram
=
\frac{1}{N_\D}\mathrm J_{\rvec}^\top\mathrm J_{\rvec}
=
(\Cmat^{\mathrm{ch}})^\top
\Upsilon^{\mathrm{ch}}\Cmat^{\mathrm{ch}},
\qquad
\DGram_{st}
=
(\Cmat_s^{\mathrm{ch}})^\top
\Upsilon_{st}^{\mathrm{ch}}\Cmat_t^{\mathrm{ch}}.
\end{equation}
In particular,
\begin{equation*}
\rank(\DGram)
=
\rank(\mathrm J_{\rvec})
\le
\min\{N_\D,\Neff^{\mathrm{ch}},P\}.
\end{equation*}

\item
Define the compressed residual
$\rho_{\mathrm c}^{\mathrm{ch}}\in\R^{\Neff^{\mathrm{ch}}}$ by
\begin{equation*}
\rho_{\mathrm c}^{\mathrm{ch}}
:=
\operatorname{col}
(\rho_{\mathrm c,1}^{\mathrm{ch}},\ldots,
\rho_{\mathrm c,d}^{\mathrm{ch}})
=
\frac{1}{N_\D}
(\Psi^{\mathrm{ch}})^\top\rvec(\theta),
\qquad
\rho_{\mathrm c,s}^{\mathrm{ch}}
:=
\frac{1}{N_\D}
(\Psi_s^{\mathrm{ch}})^\top\rvec(\theta)
\in\R^{N_s^{\mathrm{ch}}}.
\end{equation*}
Each block $\rho_{\mathrm c,s}^{\mathrm{ch}}$ further consists of
the channel-indexed blocks
\begin{equation*}
\rho_{\mathrm c,s}^{\mathrm{ch}}
=
\operatorname{col}_{\ell\in\Lambda_s}
\bigl(\rho_{\mathrm c,s}^{\mathrm{ch},(\ell)}\bigr),
\qquad
\rho_{\mathrm c,s}^{\mathrm{ch},(\ell)}
:=
\frac{1}{N_\D}
(\Psi_{s,\ell}^{\mathrm{ch}})^\top\rvec(\theta)
\in\R^{n_sk_s}.
\end{equation*}
Then the discrete loss gradient admits the factorization
\begin{equation*}
\nabla\widehat{\mathcal L}(\theta)
=
(\Cmat^{\mathrm{ch}})^\top\rho_{\mathrm c}^{\mathrm{ch}},
\qquad
\nabla_{\theta^s}\widehat{\mathcal L}(\theta)
=
(\Cmat_s^{\mathrm{ch}})^\top\rho_{\mathrm c,s}^{\mathrm{ch}},
\qquad s=1,\ldots,d.
\end{equation*}

\end{enumerate}
\end{theorem}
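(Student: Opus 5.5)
The plan is to reduce everything to the single-block identity (i), after which (ii) and (iii) follow verbatim from the algebra in the proof of Theorem~\ref{prop:discrete_jacobian_factorization}. For (i) I start from Lemma~\ref{lem:collocated_jacobian_factorization}, which already gives $\J_s=\sum_{\beta}\Psi_s^{(\beta)}\Cmat_s^{(\beta)}$. Entrywise, at $i=\overline{i_1\ldots i_d}$ and parameter index $j$,
\[
\J_s(i,j)=\sum_{\beta=1}^{R_\A}\sum_{a_s=1}^{k_s}\mathsf E_s^{(\beta)}\bigl(\overline{i_{-s}},a_s\bigr)\,\bigl(\A_s^{(\beta)}\partial_{\theta_j^s}w_s^{a_s}\bigr)(z_{s,i_s}).
\]
This holds because the Kronecker delta in \eqref{eq:Psi_beta} collapses the sum over $j_s$.

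The key step is to insert Assumption~\ref{ass:local_differential_channels} into the active univariate factor. I use $\A_s^{(\beta)}v=\sum_{\ell\in\Lambda_s}b_{s,\ell}^{(\beta)}\mathcal D_s^\ell v$ with $v=\partial_{\theta_j^s}w_s^{a_s}\in H_s$. Pointwise evaluation at $z_{s,i_s}$ then gives
\[
\bigl(\A_s^{(\beta)}\partial_{\theta_j^s}w_s^{a_s}\bigr)(z_{s,i_s})=\sum_{\ell\in\Lambda_s} b_{s,\ell}^{(\beta)}(z_{s,i_s})\,\mathsf H_{s,\ell}(\overline{i_sa_s},j).
\]
Evaluation of the coefficient is legitimate since $b_{s,\ell}^{(\beta)}\in C(\overline{\D_s})$, and pointwise evaluation of the derivatives is used in the same sense as in the Lemma. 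This is essentially the only point needing care. The coefficient $b$ depends on $(\beta,\ell,i_s)$ but not on $a_s$ or $j$, which is exactly what allows it to be moved out of the sensitivity factor. Because the coefficient sits at the active grid point $z_{s,i_s}$, it can be absorbed into the direction matrix, where the delta $\delta_{i_sj_s}$ ties the column index $j_s$ to the row's $i_s$.

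Next I interchange the finite sums over $\beta$ and $\ell$ and regroup:
\[
\J_s(i,j)=\sum_{\ell\in\Lambda_s}\sum_{j_s,a_s}\delta_{i_sj_s}\Bigl[\sum_\beta b_{s,\ell}^{(\beta)}(z_{s,j_s})\mathsf E_s^{(\beta)}(\overline{i_{-s}},a_s)\Bigr]\mathsf H_{s,\ell}(\overline{j_sa_s},j).
\]
The bracket together with the delta is precisely $\Psi^{\mathrm{ch}}_{s,\ell}(i,\overline{j_sa_s})$ from \eqref{eq:local_channel_direction_columns}. Hence $\J_s=\sum_\ell\Psi_{s,\ell}^{\mathrm{ch}}\mathsf H_{s,\ell}$, which is the block product $\Psi_s^{\mathrm{ch}}\Cmat_s^{\mathrm{ch}}$ under the stacking \eqref{eq:local_channel_stacking}. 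Concatenating over $s$ with the block-diagonal $\Cmat^{\mathrm{ch}}$ gives $\mathrm J_{\rvec}=\Psi^{\mathrm{ch}}\Cmat^{\mathrm{ch}}$, and counting columns gives \eqref{eq:local_channel_dimension}.

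For (ii) I substitute $\mathrm J_{\rvec}=\Psi^{\mathrm{ch}}\Cmat^{\mathrm{ch}}$ into \eqref{eq:discrete_gramian}. The channel-indexed block structure is read off from the column partition of $\Psi_s^{\mathrm{ch}}$. The rank statement follows from $\rank(\mathrm J^\top\mathrm J)=\rank(\mathrm J)$ together with the size bounds on the factors, exactly as before. For (iii) I substitute into \eqref{eq:discrete_gradient} and split $(\Psi^{\mathrm{ch}})^\top\rvec$ by blocks. I expect no real obstacle; the only delicate part is the index bookkeeping in the regrouping step, namely keeping $b$ evaluated at $z_{s,j_s}$ consistent with the delta.
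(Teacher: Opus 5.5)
Your proposal is correct and follows essentially the same route as the paper's proof. The paper likewise inserts the channel representation into $\Cmat_s^{(\beta)}$ from Lemma~\ref{lem:collocated_jacobian_factorization} and regroups the finite sums so that the bracket, together with $\delta_{i_sj_s}$, is $\Psi^{\mathrm{ch}}_{s,\ell}$; it then obtains (ii) and (iii) by the same algebra as in Theorem~\ref{prop:discrete_jacobian_factorization}.
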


\begin{proof}
Applying \eqref{eq:local_differential_channel_representation} to
the local sensitivities in \eqref{eq:C_beta} gives
\begin{equation*}
\Cmat_s^{(\beta)}\bigl(\overline{i_sa_s},j\bigr)
=
\sum_{\ell\in\Lambda_s}
b_{s,\ell}^{(\beta)}(z_{s,i_s})\,
\mathsf H_{s,\ell}\bigl(\overline{i_sa_s},j\bigr).
\end{equation*}
Substituting this expression into the block factorization
\eqref{eq:Js_factorization} of
Lemma~\ref{lem:collocated_jacobian_factorization}, using
\eqref{eq:Psi_beta}, and regrouping the finite sums, we obtain
\begin{align*}
\J_s(i,j)
=
\sum_{\ell\in\Lambda_s}\sum_{a_s=1}^{k_s}
\left[
\sum_{\beta=1}^{R_\A}
b_{s,\ell}^{(\beta)}(z_{s,i_s})\,
\mathsf E_s^{(\beta)}\bigl(\overline{i_{-s}},a_s\bigr)
\right]
\mathsf H_{s,\ell}\bigl(\overline{i_sa_s},j\bigr).
\end{align*}
Observe that the expression in square brackets
equals $\Psi_{s,\ell}^{\mathrm{ch}}\bigl(i,\overline{i_sa_s}\bigr)$.
Since the remaining entries with $j_s\ne i_s$ vanish, ordinary matrix
multiplication and \eqref{eq:local_channel_stacking} yield
\begin{align*}
\J_s
=
\sum_{\ell\in\Lambda_s}
\Psi_{s,\ell}^{\mathrm{ch}}\mathsf H_{s,\ell}
=
\Psi_s^{\mathrm{ch}}\Cmat_s^{\mathrm{ch}},
\end{align*}
which proves \eqref{eq:local_channel_local_jacobian}.
Each $\Cmat_s^{\mathrm{ch}}$ stacks $L_s$ matrices with $n_sk_s$ rows,
thus $N_s^{\mathrm{ch}}=L_s n_sk_s$, and \eqref{eq:local_channel_dimension} follows after summation over $s$. The remainder of the proof follows by the same algebraic arguments as in
Theorem~\ref{prop:discrete_jacobian_factorization}, with $\Psi_s$ and
$\Cmat_s$ replaced by $\Psi_s^{\mathrm{ch}}$ and
$\Cmat_s^{\mathrm{ch}}$, respectively, and with the local blocks indexed
by $\ell\in\Lambda_s$ instead of $\beta=1,\ldots,R_\A$.
\end{proof}

Before we proceed to applications of our abstract framework, we note that for the concrete tensor formats considered in Section \ref{sec:architectures}, we use the generic
factorization of Theorem~\ref{prop:discrete_jacobian_factorization}, whose
operator-term indexing makes it more general. The channel
formulation of Theorem~\ref{prop:local_channel_compression}, however, uses the same
tensor contractions, replacing $\Cmat_s^{(\beta)}$ by $\mathsf H_{s,\ell}$
from \eqref{eq:local_channel_sensitivities} and regrouping
$\Psi_s^{(\beta)}$ into $\Psi_{s,\ell}^{\mathrm{ch}}$ through the coefficients
$b_{s,\ell}^{(\beta)}$ as in \eqref{eq:local_channel_direction_columns}.
The resulting blocks are then indexed by channels rather than operator terms.
Both the subsequent factorizations and the compressed Gauss--Newton solver
in Section~\ref{sec:woodbury} preserve their form under the replacements
$(\Cmat,\Upsilon,\rho_{\mathrm c},\Neff)
\mapsto (\Cmat^{\mathrm{ch}},\Upsilon^{\mathrm{ch}},
\rho_{\mathrm c}^{\mathrm{ch}},\Neff^{\mathrm{ch}})$, with the corresponding change in effective compressed dimension. In Section \ref{sec:numerics}, we use the channel factorization of
Theorem~\ref{prop:local_channel_compression} due to its greater efficiency.

\section{The Gauss--Newton solver in compressed space}
\label{sec:woodbury}

In this section, we use the factorization in
Theorem~\ref{prop:discrete_jacobian_factorization} to formulate the
Gauss--Newton step directly in the effective residual space. In particular,
\eqref{eq:discrete_gramian_factorization} and
\eqref{eq:compressed_gradient} give
\begin{align*}
\DGram=\Cmat^\top\Upsilon\Cmat,
\qquad
\nabla\widehat{\mathcal L}(\theta)
=
\Cmat^\top\rho_{\mathrm c}.
\end{align*}
Thus, both the discrete Gauss--Newton matrix and the gradient factor through the same $\Neff$-dimensional space. We are
particularly interested in the regime
\begin{align*}
\Neff \ll P,
\end{align*}
where the effective compressed space is substantially smaller than the parameter space.

To exploit this structure in the Gauss--Newton linear solve, define
\begin{equation}\label{eq:wb_pushthrough}
\Kmat
:=
\Cmat\Cmat^\top
=
\diag(\Kmat_1,\ldots,\Kmat_d),
\qquad
\Kmat_s:=\Cmat_s\Cmat_s^\top.
\end{equation}
Since $\Cmat$ is block diagonal,
\begin{equation}\label{eq:wb_block_product}
(\Upsilon\Kmat)_{st}
=
\Upsilon_{st}\Kmat_t,
\qquad
s,t=1,\ldots,d.
\end{equation}
The next result gives the exact compressed formulation of
the damped Gauss--Newton step.
\begin{theorem}[Compressed Gauss--Newton step] \label{prop:compressed_gn_step} Let $\mu>0$. Then the matrix $\Upsilon\Kmat+\mu I_{\Neff}$ is nonsingular, and the unique solution of the damped Gauss--Newton system \begin{equation}\label{eq:algebraic_gn_compressed_section} \bigl(\DGram+\mu I_P\bigr)\delta\theta = \nabla\widehat{\mathcal L}(\theta) \end{equation} is obtained from the compressed system \begin{equation}\label{eq:wb_step} 
\bigl(\Upsilon\Kmat+\mu I_{\Neff}\bigr)y = \rho_{\mathrm c}, \qquad \delta\theta = \Cmat^{\top}y . 
\end{equation} 
\end{theorem}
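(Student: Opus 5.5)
The plan is to combine the factorizations of Theorem~\ref{prop:discrete_jacobian_factorization}, namely $\DGram=\Cmat^\top\Upsilon\Cmat$ and $\nabla\widehat{\mathcal L}(\theta)=\Cmat^\top\rho_{\mathrm c}$, with a push-through identity. The argument has three steps: nonsingularity of $\Upsilon\Kmat+\mu I_{\Neff}$, verification that $\Cmat^\top y$ solves the damped system, and uniqueness.

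\emph{Nonsingularity.} Since $\DGram=\frac1{N_\D}\mathrm J_{\rvec}^\top\mathrm J_{\rvec}$ is symmetric positive semidefinite, $\DGram+\mu I_P$ is positive definite for $\mu>0$, hence invertible. This gives uniqueness of $\delta\theta$. For the compressed matrix, I use that $\Upsilon=\frac1{N_\D}\Psi^\top\Psi$ and $\Kmat=\Cmat\Cmat^\top$ are both symmetric positive semidefinite. The product $\Upsilon\Kmat$ has the same nonzero eigenvalues as $\Kmat^{1/2}\Upsilon\Kmat^{1/2}\succeq 0$, so its spectrum is real and nonnegative. Therefore $-\mu$ is not an eigenvalue, and $\Upsilon\Kmat+\mu I_{\Neff}$ is nonsingular.

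A more elementary alternative: suppose $(\Upsilon\Kmat+\mu I)y=0$. Multiply on the left by $y^\top\Kmat$ to obtain
\[
(\Kmat y)^\top\Upsilon(\Kmat y)+\mu\, y^\top\Kmat y=0 .
\]
Both terms are nonnegative, so $\Kmat y=0$. Then $\mu y=-\Upsilon\Kmat y=0$, which gives $y=0$.

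\emph{Push-through.} Let $y$ be the unique solution of \eqref{eq:wb_step} and set $\delta\theta:=\Cmat^\top y$. Then
\[
(\DGram+\mu I_P)\Cmat^\top y=\Cmat^\top\Upsilon\Cmat\Cmat^\top y+\mu\Cmat^\top y=\Cmat^\top(\Upsilon\Kmat+\mu I_{\Neff})y=\Cmat^\top\rho_{\mathrm c}=\nabla\widehat{\mathcal L}(\theta).
\]
The last equality uses \eqref{eq:compressed_gradient}. So $\Cmat^\top y$ solves the damped Gauss--Newton system, and by uniqueness it is the solution.

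Equivalently, one can state this as the identity $(\Cmat^\top\Upsilon\Cmat+\mu I)^{-1}\Cmat^\top=\Cmat^\top(\Upsilon\Cmat\Cmat^\top+\mu I)^{-1}$, which follows from $\Cmat^\top(\Upsilon\Kmat+\mu I)=(\DGram+\mu I)\Cmat^\top$.

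The only step that is not mere bookkeeping is the nonsingularity of the nonsymmetric matrix $\Upsilon\Kmat+\mu I$. I will handle it with the quadratic-form argument above, which needs no similarity transform and stays valid when $\Kmat$ is singular, for example when $\Cmat$ has rank deficiency.
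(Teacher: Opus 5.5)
Your proposal is correct. The verification step coincides with the paper's: both rest on $\Cmat^\top(\Upsilon\Kmat+\mu I_{\Neff})=(\DGram+\mu I_P)\Cmat^\top$, on the factorization $\nabla\widehat{\mathcal L}(\theta)=\Cmat^\top\rho_{\mathrm c}$, and on uniqueness from positive definiteness of $\DGram+\mu I_P$.

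The genuine difference is how you prove that $\Upsilon\Kmat+\mu I_{\Neff}$ is nonsingular. The paper applies Sylvester's identity $\det(I+AB)=\det(I+BA)$ with $A=\mu^{-1}\Upsilon\Cmat$ and $B=\Cmat^\top$. This gives $\det(\Upsilon\Kmat+\mu I_{\Neff})=\mu^{\Neff-P}\det(\DGram+\mu I_P)>0$, so invertibility is transferred from the parameter-space matrix. You instead show directly in the compressed space that the kernel is trivial, via the quadratic form $(\Kmat y)^\top\Upsilon(\Kmat y)+\mu\,y^\top\Kmat y$. Your argument needs only $\Kmat\succeq0$ and nonnegativity of $\Upsilon$ on $\ran\Cmat$, which is equivalent to $\DGram\succeq0$, so the two routes use effectively the same hypotheses.

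What each buys: your route is more elementary and determinant-free. The paper's yields an exact identity which, since it holds for every $\mu>0$, also shows that $\Upsilon\Kmat$ and $\DGram$ share their nonzero eigenvalues. That ties the nonsymmetric compressed matrix spectrally to the Gauss--Newton matrix.

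The robustness you advertise (no inversion of a possibly singular $\Kmat$) is shared by the Sylvester argument, so it does not distinguish your route. What your ordering does improve is the logic: you establish nonsingularity before using $(\Upsilon\Kmat+\mu I_{\Neff})^{-1}$, whereas the paper writes the inverse push-through identity first and checks well-posedness afterwards.
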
 
\begin{proof} Using \eqref{eq:wb_pushthrough}, we apply the push-through identity
\begin{equation}\label{eq:wb_pushthrough_identity} \bigl(\Cmat^{\top}\Upsilon\Cmat+\mu I_P\bigr)\Cmat^{\top} = \Cmat^{\top}\bigl(\Upsilon\Kmat+\mu I_{\Neff}\bigr). 
\end{equation}
The respective inverse push-through identity then yields
\begin{align*}
\delta\theta
&=
\bigl(\DGram+\mu I_P\bigr)^{-1}
\nabla\widehat{\mathcal L}(\theta)
=
\bigl(\Cmat^{\top}\Upsilon\Cmat+\mu I_P\bigr)^{-1}
\Cmat^{\top}\rho_{\mathrm c}
=
\Cmat^{\top}
\bigl(\Upsilon\Kmat+\mu I_{\Neff}\bigr)^{-1}
\rho_{\mathrm c}.
\end{align*}
Hence, defining $y:= \bigl(\Upsilon\Kmat+\mu I_{\Neff}\bigr)^{-1} \rho_{\mathrm c}$, we obtain the compressed system \eqref{eq:wb_step}.

It remains to verify that \eqref{eq:wb_step} is actually well-posed. Using \eqref{eq:wb_pushthrough} and 
the Sylvester's determinant identity\footnote{$\det(I+AB)=\det(I+BA)$ for conformal matrices $A$ and $B$.}, we obtain
\begin{align*}
\det\bigl(\Upsilon\Kmat+\mu I_{\Neff}\bigr)
&=
\mu^{\Neff}
\det\bigl(
I_{\Neff}+\mu^{-1}\Upsilon\Cmat\Cmat^\top
\bigr)
\\
&=
\mu^{\Neff}
\det\bigl(
I_P+\mu^{-1}\Cmat^\top\Upsilon\Cmat
\bigr)
\\
&=
\mu^{\Neff}
\det\bigl(
I_P+\mu^{-1}\DGram
\bigr)
>0.
\end{align*}
Hence $\Upsilon\Kmat+\mu I_{\Neff}$ is non-singular. It then follows from \eqref{eq:wb_pushthrough_identity} that
\begin{align*}
\bigl(\DGram+\mu I_P\bigr)\Cmat^\top y=
\Cmat^\top
\bigl(\Upsilon\Kmat+\mu I_{\Neff}\bigr)y
=
\Cmat^\top\rho_{\mathrm c}
=
\nabla\widehat{\mathcal L}(\theta).
\end{align*}
Since $\DGram+\mu I_P$ is positive definite, \eqref{eq:algebraic_gn_compressed_section} admits a unique solution
$\delta\theta=\Cmat^\top y$.
\end{proof} 
Although $\Upsilon$ and $\Kmat$ are symmetric positive semidefinite, their product is generally nonsymmetric, which renders LU factorization a natural first choice for solving  \eqref{eq:wb_step}. We summarize the computational procedure in Algorithm~\ref{alg:woodbury}.
\begin{algorithm}[H]
\caption{Compressed Gauss--Newton step}
\label{alg:woodbury}
\begin{algorithmic}[1]
\setlength{\itemsep}{0.35em}

\REQUIRE Current parameters $\theta$; architecture-dependent collocated
quantities and residual representation; damping parameter $\mu>0$

\STATE Assemble $\Cmat$, $\Upsilon$, and $\rho_{\mathrm c}$

\STATE $\Kmat_s\leftarrow \Cmat_s\Cmat_s^\top$,
\quad $s=1,\ldots,d$

\STATE $\Kmat\leftarrow\diag(\Kmat_1,\ldots,\Kmat_d)$

\STATE Form $A\leftarrow\Upsilon\Kmat+\mu I_{\Neff}$ blockwise with
$A_{st}=\Upsilon_{st}\Kmat_t+\mu\delta_{st}I_{N_s}$

\STATE $y\leftarrow\operatorname{solve}_{\mathrm{LU}}
\bigl(A,\rho_{\mathrm c}\bigr)$

\STATE Partition $y=(y_1,\ldots,y_d)$ according to
$\Neff=\sum_{s=1}^d N_s$

\STATE $\delta\theta^s\leftarrow\Cmat_s^\top y_s$,
\quad $s=1,\ldots,d$

\STATE Choose the step size $\eta_k>0$

\STATE $\theta\leftarrow\theta-\eta_k\,\delta\theta$

\ENSURE Updated parameters $\theta$

\end{algorithmic}
\end{algorithm}
We note that the construction of $\Cmat$, $\Upsilon$, and $\rho_{\mathrm c}$ is the only architecture-dependent part of the procedure, since the quantities used to build these objects are determined by the contraction pattern. Once these quantities are available, forming the local blocks $\Kmat_s$ costs $\mathcal O\bigl(\sum_s N_s^2P_s\bigr)$, while the block product in \eqref{eq:wb_block_product} costs $\mathcal O\bigl(\sum_s\Neff N_s^2\bigr)$. The dense linear algebra cost is the LU factorization, namely $\mathcal O(\Neff^3)$ operations and $\mathcal O(\Neff^2)$ storage, compared with $\mathcal O(P^3)$ operations and $\mathcal O(P^2)$ storage for a direct factorization of the discrete Gauss--Newton matrix in parameter space. The compressed route is therefore advantageous when $\Neff<P$. Note that the reduction to the compressed space yields an exact step, and no object of size $N_\D$ needs to be formed.

\section{Separable neural network architectures}
\label{sec:architectures}

The compressed Gauss--Newton solver is independent of the particular neural architecture once $\Cmat$, $\Upsilon$, and $\rho_{\mathrm c}$ have been assembled. The architecture enters through the contraction pattern in Assumption~\ref{ass:architecture}, which determines the structure and number of compressed residual directions, and hence the effective dimension $\Neff$, as well as the contractions required for their assembly. We now specialize the framework to two contraction patterns of interest: the canonical polyadic (CP) and tensor-train (TT) representations, which lead to different effective compressed dimensions and assembly procedures.

\subsection{Separable PINNs: canonical polyadic representation} 
\label{sec:separable_pinns}
We first consider the canonical polyadic (CP) representation, which corresponds to the canonical decomposition used as a reference point for the tensor-train format in \cite{oseledets2011tensor}.

For $R\in\mathbb N$, we first define the CP class of functions
\begin{equation}\label{eq:cp_tensor_class}
\mathfrak T_R^{\mathrm{CP}}
:=
\left\{
\sum_{\alpha=1}^R
w_1^\alpha\otimes\cdots\otimes w_d^\alpha:
(w_m^1,\ldots,w_m^R)\in\W_m^R,\ m=1,\ldots,d
\right\}
\subset H.
\end{equation}
This class corresponds to Assumption~\ref{ass:architecture} with
$k_m=R$ and the diagonal contraction pattern
\begin{equation}\label{eq:cp_pattern}
c_{a_1\ldots a_d}
=
\prod_{m=1}^{d-1}\delta_{a_m a_{m+1}}.
\end{equation}
Indeed, fixing $a_s=\alpha$ in \eqref{eq:cp_pattern} forces every other term in
the sum over $a_{-s}$ to vanish except for the single choice
$a_1=\cdots=a_{s-1}=a_{s+1}=\cdots=a_d=\alpha$. 

For the univariate parametrizations in Assumption~\ref{ass:architecture}, we set
\begin{equation*}
\Phi_m^R(\theta^m)
=
\bigl(
u_m^1(\,\cdot\,;\theta^m),\ldots,
u_m^R(\,\cdot\,;\theta^m)
\bigr)
\in\W_m^R.
\end{equation*}
Hence, the corresponding neural model class is given by
\begin{equation*}
\M
=
\Cmap_{\mathrm c}\bigl(
\ran\Phi_1^R\times\cdots\times\ran\Phi_d^R
\bigr)
\subset
\mathfrak T_R^{\mathrm{CP}},
\end{equation*}
and every $u_\theta\in\M$ takes the form
\begin{equation}\label{eq:rank_R_ansatz}
u_\theta(z)
=
\sum_{\alpha=1}^R
\prod_{m=1}^d u_m^\alpha(z_m;\theta^m).
\end{equation}
Collocating $u_\theta$ on the tensor grid \eqref{eq:tensor grid} yields the output tensor
\begin{equation}\label{eq:cp_output_tensor}
\mathcal U_\theta
\in
\R^{n_1\times\cdots\times n_d},
\qquad
\mathcal U_\theta(i_1,\ldots,i_d)
:=
u_\theta(z_{1,i_1},\ldots,z_{d,i_d}).
\end{equation}
For every coordinate, collect the sampled univariate factors in the factor matrix
\begin{equation}\label{eq:cp_output_matrix}
\mathsf U_m
\in\R^{n_m\times R},
\qquad
\mathsf U_m(i_m,\alpha)
:=
u_m^\alpha(z_{m,i_m}),
\qquad
m=1,\ldots,d,
\end{equation}
whose $\alpha$-th column $\mathsf U_m(:,\alpha)$ is the $\alpha$-th univariate
factor sampled on the $m$-th factor grid \eqref{eq:factor_grids}. Then
\eqref{eq:rank_R_ansatz} gives the exact canonical representation
\begin{equation}\label{eq:cp_collocated_representation}
\mathcal U_\theta(i_1,\ldots,i_d)
=
\sum_{\alpha=1}^{R}
\prod_{m=1}^{d}
\mathsf U_m(i_m,\alpha),
\qquad
\mathcal U_\theta
=
\sum_{\alpha=1}^{R}
\mathsf U_1(:,\alpha)\otimes\cdots\otimes\mathsf U_d(:,\alpha).
\end{equation}
Therefore, the SPINN output is stored through the factor matrices $\mathsf U_1,\ldots,\mathsf U_d$, requiring only $\mathcal{O}(R\sum_{m=1}^{d}n_m)$ storage.

For each coordinate $m$ and operator term $\beta$, define the collocated
operated\footnote{We use this term throughout the entire work since the factor is obtained by applying $\A_m^{(\beta)}$.} factor matrix
\begin{equation}\label{eq:cp_F_note}
\mathsf F_m^{(\beta)}
\in\R^{n_m\times R},
\qquad
\mathsf F_m^{(\beta)}(i_m,\alpha)
:=
\bigl(\A_m^{(\beta)}u_m^\alpha\bigr)(z_{m,i_m}),
\end{equation}
For the active coordinate $s$, \eqref{eq:C_beta} becomes
\begin{equation}\label{eq:cp_Cmat}
\Cmat_s^{(\beta)}
\bigl(\overline{i_s\alpha},j\bigr)
=
\bigl(
\A_s^{(\beta)}
\partial_{\theta_j^s}u_s^\alpha
\bigr)(z_{s,i_s}),
\qquad
\Cmat_s^{(\beta)}
\in\R^{n_sR\times P_s}.
\end{equation}
Since $k_s=R$, the effective compressed dimension in the CP case is then given by
\begin{equation}\label{eq:cp_effective_dimension}
N_s=R_\A n_sR,
\qquad
\Neff=R_\A R\sum_{s=1}^{d}n_s.
\end{equation}
Observe that \eqref{eq:cp_Cmat} involve only univariate
network factors and can therefore be computed efficiently by automatic
differentiation in parallel across coordinate directions.

Collocating \eqref{eq:en-e} over the inactive tensor grid gives the $\alpha$-th slice of
\eqref{eq:inactive_contraction_tensor}:
\begin{align*}
\mathcal E_s^{(\beta)}(:,\ldots,:,\alpha)
=
\bigotimes_{m\neq s}\mathsf F_m^{(\beta)}(:,\alpha)
\in
\R^{n_1\times\cdots\times n_{s-1}\times n_{s+1}\times\cdots\times n_d},
\end{align*}
which is a rank-one tensor over the inactive grid modes with
$N_{-s}$ entries. Vectorizing these tensor slices according
to \eqref{eq:inactive_contraction_vector} yields the Kronecker products of the operated factor vectors. Stacking these vectorizations column-wise over
$\alpha=1,\ldots,R$ gives the Khatri--Rao product:
\begin{equation}\label{eq:cp_E_note}
\mathsf E_s^{(\beta)}
=
\mathop{\bigodot}_{m\neq s}\mathsf F_m^{(\beta)}
\in\R^{N_{-s}\times R},
\qquad
\mathsf E_s^{(\beta)}\bigl(\overline{i_{-s}},\alpha\bigr)
=
\prod_{m\neq s}\mathsf F_m^{(\beta)}(i_m,\alpha),
\end{equation}
where the factors are ordered by increasing $m$, following the
lexicographic convention \eqref{eq:multiindex}.

The Khatri--Rao product assembles the collocated inactive contractions
\eqref{eq:en-e} from the univariate operated factors, with
each column corresponding to a fixed canonical index. For two
such columns, associated with canonical indices $\alpha$ and $\alpha'$ and
operator terms $\beta$ and $\gamma$, their normalized dot product provides the
discrete counterpart of the $L^2$ inner product between the underlying
inactive factors:
\[
\frac{1}{N_{-s}}
\mathsf E_s^{(\beta)}(:,\alpha)^\top
\mathsf E_s^{(\gamma)}(:,\alpha')
\approx
\left\langle
e_{s,\beta,\alpha},
e_{s,\gamma,\alpha'}
\right\rangle_
{L^2\left(\prod_{m\neq s}\D_m\right)}.
\]
Since the inactive factors are separable, the continuous inner product factorizes as
\[
\left\langle
e_{s,\beta,\alpha},
e_{s,\gamma,\alpha'}
\right\rangle_
{L^2\left(\prod_{m\neq s}\D_m\right)}
=
\prod_{m\neq s}
\left\langle
\A_m^{(\beta)}u_m^\alpha,
\A_m^{(\gamma)}u_m^{\alpha'}
\right\rangle_{L^2(\D_m)}.
\]
The tensor grid discretization inherits this factorization: the corresponding univariate (normalized) dot products are collected in small cross-Gram matrices, whose Hadamard products assemble the multidimensional inner products. The next proposition formalizes this discussion. However, we refer first to Remark~\ref{rem:active_ordering}, which now appears useful.
\begin{theorem}[Algebraic characterization of the compressed CP Gramian]\label{prop:cp_compressed_gramian_blocks}
For the rank-$R$ separable PINN \eqref{eq:rank_R_ansatz}, define for
each operator pair $(\beta,\gamma)$ the univariate cross-Gram matrices
\begin{equation}\label{eq:cp_H}
\mathsf H_m^{(\beta,\gamma)}
:=
\frac{1}{n_m}
\bigl(\mathsf F_m^{(\beta)}\bigr)^\top
\mathsf F_m^{(\gamma)}
\in\R^{R\times R},
\qquad
\mathsf H_m^{(\beta,\gamma)}(\alpha,\alpha')
=
\frac{1}{n_m}\sum_{i_m=1}^{n_m}
\mathsf F_m^{(\beta)}(i_m,\alpha)
\mathsf F_m^{(\gamma)}(i_m,\alpha'),
\end{equation}
together with
\begin{equation}\label{eq:cp_H_contractions}
\mathsf H_{-s}^{(\beta,\gamma)}
:=
\mathop{\bigcirc}_{m\neq s}\mathsf H_m^{(\beta,\gamma)},
\qquad
\mathsf H_{-st}^{(\beta,\gamma)}
:=
\mathop{\bigcirc}_{m\neq s,t}\mathsf H_m^{(\beta,\gamma)},
\end{equation}
where $\circ$ denotes the Hadamard product.
Then the operator-indexed compressed Gramian blocks
$\Upsilon_{st}^{(\beta,\gamma)}$ in
\eqref{eq:compressed_gramian_operator_blocks} satisfy the following.

\begin{enumerate}

\item
For $s=t$, we have
\begin{equation}\label{eq:cp_upsilon_diag}
\Upsilon_{ss}^{(\beta,\gamma)}
=
\frac{1}{n_s}
I_{n_s}\otimes
\mathsf H_{-s}^{(\beta,\gamma)}
\in\R^{n_sR\times n_sR}.
\end{equation}

\item
For $s<t$, the $(i_s,i_t)$ block of $\Upsilon_{st}^{(\beta,\gamma)}$ of
size $R\times R$ is given by
\begin{equation}\label{eq:cp_upsilon_offdiag}
\Upsilon_{st}^{(\beta,\gamma)}
\bigl(\overline{i_s:}\,,\overline{i_t:}\bigr)
=
\frac{1}{n_sn_t}
\Diag\!\bigl(\mathsf F_t^{(\beta)}(i_t,:)\bigr)
\mathsf H_{-st}^{(\beta,\gamma)}
\Diag\!\bigl(\mathsf F_s^{(\gamma)}(i_s,:)\bigr).
\end{equation}
The opposite block is obtained by transpose symmetry:
\begin{equation}\label{eq:cp_gramian_block_symmetry}
\Upsilon_{ts}^{(\gamma,\beta)}
=
\bigl(\Upsilon_{st}^{(\beta,\gamma)}\bigr)^\top.
\end{equation}

\end{enumerate}
\end{theorem}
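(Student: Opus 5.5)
The proof is a direct entrywise computation of $\Upsilon_{st}^{(\beta,\gamma)}=\frac1{N_\D}(\Psi_s^{(\beta)})^\top\Psi_t^{(\gamma)}$. It combines the defining formula \eqref{eq:Psi_beta} for $\Psi_s^{(\beta)}$ with the Khatri--Rao form \eqref{eq:cp_E_note} of the inactive contraction. Substituting the latter into the former gives, for $i=\overline{i_1\ldots i_d}$,
\[
\Psi_s^{(\beta)}\bigl(i,\overline{j_s\alpha}\bigr)=\delta_{i_sj_s}\prod_{m\neq s}\mathsf F_m^{(\beta)}(i_m,\alpha).
\]
Every entry of the compressed Gramian is therefore a sum over the full tensor grid of a product of univariate factors. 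The whole argument reduces to factorizing such grid sums by Fubini-type separation over the coordinates, together with the normalization $N_\D=\prod_m n_m$.

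\emph{Diagonal blocks ($s=t$).} I will use Remark~\ref{rem:active_ordering}, which gives $(\Psi_s^{(\beta)})^\top\Psi_s^{(\gamma)}=I_{n_s}\otimes\bigl[(\mathsf E_s^{(\beta)})^\top\mathsf E_s^{(\gamma)}\bigr]$. The remaining step is to evaluate the $(\alpha,\alpha')$ entry of $(\mathsf E_s^{(\beta)})^\top\mathsf E_s^{(\gamma)}$:
\[
\sum_{i_{-s}}\prod_{m\neq s}\mathsf F_m^{(\beta)}(i_m,\alpha)\mathsf F_m^{(\gamma)}(i_m,\alpha')=\prod_{m\neq s}\sum_{i_m=1}^{n_m}\mathsf F_m^{(\beta)}(i_m,\alpha)\mathsf F_m^{(\gamma)}(i_m,\alpha')=N_{-s}\,\mathsf H_{-s}^{(\beta,\gamma)}(\alpha,\alpha').
\]
The last equality uses \eqref{eq:cp_H} and the definition of the Hadamard product. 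Dividing by $N_\D=n_sN_{-s}$ yields \eqref{eq:cp_upsilon_diag}.

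\emph{Off-diagonal blocks ($s<t$).} Here $\Pi_s^\top\Pi_t\neq I$, so I will compute directly from the entrywise formula above, as the remark suggests. The entry with row $\overline{i_s\alpha}$ and column $\overline{i_t\alpha'}$ equals
\[
\frac1{N_\D}\sum_{i}\delta_{i_s'i_s}\,\delta_{i_t'i_t}\prod_{m\neq s}\mathsf F_m^{(\beta)}(i'_m,\alpha)\prod_{m\neq t}\mathsf F_m^{(\gamma)}(i'_m,\alpha'),
\]
where the summation index is $i=\overline{i'_1\ldots i'_d}$. The Kronecker deltas freeze the $s$-th and $t$-th grid indices. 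The first product then contributes the frozen factor $\mathsf F_t^{(\beta)}(i_t,\alpha)$, and the second contributes $\mathsf F_s^{(\gamma)}(i_s,\alpha')$. The remaining coordinates $m\neq s,t$ factorize exactly as in the diagonal case, giving $\prod_{m\neq s,t}n_m\,\mathsf H_m^{(\beta,\gamma)}(\alpha,\alpha')$. With $N_\D=n_sn_t\prod_{m\neq s,t}n_m$, the entry is
\[
\frac1{n_sn_t}\,\mathsf F_t^{(\beta)}(i_t,\alpha)\,\mathsf H_{-st}^{(\beta,\gamma)}(\alpha,\alpha')\,\mathsf F_s^{(\gamma)}(i_s,\alpha').
\]
Viewed as an $R\times R$ matrix in $(\alpha,\alpha')$, this is the left and right diagonal scaling in \eqref{eq:cp_upsilon_offdiag}. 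The symmetry \eqref{eq:cp_gramian_block_symmetry} follows immediately from $\Upsilon_{ts}^{(\gamma,\beta)}=\frac1{N_\D}(\Psi_t^{(\gamma)})^\top\Psi_s^{(\beta)}=\bigl(\Upsilon_{st}^{(\beta,\gamma)}\bigr)^\top$.

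There is no real analytic obstacle. The step requiring care is the index bookkeeping in the off-diagonal case. One must check that the row index $\overline{i_s\alpha}$ pairs $\alpha$ with the operator term $\beta$ and with the factor at the \emph{other} active coordinate $t$, which explains why $\mathsf F_t^{(\beta)}$ appears on the left and $\mathsf F_s^{(\gamma)}$ on the right. One must also confirm that the factors $1/n_m$ built into $\mathsf H_m$ reproduce $1/N_\D$ up to exactly the leftover $1/(n_sn_t)$, or $1/n_s$ in the diagonal case.
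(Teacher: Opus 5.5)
Your proposal is correct and follows essentially the same route as the paper: the diagonal block via Remark~\ref{rem:active_ordering}, the off-diagonal block by an entrywise collapse of the Kronecker deltas in the full grid sum, and the transpose symmetry read off directly from the Gramian definition. The only cosmetic difference is in the diagonal case. There the paper evaluates $(\mathsf E_s^{(\beta)})^\top\mathsf E_s^{(\gamma)}$ by citing the Khatri--Rao identity $(\mathsf A\odot\mathsf B)^\top(\mathsf C\odot\mathsf D)=(\mathsf A^\top\mathsf C)\circ(\mathsf B^\top\mathsf D)$, whereas your coordinatewise factorization of the sum over $i_{-s}$ proves that identity inline.
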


\begin{proof}
By Remark~\ref{rem:active_ordering},
$\Psi_s^{(\beta)}=\Pi_s(I_{n_s}\otimes\mathsf E_s^{(\beta)})$. Since $\Pi_s^\top\Pi_s=I_{N_\D}$ and
$N_\D=n_sN_{-s}$, we obtain 
\begin{align}\label{eq:diagonal_gramm_CP}
\Upsilon_{ss}^{(\beta,\gamma)}
=
\frac{1}{N_\D}
\bigl(\Psi_s^{(\beta)}\bigr)^\top
\Psi_s^{(\gamma)}=
\frac{1}{n_s}
I_{n_s}\otimes
\left[
\frac{1}{N_{-s}}
\bigl(\mathsf E_s^{(\beta)}\bigr)^\top
\mathsf E_s^{(\gamma)}
\right].
\end{align}
For conforming matrices, the Khatri--Rao product satisfies the identity
\begin{equation}\label{eq:cp_khatri_rao_identity}
(\mathsf A\odot \mathsf B)^\top(\mathsf C\odot \mathsf D)
=
(\mathsf A^\top \mathsf C)\circ(\mathsf B^\top \mathsf D).
\end{equation}
Using this identity together with \eqref{eq:cp_E_note}, we obtain
\begin{align*}
\frac{1}{N_{-s}}
\bigl(\mathsf E_s^{(\beta)}\bigr)^\top
\mathsf E_s^{(\gamma)}
&=
\mathop{\bigcirc}_{m\neq s}
\left[
\frac{1}{n_m}
\bigl(\mathsf F_m^{(\beta)}\bigr)^\top
\mathsf F_m^{(\gamma)}
\right]
=
\mathsf H_{-s}^{(\beta,\gamma)}.
\end{align*}
Substituting this expression into
\eqref{eq:diagonal_gramm_CP} yields \eqref{eq:cp_upsilon_diag}.

Now let $s < t$. Using the entrywise representation \eqref{eq:Psi_beta} of
$\Psi_s^{(\beta)}$, we have
\begin{align*}
\Upsilon_{st}^{(\beta,\gamma)}
\bigl(\overline{i_s\alpha},\overline{i_t\alpha'}\bigr)
&=
\frac{1}{N_\D}
\sum_{j_1=1}^{n_1}\cdots\sum_{j_d=1}^{n_d}
\delta_{j_si_s}\delta_{j_ti_t}
\prod_{m\neq s}
\mathsf F_m^{(\beta)}(j_m,\alpha)
\prod_{m\neq t}
\mathsf F_m^{(\gamma)}(j_m,\alpha').
\end{align*}
Since $s\neq t$, the two active coordinate factors can be separated from the remaining inactive ones:
\begin{align*}
\Upsilon_{st}^{(\beta,\gamma)}
\bigl(\overline{i_s\alpha},\overline{i_t\alpha'}\bigr)
&=
\frac{1}{N_\D}
\left[
\sum_{j_s=1}^{n_s}
\delta_{j_si_s}
\mathsf F_s^{(\gamma)}(j_s,\alpha')
\right]
\left[
\sum_{j_t=1}^{n_t}
\delta_{j_ti_t}
\mathsf F_t^{(\beta)}(j_t,\alpha)
\right]
\\
&\qquad\qquad\times
\prod_{m\neq s,t}
\left[
\sum_{j_m=1}^{n_m}
\mathsf F_m^{(\beta)}(j_m,\alpha)
\mathsf F_m^{(\gamma)}(j_m,\alpha')
\right].
\end{align*}
Using first the elementary identity $\sum_{j=1}^{n}\delta_{ji}x_j=x_i$, the active sums collapse to
\begin{align*}
\Upsilon_{st}^{(\beta,\gamma)}
\bigl(\overline{i_s\alpha},\overline{i_t\alpha'}\bigr)
&=
\frac{1}{N_\D}
\mathsf F_t^{(\beta)}(i_t,\alpha)\,
\mathsf F_s^{(\gamma)}(i_s,\alpha')
\prod_{m\neq s,t}
\left[
\sum_{j_m=1}^{n_m}
\mathsf F_m^{(\beta)}(j_m,\alpha)
\mathsf F_m^{(\gamma)}(j_m,\alpha')
\right].
\end{align*}
Since $N_\D=n_sn_t\prod_{m\neq s,t}n_m$ and by \eqref{eq:cp_H} it holds
\[
\frac{1}{n_m}
\sum_{j_m=1}^{n_m}
\mathsf F_m^{(\beta)}(j_m,\alpha)
\mathsf F_m^{(\gamma)}(j_m,\alpha')
=
\mathsf H_m^{(\beta,\gamma)}(\alpha,\alpha'),
\]
we may use
$\bigl[
\Diag(a)\,\mathsf H\,\Diag(b)
\bigr](\alpha,\alpha')
=
a_\alpha \mathsf H(\alpha,\alpha')\,b_{\alpha'}$
to write
\begin{align*}
\Upsilon_{st}^{(\beta,\gamma)}
\bigl(\overline{i_s\alpha},\overline{i_t\alpha'}\bigr)
&=
\frac{1}{n_sn_t}
\mathsf F_t^{(\beta)}(i_t,\alpha)\,
\mathsf H_{-st}^{(\beta,\gamma)}(\alpha,\alpha')\,
\mathsf F_s^{(\gamma)}(i_s,\alpha')
\\
&=
\left[
\frac{1}{n_sn_t}
\Diag\!\bigl(\mathsf F_t^{(\beta)}(i_t,:)\bigr)
\mathsf H_{-st}^{(\beta,\gamma)}
\Diag\!\bigl(\mathsf F_s^{(\gamma)}(i_s,:)\bigr)
\right](\alpha,\alpha').
\end{align*}
Collecting the canonical indices $(\alpha,\alpha')$ for fixed active samples
$(i_s,i_t)$ gives \eqref{eq:cp_upsilon_offdiag}.

Finally, for $s>t$, the defining Gramian product gives
\begin{align*}
\Upsilon_{st}^{(\beta,\gamma)}
=
\frac{1}{N_\D}
\bigl(\Psi_s^{(\beta)}\bigr)^\top\Psi_t^{(\gamma)}
=
\left[
\frac{1}{N_\D}
\bigl(\Psi_t^{(\gamma)}\bigr)^\top\Psi_s^{(\beta)}
\right]^\top
=
\bigl(\Upsilon_{ts}^{(\gamma,\beta)}\bigr)^\top.
\end{align*}
Moreover, \eqref{eq:cp_H}--\eqref{eq:cp_H_contractions} imply
$\bigl(\mathsf H_{-st}^{(\beta,\gamma)}\bigr)^\top
=\mathsf H_{-st}^{(\gamma,\beta)}$. Transposing the formula for $(t,s)$ and $(\gamma,\beta)$ then
recovers \eqref{eq:cp_upsilon_offdiag} for $s>t$, completing the proof.
\end{proof}

Here we pause for a moment and compare Theorem~\ref{prop:cp_compressed_gramian_blocks} with the abstract compressed Gramian structure in Theorem~\ref{prop:discrete_jacobian_factorization}. Observe that the latter identifies the
coordinate-pair and operator-pair block levels in $\Upsilon$; recall also Remark \ref{rem:nested structure}. The CP specialization reveals one additional explicit level: each operator-pair block decomposes into active-sample blocks $(i_s,i_t)$ of size $R\times R$, indexed by the canonical indices $(\alpha,\alpha')$. Thus, the resulting block hierarchy is built from $R\times R$ atomic blocks, while the larger indices encode coordinate, operator, and active-sample couplings (Fig.~\ref{fig:cp_gramian_structure}). The atomic blocks are assembled from the quantities in
\eqref{eq:cp_F_note}, which are obtained by applying automatic
differentiation only to the univariate network factors and can be computed
efficiently in parallel across the coordinate directions.

\definecolor{gramA}{RGB}{0,114,178}     
\definecolor{gramB}{RGB}{213,94,0}      
\definecolor{gramC}{RGB}{0,158,115}     
\tikzset{
  gr blk/.style={draw=black!55, line width=0.5pt, minimum size=7mm, inner sep=0pt},
  gr diag/.style={gr blk, fill=gramA!16},
  gr off/.style={gr blk, fill=gramB!28},
  gr tr/.style={gr blk, fill=gramB!10},
  gr void/.style={gr blk, fill=black!3},
  gr cell/.style={draw=black!45, line width=0.4pt, minimum size=4.4mm, inner sep=0pt},
  gr fac/.style={draw=gramA!85!black, fill=gramA!14, line width=0.5pt,
                 rounded corners=1.5pt, minimum width=8mm, minimum height=6.5mm, inner sep=1pt},
  gr skip/.style={draw=black!35, fill=black!5, line width=0.5pt, text=black!55,
                  rounded corners=1.5pt, minimum width=8mm, minimum height=6.5mm, inner sep=1pt},
  gr lbl/.style={font=\small},
  gr tag/.style={font=\footnotesize},
  gr note/.style={font=\footnotesize, text=black!65},
  gr arrow/.style={-{Latex[length=2mm]}, draw=black!60, line width=0.5pt},
}

\begin{figure}[H]
\centering
\begin{tikzpicture}[x=1cm, y=1cm]

\begin{scope}[shift={(0,0)}]
  \foreach \i in {1,...,4}{\foreach \j in {1,...,4}{
      \node[gr void] at (0.7*\j-0.7,-0.7*\i+0.7) {};}}
  \foreach \i in {1,...,4}{\node[gr diag] at (0.7*\i-0.7,-0.7*\i+0.7) {};}
  \node[gr off] at (2.1,-0.7) {};
  \node[gr tr] at (0.7,-2.1) {};
  \node[gr tag] at (0,0) {$\Bgram_{ss}$};
  \node[gr tag] at (2.1,-0.7) {$\Bgram_{st}$};
  \node[gr tag] at (0.7,-2.1) {$\Bgram_{st}^{\!\top}$};
  \node[gr lbl, anchor=south] at (1.05,0.55) {(a) $\Bgram^{(\beta,\gamma)}$};
  \node[gr note, anchor=north] at (1.05,-2.55) {$n_sR\times n_tR$ blocks};
\end{scope}

\begin{scope}[shift={(4.3,-0.05)}]
  \node[gr lbl, anchor=south] at (0.66,0.60)
       {(b) $\Bgram_{ss}^{(\beta,\gamma)}
             =\tfrac{1}{n_s}I_{n_s}\otimes\mathsf H_{-s}^{(\beta,\gamma)}$};
  \foreach \i in {1,...,4}{\foreach \j in {1,...,4}{
      \node[gr cell, fill=white] at (0.44*\j-0.44,-0.44*\i+0.44) {};}}
  \foreach \i in {1,...,4}{
      \node[gr cell, fill=gramA!22] (Dg\i) at (0.44*\i-0.44,-0.44*\i+0.44) {};}
  \node[gr note, anchor=north] at (0.66,-1.75) {block-diagonal in $i_s$};
  \node[gr tag, text=gramA!75!black, anchor=west, inner sep=1pt] (Hlab) at (1.85,-0.25)
       {$\mathsf H_{-s}^{(\beta,\gamma)}\!\in\!\R^{R\times R}$};
  \draw[gr arrow, draw=gramA!80!black] (Hlab.west) -- (Dg3.east);
\end{scope}

\begin{scope}[shift={(9.3,-0.05)}]
  \node[gr lbl, anchor=south] at (0.66,0.60) {(c) $\Bgram_{st}^{(\beta,\gamma)}$, \ $s<t$};
  \foreach \i in {1,...,4}{\foreach \j in {1,...,4}{
      \node[gr cell, fill=gramB!12] at (0.44*\j-0.44,-0.44*\i+0.44) {};}}
  \node[gr cell, fill=gramB!55] (pick) at (0.88,-0.44) {};
  \node[gr note, anchor=north] at (0.66,-1.75) {dense in $(i_s,i_t)$};
\end{scope}

\node[draw=gramB!80!black, fill=gramB!6, line width=0.5pt, rounded corners=2pt,
      gr tag, inner sep=3pt] (CALL) at (10.15,-3.15)
     {$\tfrac{1}{n_sn_t}
       \Diag\bigl(\mathsf F_t^{(\beta)}(i_t,:)\bigr)
       \mathsf H_{-st}^{(\beta,\gamma)}
       \Diag\bigl(\mathsf F_s^{(\gamma)}(i_s,:)\bigr)$};
\draw[gr arrow, draw=gramB!80!black] (pick.east) -- ++(0.3,0)
      -- (11.75,-0.75) -- (11.75,-2.75);

\draw[gr arrow] (2.60,-1.05) -- (3.90,-1.05);
\draw[gr arrow] (7.05,-1.05) -- (8.95,-1.05);

\begin{scope}[shift={(0.35,-5.15)}]
  \node[gr lbl, anchor=west] at (-0.35,1.05)
       {(d) univariate cross-Grams
        $\mathsf H_m^{(\beta,\gamma)}
          =\tfrac1{n_m}\bigl(\mathsf F_m^{(\beta)}\bigr)^{\!\top}\mathsf F_m^{(\gamma)}$};
  \foreach \m/\x in {1/0,2/1.30,4/3.90,5/5.20}{\node[gr fac] (A\m) at (\x,0) {$\mathsf H_{\m}$};}
  \node[gr skip] (A3) at (2.60,0) {$\mathsf H_{s}$};
  \foreach \x in {0.65,1.95,3.25,4.55}{\node[gr tag] at (\x,0) {$\had$};}
  \draw[black!30, line width=0.6pt] (A3.north west) -- (A3.south east);
  \node[gr lbl, anchor=west] at (5.95,0)
       {$=\ \mathsf H_{-s}^{(\beta,\gamma)}$ \ \big(leave-one-out in (b)\big)};

  \foreach \m/\x in {1/0,2/1.30,4/3.90}{\node[gr fac] (B\m) at (\x,-1.15) {$\mathsf H_{\m}$};}
  \node[gr skip] (B3) at (2.60,-1.15) {$\mathsf H_{s}$};
  \node[gr skip] (B5) at (5.20,-1.15) {$\mathsf H_{t}$};
  \foreach \x in {0.65,1.95,3.25,4.55}{\node[gr tag] at (\x,-1.15) {$\had$};}
  \foreach \n in {B3,B5}{\draw[black!30, line width=0.6pt] (\n.north west) -- (\n.south east);}
  \node[gr lbl, anchor=west] at (5.95,-1.15)
       {$=\ \mathsf H_{-st}^{(\beta,\gamma)}$ \ \big(leave-two-out in (c)\big)};
\end{scope}

\end{tikzpicture}
\caption{Compressed CP Gramian with hierarchical structure: every block is
built from the univariate cross-Grams $\mathsf H_m^{(\beta,\gamma)}\in\R^{R\times R}$,
$m=1,\dots,d$, through leave-one-out and leave-two-out Hadamard products.}
\label{fig:cp_gramian_structure}
\end{figure}

Finally, we comment on computing the gradient of the loss in the CP format. The following proposition gives the corresponding CP representation of the compressed residual \eqref{eq:compressed_residual}, which is required to compute the compressed gradient \eqref{eq:compressed_gradient}.
\begin{proposition}\label{prop:cp_compressed_gradient}
Suppose that the collocated residual admits the finite canonical representation
\begin{equation}\label{eq:cp_residual_representation}
\rvec(\theta)\bigl(\overline{i_1\ldots i_d}\bigr)
=
\sum_{\eta=1}^{R_{\rho}}
\prod_{m=1}^d\mathsf Y_m(i_m,\eta),
\qquad
\mathsf Y_m
\in\R^{n_m\times R_{\rho}}.
\end{equation}
For every coordinate $s=1,\ldots,d$ and operator term
$\beta=1,\ldots,R_\A$, define
\begin{equation}\label{eq:cp_rhs_correlations}
\mathsf Q_m^{(\beta)}
:=
\frac{1}{n_m}
\bigl(\mathsf F_m^{(\beta)}\bigr)^\top\mathsf Y_m
\in\R^{R\times R_{\rho}},
\qquad
\mathsf Q_{-s}^{(\beta)}
:=
\mathop{\bigcirc}_{m\neq s}\mathsf Q_m^{(\beta)}
\in\R^{R\times R_{\rho}}.
\end{equation}
Let
\begin{equation}\label{eq:cp_rhs_matrix}
\mathsf P_s^{(\beta)}
:=
\frac{1}{n_s}
\mathsf Y_s
\bigl(\mathsf Q_{-s}^{(\beta)}\bigr)^\top
\in\R^{n_s\times R}.
\end{equation}
Then $\rho_{\mathrm c,s}^{(\beta)}$ in
\eqref{eq:compressed_residual_operator_blocks} is the
vectorization of $\mathsf P_s^{(\beta)}$:
\begin{equation}\label{eq:cp_compressed_rhs}
\rho_{\mathrm c,s}^{(\beta)}
=
\vecop\bigl(\mathsf P_s^{(\beta)}\bigr)
\in\R^{n_sR},
\qquad
\rho_{\mathrm c,s}^{(\beta)}\bigl(\overline{i_s\alpha}\bigr)
=
\mathsf P_s^{(\beta)}(i_s,\alpha).
\end{equation}
\end{proposition}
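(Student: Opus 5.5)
The plan is a direct entrywise computation, mirroring the off-diagonal part of the proof of Theorem~\ref{prop:cp_compressed_gramian_blocks} but with one factor replaced by the CP residual. Start from the definition \eqref{eq:compressed_residual_operator_blocks}, $\rho_{\mathrm c,s}^{(\beta)}=\frac{1}{N_\D}(\Psi_s^{(\beta)})^\top\rvec(\theta)$, and insert the entrywise formula \eqref{eq:Psi_beta} for $\Psi_s^{(\beta)}$ together with the CP Khatri--Rao form \eqref{eq:cp_E_note} of $\mathsf E_s^{(\beta)}$. For a fixed compressed row index $\overline{i_s\alpha}$ this gives
\begin{equation*}
\rho_{\mathrm c,s}^{(\beta)}\bigl(\overline{i_s\alpha}\bigr)
=
\frac{1}{N_\D}\sum_{j_1,\ldots,j_d}\delta_{j_si_s}
\prod_{m\neq s}\mathsf F_m^{(\beta)}(j_m,\alpha)\,
\rvec(\theta)\bigl(\overline{j_1\ldots j_d}\bigr).
\end{equation*}
Since $\Psi_s^{(\beta)}$ is indexed in the natural ordering, no permutation $\Pi_s$ needs to be handled explicitly.

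Next, substitute the canonical representation \eqref{eq:cp_residual_representation} of the residual and exchange the finite sum over $\eta$ with the grid sums. Each summand is then a product of univariate factors, so the $d$-fold grid sum factorizes. The active sum collapses via $\delta_{j_si_s}$ to $\mathsf Y_s(i_s,\eta)$. Each inactive sum becomes
\[
\sum_{j_m}\mathsf F_m^{(\beta)}(j_m,\alpha)\mathsf Y_m(j_m,\eta)=n_m\mathsf Q_m^{(\beta)}(\alpha,\eta).
\]
Using $N_\D=n_s\prod_{m\neq s}n_m$, the normalization distributes as $\frac{1}{n_s}$ on the active factor and $\frac{1}{n_m}$ on each inactive factor.

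This yields
\[
\rho_{\mathrm c,s}^{(\beta)}(\overline{i_s\alpha})=\frac{1}{n_s}\sum_{\eta}\mathsf Y_s(i_s,\eta)\prod_{m\neq s}\mathsf Q_m^{(\beta)}(\alpha,\eta)=\frac{1}{n_s}\sum_\eta\mathsf Y_s(i_s,\eta)\mathsf Q_{-s}^{(\beta)}(\alpha,\eta).
\]
This equals $\mathsf P_s^{(\beta)}(i_s,\alpha)$ by \eqref{eq:cp_rhs_matrix}. Finally, observe that the row ordering $\overline{i_s\alpha}$ of \eqref{eq:compressed_row_index}, with $i_s$ varying slowest, coincides with $\vecop$ of the $n_s\times R$ matrix $\mathsf P_s^{(\beta)}$ under the convention \eqref{eq:multiindex}; this gives \eqref{eq:cp_compressed_rhs}.

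The main obstacle is bookkeeping rather than substance: the delicate points are the correct split of the normalization constant $N_\D$ across the factors, and the check that the vectorization convention matches the compressed index ordering. The interchange of sums is trivial, since all sums are finite.
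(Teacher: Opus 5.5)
Your proposal is correct and follows the paper's proof essentially line by line. Both arguments insert the entrywise form of $\Psi_s^{(\beta)}$ with the Khatri--Rao columns, substitute the CP residual, factor the grid sums, and collapse the active sum with the Kronecker delta. Both then split $N_\D=n_s\prod_{m\neq s}n_m$ to obtain $\mathsf Q_{-s}^{(\beta)}$, and finally read off $\mathsf P_s^{(\beta)}(i_s,\alpha)$ in the lexicographic ordering of $\overline{i_s\alpha}$.
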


\begin{proof}
From \eqref{eq:compressed_residual_operator_blocks} and the entrywise
representation \eqref{eq:Psi_beta}, we have
\begin{align*}
\rho_{\mathrm c,s}^{(\beta)}\bigl(\overline{i_s\alpha}\bigr)
&=
\frac{1}{N_\D}
\sum_{j_1=1}^{n_1}\cdots\sum_{j_d=1}^{n_d}
\delta_{j_si_s}
\prod_{m\neq s}
\mathsf F_m^{(\beta)}(j_m,\alpha)\,
\rvec(\theta)\bigl(\overline{j_1\ldots j_d}\bigr).
\end{align*}
Substituting \eqref{eq:cp_residual_representation} and separating the
coordinate sums gives
\begin{align*}
\rho_{\mathrm c,s}^{(\beta)}\bigl(\overline{i_s\alpha}\bigr)
&=
\frac{1}{N_\D}
\sum_{\eta=1}^{R_{\rho}}
\left[
\sum_{j_s=1}^{n_s}
\delta_{j_si_s}\mathsf Y_s(j_s,\eta)
\right]
\prod_{m\neq s}
\left[
\sum_{j_m=1}^{n_m}
\mathsf F_m^{(\beta)}(j_m,\alpha)\,
\mathsf Y_m(j_m,\eta)
\right].
\end{align*}
Using
$\sum_{j=1}^{n}\delta_{ji}x_j=x_i$ and
$N_\D=n_s\prod_{m\neq s}n_m$, we obtain
\begin{align*}
\rho_{\mathrm c,s}^{(\beta)}\bigl(\overline{i_s\alpha}\bigr)
&=
\frac{1}{n_s}
\sum_{\eta=1}^{R_{\rho}}
\mathsf Y_s(i_s,\eta)\,
\prod_{m\neq s}
\mathsf Q_m^{(\beta)}(\alpha,\eta)
=
\frac{1}{n_s}
\sum_{\eta=1}^{R_{\rho}}
\mathsf Y_s(i_s,\eta)\,
\mathsf Q_{-s}^{(\beta)}(\alpha,\eta)
=
\mathsf P_s^{(\beta)}(i_s,\alpha).
\end{align*}
Collecting these entries in the lexicographic order of $\overline{i_s\alpha}$
proves \eqref{eq:cp_compressed_rhs}.
\end{proof}

Algorithm~\ref{alg:cp_assembly} summarizes the CP assembly of the
compressed Gauss--Newton system.
\begin{algorithm}[H]
\caption{CP assembly for the compressed Gauss--Newton step}
\label{alg:cp_assembly}
\small
\begin{algorithmic}[1]
\setlength{\itemsep}{0.25em}
\REQUIRE Current parameters $\theta$; factor grids $\X_s$; rank $R$;
separated representation of the differential operator $\mathcal{A}$; source data admitting
the residual representation \eqref{eq:cp_residual_representation}
\FOR{$s=1,\ldots,d$}
  \STATE Evaluate $\mathsf F_s^{(\beta)}$ and obtain
  $\Cmat_s^{(\beta)}$ for
  $\beta=1,\ldots,R_\A$ by automatic differentiation.
  \STATE $\Cmat_s\leftarrow\operatorname{col}_{\beta=1}^{R_\A}
  \Cmat_s^{(\beta)}$, \quad $N_s\leftarrow R_\A n_sR$.
\ENDFOR
\STATE Form the residual factors $\mathsf Y_m$
from the operated factors and prescribed source data.
\FOR{$\beta,\gamma=1,\ldots,R_\A$}
  \STATE Compute $\mathsf H_m^{(\beta,\gamma)}
  \leftarrow(\mathsf F_m^{(\beta)})^\top\mathsf F_m^{(\gamma)}/n_m$
  for $m=1,\ldots,d$.
  \STATE Form $\mathsf H_{-s}^{(\beta,\gamma)}$ and
  $\mathsf H_{-st}^{(\beta,\gamma)}$ by the excluded-factor
  Hadamard products \eqref{eq:cp_H_contractions}.
  \STATE Assemble $\Upsilon_{ss}^{(\beta,\gamma)}$ for $s=1,\ldots,d$ using
  \eqref{eq:cp_upsilon_diag}. 
  \STATE Assemble $\Upsilon_{st}^{(\beta,\gamma)}$ for $s<t$ using
  \eqref{eq:cp_upsilon_offdiag}, and set
  $\Upsilon_{ts}^{(\gamma,\beta)}
  \leftarrow(\Upsilon_{st}^{(\beta,\gamma)})^\top$.
\ENDFOR
\FOR{$\beta=1,\ldots,R_\A$}
  \STATE Compute $\mathsf Q_m^{(\beta)}$ and
  $\mathsf Q_{-s}^{(\beta)}$ using \eqref{eq:cp_rhs_correlations}.
  \STATE Form $\mathsf P_s^{(\beta)}$ using \eqref{eq:cp_rhs_matrix}
  and set $\rho_{\mathrm c,s}^{(\beta)}
  \leftarrow\vecop(\mathsf P_s^{(\beta)})$, for all $s$.
\ENDFOR
\STATE Stack $\rho_{\mathrm c,s}^{(\beta)}$ and the Gramian blocks
in the order $(s,\beta,i_s,\alpha)$ to obtain
$\rho_{\mathrm c}$ and $\Upsilon$.
\STATE Compute
$\nabla_{\theta^s}\widehat{\mathcal L}(\theta)
\leftarrow\sum_{\beta=1}^{R_\A}
\bigl(\Cmat_s^{(\beta)}\bigr)^\top
\rho_{\mathrm c,s}^{(\beta)}$,
for $s=1,\ldots,d$.
\STATE Stack the coordinate gradients to obtain
$\nabla_\theta\widehat{\mathcal L}(\theta)
\leftarrow\operatorname{col}_{s=1}^d
\nabla_{\theta^s}\widehat{\mathcal L}(\theta)$.
\STATE Evaluate $\widehat{\mathcal L}(\theta)$ from the residual CP factors as $\widehat{\mathcal L}(\theta)
\leftarrow
\frac12
\sum_{\eta,\eta'=1}^{R_\rho}
\prod_{m=1}^d
\left[
\frac{1}{n_m}\mathsf Y_m^\top\mathsf Y_m
\right]_{\eta\eta'}$
\ENSURE $\{\Cmat_s\}_{s=1}^d$, $\Upsilon$, $\rho_{\mathrm c}$,
$\nabla_\theta\widehat{\mathcal L}(\theta)$,
and $\widehat{\mathcal L}(\theta)$, with
$\Neff=R_\A R\sum_s n_s$.
\end{algorithmic}
\end{algorithm}

\subsection{TT-PINNs: tensor-train representation}
\label{sec:tt_spinn}

We next specialize the abstract construction to the tensor-train (TT)
contraction pattern of \cite{oseledets2011tensor}.  For $\mathbf r=(r_0,\ldots,r_d)\in\mathbb N^{d+1}$ with $r_0=r_d=1$,
we first define the TT class of functions
\begin{equation}\label{eq:tt_tensor_class}
\mathfrak T_{\mathbf r}^{\mathrm{TT}}
:=
\left\{
z\mapsto G_1(z_1)\cdots G_d(z_d):
G_m(\,\cdot\,)(\alpha_{m-1},\alpha_m)\in H_m
\right\}
\subset H,
\end{equation}
where the matrix-valued functions $G_m:\D_m\to\R^{r_{m-1}\times r_m}$ are the TT cores and
$r_0,\ldots,r_d$ are the TT-ranks. Following \cite{oseledets2011tensor}, we also regard the
$m$-th core as a three-index object and write
\begin{equation}\label{eq:tt_core_entries}
G_m(\alpha_{m-1},z_m,\alpha_m)
:=
G_m(z_m)(\alpha_{m-1},\alpha_m),
\qquad
\alpha_{m-1}=1,\ldots,r_{m-1},
\quad
\alpha_m=1,\ldots,r_m,
\end{equation}
where the middle argument is the coordinate variable $z_m\in\D_m$,
while the two outer arguments are the TT bond indices. 

To identify \eqref{eq:tt_tensor_class} with \eqref{eq:ambient_contraction_class},
the left and right bond indices are merged into the single local index
\begin{align*}
a_m=\overline{a_m^-a_m^+},
\qquad
a_m^-=1,\ldots,r_{m-1},
\quad
a_m^+=1,\ldots,r_m,
\qquad
k_m=r_{m-1}r_m,
\end{align*}
and set $w_m^{a_m}:=G_m(a_m^-,\,\cdot\,,a_m^+)$.
The TT contraction pattern in Assumption~\ref{ass:architecture} is
then given by
\begin{equation}\label{eq:tt_pattern}
c_{a_1\ldots a_d}
:=
\prod_{m=1}^{d-1}
\delta_{a_m^+,a_{m+1}^-},
\end{equation}
which identifies neighboring bond indices consistently with the rules of
matrix multiplication and yields $\mathfrak T_{\mathrm c}=\mathfrak T_{\mathbf r}^{\mathrm{TT}}$.

For the univariate parametrizations in Assumption~\ref{ass:architecture},
we set
\begin{equation*}
\Phi_m^{k_m}(\theta^m)
=
\bigl(
G_m(\,\cdot\,;\theta^m)(a_m^-,a_m^+)
\bigr)_{a_m=1}^{k_m}
\in\W_m^{k_m}.
\end{equation*}
Hence, the corresponding neural model class is given by
\begin{equation*}
\M
=
\Cmap_{\mathrm c}\bigl(
\ran\Phi_1^{k_1}\times\cdots\times\ran\Phi_d^{k_d}
\bigr)
\subset
\mathfrak T_{\mathbf r}^{\mathrm{TT}},
\end{equation*}
and every $u_\theta\in\M$ takes the (functional) tensor-train neural form
\begin{equation}\label{eq:tt_ansatz}
u_\theta(z)
=
G_1(z_1;\theta^1)\cdots G_d(z_d;\theta^d),
\qquad
G_m(\,\cdot\,;\theta^m):\D_m\to\R^{r_{m-1}\times r_m}.
\end{equation}
We suppress the parameter dependence of the cores whenever no ambiguity
can arise. Denoting the shared bond index by $\alpha_m$ gives
$a_m=\overline{\alpha_{m-1}\alpha_m}$, and the matrix product
\eqref{eq:tt_ansatz} reads
\begin{equation}\label{eq:tt_index_form}
u_\theta(z)
=
\sum_{\alpha_1=1}^{r_1}\cdots
\sum_{\alpha_{d-1}=1}^{r_{d-1}}
\prod_{m=1}^{d}
G_m(\alpha_{m-1},z_m,\alpha_m),
\qquad
\alpha_0=\alpha_d=1.
\end{equation}
This expansion contains $\nu=\prod_{m=1}^{d-1}r_m$ separable terms.
However, \eqref{eq:tt_index_form} is never formed in practice, and the TT
is evaluated by successive matrix multiplications.

Collocating \eqref{eq:tt_ansatz} on the tensor grid
\eqref{eq:tensor grid} yields the output tensor $\mathcal U_\theta$
defined in \eqref{eq:cp_output_tensor}, whose entries now follow the
TT representation
\begin{equation}\label{eq:tt_collocated_representation}
\mathcal U_\theta(i_1,\ldots,i_d)
=
\mathsf G_1(i_1)\cdots\mathsf G_d(i_d),
\qquad
\mathsf G_m(i_m):=G_m(z_{m,i_m})\in\R^{r_{m-1}\times r_m}.
\end{equation}
Therefore, the TT representation stores only
$\mathcal O\!\left(\sum_{m=1}^{d}n_mr_{m-1}r_m\right)$ sampled core entries
instead of the full $N_\D$ tensor entries.
Fig.~\ref{fig:tt_diagram} shows the standard graphical representation:
the TT bond indices are contracted, and the collocation indices remain open.
By fixing these indices, the scalar entry
$\mathcal U_\theta(i_1,\ldots,i_d)$ is obtained.
\begin{figure}[H]
\centering
\setlength{\fboxsep}{8pt}
\fcolorbox{black!15}{black!4}{%
\begin{tikzpicture}[
  x=2.35cm,y=1.45cm,
  core/.style={draw,rounded corners=2pt,minimum width=1.10cm,
               minimum height=0.64cm,inner sep=1pt,font=\footnotesize},
  dots/.style={font=\footnotesize,inner sep=1pt},
  bond/.style={line width=0.5pt},
  leg/.style={line width=0.5pt},
  lab/.style={font=\footnotesize,inner sep=1pt}]

\node[core] (g1) at (1,0) {$\mathsf G_1$};
\node[core] (g2) at (2,0) {$\mathsf G_2$};
\node[dots] (g3) at (3,0) {$\cdots$};
\node[core] (g4) at (4,0) {$\mathsf G_{d-1}$};
\node[core] (g5) at (5,0) {$\mathsf G_{d}$};

\draw[bond] (g1.east) -- node[lab,above=7pt] {$\alpha_1$}     (g2.west);
\draw[bond] (g2.east) -- node[lab,above=7pt] {$\alpha_2$}     (g3.west);
\draw[bond] (g3.east) -- node[lab,above=7pt] {$\alpha_{d-2}$} (g4.west);
\draw[bond] (g4.east) -- node[lab,above=7pt] {$\alpha_{d-1}$} (g5.west);

\foreach \n/\lbl in {g1/{i_1},g2/{i_2},g4/{i_{d-1}},g5/{i_d}} {
  \draw[leg] (\n.south) -- ++(0,-0.62);
  \node[lab,anchor=north] at ($(\n.south)+(0,-0.66)$) {$\lbl$};
}

\node[lab,anchor=east] at ($(g1.west)+(-0.09,0)$) {$r_0=1$};
\node[lab,anchor=west] at ($(g5.east)+(0.09,0)$) {$r_d=1$};

\end{tikzpicture}%
}
\caption{Tensor train diagram for
$\mathsf G_1(i_1)\cdots\mathsf G_d(i_d)$. Each box is a core
$\mathsf G_m\in\R^{r_{m-1}\times n_m\times r_m}$; the horizontal lines are
the TT bonds $\alpha_1,\dots,\alpha_{d-1}$, and the downward legs are the
collocation indices $i_1,\dots,i_d$.}
\label{fig:tt_diagram}
\end{figure}

For every operator term $\beta$, we further set
$G_m^{(\beta)}:=\A_m^{(\beta)}G_m$, where
$\A_m^{(\beta)}$ acts entrywise on the matrix-valued core $G_m$ in the sense
of \eqref{eq:tt_core_entries}. Since the parameter block $\theta^s$ occurs only in the $s$-th core, the residual push-forward \eqref{eq:separable_pushforward} becomes
\begin{equation}\label{eq:tt_residual_pushforward}
\begin{aligned}
\bigl(\A\partial_{\theta_j^s}u_\theta\bigr)(z)
=
\sum_{\beta=1}^{R_\A}
G_1^{(\beta)}(z_1)\cdots
G_{s-1}^{(\beta)}(z_{s-1})
\bigl(\A_s^{(\beta)}\partial_{\theta_j^s}G_s\bigr)(z_s)
G_{s+1}^{(\beta)}(z_{s+1})\cdots
G_d^{(\beta)}(z_d).
\end{aligned}
\end{equation}
Observe that in this case, the active--inactive separation amounts to
replacing the active core by its local parameter derivative while contracting the remaining cores over the shared TT bond indices. We define the sampled operated cores
\begin{equation}\label{eq:tt_collocated_objects}
\mathsf G_m^{(\beta)}(i_m)
:=
G_m^{(\beta)}(z_{m,i_m})
=
\bigl(\A_m^{(\beta)}G_m\bigr)(z_{m,i_m})
\in\R^{r_{m-1}\times r_m}.
\end{equation}
Further substituting $a_s=\overline{\alpha_{s-1}\alpha_s}$ into the compressed row index
\eqref{eq:compressed_row_index}, the local residual sensitivity matrix
\eqref{eq:C_beta} takes the form
\begin{equation}\label{eq:tt_Cmat}
\Cmat_s^{(\beta)}
\bigl(\overline{i_s\alpha_{s-1}\alpha_s},\,j\bigr)
=
\bigl(\A_s^{(\beta)}\partial_{\theta_j^s}G_s\bigr)(z_{s,i_s})
(\alpha_{s-1},\alpha_s),
\qquad
\Cmat_s^{(\beta)}
\in\R^{n_sr_{s-1}r_s\times P_s},
\end{equation}
where the row of $\Cmat_s^{(\beta)}$ carrying the active sample $i_s$ and the active core entry $(\alpha_{s-1},\alpha_s)$ is addressed by the single index $\overline{i_s\alpha_{s-1}\alpha_s}$. Consequently, we have
\begin{equation}\label{eq:tt_effective_compressed_dimension}
N_s=R_\A n_sr_{s-1}r_s,
\qquad
\Neff=R_\A\sum_{s=1}^{d}n_sr_{s-1}r_s.
\end{equation}
Observe that unlike \eqref{eq:cp_effective_dimension}, which depends on one global canonical rank,
\eqref{eq:tt_effective_compressed_dimension} is governed by the two TT-ranks adjacent to each coordinate. Therefore, TT is advantageous when these local
ranks remain moderate while an accurate canonical representation requires a large global rank. 

For each $s=1,\ldots,d$, define the matrix units
\begin{equation}\label{eq:tt_matrix_units}
\mathsf M_{\alpha_{s-1}\alpha_s}
:=
\mathrm e_{\alpha_{s-1}}\mathrm e_{\alpha_s}^{\top}
\in\R^{r_{s-1}\times r_s},
\end{equation}
where $\mathrm e_j$ is the $j$-th canonical basis vector.
The family
$\{\mathsf M_{\alpha_{s-1}\alpha_s}\}$ is the canonical basis of
$\R^{r_{s-1}\times r_s}$ and, in particular,
\[
G_s(z_s;\theta^s)
=
\sum_{\alpha_{s-1}=1}^{r_{s-1}}
\sum_{\alpha_s=1}^{r_s}
G_s(z_s;\theta^s)(\alpha_{s-1},\alpha_s)
\mathsf M_{\alpha_{s-1}\alpha_s}.
\]
With $a_s=\overline{\alpha_{s-1}\alpha_s}$, we write
$\mathsf M_{a_s}$ for the corresponding matrix unit. These matrix units
will be used throughout to expand the TT cores and their derivatives and
to isolate active core entries in the subsequent contractions.

The inactive contraction matrix $\mathsf E_s^{(\beta)}\in\R^{N_{-s}\times r_{s-1}r_s}$ plays a central role throughout the compressed construction. In the TT case, its columns are indexed by the adjacent TT bond indices
$\alpha_{s-1}$ and $\alpha_s$ left open after removing the $s$-th active
core:
\begin{equation}\label{eq:tt_E_entry}
\begin{aligned}
\mathsf E_s^{(\beta)}
\bigl(
\overline{i_{-s}},
\overline{\alpha_{s-1}\alpha_s}
\bigr)=
\left[
\mathsf G_1^{(\beta)}(i_1)\cdots
\mathsf G_{s-1}^{(\beta)}(i_{s-1})
\right](1,\alpha_{s-1})
\left[
\mathsf G_{s+1}^{(\beta)}(i_{s+1})\cdots
\mathsf G_d^{(\beta)}(i_d)
\right](\alpha_s,1).
\end{aligned}
\end{equation}
The following proposition gives two equivalent matrix representations of
$\mathsf E_s^{(\beta)}$ that will be useful for characterizing the compressed
Gramian $\Upsilon$ in the TT format.
\begin{proposition} \label{prop:tt_inactive_contraction_structure}
For a fixed active coordinate $s$, set
\begin{align*}
N_{<s}:=\prod_{m<s}n_m,
\qquad
N_{>s}:=\prod_{m>s}n_m,
\qquad
N_{-s}=N_{<s}N_{>s},
\end{align*}
The following statements hold.
\begin{enumerate}
\item The inactive contraction matrix factorizes as
\begin{equation}\label{eq:tt_E_matrix}
\mathsf E_s^{(\beta)}
=
\mathsf U_{<s}^{(\beta)}
\otimes
\mathsf U_{>s}^{(\beta)},
\end{equation}
where $\mathsf U_{<s}^{(\beta)} \in\R^{N_{<s}\times r_{s-1}}$
and $\mathsf U_{>s}^{(\beta)}\in\R^{N_{>s}\times r_s}$ denote the left and right partial contractions, respectively, and are given by
\begin{align}\label{eq:partial trains}
\mathsf U_{<s}^{(\beta)}(\overline{i_{<s}},:)
:=
\mathsf G_1^{(\beta)}(i_1)\cdots
\mathsf G_{s-1}^{(\beta)}(i_{s-1}), \quad
\mathsf U_{>s}^{(\beta)}(\overline{i_{>s}},:)
:=
\left[
\mathsf G_{s+1}^{(\beta)}(i_{s+1})\cdots
\mathsf G_d^{(\beta)}(i_d)
\right]^\top 
\end{align}
with $\mathsf U_{<1}^{(\beta)}=1$ and $\mathsf U_{>d}^{(\beta)}=1$.
\item Each entry of $\mathsf E_s^{(\beta)}$ admits the representation
\begin{equation}\label{eq:tt_E_matrix_unit_representation}
\begin{aligned}
&\mathsf E_s^{(\beta)}
\bigl(
\overline{i_{-s}},
\overline{\alpha_{s-1}\alpha_s}
\bigr)=
\mathsf G_1^{(\beta)}(i_1)\cdots
\mathsf G_{s-1}^{(\beta)}(i_{s-1})
\mathsf M_{\alpha_{s-1}\alpha_s}
\mathsf G_{s+1}^{(\beta)}(i_{s+1})\cdots
\mathsf G_d^{(\beta)}(i_d),
\end{aligned}
\end{equation}
where
$\mathsf M_{\alpha_{s-1}\alpha_s}
\in\R^{r_{s-1}\times r_s}$ denotes the matrix unit \eqref{eq:tt_matrix_units}.
\end{enumerate}
\end{proposition}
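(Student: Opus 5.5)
The plan is to start from the entrywise definition \eqref{eq:en-e} of the inactive contraction, specialized to the TT pattern \eqref{eq:tt_pattern}. The goal is first to recover the displayed formula \eqref{eq:tt_E_entry}, and then to read off both representations from it.

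Fix $a_s=\overline{\alpha_{s-1}\alpha_s}$. In \eqref{eq:en-e} we sum over $a_{-s}$. The Kronecker deltas in \eqref{eq:tt_pattern} force $a_m^+=a_{m+1}^-$ for all neighbouring pairs. The two deltas touching slot $s$ then pin $a_{s-1}^+=\alpha_{s-1}$ and $a_{s+1}^-=\alpha_s$. All remaining shared bond indices are summed freely, with $a_1^-=1=a_d^+$ since $r_0=r_d=1$. This splits the sum into two independent pieces.

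The first piece, over $m<s$, is
\[
\sum \prod_{m<s}\mathsf G_m^{(\beta)}(i_m)(\alpha_{m-1},\alpha_m)=[\mathsf G_1^{(\beta)}(i_1)\cdots\mathsf G_{s-1}^{(\beta)}(i_{s-1})](1,\alpha_{s-1}),
\]
by the definition of the matrix product. The second piece, over $m>s$, gives $[\mathsf G_{s+1}^{(\beta)}(i_{s+1})\cdots\mathsf G_d^{(\beta)}(i_d)](\alpha_s,1)$ in the same way. Here I use that $\A_m^{(\beta)}$ acts entrywise, so $(\A_m^{(\beta)}w_m^{a_m})(z_{m,i_m})=\mathsf G_m^{(\beta)}(i_m)(a_m^-,a_m^+)$. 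This establishes \eqref{eq:tt_E_entry}.

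For part (i), \eqref{eq:tt_E_entry} is a product $\mathsf U_{<s}^{(\beta)}(\overline{i_{<s}},\alpha_{s-1})\,\mathsf U_{>s}^{(\beta)}(\overline{i_{>s}},\alpha_s)$, using the definitions \eqref{eq:partial trains}; note the transpose in $\mathsf U_{>s}$ turns the column vector into a row. The Kronecker convention \eqref{eq:kron_convention} then identifies this with $(\mathsf U_{<s}^{(\beta)}\otimes\mathsf U_{>s}^{(\beta)})(\overline{i_{<s}i_{>s}},\overline{\alpha_{s-1}\alpha_s})$.

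The only point needing care is the ordering. By \eqref{eq:multiindex} the lexicographic merged index $\overline{i_{-s}}$ coincides with $\overline{\,\overline{i_{<s}}\,\overline{i_{>s}}\,}$, because the leftmost indices vary slowest and all indices below $s$ precede those above it. Likewise, the column index $a_s=\overline{\alpha_{s-1}\alpha_s}$ matches the Kronecker column ordering. The boundary cases $s=1$ and $s=d$ follow from the conventions $\mathsf U_{<1}=1$ and $\mathsf U_{>d}=1$.

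For part (ii), write $\mathsf M_{\alpha_{s-1}\alpha_s}=\mathrm e_{\alpha_{s-1}}\mathrm e_{\alpha_s}^\top$. For a row vector $x^\top$ and a column vector $y$,
\[
x^\top\mathsf M_{\alpha_{s-1}\alpha_s}y=x(\alpha_{s-1})\,y(\alpha_s).
\]
Take $x^\top$ to be the left partial product, which is a $1\times r_{s-1}$ row since $r_0=1$, and $y$ the right partial product, an $r_s\times1$ column. This reproduces \eqref{eq:tt_E_entry}.

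The main obstacle is bookkeeping rather than substance. Collapsing the deltas correctly requires checking that every non-active bond is summed exactly once, and the merged-index orderings must be consistent with the Kronecker convention.
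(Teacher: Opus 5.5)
Your proposal is correct and follows the paper's proof. Part (i) reads the product form \eqref{eq:tt_E_entry} through the Kronecker convention \eqref{eq:kron_convention} with $\overline{i_{-s}}=\overline{i_{<s}i_{>s}}$, and part (ii) uses $\mathsf M_{\alpha_{s-1}\alpha_s}=\mathrm e_{\alpha_{s-1}}\mathrm e_{\alpha_s}^\top$ to select the two factors. The only difference is that you also derive \eqref{eq:tt_E_entry} from \eqref{eq:en-e} and the TT pattern \eqref{eq:tt_pattern} by collapsing the Kronecker deltas, whereas the paper simply takes that entry formula as its starting point; your added step is correct.
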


\begin{proof}
Since $\overline{i_{-s}}=\overline{i_{<s}i_{>s}}$, then $\mathsf U_{<s}^{(\beta)}
\otimes
\mathsf U_{>s}^{(\beta)} \in \mathbb{R}^{N_{-s} \times r_{s-1}r_{s}}$. By the Kronecker convention \eqref{eq:kron_convention},
\begin{align*}
\bigl(
\mathsf U_{<s}^{(\beta)}
\otimes
\mathsf U_{>s}^{(\beta)}
\bigr)
\bigl(
\overline{i_{<s}i_{>s}},
\overline{\alpha_{s-1}\alpha_s}
\bigr)=\mathsf U_{<s}^{(\beta)}
\bigl(\overline{i_{<s}},\alpha_{s-1}\bigr)
\mathsf U_{>s}^{(\beta)}
\bigl(\overline{i_{>s}},\alpha_s\bigr).
\end{align*}
Observe that the right-hand side here is given by
\begin{align*}
\mathsf U_{<s}^{(\beta)}
\bigl(\overline{i_{<s}},\alpha_{s-1}\bigr)
\mathsf U_{>s}^{(\beta)}
\bigl(\overline{i_{>s}},\alpha_s\bigr) = \left[
\mathsf G_1^{(\beta)}(i_1)\cdots
\mathsf G_{s-1}^{(\beta)}(i_{s-1})
\right](1,\alpha_{s-1})\left[
\mathsf G_{s+1}^{(\beta)}(i_{s+1})\cdots
\mathsf G_d^{(\beta)}(i_d)
\right](\alpha_s,1),
\end{align*}
which coincides with
$\mathsf E_s^{(\beta)}
(
\overline{i_{-s}},
\overline{\alpha_{s-1}\alpha_s}
)$
by \eqref{eq:tt_E_entry}. Hence, \eqref{eq:tt_E_matrix} holds.

The two factors in \eqref{eq:tt_E_entry} can equivalently be selected by the corresponding coordinate vectors:
\begin{equation*}
\begin{aligned}
\mathsf E_s^{(\beta)}
\bigl(
\overline{i_{-s}},
\overline{\alpha_{s-1}\alpha_s}
\bigr)&=
\left[
\mathsf G_1^{(\beta)}(i_1)\cdots
\mathsf G_{s-1}^{(\beta)}(i_{s-1})
\,\mathrm e_{\alpha_{s-1}}
\right]
\left[
\mathrm e_{\alpha_s}^{\top}
\mathsf G_{s+1}^{(\beta)}(i_{s+1})\cdots
\mathsf G_d^{(\beta)}(i_d)
\right]
 \\ &=
\left[
\mathsf G_1^{(\beta)}(i_1)\cdots
\mathsf G_{s-1}^{(\beta)}(i_{s-1})
\right]
\mathsf M_{\alpha_{s-1}\alpha_s}
\left[
\mathsf G_{s+1}^{(\beta)}(i_{s+1})\cdots
\mathsf G_d^{(\beta)}(i_d)
\right].
\end{aligned}
\end{equation*}
Removing the brackets by associativity of matrix multiplication proves \eqref{eq:tt_E_matrix_unit_representation}.
\end{proof}

The representation \eqref{eq:tt_E_matrix_unit_representation}
writes each entry of $\mathsf E_s^{(\beta)}$ as a scalar TT contraction.
Since $\mathsf E_s^{(\beta)}$ determines the columns of
$\Psi_s^{(\beta)}$ through \eqref{eq:Psi_beta}, the compressed Gramian
requires tensor-grid averages of products of such contractions, with the
active coordinates pinned by the column indices. The following recursion
computes these averages efficiently.

\begin{lemma}\label{lem:averaged_tt_recursion}
Let $(p_m)_{m=0}^{d}$ and $(q_m)_{m=0}^{d}$ be two rank sequences with
$p_0=q_0=1$, and consider
$\mathsf A_m(j_m)\in\R^{p_{m-1}\times p_m}$ and
$\mathsf B_m(j_m)\in\R^{q_{m-1}\times q_m}$
for $j_m=1,\ldots,n_m$ and $m=1,\ldots,d$.
Define the recursion
\begin{equation}\label{eq:averaged_tt_recursion}
\mathsf Z_0:=1,
\qquad
\mathsf Z_m
:=
\frac{1}{n_m}
\sum_{j_m=1}^{n_m}
\mathsf A_m(j_m)^\top
\mathsf Z_{m-1}
\mathsf B_m(j_m)
\in\R^{p_m\times q_m},
\qquad
m=1,\ldots,d.
\end{equation}
Then
\begin{equation}\label{eq:averaged_tt_recursion_result}
\mathsf Z_m
=
\frac{1}{\prod_{\ell=1}^{m}n_\ell}
\sum_{j_1=1}^{n_1}\cdots\sum_{j_m=1}^{n_m}
\bigl[
\mathsf A_1(j_1)\cdots\mathsf A_m(j_m)
\bigr]^\top
\bigl[
\mathsf B_1(j_1)\cdots\mathsf B_m(j_m)
\bigr].
\end{equation}
If the $m$-th coordinate is pinned to a fixed sample $i_m$, then
\begin{equation}\label{eq:pinned_update}
\frac{1}{n_m}
\sum_{j_m=1}^{n_m}
\delta_{j_mi_m}\,
\mathsf A_m(j_m)^\top
\mathsf Z_{m-1}
\mathsf B_m(j_m)
=
\frac{1}{n_m}
\mathsf A_m(i_m)^\top
\mathsf Z_{m-1}
\mathsf B_m(i_m).
\end{equation}
\end{lemma}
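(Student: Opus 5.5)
The identity \eqref{eq:averaged_tt_recursion_result} follows by induction on $m$, with the base case coming from the convention for empty products; the pinned identity \eqref{eq:pinned_update} is immediate.

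\emph{Base case.} For $m=0$ the statement reduces to $\mathsf Z_0=1$. Interpret the empty product of matrices as the $1\times 1$ identity, which is consistent with $p_0=q_0=1$, and the empty average as the single term. Alternatively, one can start at $m=1$: there the recursion gives $\mathsf Z_1=\frac{1}{n_1}\sum_{j_1}\mathsf A_1(j_1)^\top\mathsf B_1(j_1)$, which is exactly \eqref{eq:averaged_tt_recursion_result} for $m=1$.

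\emph{Inductive step.} Assume \eqref{eq:averaged_tt_recursion_result} holds for $m-1$ and substitute it into \eqref{eq:averaged_tt_recursion}. By linearity, the outer factors $\mathsf A_m(j_m)^\top$ and $\mathsf B_m(j_m)$ can be moved inside the sum over $(j_1,\ldots,j_{m-1})$. The normalizations combine as $\frac{1}{n_m}\cdot\frac{1}{\prod_{\ell<m}n_\ell}=\frac{1}{\prod_{\ell\le m}n_\ell}$. The summand then becomes
\[
\mathsf A_m(j_m)^\top\bigl[\mathsf A_1(j_1)\cdots\mathsf A_{m-1}(j_{m-1})\bigr]^\top\bigl[\mathsf B_1(j_1)\cdots\mathsf B_{m-1}(j_{m-1})\bigr]\mathsf B_m(j_m).
\]
Using $(XY)^\top=Y^\top X^\top$ and associativity, this equals $\bigl[\mathsf A_1(j_1)\cdots\mathsf A_m(j_m)\bigr]^\top\bigl[\mathsf B_1(j_1)\cdots\mathsf B_m(j_m)\bigr]$. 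Merging the $j_m$-sum with the inner sums gives the full $m$-fold sum, which closes the induction. A dimension check confirms the products are well formed and that the result lies in $\R^{p_m\times q_m}$.

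\emph{Pinned identity.} Equation \eqref{eq:pinned_update} follows directly from the sifting property $\sum_{j}\delta_{ji}x_j=x_i$, applied entrywise to the matrix-valued summand.

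No step is a real obstacle. The only point needing care is the transpose reversal, which ensures that the left chain is accumulated in the correct order. One may also remark, though it is not needed for the proof, that pinning in the recursion corresponds to inserting $\delta_{j_mi_m}$ inside the full average in \eqref{eq:averaged_tt_recursion_result}: the same induction goes through verbatim with the pinned update substituted at step $m$.
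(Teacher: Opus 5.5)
Your proof is correct and follows the same argument as the paper. Both proceed by induction on $m$ from the base case $m=1$, substitute the induction hypothesis into the recursion, and close the step using linearity of finite sums and associativity; you also spell out the transpose reversal $(XY)^\top=Y^\top X^\top$, which the paper uses implicitly. The pinned identity is read off directly from the Kronecker delta, as in the paper.
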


\begin{proof}
We prove \eqref{eq:averaged_tt_recursion_result} by induction on $m$.
For $m=1$, we obtain
\begin{align*}
\mathsf Z_1
=
\frac{1}{n_1}
\sum_{j_1=1}^{n_1}
\mathsf A_1(j_1)^\top\mathsf B_1(j_1),
\end{align*}
which is the base case for \eqref{eq:averaged_tt_recursion_result}.
Suppose that it holds at level $m-1$. Substituting the induction hypothesis
into \eqref{eq:averaged_tt_recursion} gives
\begin{align*}
\mathsf Z_m
&=
\frac{1}{n_m}
\sum_{j_m=1}^{n_m}
\mathsf A_m(j_m)^\top
\left[
\frac{1}{\prod_{\ell=1}^{m-1}n_\ell}
\sum_{j_1=1}^{n_1}\cdots
\sum_{j_{m-1}=1}^{n_{m-1}}
\right.
\\[-1mm]
&\hspace{35mm}\left.
\bigl[
\mathsf A_1(j_1)\cdots
\mathsf A_{m-1}(j_{m-1})
\bigr]^\top
\bigl[
\mathsf B_1(j_1)\cdots
\mathsf B_{m-1}(j_{m-1})
\bigr]
\right]
\mathsf B_m(j_m)
\\
&=
\frac{1}{\prod_{\ell=1}^{m}n_\ell}
\sum_{j_1=1}^{n_1}\cdots\sum_{j_m=1}^{n_m}
\bigl[
\mathsf A_1(j_1)\cdots\mathsf A_m(j_m)
\bigr]^\top
\bigl[
\mathsf B_1(j_1)\cdots\mathsf B_m(j_m)
\bigr],
\end{align*}
where we used linearity of the finite sums and associativity of matrix
multiplication. It completes the induction step and proves
\eqref{eq:averaged_tt_recursion_result}. Formula
\eqref{eq:pinned_update} follows directly from the Kronecker delta.
\end{proof}
\noindent 
Lemma~\ref{lem:averaged_tt_recursion} shows that the tensor-grid average of
the product of two TT contractions can be computed one coordinate at a time.
We refer to such an averaged product as a double-layer TT contraction. The two
layers may have different rank sequences. We remark beforehand that two
instances occur below: $p_m=q_m=r_m$ for the compressed Gramian in
Theorem~\ref{thm:tt_gramian_forward_backward}, and $p_m=r_m$, with $(q_m)$ the TT ranks of the collocated residual, for the compressed residual in Proposition~\ref{prop:tt_compressed_gradient}.

Motivated by the recursive formula \eqref{eq:averaged_tt_recursion}, we
distinguish the two possible directions in which an averaged double-layer
contraction can be propagated through the $m$-th pair of TT cores. To formalize this distinction, we introduce the forward and backward transfer operators. We note that these transfer operator sweeps are standard environment contractions used in the tensor-train and density-matrix renormalization group literature; cf. \cite{HoltzRohwedderSchneider2012,oseledets2011tensor}.
\begin{proposition}
\label{prop:tt_transfer_operators}
Let $\mathsf A_m(j_m)\in\R^{p_{m-1}\times p_m}$ and
$\mathsf B_m(j_m)\in\R^{q_{m-1}\times q_m}$ be as in
Lemma~\ref{lem:averaged_tt_recursion}. For
\[
\mathsf X\in\R^{p_{m-1}\times q_{m-1}},
\qquad
\mathsf Y\in\R^{p_m\times q_m},
\]
define the forward transfer operator
\begin{equation}\label{eq:forward_transfer_operator}
\Tfwd_m
:
\R^{p_{m-1}\times q_{m-1}}
\to
\R^{p_m\times q_m},
\qquad
\Tfwd_m(\mathsf X)
:=
\frac{1}{n_m}
\sum_{j_m=1}^{n_m}
\bigl(\mathsf A_m(j_m)\bigr)^\top
\mathsf X
\,\mathsf B_m(j_m),
\end{equation}
and the backward transfer operator
\begin{equation}\label{eq:backward_transfer_operator}
\Tbwd_m
:
\R^{p_m\times q_m}
\to
\R^{p_{m-1}\times q_{m-1}},
\qquad
\Tbwd_m(\mathsf Y)
:=
\frac{1}{n_m}
\sum_{j_m=1}^{n_m}
\mathsf A_m(j_m)
\,\mathsf Y
\bigl(\mathsf B_m(j_m)\bigr)^\top.
\end{equation}
With respect to the Frobenius inner product these operators are adjoint:
\begin{equation}\label{eq:transfer_adjoint_identity}
\big\langle
\Tfwd_m(\mathsf X),
\mathsf Y
\big\rangle_{\mathrm F}
=
\big\langle
\mathsf X,
\Tbwd_m(\mathsf Y)
\big\rangle_{\mathrm F},
\end{equation}
where
$\langle\mathsf U,\mathsf V\rangle_{\mathrm F}
:=\operatorname{tr}(\mathsf U^\top\mathsf V)$
for matrices $\mathsf U,\mathsf V$ of the same size.
\end{proposition}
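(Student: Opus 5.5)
The claim is the adjoint identity \eqref{eq:transfer_adjoint_identity}. The plan is to reduce it, term by term, to the cyclic invariance of the trace.

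By linearity of the Frobenius inner product in each argument, and since both $\Tfwd_m$ and $\Tbwd_m$ are averages over $j_m$ with the same weight $1/n_m$, it suffices to prove, for each fixed sample $j_m$ with $\mathsf A:=\mathsf A_m(j_m)$ and $\mathsf B:=\mathsf B_m(j_m)$, the identity
\[
\langle \mathsf A^\top\mathsf X\mathsf B,\mathsf Y\rangle_{\mathrm F}
=
\langle \mathsf X,\mathsf A\mathsf Y\mathsf B^\top\rangle_{\mathrm F}.
\]
Summing over $j_m$ and multiplying by $1/n_m$ then gives the claim.

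First I check that both sides are well defined. $\mathsf A^\top\mathsf X\mathsf B$ is of size $p_m\times q_m$, matching $\mathsf Y$. Likewise $\mathsf A\mathsf Y\mathsf B^\top$ is of size $p_{m-1}\times q_{m-1}$, matching $\mathsf X$.

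For the single-sample identity, I write out the left-hand side as
\[
\operatorname{tr}\bigl((\mathsf A^\top\mathsf X\mathsf B)^\top\mathsf Y\bigr)
=\operatorname{tr}(\mathsf B^\top\mathsf X^\top\mathsf A\mathsf Y).
\]
Cyclic invariance of the trace, moving $\mathsf B^\top$ to the end, turns this into $\operatorname{tr}(\mathsf X^\top\mathsf A\mathsf Y\mathsf B^\top)$. This is exactly $\langle\mathsf X,\mathsf A\mathsf Y\mathsf B^\top\rangle_{\mathrm F}$.

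The argument is entirely routine; the only point needing care is the dimension bookkeeping that guarantees every product in the cyclic permutation is conformal. As a cross-check, one could instead use vectorization: $\Tfwd_m$ corresponds to the matrix $\frac1{n_m}\sum_{j_m}\mathsf B_m(j_m)^\top\otimes\mathsf A_m(j_m)^\top$ acting on the column-major vectorization $\operatorname{vec}_{\mathrm{col}}(\mathsf X)$. The transpose of that matrix is precisely the matrix representing $\Tbwd_m$. This is only a consistency check, though: the paper's $\vecop$ is row-major, which would swap the Kronecker factor order. The trace argument avoids this convention issue entirely, so it is the one I would present.
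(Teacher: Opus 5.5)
Your proof is correct and follows essentially the same route as the paper, which expands the Frobenius inner product sample by sample and applies cyclicity of the trace to obtain $\operatorname{tr}(\mathsf X^\top\mathsf A_m(j_m)\mathsf Y\mathsf B_m(j_m)^\top)$. Your reduction to a single sample by linearity and your dimension checks simply make that same computation explicit.
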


\begin{proof}
By the definition of the Frobenius inner product and cyclicity of the trace,
\begin{align*}
\big\langle
\Tfwd_m(\mathsf X),
\mathsf Y
\big\rangle_{\mathrm F}
&=
\frac{1}{n_m}
\sum_{j_m=1}^{n_m}
\operatorname{tr}
\left(
\bigl[
\bigl(\mathsf A_m(j_m)\bigr)^\top
\mathsf X
\,\mathsf B_m(j_m)
\bigr]^\top
\mathsf Y
\right)
\\
&=
\frac{1}{n_m}
\sum_{j_m=1}^{n_m}
\operatorname{tr}
\left(
\mathsf X^\top
\mathsf A_m(j_m)
\,\mathsf Y
\bigl(\mathsf B_m(j_m)\bigr)^\top
\right)=
\big\langle
\mathsf X,
\Tbwd_m(\mathsf Y)
\big\rangle_{\mathrm F},
\end{align*}
which proves \eqref{eq:transfer_adjoint_identity}. 
\end{proof}
\noindent
Throughout, we write $\Tfwd_m^{(\beta,\gamma)}$ and
$\Tbwd_m^{(\beta,\gamma)}$ for the instance
$(\mathsf A_m,\mathsf B_m)=\bigl(\mathsf G_m^{(\beta)},\mathsf G_m^{(\gamma)}\bigr)$
of \eqref{eq:forward_transfer_operator}--\eqref{eq:backward_transfer_operator},
in which both layers are operated trains and $p_m=q_m=r_m$, and
$\Tfwd_m^{(\beta,\rho)}$, $\Tbwd_m^{(\beta,\rho)}$ for the mixed instance
$(\mathsf A_m,\mathsf B_m)=\bigl(\mathsf G_m^{(\beta)},\mathsf Y_m\bigr)$, in
which the second layer is the collocated residual train of
\eqref{eq:tt_residual_representation_gradient}, with $p_m=r_m$ and $(q_m)$
denoting its TT ranks. The adjoint identity
\eqref{eq:transfer_adjoint_identity} then applies to both.

The partial train matrices \eqref{eq:partial trains}  allow us to define the left and right interface matrices. For an operator pair $(\beta,\gamma)$,
we define
\begin{align}
\mathsf L_{<s}^{(\beta,\gamma)}
:=
\frac{1}{N_{<s}}
\bigl(\mathsf U_{<s}^{(\beta)}\bigr)^\top
\mathsf U_{<s}^{(\gamma)}
\in\R^{r_{s-1}\times r_{s-1}}, \qquad
\mathsf R_{>s}^{(\beta,\gamma)}
:=
\frac{1}{N_{>s}}
\bigl(\mathsf U_{>s}^{(\beta)}\bigr)^\top
\mathsf U_{>s}^{(\gamma)}
\in\R^{r_s\times r_s}.
\end{align}
Proposition~\ref{prop:tt_transfer_operators} gives the compact recursions
\begin{equation}\label{eq:TT_recursions_interfaces}
\begin{aligned}
\mathsf L_{<1}^{(\beta,\gamma)}
&=1, \quad
&
\mathsf L_{<m+1}^{(\beta,\gamma)}
&=
\Tfwd_m^{(\beta,\gamma)}
\bigl(\mathsf L_{<m}^{(\beta,\gamma)}\bigr),
&& m=1,\ldots,d-1,
\\
\mathsf R_{>d}^{(\beta,\gamma)}
&=1, \quad
&
\mathsf R_{>m-1}^{(\beta,\gamma)}
&=
\Tbwd_m^{(\beta,\gamma)}
\bigl(\mathsf R_{>m}^{(\beta,\gamma)}\bigr),
&& m=d,\ldots,2.
\end{aligned}
\end{equation}
Thus, one forward sweep computes all left interfaces, and one backward
sweep with the adjoint transfers computes all right interfaces. 

With this rather technical preparation in place, we are ready to characterize the operator-indexed blocks of the compressed Gramian
\eqref{eq:compressed_gramian_operator_blocks}.

\begin{theorem}[Algebraic characterization of the compressed TT Gramian]
\label{thm:tt_gramian_forward_backward}
For every operator pair $(\beta,\gamma)$, the following statements hold.

\begin{enumerate}
\item For every $s=1,\ldots,d$, we have
\begin{equation}\label{eq:tt_gramian_diagonal_final}
\Upsilon_{ss}^{(\beta,\gamma)}
=
\frac{1}{n_s}I_{n_s}\otimes
\left(
\mathsf L_{<s}^{(\beta,\gamma)}\otimes
\mathsf R_{>s}^{(\beta,\gamma)}
\right)
\in
\R^{n_sr_{s-1}r_s\times n_sr_{s-1}r_s}.
\end{equation}

\item Let $s<t$, and fix active samples $i_s,i_t$ and bond pairs
$(\alpha_{s-1},\alpha_s)$ and
$(\alpha'_{t-1},\alpha'_t)$. Define
\begin{align}
\mathsf Z_s
&:=
\frac{1}{n_s}
\mathsf M_{\alpha_{s-1}\alpha_s}^\top
\mathsf L_{<s}^{(\beta,\gamma)}
\mathsf G_s^{(\gamma)}(i_s)
\in\R^{r_s\times r_s},
\label{eq:bridge_initialization}
\\
\mathsf Z_m
&:=
\Tfwd_m^{(\beta,\gamma)}(\mathsf Z_{m-1})
\in\R^{r_m\times r_m},
\qquad s<m<t,
\label{eq:bridge_propagation}
\\
\mathsf Z_t
&:=
\frac{1}{n_t}
\bigl(\mathsf G_t^{(\beta)}(i_t)\bigr)^\top
\mathsf Z_{t-1}
\mathsf M_{\alpha'_{t-1}\alpha'_t}
\in\R^{r_t\times r_t}.
\label{eq:bridge_termination}
\end{align}
Then $\Upsilon_{st}^{(\beta,\gamma)}
\in
\R^{
n_sr_{s-1}r_s
\times
n_tr_{t-1}r_t}$ with the entries given by
\begin{equation}\label{eq:tt_gramian_offdiagonal_final}
\Upsilon_{st}^{(\beta,\gamma)}
\bigl(
\overline{i_s\alpha_{s-1}\alpha_s},
\overline{i_t\alpha'_{t-1}\alpha'_t}
\bigr)
=
\big\langle
\mathsf Z_t,
\mathsf R_{>t}^{(\beta,\gamma)}
\big\rangle_{\mathrm F},
\end{equation}
If $t=s+1$, the propagation step
\eqref{eq:bridge_propagation} is absent. The opposite block is obtained from
\begin{equation}\label{eq:gramian_block_symmetry}
\Upsilon_{ts}^{(\gamma,\beta)}
=
\bigl(\Upsilon_{st}^{(\beta,\gamma)}\bigr)^\top.
\end{equation}
\end{enumerate}
\end{theorem}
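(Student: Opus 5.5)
The diagonal blocks are handled exactly as in the CP case. By Remark~\ref{rem:active_ordering}, $\Psi_s^{(\beta)}=\Pi_s(I_{n_s}\otimes\mathsf E_s^{(\beta)})$, so with $N_\D=n_sN_{-s}$ one gets
\[
\Upsilon_{ss}^{(\beta,\gamma)}=\tfrac1{n_s}I_{n_s}\otimes\Bigl[\tfrac1{N_{-s}}(\mathsf E_s^{(\beta)})^\top\mathsf E_s^{(\gamma)}\Bigr].
\]
Next I insert the Kronecker factorization $\mathsf E_s^{(\beta)}=\mathsf U_{<s}^{(\beta)}\otimes\mathsf U_{>s}^{(\beta)}$ from Proposition~\ref{prop:tt_inactive_contraction_structure}. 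The mixed-product rule $(A\otimes B)^\top(C\otimes D)=(A^\top C)\otimes(B^\top D)$ and the splitting $N_{-s}=N_{<s}N_{>s}$ then give $\mathsf L_{<s}^{(\beta,\gamma)}\otimes\mathsf R_{>s}^{(\beta,\gamma)}$. This proves \eqref{eq:tt_gramian_diagonal_final}, including the column ordering $\overline{\alpha_{s-1}\alpha_s}$, which matches \eqref{eq:kron_convention}. The boundary cases $s=1$ and $s=d$ are covered by the conventions $\mathsf U_{<1}=\mathsf U_{>d}=1$.

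For the off-diagonal blocks with $s<t$, I start from the entrywise form \eqref{eq:Psi_beta}:
\[
\Upsilon_{st}^{(\beta,\gamma)}(\overline{i_s\alpha_{s-1}\alpha_s},\overline{i_t\alpha'_{t-1}\alpha'_t})=\frac1{N_\D}\sum_{j_1,\ldots,j_d}\delta_{j_si_s}\delta_{j_ti_t}\,\mathsf E_s^{(\beta)}(\overline{j_{-s}},\overline{\alpha_{s-1}\alpha_s})\,\mathsf E_t^{(\gamma)}(\overline{j_{-t}},\overline{\alpha'_{t-1}\alpha'_t}).
\]
Each $\mathsf E$ factor is then written as a scalar TT contraction via \eqref{eq:tt_E_matrix_unit_representation}. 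This turns the summand into a product of two scalar trains:
\begin{itemize}
\item the $\beta$-layer, with cores $\mathsf G_m^{(\beta)}(j_m)$ for $m\ne s$ and the matrix unit $\mathsf M_{\alpha_{s-1}\alpha_s}$ at position $s$;
\item the $\gamma$-layer, with cores $\mathsf G_m^{(\gamma)}(j_m)$ for $m\neq t$ and $\mathsf M_{\alpha'_{t-1}\alpha'_t}$ at position $t$.
\end{itemize}
Because a scalar equals its own transpose, the summand is exactly $[\mathsf A_1\cdots\mathsf A_d]^\top[\mathsf B_1\cdots\mathsf B_d]$ with $\mathsf A_m,\mathsf B_m$ these cores. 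Note that the matrix units do not depend on $j_s$ and $j_t$; at positions $s$ and $t$ the dependence enters only through the delta pins.

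I then apply Lemma~\ref{lem:averaged_tt_recursion} coordinate by coordinate, splitting $1/N_\D=\prod_m 1/n_m$ so that one factor is attached to each step. Running the recursion gives the following stages.
\begin{itemize}
\item For $m<s$ it reproduces $\mathsf L_{<s}^{(\beta,\gamma)}$ through \eqref{eq:TT_recursions_interfaces}.
\item At $m=s$ the $\beta$-core is the constant $\mathsf M_{\alpha_{s-1}\alpha_s}$ and the $\gamma$-core is pinned by \eqref{eq:pinned_update}, which gives $\frac1{n_s}\mathsf M^\top\mathsf L_{<s}\mathsf G_s^{(\gamma)}(i_s)$, i.e.\ \eqref{eq:bridge_initialization}.
\item For $s<m<t$ both layers are operated cores, giving the forward transfers \eqref{eq:bridge_propagation}.
\item At $m=t$ the roles are swapped: the $\beta$-core $\mathsf G_t^{(\beta)}(i_t)$ is pinned and the $\gamma$-core is $\mathsf M_{\alpha'_{t-1}\alpha'_t}$, which gives \eqref{eq:bridge_termination}.
\item For $m>t$ the sweep continues with forward transfers up to $\mathsf Z_d\in\R^{1\times1}$, which is the entry itself.
\end{itemize}
In the last stage, rather than propagating forward I use the adjoint identity \eqref{eq:transfer_adjoint_identity} repeatedly: $\mathsf Z_d=\langle \Tfwd_d\cdots\Tfwd_{t+1}\mathsf Z_t,1\rangle_{\mathrm F}=\langle\mathsf Z_t,\Tbwd_{t+1}\cdots\Tbwd_d(1)\rangle_{\mathrm F}=\langle\mathsf Z_t,\mathsf R_{>t}\rangle_{\mathrm F}$, using the backward recursion \eqref{eq:TT_recursions_interfaces}. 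Finally, the symmetry relation \eqref{eq:gramian_block_symmetry} is immediate from $\Upsilon_{ts}^{(\gamma,\beta)}=\frac1{N_\D}(\Psi_t^{(\gamma)})^\top\Psi_s^{(\beta)}$, exactly as in Theorem~\ref{prop:cp_compressed_gramian_blocks}.

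I expect the main obstacle to be the careful bookkeeping at the two pinned positions. Lemma~\ref{lem:averaged_tt_recursion} assumes each step averages over $j_m$ with cores depending on $j_m$. I must check that inserting the constant matrix unit together with the delta pin still fits the recursion once the $1/n_m$ factor is accounted for, and that the transposes land correctly so that $\mathsf M^\top$ appears at $s$ and $\mathsf M$ (not its transpose) at $t$. I would verify this first by writing the product of the two scalar trains explicitly and matching it term by term against \eqref{eq:averaged_tt_recursion_result} before invoking the recursion.
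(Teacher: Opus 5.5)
Your proposal is correct and follows essentially the same route as the paper. The diagonal blocks go through Remark~\ref{rem:active_ordering}, the Kronecker factorization $\mathsf E_s^{(\beta)}=\mathsf U_{<s}^{(\beta)}\otimes\mathsf U_{>s}^{(\beta)}$ and the mixed-product rule; the off-diagonal entries are an averaged double-layer TT contraction evaluated by Lemma~\ref{lem:averaged_tt_recursion}, pinned at $s$ and $t$ and closed against $\mathsf R_{>t}^{(\beta,\gamma)}$ through the adjoint identity. The bookkeeping concern you raise is harmless, since $\delta_{j_si_s}\mathsf M_{\alpha_{s-1}\alpha_s}$ and $\delta_{j_ti_t}\mathsf M_{\alpha'_{t-1}\alpha'_t}$ are themselves $j$-indexed matrix families, to which the lemma and \eqref{eq:pinned_update} apply directly.
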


\begin{proof}

Using Remark~\ref{rem:active_ordering},
\eqref{eq:tt_E_matrix}, and the Kronecker identity, we obtain
\begin{align*}
\Upsilon_{ss}^{(\beta,\gamma)}
= \frac{1}{N_\D}
\bigl(\Psi_s^{(\beta)}\bigr)^\top
\Psi_s^{(\gamma)} &= \frac{1}{n_s}I_{n_s}\otimes
\left[
\frac{1}{N_{-s}}
\bigl(\mathsf E_s^{(\beta)}\bigr)^\top
\mathsf E_s^{(\gamma)}
\right]
\\
&=
\frac{1}{n_s}I_{n_s}\otimes
\left[
\frac{1}{N_{<s}N_{>s}}
\bigl(
\mathsf U_{<s}^{(\beta)}
\otimes
\mathsf U_{>s}^{(\beta)}
\bigr)^\top
\bigl(
\mathsf U_{<s}^{(\gamma)}
\otimes
\mathsf U_{>s}^{(\gamma)}
\bigr)
\right]
\\
&=
\frac{1}{n_s}I_{n_s}\otimes
\left(
\mathsf L_{<s}^{(\beta,\gamma)}
\otimes
\mathsf R_{>s}^{(\beta,\gamma)}
\right),
\end{align*}
which proves \eqref{eq:tt_gramian_diagonal_final}.

We proceed with the computation of the off-diagonal blocks \eqref{eq:tt_gramian_offdiagonal_final}. Without loss of generality, assume that $s<t$ and fix the two column indices
appearing in \eqref{eq:tt_gramian_offdiagonal_final}. Denote the
corresponding columns by
\[
\bm\psi_s^{(\beta)}
:=
\Psi_s^{(\beta)}
\bigl(:,\overline{i_s\alpha_{s-1}\alpha_s}\bigr),
\qquad
\bm\psi_t^{(\gamma)}
:=
\Psi_t^{(\gamma)}
\bigl(:,\overline{i_t\alpha_{t-1}'\alpha_t'}\bigr).
\]
By \eqref{eq:Psi_beta} and
\eqref{eq:tt_E_matrix_unit_representation}, their entries are given by
\begin{equation}\label{TT inactive columns}
\begin{aligned}
\bm\psi_s^{(\beta)}(\overline{j_1\ldots j_d})
&=
\mathsf G_1^{(\beta)}(j_1)\cdots
\mathsf G_{s-1}^{(\beta)}(j_{s-1})
\bigl[
\delta_{j_si_s}\,
\mathsf M_{\alpha_{s-1}\alpha_s}
\bigr]
\mathsf G_{s+1}^{(\beta)}(j_{s+1})\cdots
\mathsf G_d^{(\beta)}(j_d),
\\
\bm\psi_t^{(\gamma)}(\overline{j_1\ldots j_d})
&=
\mathsf G_1^{(\gamma)}(j_1)\cdots
\mathsf G_{t-1}^{(\gamma)}(j_{t-1})
\bigl[
\delta_{j_ti_t}\,
\mathsf M_{\alpha_{t-1}'\alpha_t'}
\bigr]
\mathsf G_{t+1}^{(\gamma)}(j_{t+1})\cdots
\mathsf G_d^{(\gamma)}(j_d).
\end{aligned}
\end{equation}
Consequently, an entry of the off-diagonal operator-indexed Gramian block is given by
\begin{align*}
&
\Upsilon_{st}^{(\beta,\gamma)}
\bigl(
\overline{i_s\alpha_{s-1}\alpha_s},
\overline{i_t\alpha_{t-1}'\alpha_t'}
\bigr)
=
\frac{1}{N_\D}
\bigl(\bm\psi_s^{(\beta)}\bigr)^\top
\bm\psi_t^{(\gamma)}
=
\frac{1}{N_\D}
\sum_{j_1=1}^{n_1}\cdots\sum_{j_d=1}^{n_d}
\bm\psi_s^{(\beta)}(\overline{j_1\ldots j_d})
\bm\psi_t^{(\gamma)}(\overline{j_1\ldots j_d}).
\end{align*}
Observe that this is nothing but the averaged double-layer contraction 
\eqref{eq:averaged_tt_recursion_result} of the two tensor trains
\eqref{TT inactive columns}. We evaluate it by following the
diagram in Fig.~\ref{Diag: Double-layer contraction diagram for the off-diagonal Gramian block} and contracting the two trains from left to right.

\begin{figure}[H]
\centering

\begin{tikzpicture}[
  x=1.62cm,y=1.30cm,>=Latex,
  core/.style={draw,rounded corners=2pt,minimum width=1.10cm,
               minimum height=0.64cm,inner sep=1pt,font=\footnotesize},
  mu/.style={core,fill=gray!25},
  dots/.style={font=\footnotesize,inner sep=1pt},
  bond/.style={line width=0.5pt},
  share/.style={densely dotted,gray!65,line width=0.5pt},
  cut/.style={dashed,gray!75,line width=0.6pt},
  reg/.style={fill=gray!8,rounded corners=2pt},
  bridge/.style={fill=blue!8,rounded corners=2pt},
  lab/.style={font=\footnotesize,inner sep=1pt}]

\fill[reg]    (0.48,-0.56) rectangle (3.48,1.86);
\fill[bridge] (3.52,-0.56) rectangle (7.48,1.86);
\fill[reg]    (7.52,-0.56) rectangle (9.52,1.86);

\node[core] (t1) at (1,1.28) {$\mathsf G_1^{(\beta)}$};
\node[dots] (t2) at (2,1.28) {$\cdots$};
\node[core] (t3) at (3,1.28) {$\mathsf G_{s-1}^{(\beta)}$};
\node[mu]   (t4) at (4,1.28) {$\mathsf M$};
\node[core] (t5) at (5,1.28) {$\mathsf G_{s+1}^{(\beta)}$};
\node[dots] (t6) at (6,1.28) {$\cdots$};
\node[core] (t7) at (7,1.28) {$\mathsf G_{t}^{(\beta)}$};
\node[dots] (t8) at (8,1.28) {$\cdots$};
\node[core] (t9) at (9,1.28) {$\mathsf G_{d}^{(\beta)}$};

\node[core] (b1) at (1,0) {$\mathsf G_1^{(\gamma)}$};
\node[dots] (b2) at (2,0) {$\cdots$};
\node[core] (b3) at (3,0) {$\mathsf G_{s-1}^{(\gamma)}$};
\node[core] (b4) at (4,0) {$\mathsf G_{s}^{(\gamma)}$};
\node[core] (b5) at (5,0) {$\mathsf G_{s+1}^{(\gamma)}$};
\node[dots] (b6) at (6,0) {$\cdots$};
\node[mu]   (b7) at (7,0) {$\mathsf M'$};
\node[dots] (b8) at (8,0) {$\cdots$};
\node[core] (b9) at (9,0) {$\mathsf G_{d}^{(\gamma)}$};

\foreach \a/\b in {1/2,2/3,3/4,4/5,5/6,6/7,7/8,8/9} {
  \draw[bond] (t\a.east)--(t\b.west);
  \draw[bond] (b\a.east)--(b\b.west);
}

\foreach \k in {1,3,5,9} {
  \draw[share] (t\k.south)--(b\k.north);
}

\foreach \k/\lbl in {4/{i_s},7/{i_t}} {
  \draw[bond] (t\k.south)--(b\k.north);
  \fill (\k,0.64) circle (1.6pt);
  \node[lab,anchor=west] at (\k+0.09,0.64) {$\lbl$};
}

\draw[cut] (5.5,-0.56)--(5.5,1.86);

\node[lab,anchor=south] at (1.98,1.92)
  {$\mathsf L_{<s}^{(\beta,\gamma)}$};

\node[lab,anchor=south] at (5.50,1.92)
  {$\mathsf Z_m$};

\node[lab,anchor=south] at (8.52,1.92)
  {$\mathsf R_{>t}^{(\beta,\gamma)}$};

\node[lab,anchor=north] at (5.50,-0.66)
  {bridge: $\mathsf Z_s\to\cdots\to\mathsf Z_t$};

\end{tikzpicture}
\caption{Double-layer contraction diagram for the off-diagonal Gramian block}
\label{Diag: Double-layer contraction diagram for the off-diagonal Gramian block}
\end{figure}

\medskip
\noindent\emph{Step 1:} We start by computing the averaged double-layer contraction of the left shaded region of the diagram. For $m<s$, the two layers contain the ordinary operated cores $\mathsf G_m^{(\beta)}(j_m)$ and $\mathsf G_m^{(\gamma)}(j_m)$, respectively. Repeated application of the forward
transfer operators \eqref{eq:forward_transfer_operator} yields
\begin{equation}\label{eq:offdiag_left_region}
\mathsf L_{<s}^{(\beta,\gamma)}
=
\Tfwd_{s-1}^{(\beta,\gamma)}
\circ\cdots\circ
\Tfwd_1^{(\beta,\gamma)}
\bigl(\mathsf L_{<1}^{(\beta,\gamma)}\bigr).
\end{equation}
As a result, the matrix $\mathsf L_{<s}^{(\beta,\gamma)}
\in\R^{r_{s-1}\times r_{s-1}}$ defined by
\eqref{eq:TT_recursions_interfaces} contains the double-layer contraction of
all coordinates strictly to the left of $s$.

\medskip
\noindent\emph{Step 2:} We initialize the bridge at the first active coordinate $s$. Here, the first
layer contains $\delta_{j_si_s}\mathsf M_{\alpha_{s-1}\alpha_s}$, while the
second contains $\mathsf G_s^{(\gamma)}(j_s)$. The Kronecker delta reduces
the local collocation average to the single contribution $j_s=i_s$.
Thus, \eqref{eq:pinned_update} yields
$\mathsf Z_s\in\R^{r_s\times r_s}$ in
\eqref{eq:bridge_initialization}.

\medskip
\noindent\emph{Step 3:} We now propagate through the bridge.
For every $s<m<t$, the first and second layers contain the ordinary operated
cores $\mathsf G_m^{(\beta)}(j_m)$ and
$\mathsf G_m^{(\gamma)}(j_m)$, respectively. Successive application of the forward transfer operators \eqref{eq:forward_transfer_operator} to these cores
propagates the bridge state $\mathsf Z_s$ from coordinate $s$ to coordinate
$m$, giving
\begin{equation}
\mathsf Z_m
=
\Tfwd_m^{(\beta,\gamma)}
\circ\cdots\circ
\Tfwd_{s+1}^{(\beta,\gamma)}
\bigl(\mathsf Z_s\bigr),
\qquad
s<m<t.
\end{equation}
which is obtained by iterating \eqref{eq:bridge_propagation}.

\medskip
\noindent\emph{Step 4:} We are ready to terminate the bridge at the second active coordinate $t$. Proceeding as
in Step~2, the first layer contains $\mathsf G_t^{(\beta)}(j_t)$, whereas
the second contains
$\delta_{j_ti_t}\mathsf M_{\alpha_{t-1}'\alpha_t'}$. The Kronecker delta
again reduces the local collocation average to the single contribution
$j_t=i_t$. Hence, \eqref{eq:pinned_update} yields
$\mathsf Z_t\in\R^{r_t\times r_t}$ defined by
\eqref{eq:bridge_termination}.

\medskip
\noindent\emph{Step 5:} Finally, we close the accumulated contraction against the right interface $\mathsf R_{>t}^{(\beta,\gamma)}$. Rather than propagating $\mathsf Z_t$
forward through the tail $m>t$, we contract the same tail backward using
\eqref{eq:transfer_adjoint_identity}. Repeated application of the adjoint
identity yields
\begin{align*}
\big\langle
\Tfwd_d^{(\beta,\gamma)}
\circ\cdots\circ
\Tfwd_{t+1}^{(\beta,\gamma)}(\mathsf Z_t),
\mathsf R_{>d}^{(\beta,\gamma)}
\big\rangle_{\mathrm F}=
\big \langle
\mathsf Z_t,
\Tbwd_{t+1}^{(\beta,\gamma)}
\circ\cdots\circ
\Tbwd_d^{(\beta,\gamma)}
\bigl(\mathsf R_{>d}^{(\beta,\gamma)}\bigr)
\big\rangle_{\mathrm F}=
\big\langle
\mathsf Z_t,
\mathsf R_{>t}^{(\beta,\gamma)}
\big\rangle_{\mathrm F}.
\end{align*}
which proves \eqref{eq:tt_gramian_offdiagonal_final}.

Observe that if $t=s+1$, the bridge propagation
\eqref{eq:bridge_propagation} is absent and the bridge initialized at $s$
is terminated immediately at $t$. Finally, recall our initial assumption $s<t$. If $s>t$, interchanging the two columns in the defining dot product between $\bm\psi_s^{(\beta)}$ and $\bm\psi_t^{(\gamma)}$ places the smaller active coordinate first and gives $\Upsilon_{st}^{(\beta,\gamma)}
=\bigl(\Upsilon_{ts}^{(\gamma,\beta)}\bigr)^\top$. Thus, the case $s>t$ follows from the case $s<t$, which proves
\eqref{eq:gramian_block_symmetry} and completes the characterization of the off-diagonal blocks.
\end{proof}

It remains to characterize the compressed residual
\eqref{eq:compressed_residual_operator_blocks} in the TT format, from which the gradient
expression follows by \eqref{eq:compressed_gradient}. The proof follows the
same strategy as that of the preceding theorem.
\begin{proposition}[TT representation of the compressed residual and gradient]
\label{prop:tt_compressed_gradient}
Suppose that the collocated residual admits the TT representation
\begin{equation}\label{eq:tt_residual_representation_gradient}
\rvec(\theta)(\overline{i_1\ldots i_d})
=
\mathsf Y_1(i_1)\cdots\mathsf Y_d(i_d),
\qquad
\mathsf Y_m(i_m)\in\R^{q_{m-1}\times q_m},
\qquad q_0=q_d=1.
\end{equation}
For every operator term $\beta$, define the mixed left and right interfaces
by
\begin{equation}\label{eq:tt_residual_interfaces}
\begin{aligned}
\mathsf L_{<1}^{(\beta,\rho)}
&=1,
&
\mathsf L_{<m+1}^{(\beta,\rho)}
&:=
\frac{1}{n_m}
\sum_{j_m=1}^{n_m}
\bigl(\mathsf G_m^{(\beta)}(j_m)\bigr)^\top
\mathsf L_{<m}^{(\beta,\rho)}
\mathsf Y_m(j_m),
\\
\mathsf R_{>d}^{(\beta,\rho)}
&=1,
&
\mathsf R_{>m-1}^{(\beta,\rho)}
&:=
\frac{1}{n_m}
\sum_{j_m=1}^{n_m}
\mathsf G_m^{(\beta)}(j_m)
\mathsf R_{>m}^{(\beta,\rho)}
\mathsf Y_m(j_m)^\top,
\end{aligned}
\end{equation}
where
$\mathsf L_{<s}^{(\beta,\rho)}\in
\R^{r_{s-1}\times q_{s-1}}$ and
$\mathsf R_{>s}^{(\beta,\rho)}\in
\R^{r_s\times q_s}$. For $i_s=1,\ldots,n_s$, set
\begin{equation}\label{eq:tt_gradient_local_matrix}
\mathsf P_s^{(\beta)}(i_s)
:=
\frac{1}{n_s}
\mathsf L_{<s}^{(\beta,\rho)}
\mathsf Y_s(i_s)
\bigl(\mathsf R_{>s}^{(\beta,\rho)}\bigr)^\top
\in\R^{r_{s-1}\times r_s}.
\end{equation}
Then the operator-indexed compressed residual satisfies
\begin{equation}\label{eq:tt_compressed_rhs}
\rho_{\mathrm c,s}^{(\beta)}
\bigl(\overline{i_s\alpha_{s-1}\alpha_s}\bigr)
=
\mathsf P_s^{(\beta)}(i_s)
(\alpha_{s-1},\alpha_s).
\end{equation}
\end{proposition}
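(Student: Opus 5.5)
The plan is to mirror the off-diagonal argument of Theorem~\ref{thm:tt_gramian_forward_backward}, with the second layer replaced by the residual train. By \eqref{eq:compressed_residual_operator_blocks} and the entrywise form \eqref{eq:Psi_beta}, together with the matrix-unit representation \eqref{eq:tt_E_matrix_unit_representation}, the entry $\rho_{\mathrm c,s}^{(\beta)}(\overline{i_s\alpha_{s-1}\alpha_s})$ equals
\[
\frac{1}{N_\D}\sum_{j_1,\ldots,j_d}
\Bigl[\mathsf G_1^{(\beta)}(j_1)\cdots\mathsf G_{s-1}^{(\beta)}(j_{s-1})\,\delta_{j_si_s}\mathsf M_{\alpha_{s-1}\alpha_s}\,\mathsf G_{s+1}^{(\beta)}(j_{s+1})\cdots\mathsf G_d^{(\beta)}(j_d)\Bigr]
\,\mathsf Y_1(j_1)\cdots\mathsf Y_d(j_d).
\]
Both factors are scalar TT contractions, so this is an averaged double-layer contraction in the sense of Lemma~\ref{lem:averaged_tt_recursion}. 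The first layer has ranks $p_m=r_m$ and the modified core $\delta_{j_si_s}\mathsf M_{\alpha_{s-1}\alpha_s}$ at position $s$; the second layer has ranks $q_m$ and cores $\mathsf Y_m$.

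The key steps are as follows.

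First, apply Lemma~\ref{lem:averaged_tt_recursion} up to coordinate $s-1$ with $(\mathsf A_m,\mathsf B_m)=(\mathsf G_m^{(\beta)},\mathsf Y_m)$. The resulting state is exactly the mixed left interface $\mathsf L_{<s}^{(\beta,\rho)}$ of \eqref{eq:tt_residual_interfaces}, i.e.\ iterates of $\Tfwd_m^{(\beta,\rho)}$.

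Second, take the pinned update \eqref{eq:pinned_update} at coordinate $s$. It gives the state
\[
\mathsf Z_s=\tfrac{1}{n_s}\mathsf M_{\alpha_{s-1}\alpha_s}^\top\mathsf L_{<s}^{(\beta,\rho)}\mathsf Y_s(i_s)\in\R^{r_s\times q_s}.
\]

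Third, the full contraction equals $\Tfwd_d^{(\beta,\rho)}\circ\cdots\circ\Tfwd_{s+1}^{(\beta,\rho)}(\mathsf Z_s)$, which is a $1\times1$ matrix. Writing it as $\langle\,\cdot\,,\mathsf R_{>d}^{(\beta,\rho)}\rangle_{\mathrm F}$ with $\mathsf R_{>d}^{(\beta,\rho)}=1$, I move the transfers to the right argument by repeated use of the adjoint identity \eqref{eq:transfer_adjoint_identity} of Proposition~\ref{prop:tt_transfer_operators}. Since the backward recursion in \eqref{eq:tt_residual_interfaces} is precisely $\Tbwd_m^{(\beta,\rho)}$, this yields $\langle\mathsf Z_s,\mathsf R_{>s}^{(\beta,\rho)}\rangle_{\mathrm F}$.

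Fourth, evaluate this inner product. Using $\operatorname{tr}(\mathsf M_{\alpha_{s-1}\alpha_s}^\top \mathsf X)=\mathsf X(\alpha_{s-1},\alpha_s)$ and cyclicity of the trace,
\[
\langle\mathsf Z_s,\mathsf R_{>s}^{(\beta,\rho)}\rangle_{\mathrm F}
=\tfrac{1}{n_s}\operatorname{tr}\bigl(\mathsf Y_s(i_s)^\top \mathsf L_{<s}^{(\beta,\rho)\top}\mathsf M_{\alpha_{s-1}\alpha_s}\mathsf R_{>s}^{(\beta,\rho)}\bigr)
=\bigl[\tfrac{1}{n_s}\mathsf L_{<s}^{(\beta,\rho)}\mathsf Y_s(i_s)\mathsf R_{>s}^{(\beta,\rho)\top}\bigr](\alpha_{s-1},\alpha_s).
\]
This is $\mathsf P_s^{(\beta)}(i_s)(\alpha_{s-1},\alpha_s)$, which proves \eqref{eq:tt_compressed_rhs}.

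The boundary cases $s=1$ and $s=d$ are covered by the conventions $\mathsf L_{<1}^{(\beta,\rho)}=1$ and $\mathsf R_{>d}^{(\beta,\rho)}=1$.

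The main obstacle is bookkeeping rather than any conceptual difficulty. I must check that the normalization $1/N_\D$ splits as $\prod_m 1/n_m$ consistently with each transfer step. I must also confirm the transpose orientation in the backward mixed recursion, namely that $\mathsf G\,\mathsf R\,\mathsf Y^\top$ matches $\Tbwd$ with $\mathsf B_m=\mathsf Y_m$. Finally, the trace identity needs care so that the matrix unit selects the entry $(\alpha_{s-1},\alpha_s)$ and not its transpose.
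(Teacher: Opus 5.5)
Your proposal is correct and follows essentially the same route as the paper. You write the entry as an averaged double-layer TT contraction via \eqref{eq:Psi_beta} and \eqref{eq:tt_E_matrix_unit_representation}, build $\mathsf L_{<s}^{(\beta,\rho)}$ by forward transfers, apply the pinned update at coordinate $s$, close against $\mathsf R_{>s}^{(\beta,\rho)}$ with the adjoint identity \eqref{eq:transfer_adjoint_identity}, and extract the $(\alpha_{s-1},\alpha_s)$ entry through $\mathsf M_{\alpha_{s-1}\alpha_s}=\mathrm e_{\alpha_{s-1}}\mathrm e_{\alpha_s}^\top$. Your trace and transpose bookkeeping checks out, and you make slightly more explicit than the paper why the final $1\times 1$ state of Lemma~\ref{lem:averaged_tt_recursion} equals the $1/N_\D$-normalized sum.
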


\begin{proof}
Fix $s$, $\beta$, and a the row index
$\overline{i_s\alpha_{s-1}\alpha_s}$. From \eqref{eq:Psi_beta},
\eqref{eq:compressed_residual_operator_blocks}, and
\eqref{eq:tt_E_matrix_unit_representation},
\begin{align*}
\rho_{\mathrm c,s}^{(\beta)}
\bigl(\overline{i_s\alpha_{s-1}\alpha_s}\bigr)
&=
\frac{1}{N_\D}
\sum_{j_1=1}^{n_1}\cdots\sum_{j_d=1}^{n_d}
\Big[
\mathsf G_1^{(\beta)}(j_1)\cdots
 \\
&\mathsf G_{s-1}^{(\beta)}(j_{s-1})\delta_{j_si_s}\mathsf M_{\alpha_{s-1}\alpha_s}
\mathsf G_{s+1}^{(\beta)}(j_{s+1})\cdots
\mathsf G_d^{(\beta)}(j_d)
\Big]
\Big[
\mathsf Y_1(j_1)\cdots\mathsf Y_d(j_d)
\Big].
\end{align*}
Observe that this is an averaged double-layer TT contraction between the
operated train and the residual train. By Proposition~\ref{prop:tt_transfer_operators}, the mixed interface
recursions \eqref{eq:tt_residual_interfaces} satisfy the adjoint identity \eqref{eq:transfer_adjoint_identity}, which we use in  Step~3 below.

We evaluate the averaged double-layer TT contraction by following the diagram in 
Fig. \ref{Diag: Double-layer contraction diagram for the compressed residual} and contracting the two trains from left to right.

\begin{figure}[H]
\centering

\begin{tikzpicture}[
  x=1.68cm,y=1.32cm,>=Latex,
  core/.style={draw,rounded corners=2pt,minimum width=1.10cm,
               minimum height=0.64cm,inner sep=1pt,font=\footnotesize},
  mu/.style={core,fill=gray!25},
  dots/.style={font=\footnotesize,inner sep=1pt},
  bond/.style={line width=0.5pt},
  share/.style={densely dotted,gray!65,line width=0.5pt},
  pin/.style={line width=0.55pt},
  reg/.style={fill=gray!8,rounded corners=2pt},
  local/.style={fill=blue!8,rounded corners=2pt},
  lab/.style={font=\footnotesize,inner sep=1pt}
]

\fill[reg]   (0.48,-0.56) rectangle (3.48,1.86);
\fill[local] (3.52,-0.56) rectangle (4.48,1.86);
\fill[reg]   (4.52,-0.56) rectangle (7.52,1.86);

\node[core] (t1) at (1,1.28) {$\mathsf G_1^{(\beta)}$};
\node[dots] (t2) at (2,1.28) {$\cdots$};
\node[core] (t3) at (3,1.28) {$\mathsf G_{s-1}^{(\beta)}$};
\node[mu]   (t4) at (4,1.28) {$\mathsf M_{\alpha_{s-1}\alpha_s}$};
\node[core] (t5) at (5,1.28) {$\mathsf G_{s+1}^{(\beta)}$};
\node[dots] (t6) at (6,1.28) {$\cdots$};
\node[core] (t7) at (7,1.28) {$\mathsf G_d^{(\beta)}$};

\node[core] (b1) at (1,0) {$\mathsf Y_1$};
\node[dots] (b2) at (2,0) {$\cdots$};
\node[core] (b3) at (3,0) {$\mathsf Y_{s-1}$};
\node[core] (b4) at (4,0) {$\mathsf Y_s(i_s)$};
\node[core] (b5) at (5,0) {$\mathsf Y_{s+1}$};
\node[dots] (b6) at (6,0) {$\cdots$};
\node[core] (b7) at (7,0) {$\mathsf Y_d$};

\foreach \a/\b in {1/2,2/3,3/4,4/5,5/6,6/7} {
  \draw[bond] (t\a.east)--(t\b.west);
  \draw[bond] (b\a.east)--(b\b.west);
}

\foreach \k in {1,3,5,7} {
  \draw[share] (t\k.south)--(b\k.north);
}

\draw[pin] (t4.south)--(b4.north);
\fill (4,0.64) circle (1.6pt);
\node[lab,anchor=west] at (4.10,0.64) {$i_s$};

\node[lab,anchor=south] at (2.00,1.92)
  {$\mathsf L_{<s}^{(\beta,\rho)}$};

\node[lab,anchor=south] at (4.00,1.92)
  {active slot $s$};

\node[lab,anchor=south] at (6.00,1.92)
  {$\mathsf R_{>s}^{(\beta,\rho)}$};

\end{tikzpicture}
\caption{Double-layer contraction diagram for the compressed residual}
\label{Diag: Double-layer contraction diagram for the compressed residual}
\end{figure}

\medskip
\noindent\emph{Step 1.}
We start by computing the averaged double-layer contraction of the
coordinates strictly to the left of $s$. For $m<s$, the two layers contain
the ordinary cores
$\mathsf G_m^{(\beta)}(j_m)$ and $\mathsf Y_m(j_m)$, respectively.
Repeated application of the corresponding forward transfer operators yields
\begin{equation}\label{eq:gradient_left_region}
\mathsf L_{<s}^{(\beta,\rho)}
=
\Tfwd_{s-1}^{(\beta,\rho)}
\circ\cdots\circ
\Tfwd_1^{(\beta,\rho)}
\bigl(\mathsf L_{<1}^{(\beta,\rho)}\bigr).
\end{equation}

\medskip
\noindent\emph{Step 2.}
At the active coordinate $s$, the operated layer contains
$\delta_{j_si_s}\mathsf M_{\alpha_{s-1}\alpha_s}$, while the residual
layer contains $\mathsf Y_s(j_s)$. Formula \eqref{eq:pinned_update} then yields
\begin{equation}\label{eq:gradient_active_contraction}
\mathrm Z_{s}^{\rho}
=
\frac{1}{n_s}
\mathsf M_{\alpha_{s-1}\alpha_s}^{\top}
\mathsf L_{<s}^{(\beta,\rho)}
\mathsf Y_s(i_s).
\end{equation}

\medskip
\noindent\emph{Step 3.}
Finally, we close the accumulated contraction against the right interface
$\mathsf R_{>s}^{(\beta,\rho)}$. Instead of propagating
$\mathrm Z_s^\rho$ forward through the tail $m>s$, we contract this tail
backward. The repeated application of \eqref{eq:transfer_adjoint_identity}
identity yields
\begin{align*}
\big\langle
\Tfwd_d^{(\beta,\rho)}
\circ\cdots\circ
\Tfwd_{s+1}^{(\beta,\rho)}
\bigl(\mathrm Z_s^\rho\bigr),
\mathsf R_{>d}^{(\beta,\rho)}
\big\rangle_{\mathrm F}
=
\big\langle
\mathrm Z_s^\rho,
\Tbwd_{s+1}^{(\beta,\rho)}
\circ\cdots\circ
\Tbwd_d^{(\beta,\rho)}
\bigl(\mathsf R_{>d}^{(\beta,\rho)}\bigr)
\big\rangle_{\mathrm F}
=
\big\langle
\mathrm Z_s^\rho,
\mathsf R_{>s}^{(\beta,\rho)}
\big\rangle_{\mathrm F}.
\end{align*}
We complete the proof by computing 
\begin{align*}
\rho_{\mathrm c,s}^{(\beta)}
\bigl(\overline{i_s\alpha_{s-1}\alpha_s}\bigr)
=
\big\langle
\mathrm Z_s^\rho,
\mathsf R_{>s}^{(\beta,\rho)}
\big\rangle_{\mathrm F}=
\frac{1}{n_s}
\left[
\mathsf L_{<s}^{(\beta,\rho)}
\mathsf Y_s(i_s)
\bigl(\mathsf R_{>s}^{(\beta,\rho)}\bigr)^\top
\right]
(\alpha_{s-1},\alpha_s),
\end{align*}
where the last identity follows from
$\mathsf M_{\alpha_{s-1}\alpha_s}
=\mathrm e_{\alpha_{s-1}}\mathrm e_{\alpha_s}^{\top}$.
\end{proof}

\begin{remark}\label{rem:tt_residual_sum}
If the residual \eqref{eq:tt_residual_representation_gradient} is represented as a finite sum of TT trains, the construction
in Proposition~\ref{prop:tt_compressed_gradient} is applied to each train and the resulting compressed residuals are added. 
\end{remark}

Finally, we note that the loss in the tensor train format can also be
evaluated directly from the residual cores in
\eqref{eq:tt_residual_representation_gradient}. This amounts to an averaged double-layer TT
contraction of the residual train with itself. With
$\mathsf A_m=\mathsf B_m=\mathsf Y_m$, the recursion in
Lemma~\ref{lem:averaged_tt_recursion} becomes
\begin{equation}\label{eq:tt_loss_recursion}
\mathsf S_0:=1,
\qquad
\mathsf S_m
:=
\frac{1}{n_m}
\sum_{i_m=1}^{n_m}
\bigl(\mathsf Y_m(i_m)\bigr)^\top
\mathsf S_{m-1}\mathsf Y_m(i_m)
\in\R^{q_m\times q_m},
\qquad m=1,\ldots,d.
\end{equation}
Since $q_d=1$, the final contraction $\mathsf S_d$ is scalar, and
\eqref{eq:averaged_tt_recursion_result} yields
\begin{equation}\label{eq:tt_loss_contraction}
\widehat{\mathcal L}(\theta)
=
\frac{1}{2N_\D}\|\rvec(\theta)\|_2^2
=
\frac12\mathsf S_d.
\end{equation}
Algorithm~\ref{alg:tt_assembly} then summarizes the TT assembly of the
compressed Gauss--Newton system.

\begin{algorithm}[H]
\caption{TT assembly for the compressed Gauss--Newton step}
\label{alg:tt_assembly}
\small
\begin{algorithmic}[1]
\setlength{\itemsep}{0.22em}
\REQUIRE Current parameters $\theta$; factor grids $\X_s$; TT ranks
$(r_0,\ldots,r_d)$; univariate operators $\A_s^{(\beta)}$ in the
separated representation of $\A$; source data admitting the residual
representation \eqref{eq:tt_residual_representation_gradient}
\STATE Evaluate $\mathsf G_s^{(\beta)}$ and obtain
$\Cmat_s^{(\beta)}$ for all $s$
and $\beta$ by automatic differentiation using
\eqref{eq:tt_collocated_objects}--\eqref{eq:tt_Cmat}.
\STATE $\Cmat_s\leftarrow\operatorname{col}_{\beta=1}^{R_\A}
\Cmat_s^{(\beta)}$, \quad
$N_s\leftarrow R_\A n_s r_{s-1}r_s$, for all $s$.
\STATE Form the residual cores $\mathsf Y_m$ from the operated trains
and prescribed source data.
\FOR{$\beta,\gamma=1,\ldots,R_\A$}
  \STATE Compute all $\mathsf L_{<s}^{(\beta,\gamma)}$ and
  $\mathsf R_{>s}^{(\beta,\gamma)}$ by the two sweeps
  \eqref{eq:TT_recursions_interfaces}.
  \STATE Assemble the diagonal blocks using
  \eqref{eq:tt_gramian_diagonal_final}.
  \FOR{$s=1,\ldots,d-1$ and each $(i_s,a_s)$}
    \STATE Initialize $\mathsf Z\leftarrow
    n_s^{-1}\mathsf M_{a_s}^{\top}
    \mathsf L_{<s}^{(\beta,\gamma)}
    \mathsf G_s^{(\gamma)}(i_s)$.
    \FOR{$t=s+1,\ldots,d$}
      \STATE For every $(i_t,a_t')$, set
      $\displaystyle
      \Upsilon_{st}^{(\beta,\gamma)}
      (\overline{i_sa_s},\overline{i_ta_t'})
      \leftarrow
      \frac{1}{n_t}
      \big\langle
      (\mathsf G_t^{(\beta)}(i_t))^\top\mathsf Z\mathsf M_{a_t'},
      \mathsf R_{>t}^{(\beta,\gamma)}
      \big\rangle_{\mathrm F}$.
      \IF{$t<d$}
        \STATE $\mathsf Z\leftarrow
        \Tfwd_t^{(\beta,\gamma)}(\mathsf Z)$.
      \ENDIF
    \ENDFOR
  \ENDFOR
  \STATE Fill the opposite blocks by
  $\Upsilon_{ts}^{(\gamma,\beta)}
  \leftarrow(\Upsilon_{st}^{(\beta,\gamma)})^\top$ for $s<t$. 
\ENDFOR
\FOR{$\beta=1,\ldots,R_\A$}
  \STATE Compute the mixed interfaces \eqref{eq:tt_residual_interfaces},
  then $\mathsf P_s^{(\beta)}(i_s)$ and
  $\rho_{\mathrm c,s}^{(\beta)}$ using
  \eqref{eq:tt_gradient_local_matrix}--\eqref{eq:tt_compressed_rhs}.
\ENDFOR
\STATE Stack $\rho_{\mathrm c,s}^{(\beta)}$ and the Gramian blocks
in the order $(s,\beta,i_s,a_s)$ to obtain
$\rho_{\mathrm c}$ and $\Upsilon$.
\STATE Compute
$\nabla_{\theta^s}\widehat{\mathcal L}(\theta)
\leftarrow\sum_{\beta=1}^{R_\A}
\bigl(\Cmat_s^{(\beta)}\bigr)^\top
\rho_{\mathrm c,s}^{(\beta)}$,
for $s=1,\ldots,d$.
\STATE Stack the coordinate gradients to obtain
$\nabla_\theta\widehat{\mathcal L}(\theta)
\leftarrow\operatorname{col}_{s=1}^d
\nabla_{\theta^s}\widehat{\mathcal L}(\theta)$.
\STATE Evaluate $\widehat{\mathcal L}(\theta)$ using
\eqref{eq:tt_loss_recursion}--\eqref{eq:tt_loss_contraction}.
\ENSURE $\{\Cmat_s\}_{s=1}^d$, $\Upsilon$, $\rho_{\mathrm c}$,
$\nabla_\theta\widehat{\mathcal L}(\theta)$,
and $\widehat{\mathcal L}(\theta)$, with
$\Neff=R_\A\sum_s n_sr_{s-1}r_s$
\end{algorithmic}
\end{algorithm}

\section{Examples and numerical results}\label{sec:numerics}
\phantom{.}
In this section, we consider three test problems: a high-dimensional
Poisson problem with large canonical rank but bounded TT rank, illustrating
the advantage of the TT format over the CP format; a high-dimensional
diffusion benchmark \cite{jnini2025dual}, which also includes softly imposed
boundary and initial conditions treated here within the TT-compressed
setting; and two high-dimensional parametric Darcy problems with coefficients
of varying parametric complexity.

All considered problems fall within the class characterized by
Proposition~\ref{prop:lc_scope_finite_order}. Accordingly, for the operator
characterizations, we choose univariate spaces $H_s$ for which the derivative
maps required by Proposition~\ref{prop:lc_scope_finite_order} are bounded.
In addition, Proposition~\ref{prop:lc_scope_finite_order} renders
Assumption~\ref{ass:local_differential_channels} applicable, so that
Theorem~\ref{prop:local_channel_compression} can be used for the algebraic
characterization of the corresponding compressed matrices.

For the derivative orders occurring below, the sensitivity matrices
\eqref{eq:local_channel_sensitivities} specialize to
\begin{equation}\label{eq:numerical_sensitivity_matrices}
\mathsf H_{s,\ell}\bigl(\overline{i_sa_s},p\bigr)
:=
\begin{cases}
\bigl(\partial_{z_s}^{\ell}\partial_{\theta_p^s}u_s^{a_s}\bigr)(z_{s,i_s}),
&\text{CP},\\[1mm]
\bigl(\partial_{z_s}^{\ell}\partial_{\theta_p^s}G_s\bigr)(z_{s,i_s})
(\alpha_{s-1},\alpha_s),&\text{TT},
\end{cases}
\qquad
\mathsf H_{s,\ell}\in\R^{n_sk_s\times P_s},
\end{equation}
where $k_s=R$ and $a_s=\alpha$ for CP, while
$k_s=r_{s-1}r_s$ and
$a_s=\overline{\alpha_{s-1}\alpha_s}$ for TT.
For each problem, we provide an explicit description of the resulting matrices, both for implementation and to illustrate the abstract theory in concrete settings.

In our numerical experiments, we assess the approximation quality using either the relative $L^2$ or $H^1$ errors
\begin{equation*}
e_{L^2}
:=
\frac{\|u_\theta-u^\star\|_{L^2(\mathcal{D})}}
{\|u^\star\|_{L^2(\mathcal{D})}},
\qquad
e_{H^1}
:=
\left(
\frac{
\|u_\theta-u^\star\|_{L^2(\mathcal{D})}^2
+
\|\nabla_x u_\theta-\nabla_x u^\star\|_{L^2(\mathcal{D})}^2
}{
\|u^\star\|_{L^2(\mathcal{D})}^2
+
\|\nabla_x u^\star\|_{L^2(\mathcal{D})}^2
}
\right)^{1/2},
\end{equation*}
where $\nabla_x$ denotes differentiation with respect to the spatial variables. For all problems, we construct an analytical solution $u^\star$ admitting a
finite separable or low-rank tensor representation. The respective errors are
computed using fixed tensor-product midpoint rules, with the required integrals
evaluated through successive one-dimensional contractions of the separable
factors or TT cores, thereby avoiding the evaluation of either $u^\star$ or
$u_\theta$ on the full tensor-product grid.

Unless otherwise specified, homogeneous Dirichlet conditions are imposed as hard constraints
\begin{align}\label{eq:SDF num}
\ell_{\rm bc}(x)
=\sqrt{30}\,x(1-x), \qquad
G_m(x_m;\theta^m)
=
\ell_{\rm bc}(x_m)\widetilde G_m(x_m;\theta^m),
\qquad m=1,\ldots,d_x.
\end{align}
The above boundary cutoff function satisfies \eqref{eq:separable_cutoff}. All networks use $\tanh$ activations in the hidden layers and a linear output layer. At each iteration of the Gauss--Newton algorithm, we use the same logarithmic line search with $31$ positive candidates ranging from $1$ to $10^{-6}$. The experiments were performed on an NVIDIA A100 GPU with 80 GB of VRAM using 64-bit floating-point precision.

\subsection{High-dimensional Poisson problem}
Let $\D=\Omega=(0,1)^{\dsp}$, where $d=\dsp=25$. We consider
\begin{equation}\label{eq:numerical_poisson}
-\Delta u_K^\star=f_K\quad\text{in }\D,
\qquad u_K^\star=0\quad\text{on }\partial\D.
\end{equation}
According to Proposition \ref{prop:poisson_separable_structure}, the Laplace operator satisfies Assumption~\ref{ass:operator}. Here, Proposition~\ref{prop:lc_scope_finite_order} also
applies, and for every active coordinate $s$ we obtain
\begin{equation}\label{eq:poisson_local_channels}
\Lambda_s=\{0,2\},
\qquad
\mathcal D_{s,0}=I,
\qquad
\mathcal D_{s,2}=\partial_{z_s}^2,
\qquad
b_{s,0}^{(\beta)}=1-\delta_{s\beta},
\qquad
b_{s,2}^{(\beta)}=-\delta_{s\beta}.
\end{equation}
Hence Assumption~\ref{ass:local_differential_channels} holds with
$L_s=2$. To compare CP and TT formats, we construct exact solutions of
\eqref{eq:numerical_poisson} with increasing canonical rank but uniformly
bounded maximal TT rank.

\begin{proposition}
\label{prop:paired_target_ranks}
Let $K=2^q$ with $q\geq 1$ and $2q\leq d$, and consider the function
\begin{equation}\label{eq:paired_target_rank_prop}
u_K^\star(z)
=
\frac{1}{\sqrt K}
\prod_{\ell=1}^{q}
\left[
\varphi_1(z_{2\ell-1})\varphi_1(z_{2\ell})
+
\varphi_2(z_{2\ell-1})\varphi_2(z_{2\ell})
\right]
\prod_{m=2q+1}^{d}\varphi_1(z_m),
\end{equation}
where $\varphi_1$ and $\varphi_2$ are linearly independent and orthogonal in
$L^2(0,1)$. Then 
\begin{equation}\label{eq:paired_target_cp_rank}
\operatorname{rank}_{\mathrm{CP}}(u_K^\star)=K.
\end{equation}
Moreover, $u_K^\star$ admits a TT representation with TT ranks
$(r_m)_{m=0}^{d}$ satisfying
\begin{equation}\label{eq:paired_target_tt_upper}
\max_{m=1,\ldots,d-1} r_m\leq 2.
\end{equation}
\end{proposition}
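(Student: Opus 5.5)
The proof splits into three steps: an explicit rank-$\le 2$ TT representation, the upper bound $\operatorname{rank}_{\mathrm{CP}}\le K$ by expansion, and the lower bound $\operatorname{rank}_{\mathrm{CP}}\ge K$. Throughout, write $P(x,y):=\varphi_1(x)\varphi_1(y)+\varphi_2(x)\varphi_2(y)$ for a single pair factor.

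\textbf{TT representation.} Each pair factor is itself a TT of length two:
\[
P(z_{2\ell-1},z_{2\ell})=\begin{bmatrix}\varphi_1(z_{2\ell-1}) & \varphi_2(z_{2\ell-1})\end{bmatrix}\begin{bmatrix}\varphi_1(z_{2\ell})\\ \varphi_2(z_{2\ell})\end{bmatrix}.
\]
So for $\ell=1,\ldots,q$ I take cores $G_{2\ell-1}$ of size $1\times 2$ and $G_{2\ell}$ of size $2\times1$, inserting the factor $K^{-1/2}$ into $G_1$. For $m>2q$ I take the $1\times1$ cores $\varphi_1(z_m)$. The matrix product of these cores reproduces \eqref{eq:paired_target_rank_prop}. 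The ranks alternate between $2$ and $1$, which gives \eqref{eq:paired_target_tt_upper}.

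\textbf{Upper bound $\operatorname{rank}_{\mathrm{CP}}(u_K^\star)\le K$.} Expanding the product of the $q$ two-term brackets gives $2^q=K$ elementary tensors, indexed by $\sigma\in\{1,2\}^q$:
\[
K^{-1/2}\bigotimes_{\ell=1}^{q}\bigl(\varphi_{\sigma_\ell}\otimes\varphi_{\sigma_\ell}\bigr)\otimes\varphi_1^{\otimes(d-2q)}.
\]

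\textbf{Lower bound $\operatorname{rank}_{\mathrm{CP}}(u_K^\star)\ge K$.} I would use a matricization argument. If $u=\sum_{\alpha=1}^R\bigotimes_m v_m^\alpha$, then for the bipartition of coordinates $S=\{1,3,\ldots,2q-1\}$ versus its complement, the induced Hilbert--Schmidt operator $T_S:L^2(\D_{S^c})\to L^2(\D_S)$ (with kernel $u$) has rank at most $R$, since each elementary tensor contributes a rank-one operator. It therefore suffices to show $\rank T_S=2^q$. The key structural fact is that $u_K^\star$ is, up to scaling, a tensor product over $\ell$ of the two-variable kernels $P(z_{2\ell-1},z_{2\ell})$, tensored with the rank-one factor $\varphi_1^{\otimes(d-2q)}$, which is assigned to $S^c$. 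Consequently $T_S$ is (up to the scaling) the Kronecker product of the $q$ operators induced by $P$, and operator rank is multiplicative under Kronecker products, a product with a rank-one factor leaving it unchanged. Each operator induced by $P$ maps $g\mapsto \varphi_1\langle\varphi_1,g\rangle+\varphi_2\langle\varphi_2,g\rangle$. Because $\varphi_1,\varphi_2$ are orthogonal and nonzero, this operator is $\|\varphi_1\|^2 e_1e_1^*+\|\varphi_2\|^2e_2e_2^*$ in an orthonormal basis, and thus has rank exactly $2$. Hence $\rank T_S=2^q=K$, and \eqref{eq:paired_target_cp_rank} follows.

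The main obstacle is making the lower bound rigorous in the infinite-dimensional setting: stating cleanly that the CP rank dominates the rank of any matricization, and that the rank of the Kronecker product of finite-rank operators is the product of the ranks. The cleanest way is to restrict everything to the finite-dimensional subspaces spanned by $\varphi_1,\varphi_2$ in each coordinate and argue with ordinary matrices. One caveat applies: if some CP decomposition used univariate factors outside this span, the matricization-rank bound still holds verbatim, since it only uses the rank of the operator $T_S$ itself.
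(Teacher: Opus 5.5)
Your proof is correct and follows the paper's route: the same alternating rank-$(2,1)$ TT cores with $K^{-1/2}$ absorbed into $G_1$, the same $2^q$-term expansion for the upper bound, and the same odd-coordinates-versus-rest matricization for the lower bound. The only difference is in how the matricization rank $K$ is obtained: the paper reads it off directly from the mutual orthogonality of the expanded families $\{v_{\bm\nu}\}$ and $\{w_{\bm\nu}\}$, whereas you get it from multiplicativity of rank under Kronecker products of the rank-$2$ pair operators.
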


\begin{proof}
We first characterize the canonical rank. Expanding the brackets in
\eqref{eq:paired_target_rank_prop} yields
\begin{equation}\label{eq:paired_target_cp_expansion}
u_K^\star(z)
=
\frac{1}{\sqrt K}
\sum_{\bm\nu\in\{1,2\}^{q}}
\left[
\prod_{\ell=1}^{q}
\varphi_{\nu_\ell}(z_{2\ell-1})
\varphi_{\nu_\ell}(z_{2\ell})
\right]
\prod_{m=2q+1}^{d}\varphi_1(z_m).
\end{equation}
Since $\{1,2\}^{q}$ contains $2^q=K$ elements,
\eqref{eq:paired_target_cp_expansion} is a canonical representation with
$K$ rank-one terms. Therefore, we get the upper bound 
\begin{equation}\label{eq:paired_target_cp_upper}
\operatorname{rank}_{\mathrm{CP}}(u_K^\star)\leq K.
\end{equation}
Let $z_{\mathrm o}:=(z_1,z_3,\ldots,z_{2q-1})$ and $z_{\mathrm r}:=(z_2,z_4,\ldots,z_{2q},z_{2q+1},\ldots,z_d)$ and, for $\bm\nu=(\nu_1,\ldots,\nu_q)\in\{1,2\}^{q}$, define
\begin{align*}
v_{\bm\nu}(z_{\mathrm o})
:=
\prod_{\ell=1}^{q}
\varphi_{\nu_\ell}(z_{2\ell-1}), \qquad
w_{\bm\nu}(z_{\mathrm r})
:=
\left[
\prod_{\ell=1}^{q}
\varphi_{\nu_\ell}(z_{2\ell})
\right]
\prod_{m=2q+1}^{d}\varphi_1(z_m).
\end{align*}
Therefore, setting $\D_{\mathrm o}
:=
\prod_{\ell=1}^{q}\D_{2\ell-1}$ and $
\D_{\mathrm r}
:=
\big(\prod_{\ell=1}^{q}\D_{2\ell} \big)
\times
\big(\prod_{m=2q+1}^{d}\D_m\big)$ such that $v_{\bm\nu}\in L^2(\D_{\mathrm o})$ and
$w_{\bm\nu}\in L^2(\D_{\mathrm r})$, \eqref{eq:paired_target_cp_expansion} can be written in the bipartite tensor form
\begin{equation*}
u_K^\star
=
\frac{1}{\sqrt K}
\sum_{\bm\nu\in\{1,2\}^{q}}
v_{\bm\nu}\otimes w_{\bm\nu}.
\end{equation*}
This corresponds to the matricization obtained by grouping the odd
coordinates against the remaining ones in the sense of
\cite[Def.~5.3]{hackbusch2012tensor}. Since $\varphi_1$ and $\varphi_2$
are orthogonal, both families
$\{v_{\bm\nu}\}_{\bm\nu\in\{1,2\}^{q}}$ and
$\{w_{\bm\nu}\}_{\bm\nu\in\{1,2\}^{q}}$ consist of $K=2^q$ mutually
orthogonal, and hence linearly independent, functions. Therefore, the
matricization has matrix rank $K$; cf.
\cite[Def.~5.7]{hackbusch2012tensor}. The bound between the matricization rank and the canonical tensor rank
\cite[Rem.~6.24]{hackbusch2012tensor} yields 
\begin{align*}
K \leq \operatorname{rank}_{\mathrm{CP}}(u_K^\star).
\end{align*}
Combined with \eqref{eq:paired_target_cp_upper}, it proves \eqref{eq:paired_target_cp_rank}

We proceed by constructing a TT representation of \eqref{eq:paired_target_cp_expansion}. First, set 
\begin{equation}\label{eq:paired_target_tt_ranks}
r_0=r_d=1,
\qquad
r_{2\ell-1}=2,
\qquad
r_{2\ell}=1,
\qquad
\ell=1,\ldots,q,
\end{equation}
and $r_m=1$ for $m>2q$. For the first pair $(z_1,z_2)$, we define
\begin{equation*}
G_1(z_1)
=
\frac{1}{\sqrt K}
\begin{bmatrix}
\varphi_1(z_1) & \varphi_2(z_1)
\end{bmatrix},
\qquad
G_2(z_2)
=
\begin{bmatrix}
\varphi_1(z_2)\\
\varphi_2(z_2)
\end{bmatrix}.
\end{equation*}
For $\ell=2,\ldots,q$, set
\begin{equation*}
G_{2\ell-1}(z_{2\ell-1})
=
\begin{bmatrix}
\varphi_1(z_{2\ell-1}) &
\varphi_2(z_{2\ell-1})
\end{bmatrix},
\qquad
G_{2\ell}(z_{2\ell})
=
\begin{bmatrix}
\varphi_1(z_{2\ell})\\
\varphi_2(z_{2\ell})
\end{bmatrix},
\end{equation*}
while for the remaining coordinates we use the scalar cores
\begin{equation*}
G_m(z_m)=\varphi_1(z_m),
\qquad
m=2q+1,\ldots,d.
\end{equation*}
For each coordinate pair $(z_{2\ell-1},z_{2\ell})$, $\ell=2,\ldots,q$, we obtain
\begin{equation*}
G_{2\ell-1}(z_{2\ell-1})G_{2\ell}(z_{2\ell})
=
\varphi_1(z_{2\ell-1})\varphi_1(z_{2\ell})
+
\varphi_2(z_{2\ell-1})\varphi_2(z_{2\ell}).
\end{equation*}
Multiplying the resulting cores successively yields
\eqref{eq:paired_target_rank_prop}. Since all intermediate dimensions in
\eqref{eq:paired_target_tt_ranks} are either one or two, this provides a TT representation satisfying \eqref{eq:paired_target_tt_upper}
\end{proof}

In the next proposition, we provide the matrices required for the compressed Gauss--Newton method in both the CP and TT formats.
\begin{proposition}[Local-channel matrices for Poisson]
\label{prop:poisson_channel_matrices}
For the Poisson channels \eqref{eq:poisson_local_channels},
Theorem~\ref{prop:local_channel_compression} admits the following explicit
realizations for the CP and TT parametrizations.
Using 
\eqref{eq:numerical_sensitivity_matrices}, define
\begin{equation}\label{eq:poisson_channel_stacking}
\Psi_s^{\mathrm{ch}}
:=
\begin{bmatrix}
\Psi_{s,0}^{\mathrm{ch}}&
\Psi_{s,2}^{\mathrm{ch}}
\end{bmatrix},
\qquad
\Cmat_s^{\mathrm{ch}}
:=
\operatorname{col}(\mathsf H_{s,0},\mathsf H_{s,2}),
\qquad
N_s^{\mathrm{ch}}=2n_sk_s.
\end{equation}

\begin{enumerate}[label=\textup{(\roman*)}]

\item
For the CP format, $k_s=R$, and the direction matrices are
\begin{equation}\label{eq:poisson_cp_direction_columns}
\begin{aligned}
\Psi_{s,0}^{\mathrm{ch}}
\bigl(i,\overline{j_s\alpha}\bigr)
&=
-\delta_{i_sj_s}
\sum_{\beta\ne s}
(u_\beta^\alpha)''(z_{\beta,i_\beta})
\prod_{m\ne s,\beta}
u_m^\alpha(z_{m,i_m}),
\\
\Psi_{s,2}^{\mathrm{ch}}
\bigl(i,\overline{j_s\alpha}\bigr)
&=
-\delta_{i_sj_s}
\prod_{m\ne s}
u_m^\alpha(z_{m,i_m}),
\end{aligned}
\end{equation}
with
$\Psi_{s,\ell}^{\mathrm{ch}}\in\R^{N_\D\times n_sR}$.

\item
For the TT format, $k_s=r_{s-1}r_s$ and
$a_s=\overline{\alpha_{s-1}\alpha_s}$.
Let $\mathsf M_{a_s}$ be the corresponding matrix unit
\eqref{eq:tt_matrix_units}, and define
\begin{equation*}
\mathcal B_m^\Delta
:=
\begin{bmatrix}
G_m&-G_m''\\
0&G_m
\end{bmatrix}
\in\R^{2r_{m-1}\times2r_m},
\qquad
\mathsf P_{s,0,a_s}^{\Delta}
:=
\begin{bmatrix}
\mathsf M_{a_s}&0\\
0&\mathsf M_{a_s}
\end{bmatrix},
\qquad
\mathsf P_{s,2,a_s}^{\Delta}
:=
\begin{bmatrix}
0&-\mathsf M_{a_s}\\
0&0
\end{bmatrix}.
\end{equation*}
Furthermore,
$\mathcal Q_{<s}^{\Delta}\in\R^{1\times2r_{s-1}}$ and
$\mathcal Q_{>s}^{\Delta}\in\R^{2r_s\times1}$ are defined by
\begin{equation}\label{eq:poisson_partial_contractions}
\mathcal Q_{<s}^{\Delta}
:=
\begin{bmatrix}1&0\end{bmatrix}
\prod_{m<s}\mathcal B_m^\Delta,
\qquad
\mathcal Q_{>s}^{\Delta}
:=
\left(
\prod_{m>s}\mathcal B_m^\Delta
\right)
\begin{bmatrix}0\\1\end{bmatrix}.
\end{equation}
For $\ell\in\{0,2\}$, the direction matrices are
\begin{equation}\label{eq:poisson_tt_direction_columns}
\Psi_{s,\ell}^{\mathrm{ch}}
\bigl(i,\overline{j_sa_s}\bigr)
=
\delta_{i_sj_s}\,
\mathcal Q_{<s}^{\Delta}(z_{<s,i_{<s}})
\mathsf P_{s,\ell,a_s}^{\Delta}
\mathcal Q_{>s}^{\Delta}(z_{>s,i_{>s}}),
\end{equation}
with
$\Psi_{s,\ell}^{\mathrm{ch}}\in\R^{N_\D\times n_sk_s}$.
\end{enumerate}
Consequently, the respective effective compressed dimensions are given by
\begin{equation}\label{eq:poisson_local_channel_dimensions}
\Neff^{\mathrm{ch,CP}}
=
2R\sum_{s=1}^{d}n_s,
\qquad
\Neff^{\mathrm{ch,TT}}
=
2\sum_{s=1}^{d}n_sr_{s-1}r_s.
\end{equation}
\end{proposition}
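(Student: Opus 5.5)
The plan is to obtain both parts by substituting the Poisson data into the general channel formula \eqref{eq:local_channel_direction_columns} of Theorem~\ref{prop:local_channel_compression}. The inactive contractions $\mathsf E_s^{(\beta)}$ are already specialized for CP in \eqref{eq:cp_E_note} and for TT in \eqref{eq:tt_E_matrix_unit_representation}. By Proposition~\ref{prop:poisson_separable_structure}, the operator terms are indexed by $\beta=1,\ldots,d$ with $\A_m^{(\beta)}=-\partial_{z_m}^2$ if $m=\beta$ and $\A_m^{(\beta)}=I$ otherwise. I will absorb the overall minus sign into the factor with $m=\beta$; this is consistent with the coefficients $b_{s,0}^{(\beta)}=1-\delta_{s\beta}$ and $b_{s,2}^{(\beta)}=-\delta_{s\beta}$ in \eqref{eq:poisson_local_channels}. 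One has to check that, with this convention, $\A_s^{(\beta)}=\sum_\ell b_{s,\ell}^{(\beta)}\mathcal D_{s,\ell}$. For $\beta\ne s$ the active factor is $I$, which gives channel $0$. For $\beta=s$ it is $-\partial^2$, which gives channel $2$ with coefficient $-1$. The operated inactive factors then satisfy $\A_m^{(\beta)}w=w$ for $m\ne\beta$ and $\A_\beta^{(\beta)}w=-w''$. Because the channel coefficients here are constants, the sum in \eqref{eq:local_channel_direction_columns} collapses. For $\ell=0$ it becomes $\sum_{\beta\ne s}\mathsf E_s^{(\beta)}$, and for $\ell=2$ it becomes $-\mathsf E_s^{(s)}$.

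\textbf{CP case.} Insert \eqref{eq:cp_E_note}. For $\beta\ne s$, column $\alpha$ of $\mathsf E_s^{(\beta)}$ has entries $-(u_\beta^\alpha)''(z_{\beta,i_\beta})\prod_{m\ne s,\beta}u_m^\alpha(z_{m,i_m})$. Summing these over $\beta\ne s$ gives $\Psi_{s,0}^{\mathrm{ch}}$ in \eqref{eq:poisson_cp_direction_columns}. For $\beta=s$, no inactive factor is differentiated, so $\mathsf E_s^{(s)}$ is the product $\prod_{m\ne s}u_m^\alpha$. Multiplying by $b_{s,2}^{(s)}=-1$ gives $\Psi_{s,2}^{\mathrm{ch}}$.

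\textbf{TT case.} This is the main step. I need an automaton-type identity: for a row vector $(x,y)$ propagated through $\mathcal B_m^\Delta$, the first component records "no second derivative taken yet", the second records "exactly one taken", and the sign $-G_m''$ is carried along. By induction on the number of factors,
\[
\prod_{m\in S}\mathcal B_m^\Delta=\begin{bmatrix}\prod_{m\in S}G_m & \sum_{\beta\in S}\prod_{m\in S}G_m^{(\beta)}\\ 0&\prod_{m\in S}G_m\end{bmatrix}
\]
for any consecutive block $S$, with the products taken in increasing order. Here $G_m^{(\beta)}=-G_m''$ if $m=\beta$ and $G_m$ otherwise. The induction step is just block upper-triangular multiplication, so it is straightforward.

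Next I compute $\mathcal Q_{<s}^\Delta\mathsf P\mathcal Q_{>s}^\Delta$ with the boundary vectors $[1\ 0]$ and $[0\ 1]^\top$. Write $\mathcal Q_{<s}^\Delta=[A,\ A']$ and $\mathcal Q_{>s}^\Delta=[B';\ B]$. Here $A$ is the underived left product and $A'$ is the sum over $\beta<s$ of left products with one second derivative. Likewise $B$ is the underived right product and $B'$ is the corresponding sum over $\beta>s$. With the block-diagonal $\mathsf P_{s,0,a_s}^\Delta$ the product equals $A\mathsf M_{a_s}B'+A'\mathsf M_{a_s}B$, which is exactly $\sum_{\beta\ne s}$ of the scalar TT contraction \eqref{eq:tt_E_matrix_unit_representation}, i.e.\ $\sum_{\beta\ne s}\mathsf E_s^{(\beta)}$. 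With $\mathsf P_{s,2,a_s}^\Delta$ the product equals $-A\mathsf M_{a_s}B=-\mathsf E_s^{(s)}$. Together with the factor $\delta_{i_sj_s}$ this reproduces \eqref{eq:poisson_tt_direction_columns}.

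The main care needed is bookkeeping: confirming that the first row picks the upper-left and upper-right blocks and the last column selects $B'$ and $B$ in the right order. Finally, the dimensions \eqref{eq:poisson_local_channel_dimensions} follow from \eqref{eq:local_channel_dimension} with $L_s=2$, using $k_s=R$ for CP and $k_s=r_{s-1}r_s$ for TT.
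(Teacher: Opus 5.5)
Your proposal is correct and follows essentially the same route as the paper. You substitute the Poisson channel coefficients into the general direction formula of Theorem~\ref{prop:local_channel_compression}, using the Khatri--Rao form of $\mathsf E_s^{(\beta)}$ for CP; for TT you use block upper-triangular multiplication of the $\mathcal B_m^\Delta$ to split the zeroth-order channel into the $\beta<s$ and $\beta>s$ insertions and to extract $-\mathsf E_s^{(s)}$ for the second-order channel. Your explicit decomposition $\mathcal Q_{<s}^\Delta=[A,\ A']$, $\mathcal Q_{>s}^\Delta=[B';\ B]$ only makes more transparent the step the paper summarizes as ordinary block multiplication.
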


\begin{proof}
For CP, the $\beta$-th separated operator term acts as
$-\partial_{z_\beta}^2$ in coordinate $\beta$ and as the identity in
all other coordinates. By \eqref{eq:cp_F_note} we therefore obtain
\begin{equation*}
\mathsf F_m^{(\beta)}(i_m,\alpha)
=
\begin{cases}
-(u_\beta^\alpha)''(z_{\beta,i_\beta}),&m=\beta,\\
u_m^\alpha(z_{m,i_m}),&m\ne\beta.
\end{cases}
\end{equation*}
Using the Khatri--Rao representation \eqref{eq:cp_E_note}, the inactive
contractions satisfy
\begin{equation}\label{eq:poisson_cp_inactive_contractions}
\mathsf E_s^{(\beta)}
\bigl(\overline{i_{-s}},\alpha\bigr)
=
\begin{cases}
\displaystyle
-(u_\beta^\alpha)''(z_{\beta,i_\beta})
\prod_{m\ne s,\beta}u_m^\alpha(z_{m,i_m}),
&\beta\ne s,\\[3mm]
\displaystyle
\prod_{m\ne s}u_m^\alpha(z_{m,i_m}),
&\beta=s.
\end{cases}
\end{equation}
Substituting \eqref{eq:poisson_cp_inactive_contractions} and
\eqref{eq:poisson_local_channels} into
\eqref{eq:local_channel_direction_columns} gives
\begin{align*}
\Psi_{s,0}^{\mathrm{ch}}
\bigl(i,\overline{j_s\alpha}\bigr)
&=
\delta_{i_sj_s}
\sum_{\beta=1}^{d}
(1-\delta_{s\beta})
\mathsf E_s^{(\beta)}
\bigl(\overline{i_{-s}},\alpha\bigr)
=
-\delta_{i_sj_s}
\sum_{\beta\ne s}
(u_\beta^\alpha)''(z_{\beta,i_\beta})
\prod_{m\ne s,\beta}
u_m^\alpha(z_{m,i_m}),
\\
\Psi_{s,2}^{\mathrm{ch}}
\bigl(i,\overline{j_s\alpha}\bigr) &=
-\delta_{i_sj_s}
\sum_{\beta=1}^{d}
\delta_{s\beta}
\mathsf E_s^{(\beta)}
\bigl(\overline{i_{-s}},\alpha\bigr)
=
-\delta_{i_sj_s}
\prod_{m\ne s}u_m^\alpha(z_{m,i_m}),
\end{align*}
which proves \eqref{eq:poisson_cp_direction_columns}.

For the TT format, ordinary block multiplication gives
\begin{equation*}
\begin{bmatrix}1&0\end{bmatrix}
\mathcal B_1^\Delta\cdots\mathcal B_d^\Delta
\begin{bmatrix}0\\1\end{bmatrix}
=
-\sum_{\beta=1}^{d}
G_1\cdots G_{\beta-1}G_\beta''G_{\beta+1}\cdots G_d
=
-\Delta u_\theta.
\end{equation*}
For the zeroth-order channel, $\beta\ne s$, and splitting into
$\beta<s$ and $\beta>s$ yields
\begin{align*}
\mathcal Q_{<s}^{\Delta}
\mathsf P_{s,0,a_s}^{\Delta}
\mathcal Q_{>s}^{\Delta}
&=
-\sum_{\beta<s}
G_1\cdots G_{\beta-1}G_\beta''G_{\beta+1}\cdots
G_{s-1}\mathsf M_{a_s}G_{s+1}\cdots G_d
\\
&\quad
-\sum_{\beta>s}
G_1\cdots G_{s-1}\mathsf M_{a_s}
G_{s+1}\cdots G_{\beta-1}G_\beta''G_{\beta+1}\cdots G_d.
\end{align*}
This yields $\Psi_{s,0}^{\mathrm{ch}}$ associated with
$\mathsf H_{s,0}$. For $\beta=s$, we have
\begin{equation*}
\mathcal Q_{<s}^{\Delta}
\mathsf P_{s,2,a_s}^{\Delta}
\mathcal Q_{>s}^{\Delta}
=
-
G_1\cdots G_{s-1}\mathsf M_{a_s}G_{s+1}\cdots G_d,
\end{equation*}
which yields $\Psi_{s,2}^{\mathrm{ch}}$ associated with
$\mathsf H_{s,2}$. Thus \eqref{eq:poisson_tt_direction_columns}
coincides with \eqref{eq:local_channel_direction_columns} for
$b_{s,0}^{(\beta)}=1-\delta_{s\beta}$ and
$b_{s,2}^{(\beta)}=-\delta_{s\beta}$. 

The effective compressed dimension count for both cases follows from Theorem~\ref{prop:local_channel_compression}.
\end{proof}

Let $\varphi_1(x)=\sin(\pi x)$ and $\varphi_2(x)=\sin(2\pi x)$ in
\eqref{eq:paired_target_rank_prop}, and set $f_K:=-\Delta u_K^\star$ in
\eqref{eq:numerical_poisson}. We conduct numerical experiments for
$K\in\{4,8,16,32\}$ using two rank configurations: a CP model of rank $R=4$
together with a TT model of boundary ranks $r_0=r_d=1$ and uniform interior rank
$r=2$, and a second configuration with $R=9$ and $r=3$. Each univariate factor
or core is a fully connected $\tanh$ network $1\to64\to64\to k_s$, with $k_s=R$
in the CP and $k_s=r_{s-1}r_s$ in the TT case. Per coordinate we draw $n=32$
collocation points independently from the uniform distribution, fixed for the
whole run. Each model is trained for $1000$ damped Gauss--Newton iterations in
the compressed space with $\mu=10^{-5}$.

\begin{figure}
    \centering
    \includegraphics[width=0.7\linewidth]{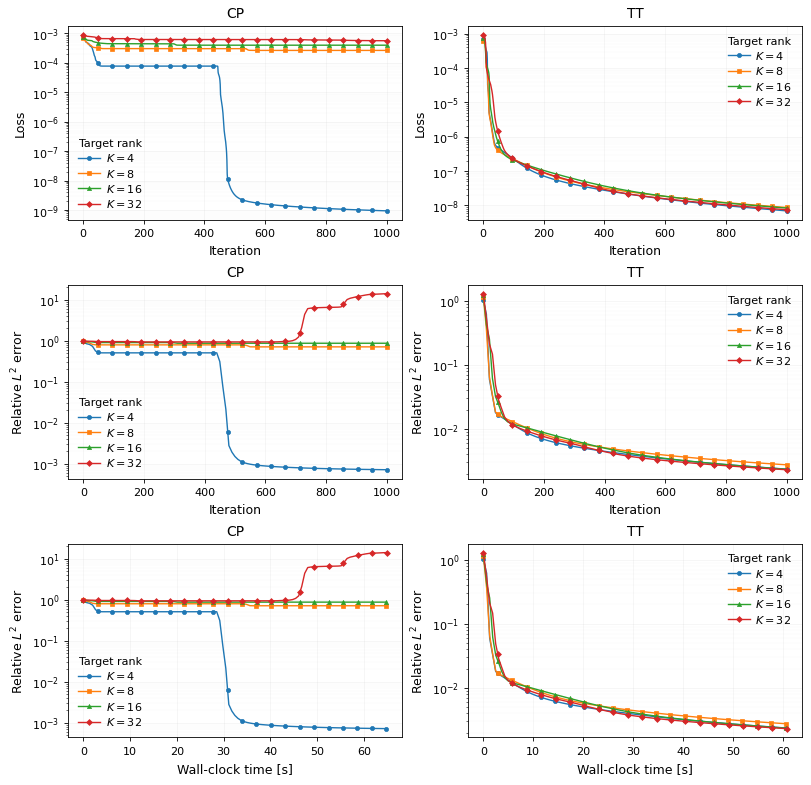}
    \caption{High-dimensional Poisson problem: CP and TT compressed Gauss--Newton convergence for canonical rank $R=4$ and TT rank $r=2$}
    \label{fig:ablation_poisson_1}
\end{figure}

The experiments in Fig.~\ref{fig:ablation_poisson_1} and
Fig.~\ref{fig:ablation_poisson_2} reflect the different rank requirements
established in Proposition~\ref{prop:paired_target_ranks}: the target $u_K^\star$
has canonical rank exactly $K$ by \eqref{eq:paired_target_cp_rank}, while its TT
ranks are bounded by two uniformly in $K$, see \eqref{eq:paired_target_tt_upper}.
Accordingly, in Fig.~\ref{fig:ablation_poisson_1} the CP model with $R=4$
accurately resolves only the $K=4$ target, while for $K>4$, where $R<K$, the
error stays at its initial level. In Fig.~\ref{fig:ablation_poisson_2},
increasing the CP rank to $R=9$ additionally allows the model to resolve $K=8$,
whereas the errors for $K=16$ and $K=32$ remain large, as expected. In contrast, and in agreement with
\eqref{eq:paired_target_tt_upper}, the TT model converges to relative errors of
about $2\cdot10^{-3}$ for all $K$ already with $r=2$, and increasing the TT rank
to $r=3$ yields no significant gain in accuracy while increasing the
computational time by roughly a factor of four. We emphasize that the $r=2$
configuration is also very fast: the full $1000$ Gauss--Newton iterations take
about $60$ seconds. Observe that most of the accuracy is reached
much earlier, the relative error falling below $10^{-2}$ within the first
$5$ seconds.

\begin{figure}[H]
    \centering
    \includegraphics[width=0.7\linewidth]{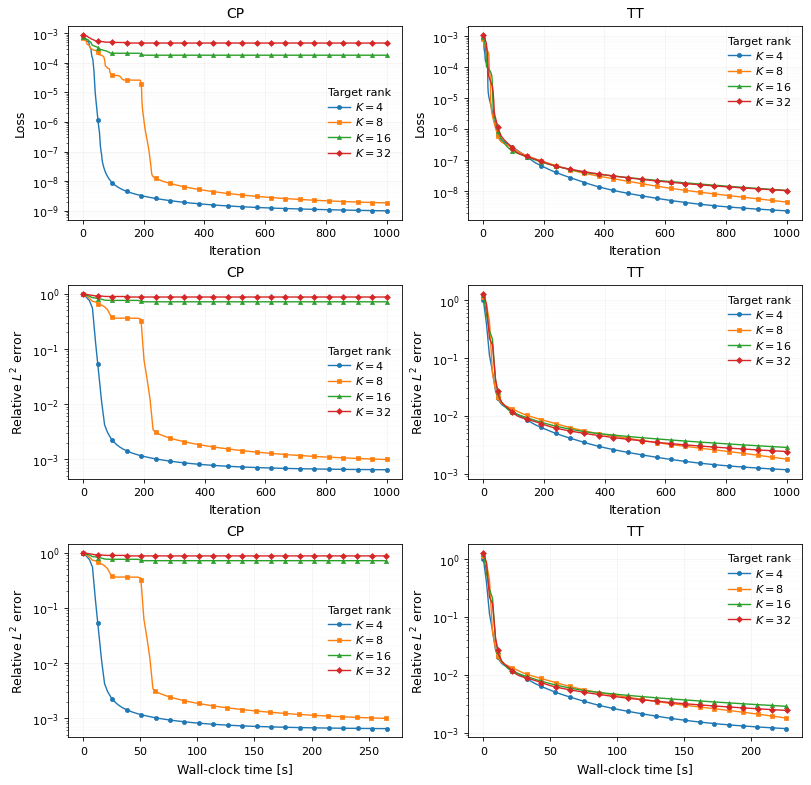}
    \caption{High-dimensional Poisson problem: CP and TT compressed Gauss--Newton convergence for canonical rank $R=9$ and TT rank $r=3$.}
    \label{fig:ablation_poisson_2}
\end{figure}


\subsection{The $10+1$-dimensional heat equation}

We consider the heat equation on $\Omega=(0,1)^{10}$ and $t\in(0,1)$,
with diffusion coefficient $\kappa=\tfrac14$:
\begin{equation*}
\partial_tu-\kappa\Delta_xu=0,
\qquad
u(x,0)=\sum_{m=1}^{10}\sin(2\pi x_m).
\end{equation*}
The Dirichlet data are given by the trace of the analytical solution
\begin{equation*}
u^\star(x,t)
=
e^{-4\pi^2\kappa t}
\sum_{m=1}^{10}\sin(2\pi x_m).
\end{equation*}
For the strong formulation, we use the following function space setting
\begin{align*}
U
&:=
L^2\bigl((0,1);H^2(\Omega)\bigr)
\cap
H^1\bigl((0,1);L^2(\Omega)\bigr),
\qquad
H
:=
\left(\bigotimes_{m=1}^{10}H^2(0,1)\right)
\otimes H^1(0,1),
\end{align*}
equipped with the norms
\begin{align*}
\|v\|_U^2
&:=
\|v\|_{L^2((0,1);H^2(\Omega))}^2
+
\|\partial_t v\|_{L^2(\Omega\times(0,1))}^2, \quad
\|v\|_H^2
:=
\sum_{\alpha\in\{0,1,2\}^{10}}
\sum_{j=0}^{1}
\|D_x^\alpha\partial_t^j v\|_{L^2(\Omega\times(0,1))}^2.
\end{align*}
The continuous embedding $H\hookrightarrow U$ and its density, and hence $H\overset{\mathrm d}{\hookrightarrow}U$, follow along the lines of the arguments given in the abstract section.

Writing $z=(x_1,\ldots,x_{10},t)$ and $d=11$, set
\begin{equation*}
(\ell_s,\gamma_s)
:=
\begin{cases}
(2,-\kappa),&s\le10,\\
(1,1),&s=11.
\end{cases}
\end{equation*}
Then the heat operator $\A:U\to L^2(\Omega\times(0,1))$ is bounded,
and its restriction to $H$ admits the representation
\begin{equation*}
\left.\A\right|_H
=
\sum_{\beta=1}^{d}
I^{\otimes(\beta-1)}
\otimes
\gamma_\beta\partial_{z_\beta}^{\ell_\beta}
\otimes
I^{\otimes(d-\beta)}.
\end{equation*}
Proposition~\ref{prop:lc_scope_finite_order} applies with
$a_{\nu^{(\beta)}}=\gamma_\beta$ and
$\nu_m^{(\beta)}=\ell_\beta\delta_{m\beta}$,
yielding $R_\A=d=11$ and
\begin{equation*}
\Lambda_s=\{0,\ell_s\},
\qquad
\mathcal D_{s,\ell}=\partial_{z_s}^{\ell},
\qquad
b_{s,0}^{(\beta)}=1-\delta_{s\beta},
\qquad
b_{s,\ell_s}^{(\beta)}=\gamma_s\delta_{s\beta}.
\end{equation*}

For this problem, the boundary and initial conditions are imposed softly
through additional loss terms. The lateral faces $\Lat_s^\sigma$,
$s=1,\ldots,d_x$, $\sigma\in\{-,+\}$, with
$z_s^-=0$ and $z_s^+=1$, are discretized by the trace grids
\eqref{eq:boundary_face_grid}. The initial slice $\Gamma_0=(0,1)^{d_x}\times\{0\}$ is treated by the same fixed-coordinate construction, with the temporal
coordinate $z_d=t$ fixed at $t=0$:
\begin{align*}
\X_{\Gamma_0}
=
\X_1\times\cdots\times\X_{d_x}\times\{0\},
\qquad
N_0:=|\X_{\Gamma_0}|=n^{d-1}.
\end{align*}
Since $n_1=\cdots=n_d=n$, we also have
$N_{s,\sigma}=n^{d-1}$ for every lateral face. Setting
$u_0(x):=u^\star(x,0)$, the equal-weight collocation discretization
\eqref{eq:discrete_loss} and \eqref{eq:discrete_face_quadrature} gives
\begin{align*}
\widehat{\mathcal L}(\theta)
&=
\frac{1}{2N_\D}
\sum_{z\in\X_\D}
\bigl|
\partial_tu_\theta(z)-\kappa\Delta_xu_\theta(z)
\bigr|^2
\\
&\quad+
\frac{\lambda_{\mathrm b}}{2d_x}
\sum_{s=1}^{d_x}
\sum_{\sigma\in\{-,+\}}
\frac{1}{2N_{s,\sigma}}
\sum_{z\in\X_{\Lat_s^\sigma}}
\bigl|u_\theta(z)-u^\star(z)\bigr|^2
+
\frac{\lambda_0}{2N_0}
\sum_{z\in\X_{\Gamma_0}}
\bigl|u_\theta(z)-u_0(x)\bigr|^2,
\end{align*}
with loss weights $\lambda_{\mathrm b}=10$ and $\lambda_0=5$.



For the TT ansatz \eqref{eq:tt_ansatz}, set
$k_s=r_{s-1}r_s$ and $\Lambda_s=\{0,\ell_s\}$, with
$\mathcal D_{s,0}=I$. Since the boundary and initial condition residuals
use only the identity channel, no additional channels are required.
Following \eqref{eq:boundary_common_sensitivities}, we evaluate
\eqref{eq:numerical_sensitivity_matrices} on the augmented grids
$\widehat\X_s$ and define
\begin{equation}\label{eq:heat_channel_stacking}
\Psi_s^{\mathrm{ch},\D}
:=
\begin{bmatrix}
\Psi_{s,0}^{\mathrm{ch},\D}&
\Psi_{s,\ell_s}^{\mathrm{ch},\D}
\end{bmatrix},
\qquad
\Cmat_s^{\mathrm{ch}}
:=
\operatorname{col}(\mathsf H_{s,0},\mathsf H_{s,\ell_s}),
\qquad
N_s^{\mathrm{ch}}=2\widehat n_sk_s.
\end{equation}
With the matrix units $\mathsf M_{a_s}$ from
\eqref{eq:tt_matrix_units}, define
\begin{equation*}
\mathcal B_m^{\mathrm h}
:=
\begin{bmatrix}
G_m&\gamma_m\partial_{z_m}^{\ell_m}G_m\\
0&G_m
\end{bmatrix},
\qquad
\mathsf P_{s,0,a_s}^{\mathrm h}
:=
\begin{bmatrix}
\mathsf M_{a_s}&0\\
0&\mathsf M_{a_s}
\end{bmatrix},
\qquad
\mathsf P_{s,\ell_s,a_s}^{\mathrm h}
:=
\gamma_s
\begin{bmatrix}
0&\mathsf M_{a_s}\\
0&0
\end{bmatrix}.
\end{equation*}
Define $\mathcal Q_{<s}^{\mathrm h}\in\R^{1\times2r_{s-1}}$ and
$\mathcal Q_{>s}^{\mathrm h}\in\R^{2r_s\times1}$ by
\eqref{eq:poisson_partial_contractions}, replacing
$\mathcal B_m^\Delta$ with $\mathcal B_m^{\mathrm h}$. Then
\begin{equation*}
\Psi_{s,\ell}^{\mathrm{ch},\D}
\bigl(i,\overline{j_sa_s}\bigr)
=
\delta_{i_sj_s}\,
\mathcal Q_{<s}^{\mathrm h}(z_{<s,i_{<s}})
\mathsf P_{s,\ell,a_s}^{\mathrm h}
\mathcal Q_{>s}^{\mathrm h}(z_{>s,i_{>s}}),
\qquad
\ell\in\Lambda_s,
\end{equation*}
where $1\le j_s\le\widehat n_s$; the columns corresponding to appended
endpoint evaluations vanish for the interior residual. The proof is
identical to the TT argument in
Proposition~\ref{prop:poisson_channel_matrices}, with
$\gamma_m\partial_{z_m}^{\ell_m}$ in place of
$-\partial_{z_m}^2$. The lateral and initial-condition contributions are assembled by the
common-sensitivity construction of
Appendix~\ref{app:tt_boundary_contractions}; the initial slice corresponds
to the same construction with the temporal coordinate fixed at $t=0$.

It follows from \eqref{eq:boundary_common_sensitivities} that the compressed dimension for this problem is given by
\begin{equation*}
\Neff^{\mathrm{ch}}
=
2\sum_{s=1}^{d}\widehat n_s r_{s-1}r_s
=
2\sum_{s=1}^{d}n_s r_{s-1}r_s
+
2\sum_{s=1}^{d}(\widehat n_s-n_s)r_{s-1}r_s.
\end{equation*}
The boundary and initial condition terms increase the compressed
dimension only through the additional endpoint evaluations in
\eqref{eq:boundary_face_appendix}, rather than introducing separate
compressed spaces for the individual faces. We refer to
Appendix~\ref{app:tt_boundary_contractions} for further details regarding this issue.

We use a TT-PINN with $r_0=r_d=1$ and uniform interior rank $r=2$. Each TT
core is a fully connected $\tanh$ network
$1\to128\to128\to128\to128\to r_{m-1}r_m$. For $d=11$, the model has
$P=552{,}872$ parameters. We use
$n_1=\cdots=n_d=n=32$ interior collocation points per coordinate, drawn
independently from the uniform distribution and fixed throughout the run.
The concrete count in the effective
compressed dimension yields the number
\begin{align*}
\Neff^{\mathrm{ch}}
=
2\sum_{s=1}^{d}\widehat n_s r_{s-1}r_s
=
2\cdot34\cdot40
=
2720,
\end{align*}
where $\widehat n_s=n+2=34$. For the damping, we set $\mu=10^{-4}$. The ablation consists of $500$
Gauss--Newton iterations for each of $10$ random seeds. As a first-order
baseline, we compare against Adam, where the residual, loss, and gradient are
evaluated in the same compressed TT form, so that the two methods differ only
in how the update is computed. For Adam, we use the standard hyperparameters
$\beta_1^{\mathrm{Adam}}=0.9$, $\beta_2^{\mathrm{Adam}}=0.999$, and
$\epsilon^{\mathrm{Adam}}=10^{-8}$, together with the exponentially decaying
learning-rate schedule
\begin{align*}
\eta_k^{\mathrm{Adm}}
=
\eta_0^{\mathrm{Adm}}
\bigl(\eta_{\mathrm{end}}^{\mathrm{Adm}}/\eta_0^{\mathrm{Adm}}\bigr)^{k/K},
\qquad
\eta_0^{\mathrm{Adm}}=10^{-3},
\quad
\eta_{\mathrm{end}}^{\mathrm{Adm}}=10^{-5},
\end{align*}
over $K=50\,000$ iterations. The relative errors in the convergence curves
of Fig.~\ref{fig:ablation_heat_GN} are computed on midpoint factor grids with
$n_{\rm err}=256$ points per coordinate.

Both compressed Gauss--Newton and TT-Adam perform well with very competitive compute times due to TT compression, as shown in Fig.~\ref{fig:ablation_heat_GN}. However, the compressed Gauss--Newton reaches a median relative error of about $1.5\cdot10^{-4}$ after $500$ iterations, compared with $7\cdot10^{-4}$ for Adam after $50,000$ iterations. It is also substantially faster, requiring about $16$ seconds versus $73$ seconds for Adam. The error curves also differ in shape: the Gauss--Newton error levels off after roughly $200$ iterations, while the Adam error is still decreasing slowly at the end of the budget.

\begin{figure}[H]
\centering
\includegraphics[width=1\linewidth]{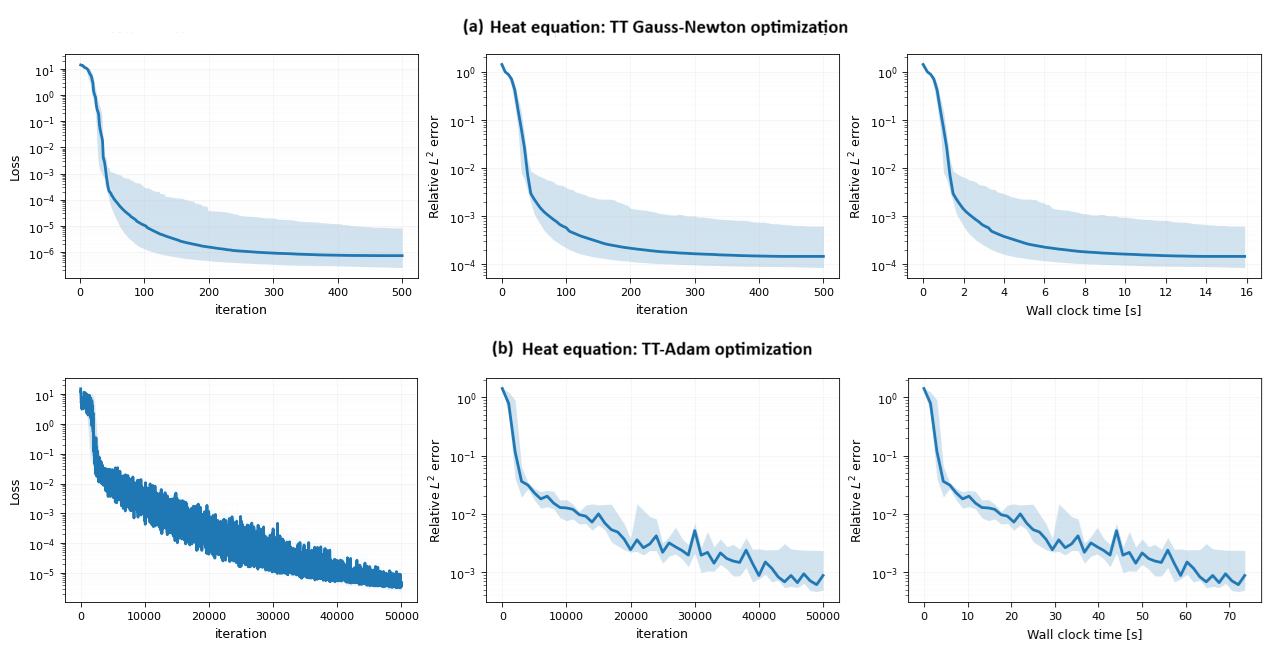}
\caption{Heat equation: (a) compressed
Gauss--Newton ($500$ iterations) and (b) TT-Adam ($50\,000$ iterations). Solid
curves show the median over ten seeds, shaded regions the interquartile range
between the 25th and 75th percentiles. From left to right: loss, relative $L^2$
error, and relative $L^2$ error versus wall-clock time; the first two are
plotted against the iteration count.}
\label{fig:ablation_heat_GN}
\end{figure}

\subsection{Parametric Darcy problem}

We consider the parametric Darcy problem
\begin{equation}\label{eq:parametric_diffusion}
-\nabla_x\cdot\bigl(\kappa(x,\xi)\nabla_x u(x,\xi)\bigr)
=f(x,\xi)
\quad\text{in }\Omega\times\Xi,
\qquad
u=0
\quad\text{on }\partial\Omega\times\Xi,
\end{equation}
where $\Omega=(0,1)^{d_x}$ and $\Xi=(-1,1)^{d_\xi}$ with $d_x = 3$ and $d_\xi=25$, respectively. We study two permeability coefficients of increasing parametric complexity. Before conducting the numerical ablation, we first analyze the additional
problem structure that can be exploited to reduce the computational costs
associated with both the assembly and the compressed Gauss--Newton solve.

\subsubsection{Affine permeability coefficient: analysis}

The first permeability benchmark is the affine coefficient; cf.~\cite{eigel2017adaptive}:
\begin{equation}\label{eq:affine_coefficient_general}
\kappa_{\rm aff}(x,\xi)
=
\bar\kappa
+
\varepsilon\sum_{j=1}^{\dpa}
\xi_j\sqrt{\lambda_j}
\prod_{m=1}^{\dsp}\cos(\pi jx_m),
\qquad
\lambda_j=j^{-2},
\end{equation}
with $\bar\kappa=1$, amplitude $\varepsilon=0.1$, and independent parameters
$\xi_j\sim\operatorname{Unif}(-1,1)$ for $j=1,\ldots,\dpa$. Writing
\begin{equation}\label{eq:affine_modes}
\kappa_{\rm aff}(x,\xi)
=
\sum_{j=0}^{\dpa}\kappa^{(j)}(x,\xi),
\end{equation}
where $\kappa^{(0)}:=\bar\kappa$ and
\[
\kappa^{(j)}(x,\xi)
=
\varepsilon\sqrt{\lambda_j}\,\xi_j
\prod_{m=1}^{\dsp}\cos(\pi jx_m),
\qquad j=1,\ldots,\dpa,
\]
each mode is rank one over $z=(x,\xi)\in\R^d$, $d=\dsp+\dpa$:
\begin{equation}\label{eq:affine_mode_product}
\kappa^{(j)}(z)
=
\prod_{m=1}^{d}\kappa_{j,m}(z_m),
\qquad j=0,\ldots,\dpa,
\end{equation}
with, for $j\ge1$,
\begin{equation}\label{affine coefficient for TT}
\kappa_{j,m}(z_m)
=
\begin{cases}
\varepsilon\sqrt{\lambda_j}\cos(\pi jz_m),
& m=1,\\[1mm]
\cos(\pi jz_m),
& 2\le m\le\dsp,\\[1mm]
z_m,
& m=\dsp+j,\\[1mm]
1,
& \dsp<m\le d,\quad m\neq\dsp+j,
\end{cases}
\end{equation}
and $\kappa_{0,1}:=\bar\kappa$, $\kappa_{0,m}:=1$ for $m=2,\ldots,d$.
The following lemma verifies Assumption~\ref{ass:operator}.
\begin{lemma}\label{lem:affine_operator_separability}
Let $\A v:=-\nabla_x\cdot(\kappa_{\rm aff}\nabla_xv)$ and let the
univariate spaces satisfy the derivative assumptions of
Proposition~\ref{prop:lc_scope_finite_order}, with $q_s=2$ for
$s\le\dsp$ and $q_s=0$ otherwise. Then $\A$ has the separated representation
\begin{equation}\label{eq:affine_operator_decomposition}
\A=\sum_{j=0}^{\dpa}\sum_{q=1}^{\dsp}
\A_1^{(j,q)}\otimes\cdots\otimes\A_d^{(j,q)},
\qquad R_\A=\dsp(\dpa+1),
\end{equation}
where the operator index $\beta=1,\ldots,R_\A$ is identified
with the pair $(j,q)\in\{0,\ldots,\dpa\}\times\{1,\ldots,\dsp\}$.
The corresponding bounded univariate factors are
\begin{equation}\label{eq:affine_univariate_operators}
\A_s^{(j,q)}h:=
\begin{cases}
-\kappa_{j,s}'\,h'-\kappa_{j,s}\,h'',&s=q,\\[1mm]
\kappa_{j,s}\,h,&s\ne q.
\end{cases}
\end{equation}
The local differential-channel representation is given by
\begin{equation}\label{eq:darcy_local_channels}
\mathcal D_{s,\ell}=\partial_{z_s}^{\ell},
\qquad
\Lambda_s=
\begin{cases}
\{0,1,2\},&s\le\dsp,\\
\{0\},&s>\dsp,
\end{cases}
\end{equation}
with channel coefficients, for $s\le\dsp$,
\begin{equation}\label{eq:affine_local_channel_coefficients}
b_{s,0}^{(j,q)}=(1-\delta_{sq})\kappa_{j,s},\qquad
b_{s,1}^{(j,q)}=-\delta_{sq}\kappa_{j,s}',\qquad
b_{s,2}^{(j,q)}=-\delta_{sq}\kappa_{j,s}.
\end{equation}
Here $b_{s,\ell}^{(j,q)}$ denotes $b_{s,\ell}^{(\beta)}$ under the
identification $\beta\equiv(j,q)$. For $s>\dsp$, only
$b_{s,0}^{(j,q)}=\kappa_{j,s}$ is needed.
\end{lemma}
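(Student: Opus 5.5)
The plan is to expand the divergence-form operator into a finite sum of products of univariate operators and then read off the factors and channel coefficients, mirroring the proof of Proposition~\ref{prop:lc_scope_finite_order}. For a sufficiently smooth $v$ I would write
\[
-\nabla_x\cdot(\kappa_{\rm aff}\nabla_x v)
=-\sum_{q=1}^{\dsp}\partial_{x_q}\bigl(\kappa_{\rm aff}\,\partial_{x_q}v\bigr)
=-\sum_{j=0}^{\dpa}\sum_{q=1}^{\dsp}\partial_{z_q}\bigl(\kappa^{(j)}\partial_{z_q}v\bigr),
\]
using the mode decomposition \eqref{eq:affine_modes}. For each pair $(j,q)$, the product structure \eqref{eq:affine_mode_product} implies that $\partial_{z_q}$ hits only the factor $\kappa_{j,q}$ of $\kappa^{(j)}$. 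Hence
\[
\partial_{z_q}(\kappa^{(j)}\partial_{z_q}v)
=\Bigl(\prod_{m\ne q}\kappa_{j,m}\Bigr)\bigl(\kappa_{j,q}'\partial_{z_q}v+\kappa_{j,q}\partial_{z_q}^2v\bigr).
\]

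First I check this on elementary tensors $v=v_1\otimes\cdots\otimes v_d$. There the term $(j,q)$ equals $\bigotimes_m \A_m^{(j,q)}v_m$, with the factors given in \eqref{eq:affine_univariate_operators}. This yields \eqref{eq:affine_operator_decomposition} on the algebraic tensor product, and counting pairs gives $R_\A=\dsp(\dpa+1)$.

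Next I establish boundedness of each $\A_s^{(j,q)}:H_s\to L^2(\D_s)$. All $\kappa_{j,s}$ and $\kappa_{j,s}'$ are continuous on the closed intervals: they are cosines, $z_m$, or constants. For $s\le\dsp$ the maps $\partial_{z_s}^\ell$ with $\ell\le2$ are bounded on $H_s$ by assumption, and for $s>\dsp$ only the identity ($q_s=0$) is needed. Note that $s=q$ only occurs for $q\le\dsp$, so derivatives never fall on parametric slots.

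The extension from elementary tensors to all of $H$ then follows verbatim the density argument in part~(ii) of the proof of Proposition~\ref{prop:lc_scope_finite_order}. Both sides are bounded operators $H\to L^2(\D)$ that agree on the dense algebraic tensor product, so they agree on $H$. This requires $\A:U\to L^2(\D)$ to be bounded, which I take as given in the problem setup since $\kappa_{\rm aff}\in C^1$.

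Finally, the channel representation is immediate from \eqref{eq:affine_univariate_operators}. For $s\le\dsp$ I match coefficients of $h,h',h''$: when $s\ne q$ only $h$ appears, with coefficient $\kappa_{j,s}$; when $s=q$ the coefficients are $-\kappa_{j,s}'$ on $h'$ and $-\kappa_{j,s}$ on $h''$. This gives \eqref{eq:affine_local_channel_coefficients} with $\Lambda_s=\{0,1,2\}$. For $s>\dsp$ one always has $s\ne q$, so $\Lambda_s=\{0\}$ and $b_{s,0}^{(j,q)}=\kappa_{j,s}$. The coefficients lie in $C(\overline{\D_s})$ as Assumption~\ref{ass:local_differential_channels} requires.

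The only delicate point is the product-rule step. One must make sure that differentiating $\kappa^{(j)}$ in $z_q$ produces a single rank-one term, namely the one containing $\kappa_{j,q}'$, rather than breaking separability. This holds precisely because each mode is rank one, so everything else is routine bookkeeping.
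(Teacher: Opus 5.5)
Your proposal is correct and follows essentially the same route as the paper: expand the divergence mode by mode, use that each $\kappa^{(j)}$ is rank one so $\partial_{x_q}$ hits only $\kappa_{j,q}$, verify the $(j,q)$-term on elementary tensors, and read off the channel coefficients. The only difference is that the paper simply cites Proposition~\ref{prop:lc_scope_finite_order} for the boundedness and density-extension step, whereas you rerun that argument directly for the grouped $(j,q)$ representation. Your version is slightly more precise here, since the Proposition's own representation would split the first- and second-order terms and give a larger $R_\A$.
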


\begin{proof}
Expanding the divergence gives
\begin{equation}\label{eq:darcy_expanded_operator}
\A v
=
-\sum_{q=1}^{\dsp}
\left[
(\partial_{x_q}\kappa_{\rm aff})\partial_{x_q}v
+
\kappa_{\rm aff}\partial_{x_q}^2v
\right],
\end{equation}
where
\begin{equation*}
\kappa_{\rm aff}
=
\sum_{j=0}^{\dpa}\prod_{m=1}^{d}\kappa_{j,m},
\qquad
\partial_{x_q}\kappa_{\rm aff}
=
\sum_{j=0}^{\dpa}
\kappa_{j,q}'\prod_{m\ne q}\kappa_{j,m}.
\end{equation*}
Thus the coefficients of both $\partial_{x_q}v$ and
$\partial_{x_q}^2v$ admit finite separable representations with smooth
univariate factors, and Proposition~\ref{prop:lc_scope_finite_order}
applies.

For an elementary tensor $v=v_1\otimes\cdots\otimes v_d$, each pair
$(j,q)$ gives
\begin{equation}\label{eq:affine_separable_direction}
-\partial_{x_q}\bigl(\kappa^{(j)}\partial_{x_q}v\bigr)
=
\bigl[-\kappa_{j,q}'v_q'-\kappa_{j,q}v_q''\bigr]
\prod_{m\ne q}\kappa_{j,m}v_m
=
\bigl(\A_1^{(j,q)}\otimes\cdots\otimes\A_d^{(j,q)}\bigr)v.
\end{equation}
Summation over $(j,q)$ yields
\eqref{eq:affine_operator_decomposition}, with the pair $(j,q)$ playing
the role of the abstract index $\beta$ and
$R_\A=\dsp(\dpa+1)$.

For $s\le\dsp$, substituting
\eqref{eq:affine_local_channel_coefficients} gives
\begin{align*}
\sum_{\ell\in\Lambda_s}
b_{s,\ell}^{(j,q)}\mathcal D_{s,\ell}h
&=
(1-\delta_{sq})\kappa_{j,s}h
-\delta_{sq}\bigl(\kappa_{j,s}'h'+\kappa_{j,s}h''\bigr)=
\A_s^{(j,q)}h.
\end{align*}
For $s>\dsp$, we have $s\ne q$, and hence
\begin{equation*}
b_{s,0}^{(j,q)}\mathcal D_{s,0}h
=
\kappa_{j,s}h
=
\A_s^{(j,q)}h.
\end{equation*}
This proves the stated local differential-channel representation.
\end{proof}

For each coefficient mode, we collect the sum over spatial derivatives
into one block TT.
\begin{lemma}
\label{lem:affine_block_core}
Let $u_\theta$ be given by \eqref{eq:tt_ansatz}, and assume that
$G_m\in C^2(\D_m;\R^{r_{m-1}\times r_m})$ for $m\le\dsp$. For
$j=0,\ldots,\dpa$ and $m=1,\ldots,d$, define
\begin{equation}\label{eq:affine_operated_cores}
\begin{aligned}
A_m^{(j)}(z_m)
&:=\kappa_{j,m}(z_m)G_m(z_m), \qquad
B_m^{(j)}(z_m)
:=
\begin{cases}
-\partial_{z_m}\bigl(\kappa_{j,m}(z_m)G_m'(z_m)\bigr),
& m\le\dsp,\\[1mm]
0,
& m>\dsp.
\end{cases}
\end{aligned}
\end{equation}
where differentiation is understood entrywise.
Both $A_m^{(j)}$ and $B_m^{(j)}$ take values in
$\R^{r_{m-1}\times r_m}$. Introduce the block cores
\begin{equation}\label{eq:affine_block_cores}
\mathcal B_m^{(j)}(z_m)
:=
\begin{bmatrix}
A_m^{(j)}(z_m)&B_m^{(j)}(z_m)\\
0&A_m^{(j)}(z_m)
\end{bmatrix}
\in\R^{2r_{m-1}\times2r_m}.
\end{equation}
Then
\begin{equation}\label{eq:affine_block_chain}
-\nabla_x\cdot
\bigl(\kappa^{(j)}\nabla_xu_\theta\bigr)(z)
=
\begin{bmatrix}1&0\end{bmatrix}
\mathcal B_1^{(j)}(z_1)\cdots\mathcal B_d^{(j)}(z_d)
\begin{bmatrix}0\\1\end{bmatrix}.
\end{equation}
The internal TT ranks of this representation are bounded by $2r_m$,
$m=1,\ldots,d-1$. In particular, if $u_\theta$ has maximal TT-rank $r$,
then \eqref{eq:affine_block_chain} has maximal TT-rank at most $2r$.
\end{lemma}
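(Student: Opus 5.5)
The plan is to prove \eqref{eq:affine_block_chain} by multiplying out the upper-triangular block cores, following the same argument used for $\mathcal B_m^\Delta$ in Proposition~\ref{prop:poisson_channel_matrices}. A product of upper block-triangular matrices with equal diagonal blocks has the structure
\begin{equation*}
\mathcal B_1^{(j)}\cdots\mathcal B_d^{(j)}
=
\begin{bmatrix}
A_1^{(j)}\cdots A_d^{(j)} & \sum_{q=1}^{d}A_1^{(j)}\cdots A_{q-1}^{(j)}B_q^{(j)}A_{q+1}^{(j)}\cdots A_d^{(j)}\\
0 & A_1^{(j)}\cdots A_d^{(j)}
\end{bmatrix}.
\end{equation*}
I will establish this by induction on the number of factors. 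Multiplying the current partial product $\begin{bmatrix}X&Y\\0&X\end{bmatrix}$ on the right by $\mathcal B_m^{(j)}$ gives the upper-right block $XB_m^{(j)}+YA_m^{(j)}$, which is exactly the next partial sum. Extracting the $(1,2)$ block with $[1\ 0]$ and $[0\ 1]^\top$ leaves the off-diagonal sum. Since $B_q^{(j)}=0$ for $q>\dsp$, the sum only runs over $q=1,\ldots,\dsp$.

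Next I identify each term of that sum. The scalar factors $\kappa_{j,m}(z_m)$ commute with the matrix cores. The term for a given $q$ therefore equals
\begin{equation*}
\Bigl(\prod_{m\ne q}\kappa_{j,m}\Bigr)\,G_1\cdots G_{q-1}\bigl[-\partial_{z_q}(\kappa_{j,q}G_q')\bigr]G_{q+1}\cdots G_d.
\end{equation*}
Because $\partial_{x_q}$ acts only on the $q$-th core and on the $q$-th coefficient factor, this is $-\partial_{x_q}\bigl(\kappa^{(j)}\partial_{x_q}u_\theta\bigr)$. The justification is the same product-rule reasoning as in \eqref{eq:affine_separable_direction}, applied entrywise to the TT product \eqref{eq:tt_index_form}, and it is where the assumption $G_q\in C^2$ is used. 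Summing over $q\le\dsp$ gives $-\nabla_x\cdot(\kappa^{(j)}\nabla_x u_\theta)$.

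The rank claim follows from the sizes of the blocks. Each $\mathcal B_m^{(j)}$ has size $2r_{m-1}\times 2r_m$. The boundary vectors contract the outer dimensions $2r_0=2$ and $2r_d=2$ down to $1$, and they can be absorbed into the first and last cores. After absorption, the internal ranks are $2r_m$ for $m=1,\ldots,d-1$, so the maximal rank is at most $2r$.

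The main care point is the commutation step. Since $\kappa_{j,m}$ is scalar-valued, moving it out of the matrix product is harmless. What needs checking is that the derivative of the coefficient, $\kappa_{j,q}'$, lands in the $q$-th slot together with $G_q'$. I would verify this directly by expanding $-\partial_{z_q}(\kappa_{j,q}G_q')=-\kappa_{j,q}'G_q'-\kappa_{j,q}G_q''$ and matching the result against \eqref{eq:affine_univariate_operators}.
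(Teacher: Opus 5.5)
Your proposal is correct and follows essentially the same route as the paper: block multiplication of the upper-triangular cores places the insertion sum $\sum_{q} A_1^{(j)}\cdots B_q^{(j)}\cdots A_d^{(j)}$ in the upper-right block, the product rule identifies each insertion with $-\partial_{x_q}(\kappa^{(j)}\partial_{x_q}u_\theta)$ (only the $q$-th core and coefficient factor depend on $x_q$), and the rank bound follows by absorbing the boundary vectors into the first and last cores. The differences are only in presentation: the paper derives the insertion sum from the operator side first and cites ``ordinary block multiplication,'' whereas you start from the block product and write out the induction step explicitly.
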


\begin{proof}
Fix $j$ and $q\le\dsp$. Differentiating \eqref{eq:tt_ansatz} and
distributing the factors in \eqref{eq:affine_mode_product}
gives
\begin{equation}\label{eq:affine_tt_flux_factorization}
\kappa^{(j)}\partial_{x_q}u_\theta
=
A_1^{(j)}\cdots A_{q-1}^{(j)}
\bigl(\kappa_{j,q}G_q'\bigr)
A_{q+1}^{(j)}\cdots A_d^{(j)}.
\end{equation}
Since all factors outside the $q$-th position are independent of $x_q$,
\begin{equation*}
\begin{aligned}
-\partial_{x_q}\bigl(\kappa^{(j)}\partial_{x_q}u_\theta\bigr)
&=
A_1^{(j)}\cdots A_{q-1}^{(j)}
\left[-\partial_{x_q}\bigl(\kappa_{j,q}G_q'\bigr)\right]
A_{q+1}^{(j)}\cdots A_d^{(j)}
\\
&=
A_1^{(j)}\cdots A_{q-1}^{(j)}
B_q^{(j)}
A_{q+1}^{(j)}\cdots A_d^{(j)}.
\end{aligned}
\end{equation*}
Summing over $q$ yields
\begin{equation}\label{eq:affine_tt_insertion_sum}
-\nabla_x\cdot\bigl(\kappa^{(j)}\nabla_xu_\theta\bigr)
=
\sum_{q=1}^{\dsp}
A_1^{(j)}\cdots A_{q-1}^{(j)}
B_q^{(j)}
A_{q+1}^{(j)}\cdots A_d^{(j)}.
\end{equation}
Ordinary block multiplication gives
\begin{equation}\label{eq:affine_partial_block_product}
\mathcal B_1^{(j)}\cdots\mathcal B_d^{(j)}
=
\begin{bmatrix}
A_1^{(j)}\cdots A_d^{(j)}
&
\displaystyle
\sum_{q=1}^{d}
A_1^{(j)}\cdots A_{q-1}^{(j)}
B_q^{(j)}
A_{q+1}^{(j)}\cdots A_d^{(j)}
\\[2mm]
0&
A_1^{(j)}\cdots A_d^{(j)}
\end{bmatrix}.
\end{equation}
Since $B_q^{(j)}=0$ for $q>\dsp$, the boundary unit vectors extract
exactly the sum in \eqref{eq:affine_tt_insertion_sum}, proving
\eqref{eq:affine_block_chain}. Each block core has size
$2r_{m-1}\times2r_m$. Absorbing the boundary vectors into the first and
last cores gives boundary ranks one and internal TT ranks at most
$2r_m$, hence maximal TT rank at most $2r$.
\end{proof}
\noindent
Combining \eqref{eq:affine_modes} and Lemma~\ref{lem:affine_block_core},
we obtain the representation
\begin{equation}\label{eq:affine_full_block_representation}
\A u_\theta(z)
=
\sum_{j=0}^{\dpa}
\begin{bmatrix}1&0\end{bmatrix}
\mathcal B_1^{(j)}(z_1)\cdots
\mathcal B_d^{(j)}(z_d)
\begin{bmatrix}0\\1\end{bmatrix}.
\end{equation}
Hence, $\A u_\theta$ is represented by
$R_\kappa=\dpa+1$ TT trains, each with maximal TT-rank at most $2r$.
We emphasize that $R_\kappa$ differs from the operator-term count
$R_{\A}=\dsp(\dpa+1)$ in \eqref{eq:affine_operator_decomposition}.

We proceed with the manufactured solution
\begin{equation}\label{eq:affine_exact_solution}
u_{\rm aff}^\star(x,\xi)
=
\prod_{m=1}^{\dsp}\sin(\pi x_m)
\big(1+\sum_{j=1}^{\dpa}\xi_j\big),
\end{equation}
which has maximal TT-rank two, and define
\[
f
=
-\nabla_x\cdot
\left(
\kappa_{\rm aff}\nabla_x u_{\rm aff}^\star
\right).
\]
The boundary conditions are imposed via \eqref{eq:SDF num}. The
interior residual is given by
\begin{equation*}
\Res_{\rm aff}(\theta)
=
-\nabla_x\cdot
\left(
\kappa_{\rm aff}
\nabla_x(u_\theta-u_{\rm aff}^\star)
\right)
=
\sum_{j=0}^{\dpa}\Res^{(j)}(\theta), \quad \Res^{(j)}(\theta)
:=
-\nabla_x\cdot
(
\kappa^{(j)}
\nabla_x(u_\theta-u_{\rm aff}^\star))
\end{equation*}
Define the partial contractions
\begin{equation}\label{eq:affine_partial_contractions}
\mathcal Q_{<s}^{(j)}:=\begin{bmatrix}1&0\end{bmatrix}
\prod_{m<s}\mathcal B_m^{(j)}\in\R^{1\times2r_{s-1}},
\qquad
\mathcal Q_{>s}^{(j)}:=\left(\prod_{m>s}\mathcal B_m^{(j)}\right)
\begin{bmatrix}0\\1\end{bmatrix}\in\R^{2r_s\times1}.
\end{equation}
We have the following algebraic characterization of the factors in the Jacobian. 
\begin{proposition}[Local-channel matrices for affine Darcy]
\label{prop:affine_darcy_compression}
Let the setting of Lemmas~\ref{lem:affine_operator_separability}
and~\ref{lem:affine_block_core} hold, and use the factor grids
\eqref{eq:factor_grids}. Assume that the TT-core parametrizations are
differentiable with values in $C^2(\overline{\D_s})$ entrywise for
$s\le\dsp$ and in $C(\overline{\D_s})$ otherwise.
For $k_s=r_{s-1}r_s$, $a_s=\overline{\alpha_{s-1}\alpha_s}$, and the
channels \eqref{eq:darcy_local_channels}, the sensitivity factor in
Theorem~\ref{prop:local_channel_compression} is
\begin{equation}\label{eq: active jacobian channels affine}
\Cmat_s^{\mathrm{ch}}:=
\begin{cases}
\operatorname{col}(\mathsf H_{s,0},\mathsf H_{s,1},\mathsf H_{s,2}),&s\le\dsp,\\[1mm]
\mathsf H_{s,0},&s>\dsp,
\end{cases}
\qquad
N_s^{\mathrm{ch}}=\begin{cases}3n_sk_s,&s\le\dsp,\\n_sk_s,&s>\dsp,\end{cases}
\end{equation}
where $\mathsf H_{s,\ell}$ is given by
\eqref{eq:numerical_sensitivity_matrices} and
$\Cmat_s^{\mathrm{ch}}\in\R^{N_s^{\mathrm{ch}}\times P_s}$.
Let $\mathsf M_{a_s}$ be the TT matrix unit as in \eqref{eq:tt_matrix_units}. For
$j=0,\ldots,\dpa$ and $\ell\in\Lambda_s$, define
\begin{equation}\label{eq:affine_derivative_insertions}
\mathsf P_{s,\ell,a_s}^{(j)}(z_s):=
\begin{cases}
\displaystyle\kappa_{j,s}(z_s)
\begin{bmatrix}\mathsf M_{a_s}&0\\0&\mathsf M_{a_s}\end{bmatrix},&\ell=0,\\[3mm]
\displaystyle-\kappa_{j,s}'(z_s)
\begin{bmatrix}0&\mathsf M_{a_s}\\0&0\end{bmatrix},&\ell=1,\\[3mm]
\displaystyle-\kappa_{j,s}(z_s)
\begin{bmatrix}0&\mathsf M_{a_s}\\0&0\end{bmatrix},&\ell=2,
\end{cases}
\end{equation}
with size $2r_{s-1}\times2r_s$. The direction matrices are
\begin{equation}\label{eq:affine_reduced_direction_columns}
\Psi_{s,\ell}^{\mathrm{ch}}\bigl(i,\overline{j_sa_s}\bigr)
=\delta_{i_sj_s}\sum_{j=0}^{\dpa}
\mathcal Q_{<s}^{(j)}(z_{<s,i_{<s}})
\mathsf P_{s,\ell,a_s}^{(j)}(z_{s,j_s})
\mathcal Q_{>s}^{(j)}(z_{>s,i_{>s}}),
\end{equation}
where $\Psi_{s,\ell}^{\mathrm{ch}}\in\R^{N_\D\times n_sk_s}$, and
\begin{equation}\label{eq:darcy_reduced_channel_stacking}
\Psi_s^{\mathrm{ch}}:=
\begin{cases}
\begin{bmatrix}\Psi_{s,0}^{\mathrm{ch}}&\Psi_{s,1}^{\mathrm{ch}}&
\Psi_{s,2}^{\mathrm{ch}}\end{bmatrix},&s\le\dsp,\\[1mm]
\Psi_{s,0}^{\mathrm{ch}},&s>\dsp,
\end{cases}
\qquad \Psi_s^{\mathrm{ch}}\in\R^{N_\D\times N_s^{\mathrm{ch}}}.
\end{equation}
The resulting compressed dimension is given by
\begin{equation}\label{eq:darcy_reduced_dimension}
\Neff^{\mathrm{ch}}=3\sum_{s=1}^{\dsp}n_sr_{s-1}r_s
+\sum_{s=\dsp+1}^{d}n_sr_{s-1}r_s.
\end{equation}
\end{proposition}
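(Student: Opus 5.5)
The plan is to obtain the statement as a direct specialization of Theorem~\ref{prop:local_channel_compression}. Lemma~\ref{lem:affine_operator_separability} already provides everything that theorem requires: the separated representation with $\beta\equiv(j,q)$, the channels \eqref{eq:darcy_local_channels}, and the coefficients \eqref{eq:affine_local_channel_coefficients}. The TT pattern \eqref{eq:tt_pattern} is also in place. The proof therefore splits into two tasks. The first, immediate, is to identify $\Cmat_s^{\mathrm{ch}}$, $N_s^{\mathrm{ch}}$ and $\Neff^{\mathrm{ch}}$. The second is to show that the abstract direction columns \eqref{eq:local_channel_direction_columns} coincide with the block-TT expression \eqref{eq:affine_reduced_direction_columns}.

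\emph{First task.} Since $\Lambda_s=\{0,1,2\}$ with $L_s=3$ for $s\le\dsp$, and $\Lambda_s=\{0\}$ with $L_s=1$ otherwise, the stacking \eqref{eq:local_channel_stacking} gives \eqref{eq: active jacobian channels affine}. The sensitivities $\mathsf H_{s,\ell}$ specialize to \eqref{eq:numerical_sensitivity_matrices} with $\mathcal D_{s,\ell}=\partial_{z_s}^\ell$. The assumed $C^2$ regularity of the cores guarantees that these derivatives exist. Summing $L_s n_s r_{s-1}r_s$ over $s$ yields \eqref{eq:darcy_reduced_dimension}.

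\emph{Second task.} Fix $s$, $\ell$ and $a_s$. By \eqref{eq:tt_E_matrix_unit_representation}, $\mathsf E_s^{(j,q)}(\overline{i_{-s}},a_s)$ is the TT product whose $m$-th factor is $\mathsf G_m^{(j,q)}=\A_m^{(j,q)}G_m$ for $m\neq s$ and whose $s$-th factor is $\mathsf M_{a_s}$. By \eqref{eq:affine_univariate_operators}, the $m$-th factor equals $A_m^{(j)}$ for $m\ne q$ and $B_m^{(j)}$ for $m=q$. Inserting \eqref{eq:affine_local_channel_coefficients}, the bracket in \eqref{eq:local_channel_direction_columns} becomes
\[
\sum_{j=0}^{\dpa}\sum_{q=1}^{\dsp} b_{s,\ell}^{(j,q)}(z_{s,j_s})\,
\mathsf E_s^{(j,q)}(\overline{i_{-s}},a_s).
\]
This sum is evaluated channel by channel.

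For $\ell=0$, only $q\ne s$ contributes, and the $s$-th factor becomes $\kappa_{j,s}\mathsf M_{a_s}$. The remaining sum over $q\neq s$ is a sum of insertions of a single $B_q^{(j)}$ at positions $q\neq s$. This is exactly what the block product yields with the block-diagonal insertion $\operatorname{diag}(\mathsf M_{a_s},\mathsf M_{a_s})$ at slot $s$. The argument mirrors \eqref{eq:affine_partial_block_product}: the upper-right block of a product of upper block-triangular matrices collects single off-diagonal insertions. Here the insertion at $s$ has zero off-diagonal block, so $q=s$ is excluded, and $B_q^{(j)}=0$ for $q>\dsp$.

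For $\ell=1,2$, only $q=s$ contributes, with coefficients $-\kappa_{j,s}'$ and $-\kappa_{j,s}$ respectively. All other slots then carry $A_m^{(j)}$, which is extracted by the off-diagonal insertion $\bigl[\begin{smallmatrix}0&\mathsf M_{a_s}\\0&0\end{smallmatrix}\bigr]$ between $\mathcal Q_{<s}^{(j)}$ and $\mathcal Q_{>s}^{(j)}$. The vectors $[1\ 0]$ and $[0\ 1]^\top$ force the product to pick the upper-left chain before $s$ and the lower-right chain after $s$, which are both products of $A$'s.

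For $s>\dsp$, only $\ell=0$ occurs and the same computation applies, now with $q\ne s$ automatic. In every case the sum over $j$ remains outside, because each $j$ is a separate train. This yields \eqref{eq:affine_reduced_direction_columns}.

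The main obstacle is the careful block bookkeeping in the $\ell=0$ case. There I would expand $\mathcal Q_{<s}^{(j)}\operatorname{diag}(\mathsf M,\mathsf M)\mathcal Q_{>s}^{(j)}$ into its two contributions, $B$ to the left of $s$ and $B$ to the right of $s$, exactly as in the TT part of the proof of Proposition~\ref{prop:poisson_channel_matrices}. It must also be checked that the sign and derivative conventions of $B_q^{(j)}=-(\kappa_{j,q}G_q')'$ match $\A_q^{(j,q)}$. Once that expansion is done, the remaining claims, namely the factorization $\mathrm J_{\rvec}=\Psi^{\mathrm{ch}}\Cmat^{\mathrm{ch}}$, follow directly from Theorem~\ref{prop:local_channel_compression}.
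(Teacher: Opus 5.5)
Your proof is correct, but it takes a different route from the paper's proof of this proposition. Your route is essentially the one the paper uses in the TT part of Proposition~\ref{prop:poisson_channel_matrices}. You start from the abstract columns \eqref{eq:local_channel_direction_columns} with $\beta\equiv(j,q)$ and identify the operated cores $\mathsf G_m^{(j,q)}$ with $A_m^{(j)}$ for $m\neq q$ and with $B_q^{(j)}$ for $m=q$. You then insert \eqref{eq:affine_local_channel_coefficients} and show that the upper block-triangular structure of $\mathcal B_m^{(j)}$ absorbs the $q$-sum, while the $j$-sum stays outside. Your case split is accurate. For $\ell=0$, only $q\neq s$ contributes, with insertions on either side of $s$. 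For $\ell=1,2$, only $q=s$ contributes, and the off-diagonal insertion extracts pure $A$-chains. The vanishing $B_q^{(j)}=0$ for $q>\dsp$ is also handled.

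The paper never forms $\mathsf E_s^{(\beta)}$ or the coefficients $b_{s,\ell}^{(\beta)}$. It differentiates the block chain \eqref{eq:affine_block_chain} in $\theta_p^s$, which by the product rule only hits $\mathcal B_s^{(j)}$. It then expands $\partial_{\theta_p^s}\mathcal B_s^{(j)}$ in the matrix units as $\sum_{\ell,a_s}\mathsf P_{s,\ell,a_s}^{(j)}\,(\partial_{z_s}^{\ell}\partial_{\theta_p^s}G_s)(\alpha_{s-1},\alpha_s)$. Summing over $j$ and collocating, it reads off $\J_s=\sum_{\ell}\Psi_{s,\ell}^{\mathrm{ch}}\mathsf H_{s,\ell}$ directly.

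Your route gives an explicit entrywise identification with \eqref{eq:local_channel_direction_columns}. All conclusions of Theorem~\ref{prop:local_channel_compression} therefore transfer literally, and it shows concretely how the $R_\A=\dsp(\dpa+1)$ operator terms collapse into $\dpa+1$ block trains. The paper's route is shorter and produces the Jacobian factorization directly, though it does not match coefficients term by term against the abstract definition. It also carries over verbatim to the quadratic coefficient in Proposition~\ref{prop:quadratic_darcy_compression}. There, your term-indexed argument would naturally pass through the explicit separated expansion with $\dsp\bigl(1+2\dpa+\binom{\dpa}{2}\bigr)$ operator terms, which the coefficient TT is designed to avoid.
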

\begin{proof}
For $s\le\dsp$, differentiating the $s$-th block core
\eqref{eq:affine_block_cores}
with respect to $\theta_p^s$ gives
\begin{equation*}
\partial_{\theta_p^s}\mathcal B_s^{(j)}
=
\begin{bmatrix}
\kappa_{j,s}\partial_{\theta_p^s}G_s&
-\kappa_{j,s}'\partial_{z_s}\partial_{\theta_p^s}G_s
-\kappa_{j,s}\partial_{z_s}^2\partial_{\theta_p^s}G_s\\
0&\kappa_{j,s}\partial_{\theta_p^s}G_s
\end{bmatrix}.
\end{equation*}
Expanding the matrix-valued derivatives in the TT matrix units gives
\begin{equation*}
\partial_{\theta_p^s}\mathcal B_s^{(j)}(z_s)
=
\sum_{\ell=0}^{2}\sum_{a_s=1}^{k_s}
\mathsf P_{s,\ell,a_s}^{(j)}(z_s)
\bigl(\partial_{z_s}^{\ell}\partial_{\theta_p^s}G_s\bigr)(z_s)
(\alpha_{s-1},\alpha_s),
\qquad s\le\dsp.
\end{equation*}
For $s>\dsp$, differentiating the same block core gives
\begin{equation*}
\partial_{\theta_p^s}\mathcal B_s^{(j)}(z_s)
=
\sum_{a_s=1}^{k_s}
\mathsf P_{s,0,a_s}^{(j)}(z_s)
\bigl(\partial_{\theta_p^s}G_s\bigr)(z_s)
(\alpha_{s-1},\alpha_s).
\end{equation*}
Next, differentiating the block-TT representation
\eqref{eq:affine_block_chain} with respect to $\theta_p^s$ gives
\begin{equation*}
\partial_{\theta_p^s}
\left[
-\nabla_x\cdot
\bigl(\kappa^{(j)}\nabla_xu_\theta\bigr)
\right](z)
=
\mathcal Q_{<s}^{(j)}(z_{<s})
\partial_{\theta_p^s}\mathcal B_s^{(j)}(z_s)
\mathcal Q_{>s}^{(j)}(z_{>s}).
\end{equation*}
Substituting the expansions above, summing over $j$, and collocating on
the factor grid yields
\begin{equation*}
\Psi_{s,\ell}^{\mathrm{ch}}
\bigl(i,\overline{j_sa_s}\bigr)
=
\delta_{i_sj_s}
\sum_{j=0}^{\dpa}
\mathcal Q_{<s}^{(j)}(z_{<s,i_{<s}})
\mathsf P_{s,\ell,a_s}^{(j)}(z_{s,j_s})
\mathcal Q_{>s}^{(j)}(z_{>s,i_{>s}}),
\end{equation*}
for $\ell=0,1,2$ if $s\le\dsp$ and $\ell=0$ otherwise. Hence
\eqref{eq:affine_reduced_direction_columns} realizes
\eqref{eq:local_channel_direction_columns}.

Finally, $L_s=3$ for $s\le\dsp$ and $L_s=1$ for $s>\dsp$, so the  dimension formulas follow from
Theorem~\ref{prop:local_channel_compression} trivially.
\end{proof}

For convenience, we adapt Algorithm~\ref{alg:tt_assembly} to the above instance of the affine Darcy problem. This yields Algorithm~\ref{alg:affine_darcy_assembly}, which summarizes the above analysis and provides the corresponding TT assembly.
\begin{algorithm}[H]
\caption{Channel-compressed TT assembly for affine parametric Darcy}
\label{alg:affine_darcy_assembly}
\small
\begin{algorithmic}[1]
\setlength{\itemsep}{0.25em}
\REQUIRE Current parameters $\theta$; factor grids $\X_s$; TT ranks
$(r_0,\ldots,r_d)$; coefficient factors $\kappa_{j,m}$ from
\eqref{affine coefficient for TT}.

\STATE Set $\Lambda_s$ by \eqref{eq:darcy_local_channels}; evaluate
$\mathcal D_{s,\ell}G_s$ for $\ell\in\Lambda_s$, compute
$\mathsf H_{s,\ell}$ from \eqref{eq:numerical_sensitivity_matrices} and
assemble $\Cmat_s^{\mathrm{ch}}$ according to
\eqref{eq: active jacobian channels affine}.

\STATE Form 
$\mathcal B_m^{(j)}$ from \eqref{eq:affine_block_cores} and $\mathsf P_{s,\ell,a_s}^{(j)}$ from
\eqref{eq:affine_derivative_insertions} for $j=0,\ldots,\dpa$.

\STATE Form the residual
$\Res_{\rm aff}=\sum_{j=0}^{\dpa}\Res^{(j)}$
using the block-TT representation
\eqref{eq:affine_full_block_representation}.

\STATE Compute the model/model interfaces for all mode pairs $(j,j')$ by
\eqref{eq:TT_recursions_interfaces} and the corresponding
model/residual interfaces by \eqref{eq:tt_residual_interfaces}.

\FOR{$s=1,\ldots,d$}
  \STATE Assemble $\Upsilon_{ss}^{\mathrm{ch}}$ by the diagonal
  contraction \eqref{eq:tt_gramian_diagonal_final}, replacing the active
  matrix units by $\mathsf P_{s,\ell,a_s}^{(j)}$ and summing over
  $j,j'=0,\ldots,\dpa$.

  \STATE Assemble $\rho_{\mathrm c,s}^{\mathrm{ch}}$ by
  \eqref{eq:tt_gradient_local_matrix}--\eqref{eq:tt_compressed_rhs}
  using $\mathsf P_{s,\ell,a_s}^{(j)}$ at the active coordinate and the
  finite-sum rule of Remark~\ref{rem:tt_residual_sum}.

  \STATE For every $(\ell,i_s,a_s)$ initialize the bridge family
  $\mathsf Z^{(j,j')}$ as in Algorithm~\ref{alg:tt_assembly} with
  $\mathsf M_{a_s}$ replaced by
  $\mathsf P_{s,\ell,a_s}^{(j)}$.

  \FOR{$t=s+1,\ldots,d$}
    \STATE Assemble $\Upsilon_{st}^{\mathrm{ch}}$ by
    \eqref{eq:tt_gramian_offdiagonal_final} using
    $\mathsf P_{t,\ell',a_t'}^{(j')}$ at the second active coordinate and
    summing over $(j,j')$; set
    $\Upsilon_{ts}^{\mathrm{ch}}
    \leftarrow(\Upsilon_{st}^{\mathrm{ch}})^\top$.

    \IF{$t<d$}
      \STATE Advance $\mathsf Z^{(j,j')}$ through
      $(\mathcal B_t^{(j)},\mathcal B_t^{(j')})$ by the forward transfer.
    \ENDIF
  \ENDFOR
\ENDFOR

\STATE Stack the channel blocks in the order $(s,\ell,i_s,a_s)$,
compute
$\nabla\widehat{\mathcal L}
=(\Cmat^{\mathrm{ch}})^\top\rho_{\mathrm c}^{\mathrm{ch}}$
as in \eqref{eq:compressed_gradient}, and evaluate the loss by applying
\eqref{eq:tt_loss_recursion}--\eqref{eq:tt_loss_contraction}
to all pairs of residual trains.

\ENSURE
$\{\Cmat_s^{\mathrm{ch}}\}_{s=1}^d$,
$\Upsilon^{\mathrm{ch}}$,
$\rho_{\mathrm c}^{\mathrm{ch}}$,
$\nabla\widehat{\mathcal L}(\theta)$, and
$\widehat{\mathcal L}(\theta)$ for Algorithm~\ref{alg:woodbury}, with
$\Neff^{\mathrm{ch}}$ given by \eqref{eq:darcy_reduced_dimension}.
\end{algorithmic}
\end{algorithm}

\subsubsection{Quadratic permeability coefficient: analysis}

The second coefficient introduces quadratic parameter interactions:
\begin{equation}\label{eq:quadratic_coefficient}
\kappa_{\rm quad}(x,\xi)
=\big(1+\varepsilon\,\psi(x)\sum_{j=1}^{\dpa}w_j\xi_j\big)^2,\qquad
\psi(x)=\prod_{m=1}^{\dsp}\cos(\pi x_m),
\end{equation}
with $w_j:=j^{-2}/\sum_{k=1}^{\dpa}k^{-2}$, amplitude $\varepsilon=0.5$, and
parameter vectors $\xi\sim\operatorname{Unif}(-1,1)^{\dpa}$. Its explicit
expansion contains $1+2\dpa+\binom{\dpa}{2}$ separable coefficient terms, whose
factors and first spatial derivatives are bounded.
For this problem, Assumption~\ref{ass:operator} can be established along the lines of
Lemma~\ref{lem:affine_operator_separability} under the same regularity
requirements and yields the tensorized factorization of $\mathcal{A}$ with
$R_{\A}=\dsp\bigl(1+2\dpa+\binom{\dpa}{2}\bigr)$ terms.

The explicit expansion of \eqref{eq:quadratic_coefficient} contains $351$ terms for $\dpa=25$, making a direct treatment computationally burdensome. Instead, we exploit a compact TT factorization. 
\begin{lemma}
\label{lem:quadratic_coefficient_tt}
Let $\chi_m(x_m):=\cos(\pi x_m)$ and $\sigma_j(\xi_j):=w_j\xi_j$.
For $m=1,\ldots,\dsp$ and $j=1,\ldots,\dpa$, define
\begin{equation}\label{eq:quadratic_state_matrices}
K_m(x_m)
:=
\operatorname{diag}\bigl(1,\chi_m(x_m),\chi_m(x_m)^2\bigr),
\qquad
M_j(\xi_j)
:=
\begin{bmatrix}
1&0&0\\
\sigma_j(\xi_j)&1&0\\
\sigma_j(\xi_j)^2&2\sigma_j(\xi_j)&1
\end{bmatrix},
\end{equation}
together with
$\ell_\kappa:=\begin{bmatrix}1&2\varepsilon&\varepsilon^2\end{bmatrix}$
and $r_\kappa:=\begin{bmatrix}1&0&0\end{bmatrix}^{\top}$.
Set
\begin{equation*}
\varrho_0=\varrho_d=1,
\qquad
\varrho_m=3,
\qquad
1\le m<d.
\end{equation*}
Define the coefficient TT cores
$C_m:\D_m\to\R^{\varrho_{m-1}\times\varrho_m}$ by
\begin{equation}\label{eq:quadratic_coefficient_cores}
C_m(z_m)
:=
\begin{cases}
\ell_\kappa K_1(x_1),
& m=1,\\[1mm]
K_m(x_m),
& 2\le m\le\dsp,\\[1mm]
M_j(\xi_j),
& m=\dsp+j,\quad 1\le j<\dpa,\\[1mm]
M_{\dpa}(\xi_{\dpa})r_\kappa,
& m=d.
\end{cases}
\end{equation}
Then
\begin{equation}\label{eq:quadratic_coefficient_tt}
\kappa_{\rm quad}(x,\xi)
=
C_1(z_1)\cdots C_d(z_d)
\end{equation}
is an exact TT representation of \eqref{eq:quadratic_coefficient} with maximal TT-rank at most three.
\end{lemma}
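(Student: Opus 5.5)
The plan is to write $S:=\sum_{j=1}^{\dpa}\sigma_j(\xi_j)$ and $\kappa_{\rm quad}=(1+\varepsilon\psi S)^2=1+2\varepsilon\psi S+\varepsilon^2\psi^2S^2$. Both the spatial and the parametric parts of this expression are then realized by a polynomial state-tracking construction over degrees $0,1,2$. I would carry out the multiplication of the cores in \eqref{eq:quadratic_coefficient_cores} from the outside in. The spatial product $\ell_\kappa K_1(x_1)\cdots K_{\dsp}(x_{\dsp})$ is a product of diagonal matrices, so it equals the row vector
\[
\bigl[1,\ 2\varepsilon\psi(x),\ \varepsilon^2\psi(x)^2\bigr],
\]
because $\prod_m\chi_m=\psi$ and $\prod_m\chi_m^2=\psi^2$. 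This step is immediate.

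For the parametric part, I would prove by induction on $k$ that
\[
M_1(\xi_1)\cdots M_k(\xi_k)\,r_\kappa=\bigl[1,\ S_k,\ S_k^2\bigr]^\top,\qquad S_k:=\sum_{j\le k}\sigma_j(\xi_j).
\]
A cleaner route is to observe that $M_j$ is the matrix acting on the coefficient vector $(1,s,s^2)$ under the shift $s\mapsto s+\sigma_j$. Applied to a column vector, $M_j[1,a,b]^\top=[1,\sigma_j+a,\sigma_j^2+2\sigma_j a+b]^\top$, and if $(a,b)=(T,T^2)$ this gives $(1,\sigma_j+T,(\sigma_j+T)^2)$. Since the product acts on $r_\kappa$ from the right, I would run the induction from the right end. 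The base case is $M_{\dpa}r_\kappa=[1,\sigma_{\dpa},\sigma_{\dpa}^2]^\top$, and each step prepends the next $M_j$ via the identity above. The result is $[1,S,S^2]^\top$.

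Combining the two parts gives $1+2\varepsilon\psi S+\varepsilon^2\psi^2S^2=\kappa_{\rm quad}$. It remains to check the sizes: $C_1$ is $1\times3$, the middle cores are $3\times3$, and $C_d=M_{\dpa}r_\kappa$ is $3\times1$. This yields the ranks $\varrho$ and hence maximal TT rank three. The main point to get right is the orientation of the lower-triangular $M_j$, that is, whether the row or the column recursion yields the binomial expansion. I expect this to be the main, if minor, obstacle, and I would verify it with the explicit matrix–vector identity above before assembling the argument.
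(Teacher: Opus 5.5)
Your proposal is correct and takes essentially the same route as the paper. Both expand $\kappa_{\rm quad}=1+2\varepsilon\psi S+\varepsilon^2\psi^2S^2$, collapse the diagonal spatial cores to $\operatorname{diag}(1,\psi,\psi^2)$, and propagate $[1,T,T^2]^\top\mapsto[1,T+\sigma_j,(T+\sigma_j)^2]^\top$ from the right starting at $r_\kappa$ (the paper writes this with the backward partial sums $t_j=\sum_{k\ge j}\sigma_k$), then finish with the same core-size check for the rank bound.
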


\begin{proof}
Expanding the brackets in \eqref{eq:quadratic_coefficient} gives
\begin{equation*}
\kappa_{\rm quad}(x,\xi)
=
1+2\varepsilon\psi(x)S(\xi)
+\varepsilon^2\psi(x)^2S(\xi)^2,
\qquad
S(\xi):=\sum_{j=1}^{\dpa}\sigma_j(\xi_j).
\end{equation*}
Fix $\xi$ and define the partial sums
\[
t_j:=\sum_{k=j}^{\dpa}\sigma_k(\xi_k),
\quad j=1,\ldots,\dpa,
\]
with $t_{\dpa+1}:=0$. Since $t_j=t_{j+1}+\sigma_j(\xi_j)$, observe that expanding the square yields
\begin{equation}\label{eq:quadratic_state_update}
\begin{bmatrix}
1\\t_j\\t_j^2
\end{bmatrix}
=
\begin{bmatrix}
1\\
t_{j+1}+\sigma_j(\xi_j)\\
t_{j+1}^2+2\sigma_j(\xi_j)t_{j+1}+\sigma_j(\xi_j)^2
\end{bmatrix}
=
M_j(\xi_j)
\begin{bmatrix}
1\\t_{j+1}\\t_{j+1}^2
\end{bmatrix}.
\end{equation}
Starting from $r_\kappa=(1,0,0)^\top$ and using $t_1=S(\xi)$, we obtain
\[
M_1(\xi_1)\cdots M_{\dpa}(\xi_{\dpa})r_\kappa
=
\begin{bmatrix}
1\\S(\xi)\\S(\xi)^2
\end{bmatrix}.
\]
The spatial matrices satisfy
$K_1(x_1)\cdots K_{\dsp}(x_{\dsp})
=\operatorname{diag}(1,\psi(x),\psi(x)^2)$.
Consequently,
\begin{align*}
C_1(z_1)\cdots C_d(z_d)
&=
\ell_\kappa
K_1(x_1)\cdots K_{\dsp}(x_{\dsp})
M_1(\xi_1)\cdots M_{\dpa}(\xi_{\dpa})r_\kappa\\
&=
\begin{bmatrix}
1&2\varepsilon\psi(x)&\varepsilon^2\psi(x)^2
\end{bmatrix}
\begin{bmatrix}
1\\S(\xi)\\S(\xi)^2
\end{bmatrix}
=
\kappa_{\rm quad}(x,\xi).
\end{align*}
The first and last cores have sizes $1\times3$ and $3\times1$,
respectively, while all interior cores have size $3\times3$.
Hence the maximal TT-rank is at most three.
\end{proof}

We use the manufactured solution
\begin{equation*}
\begin{aligned}
u_{\rm quad}^\star(x,\xi)
=
\prod_{m=1}^{\dsp}\sin(\pi x_m)
\prod_{j=1}^{\dpa}\bigl(1+b\,w_j\xi_j\bigr)+
\zeta\prod_{m=1}^{\dsp}\sin(2\pi x_m)
\prod_{j=1}^{\dpa}\bigl(1-b\,w_j\xi_j\bigr),
\end{aligned}
\end{equation*}
with $b=0.4$ and $\zeta=0.25$. It vanishes on $\partial\Omega\times\Xi$ and is a sum of two elementary tensors, with maximal TT rank at most two. 
We set $f:=-\nabla_x\cdot(\kappa_{\rm quad}\nabla_xu_{\rm quad}^\star)$
and impose the boundary conditions via \eqref{eq:SDF num}.
The interior residual is then given by
\begin{equation}\label{eq:quadratic_darcy_residual}
\Res_{\rm quad}(\theta)
=
-\nabla_x\cdot\left(
\kappa_{\rm quad}\nabla_x(u_\theta-u_{\rm quad}^\star)
\right).
\end{equation}
To organize the operator contractions, define
\begin{equation}\label{eq:quadratic_operated_blocks}
A_m:=C_m\otimes G_m,\quad
B_m:=\begin{cases}
-C_m'\otimes G_m'-C_m\otimes G_m'',&m\le\dsp,\\[1mm]
0,&m>\dsp,
\end{cases}
\quad
\mathcal B_m:=\begin{bmatrix}A_m&B_m\\0&A_m\end{bmatrix}.
\end{equation}
Here $\mathcal B_m$ has size
$2\varrho_{m-1}r_{m-1}\times2\varrho_mr_m$.
The mixed-product identity for Kronecker products gives
\begin{equation*}
\kappa_{\rm quad}\partial_{x_q}u_\theta
=A_1\cdots A_{q-1}(C_q\otimes G_q')A_{q+1}\cdots A_d.
\end{equation*}
Differentiating at $q$ and summing the resulting insertions as in
Lemma~\ref{lem:affine_block_core} yields
\begin{equation}\label{eq:quadratic_block_chain}
-\nabla_x\cdot(\kappa_{\rm quad}\nabla_xu_\theta)
=\begin{bmatrix}1&0\end{bmatrix}
\mathcal B_1\cdots\mathcal B_d\begin{bmatrix}0\\1\end{bmatrix}.
\end{equation}
The internal ranks are at most $2\varrho_mr_m=6r_m$.
Define the partial contractions
\begin{equation}\label{eq:quadratic_partial_contractions}
\mathcal Q_{<s}:=\begin{bmatrix}1&0\end{bmatrix}\prod_{m<s}\mathcal B_m,
\qquad
\mathcal Q_{>s}:=\left(\prod_{m>s}\mathcal B_m\right)
\begin{bmatrix}0\\1\end{bmatrix},
\end{equation}
with sizes $1\times2\varrho_{s-1}r_{s-1}$ and
$2\varrho_sr_s\times1$, respectively.

In the following proposition, we keep the univariate spaces, TT-core regularity, factor grids, and
notation of Proposition~\ref{prop:affine_darcy_compression}.
By Proposition~\ref{prop:lc_scope_finite_order}, the Darcy operator with the quadratic coefficient
has the same local differential channels \eqref{eq:darcy_local_channels} as the Darcy operator with the affine one. Hence, the matrices
$\mathsf H_{s,\ell}$ and $\Cmat_s^{\mathrm{ch}}$ remain those in
\eqref{eq:numerical_sensitivity_matrices} and
\eqref{eq: active jacobian channels affine}.

\begin{proposition}[Local-channel matrices for quadratic Darcy]
\label{prop:quadratic_darcy_compression}
For $\ell\in\Lambda_s$, define
\begin{equation}\label{eq:quadratic_derivative_insertions}
\mathsf P_{s,\ell,a_s}(z_s):=
\begin{cases}
\displaystyle
\begin{bmatrix}
C_s(z_s)\otimes\mathsf M_{a_s}&0\\
0&C_s(z_s)\otimes\mathsf M_{a_s}
\end{bmatrix},&\ell=0,\\[3mm]
\displaystyle
\begin{bmatrix}
0&-C_s'(z_s)\otimes\mathsf M_{a_s}\\
0&0
\end{bmatrix},&\ell=1,\\[3mm]
\displaystyle
\begin{bmatrix}
0&-C_s(z_s)\otimes\mathsf M_{a_s}\\
0&0
\end{bmatrix},&\ell=2,
\end{cases}
\end{equation}
with
$\mathsf P_{s,\ell,a_s}\in
\R^{2\varrho_{s-1}r_{s-1}\times2\varrho_sr_s}$.
Then the direction matrices of
Theorem~\ref{prop:local_channel_compression} are given by
\begin{equation}\label{eq:quadratic_reduced_direction_columns}
\Psi_{s,\ell,\mathrm{quad}}^{\mathrm{ch}}
\bigl(i,\overline{j_sa_s}\bigr)
=
\delta_{i_sj_s}\,
\mathcal Q_{<s}(z_{<s,i_{<s}})
\mathsf P_{s,\ell,a_s}(z_{s,j_s})
\mathcal Q_{>s}(z_{>s,i_{>s}}),
\end{equation}
where
$\Psi_{s,\ell,\mathrm{quad}}^{\mathrm{ch}}
\in\R^{N_\D\times n_sk_s}$.
Collecting the channels gives
\begin{equation}\label{eq:quadratic_reduced_channel_stacking}
\Psi_{s,\mathrm{quad}}^{\mathrm{ch}}
:=
\begin{cases}
\begin{bmatrix}
\Psi_{s,0,\mathrm{quad}}^{\mathrm{ch}}&
\Psi_{s,1,\mathrm{quad}}^{\mathrm{ch}}&
\Psi_{s,2,\mathrm{quad}}^{\mathrm{ch}}
\end{bmatrix},&s\le\dsp,\\[1mm]
\Psi_{s,0,\mathrm{quad}}^{\mathrm{ch}},&s>\dsp.
\end{cases}
\end{equation}
Consequently, the effective compressed dimension is given by
\begin{equation}\label{eq:quadratic_reduced_dimension}
\Neff^{\mathrm{ch}}
=
3\sum_{s=1}^{\dsp}n_sr_{s-1}r_s
+
\sum_{s=\dsp+1}^{d}n_sr_{s-1}r_s.
\end{equation}
\end{proposition}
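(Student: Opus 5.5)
The argument mirrors the proof of Proposition~\ref{prop:affine_darcy_compression}. The only change is that the sum over the $\dpa+1$ affine modes is replaced by the single Kronecker-lifted block train \eqref{eq:quadratic_block_chain}. I would therefore first justify \eqref{eq:quadratic_block_chain} carefully. By the mixed-product rule, $A_1\cdots A_d=(C_1\cdots C_d)\otimes(G_1\cdots G_d)=\kappa_{\rm quad}u_\theta$, where the scalar Kronecker product reduces to an ordinary product because $\varrho_0=\varrho_d=r_0=r_d=1$. Inserting $C_q\otimes G_q'$ at position $q$ gives $\kappa_{\rm quad}\partial_{x_q}u_\theta$. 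Since only the $q$-th factor depends on $x_q$, the product rule gives
\[
-\partial_{x_q}\bigl(\kappa_{\rm quad}\partial_{x_q}u_\theta\bigr)=A_1\cdots A_{q-1}B_qA_{q+1}\cdots A_d .
\]
The upper-right block of $\mathcal B_1\cdots\mathcal B_d$ is the sum of all single insertions of $B_q$. Because $B_q=0$ for $q>\dsp$, this block equals $-\nabla_x\cdot(\kappa_{\rm quad}\nabla_xu_\theta)$, exactly as in \eqref{eq:affine_partial_block_product}.

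Second, I would differentiate the block core $\mathcal B_s$ with respect to $\theta^s_p$. For $s\le\dsp$ we get $\partial_{\theta_p^s}A_s=C_s\otimes\partial_{\theta_p^s}G_s$ and $\partial_{\theta_p^s}B_s=-C_s'\otimes\partial_{z_s}\partial_{\theta_p^s}G_s-C_s\otimes\partial_{z_s}^2\partial_{\theta_p^s}G_s$. For $s>\dsp$ only the $A_s$ blocks contribute. I would then expand each $\partial_{z_s}^\ell\partial_{\theta^s_p}G_s$ in the matrix units, writing it as $\sum_{a_s}(\partial_{z_s}^\ell\partial_{\theta_p^s}G_s)(\alpha_{s-1},\alpha_s)\mathsf M_{a_s}$, and use bilinearity of $\otimes$. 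This yields
\[
\partial_{\theta_p^s}\mathcal B_s(z_s)=\sum_{\ell\in\Lambda_s}\sum_{a_s}\mathsf P_{s,\ell,a_s}(z_s)\,\mathsf H_{s,\ell}\bigl(\overline{i_sa_s},p\bigr)
\]
at $z_s=z_{s,i_s}$, with $\mathsf P_{s,\ell,a_s}$ as in \eqref{eq:quadratic_derivative_insertions}. Since $\theta^s$ enters only the $s$-th core, the chain rule gives $\partial_{\theta^s_p}\Res_{\rm quad}=\mathcal Q_{<s}\,\partial_{\theta_p^s}\mathcal B_s\,\mathcal Q_{>s}$. The manufactured term $u^\star_{\rm quad}$ does not depend on $\theta$.

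Third, I would collocate on $\X_\D$ and regroup the finite sums. This gives $\J_s(i,p)=\sum_{\ell}\sum_{j_s,a_s}\Psi^{\mathrm{ch}}_{s,\ell,\mathrm{quad}}(i,\overline{j_sa_s})\,\mathsf H_{s,\ell}(\overline{j_sa_s},p)$, where the Kronecker delta $\delta_{i_sj_s}$ pins the active sample. Hence $\J_s=\Psi^{\mathrm{ch}}_{s,\mathrm{quad}}\Cmat_s^{\mathrm{ch}}$ with the stacking \eqref{eq:quadratic_reduced_channel_stacking} and $\Cmat_s^{\mathrm{ch}}$ from \eqref{eq: active jacobian channels affine}, which is precisely the factorization of Theorem~\ref{prop:local_channel_compression}(i).

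One point needs a brief consistency remark. The abstract $\Psi^{\mathrm{ch}}_{s,\ell}$ in \eqref{eq:local_channel_direction_columns} is defined through $b^{(\beta)}_{s,\ell}$ over the $R_\A$ expanded terms. Both constructions give the same $\J_s$, and $\J_s$ is factored with the same $\Cmat^{\mathrm{ch}}_s$. To show the two direction matrices actually coincide, I would expand $C_1\cdots C_d$ into its separable terms inside the Kronecker factors. Since that is only a finite regrouping, I would state it in this form. Finally, $L_s=3$ for $s\le\dsp$ and $L_s=1$ otherwise, with $k_s=r_{s-1}r_s$, so \eqref{eq:local_channel_dimension} gives \eqref{eq:quadratic_reduced_dimension}.

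I expect the main obstacle to be the bookkeeping in the second step. One has to check that the Kronecker lift places the matrix unit in the correct factor ($C_s\otimes\mathsf M_{a_s}$, not $\mathsf M_{a_s}\otimes C_s$), consistently with the definition of $A_m$. One also has to check that the first-derivative channel picks up $C_s'$ from the product rule on $C_s\otimes G_s'$, which for $s\le\dsp$ means differentiating the coefficient core $K_s$ or $\ell_\kappa K_1$.
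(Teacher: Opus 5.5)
Your proposal is correct and follows the paper's proof. The paper differentiates the block core $\mathcal B_s$, expands $\partial_{z_s}^{\ell}\partial_{\theta_p^s}G_s$ in the matrix units to get $\mathsf P_{s,\ell,a_s}$, inserts this between $\mathcal Q_{<s}$ and $\mathcal Q_{>s}$, collocates, and counts $L_s n_s k_s$, exactly as you do. Your added remark is a point the paper passes over: $\J_s=\Psi\,\Cmat_s^{\mathrm{ch}}$ by itself does not determine $\Psi$, so agreement with the $b^{(\beta)}_{s,\ell}$-based definition of the theorem has to be checked by expanding $C_1\cdots C_d$ into separable terms.
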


\begin{proof}
For $s\le\dsp$, differentiating the $s$-th
block core in \eqref{eq:quadratic_operated_blocks} with respect to
$\theta_p^s$ gives
\begin{equation*}
\partial_{\theta_p^s}\mathcal B_s
=
\begin{bmatrix}
C_s\otimes\partial_{\theta_p^s}G_s&
-C_s'\otimes\partial_{z_s}\partial_{\theta_p^s}G_s
-C_s\otimes\partial_{z_s}^2\partial_{\theta_p^s}G_s\\
0&C_s\otimes\partial_{\theta_p^s}G_s
\end{bmatrix}.
\end{equation*}
For $s>\dsp$, the upper-right block is zero. Expanding in the
matrix units gives
\begin{equation}\label{eq:quadratic_block_channel_expansion}
\partial_{\theta_p^s}\mathcal B_s(z_s)
=
\sum_{\ell\in\Lambda_s}\sum_{a_s=1}^{k_s}
\mathsf P_{s,\ell,a_s}(z_s)
\bigl(\partial_{z_s}^{\ell}\partial_{\theta_p^s}G_s\bigr)(z_s)
(\alpha_{s-1},\alpha_s).
\end{equation}
Substitution into the derivative of \eqref{eq:quadratic_block_chain},
followed by \eqref{eq:quadratic_partial_contractions} and collocation
yields \eqref{eq:quadratic_reduced_direction_columns}.
The remaining assertions follow from
Theorem~\ref{prop:local_channel_compression}, since $\Lambda_s$ and
$k_s$ are unchanged.
\end{proof}
\noindent
\begin{remark}\label{remark on two parametric problems}
The affine and quadratic Darcy problems have the same matrix
$\Cmat^{\mathrm{ch}}$ in their respective factorizations and, hence, the same
effective compressed dimension; see \eqref{eq:darcy_reduced_dimension} and
\eqref{eq:quadratic_reduced_dimension}. Their assembly costs, however, differ.
In the affine case, \eqref{eq:affine_full_block_representation} contains an
explicit sum over the $R_\kappa=\dpa+1$ coefficient modes, so the compressed
Gramian assembly scales with $R_\kappa^2$. In the quadratic case, the
coefficient dependence is represented by the single coefficient TT
\eqref{eq:quadratic_coefficient_tt}, where the cost is governed by the coefficient TT bond
dimensions.
\end{remark}

Algorithm~\ref{alg:quadratic_darcy_assembly} provides the TT assembly for the quadratic parametric Darcy problem. As in the affine case, we note that this is merely an adaptation of Algorithm~\ref{alg:tt_assembly}.
\begin{algorithm}[H]
\caption{Channel-compressed TT assembly for quadratic parametric Darcy}
\label{alg:quadratic_darcy_assembly}
\small
\begin{algorithmic}[1]
\setlength{\itemsep}{0.25em}
\REQUIRE Current parameters $\theta$; factor grids $\X_s$; TT ranks
$(r_0,\ldots,r_d)$; coefficient cores $C_m$ from
\eqref{eq:quadratic_coefficient_cores}. 

\STATE Set $\Lambda_s$ by \eqref{eq:darcy_local_channels}; evaluate
$\mathcal D_{s,\ell}G_s$ for $\ell\in\Lambda_s$, compute
$\mathsf H_{s,\ell}$ from \eqref{eq:numerical_sensitivity_matrices}, and set
$\Cmat_s^{\mathrm{ch}}
=\operatorname{col}_{\ell\in\Lambda_s}\mathsf H_{s,\ell}$.

\STATE Form $\mathcal B_m$ from
\eqref{eq:quadratic_operated_blocks} and 
$\mathsf P_{s,\ell,a_s}$ from
\eqref{eq:quadratic_derivative_insertions}.

\STATE Form the residual \eqref{eq:quadratic_darcy_residual} in the
block-TT form \eqref{eq:quadratic_block_chain}.

\STATE Compute the model/model interfaces by
\eqref{eq:TT_recursions_interfaces} and the model/residual interfaces by
\eqref{eq:tt_residual_interfaces}.

\FOR{$s=1,\ldots,d$}
  \STATE Assemble $\Upsilon_{ss}^{\mathrm{ch}}$ by the diagonal
  contraction \eqref{eq:tt_gramian_diagonal_final}, replacing the active
  matrix units by $\mathsf P_{s,\ell,a_s}$.

  \STATE Assemble $\rho_{\mathrm c,s}^{\mathrm{ch}}$ by
    \eqref{eq:tt_gradient_local_matrix}--\eqref{eq:tt_compressed_rhs}
    using $\mathsf P_{s,\ell,a_s}$ at the active coordinate.

  \STATE For every $(\ell,i_s,a_s)$ initialize the bridge as in
  Algorithm~\ref{alg:tt_assembly} with $\mathsf M_{a_s}$ replaced by
  $\mathsf P_{s,\ell,a_s}$.

  \FOR{$t=s+1,\ldots,d$}
    \STATE Assemble $\Upsilon_{st}^{\mathrm{ch}}$ by
    \eqref{eq:tt_gramian_offdiagonal_final} using
    $\mathsf P_{t,\ell',a_t'}$ at the second active coordinate, and set
    $\Upsilon_{ts}^{\mathrm{ch}}
    \leftarrow(\Upsilon_{st}^{\mathrm{ch}})^\top$.

    \IF{$t<d$}
      \STATE Advance the bridge through
      $(\mathcal B_t,\mathcal B_t)$ by the forward transfer.
    \ENDIF
  \ENDFOR
\ENDFOR

\STATE Stack the channel blocks in the order $(s,\ell,i_s,a_s)$,
compute
$\nabla\widehat{\mathcal L}
=(\Cmat^{\mathrm{ch}})^\top\rho_{\mathrm c}^{\mathrm{ch}}$
as in \eqref{eq:compressed_gradient}, and evaluate the loss by
double-layer contractions over all signed residual-train pairs.

\ENSURE
$\{\Cmat_s^{\mathrm{ch}}\}_{s=1}^d$,
$\Upsilon^{\mathrm{ch}}$,
$\rho_{\mathrm c}^{\mathrm{ch}}$,
$\nabla\widehat{\mathcal L}(\theta)$, and
$\widehat{\mathcal L}(\theta)$ for Algorithm~\ref{alg:woodbury}, with
$\Neff^{\mathrm{ch}}$ given by
\eqref{eq:quadratic_reduced_dimension}.
\end{algorithmic}
\end{algorithm}

\subsubsection{Parametric Darcy problem: numerical results}
We proceed with the numerical ablation study. Both the affine and the
quadratic coefficient are tested with the same configuration. We use a TT-PINN
with boundary ranks $r_0=r_d=1$ and uniform interior rank $r=2$. Each univariate
TT core is a fully connected $\tanh$ network
$1\to128\to128\to128\to r_{m-1}r_m$, with $33\,796$ parameters for the interior
cores ($4$ outputs) and $33\,538$ for the boundary cores ($2$ outputs), so that
$P=26\times 33\,796+2\times 33\,538=945\,772$ for $d=d_x+d_\xi=28$. Per
coordinate we draw $n=64$ interior points independently from the uniform
distribution, fixed once for the whole run. For both problems
the effective compressed dimension is given by
\begin{align*}
\Neff=3\sum_{s=1}^{d_x}n\,r_{s-1}r_s+\sum_{s=d_x+1}^{d}n\,r_{s-1}r_s=8192.
\end{align*}
Observe that $\Neff \ll P$ in this setting, as well. Therefore, the compressed Gauss--Newton solve is used with
$\mu=10^{-4}$. The ablation consists of $200$ Gauss--Newton iterations for each of $10$ random seeds. Relative $H^1$ errors are computed along the training process on deterministic midpoint factor grids with $n_{\rm err}=256$ points per coordinate.

The convergence curves in Fig.~\ref{fig:ablation_darcy} show the median over the
ten seeds, with the shaded regions indicating the specified interquartile range. We observe that both problems attain high accuracy, the affine case reaching a relative error about
one order of magnitude below the quadratic case, as expected from the greater
parametric complexity of the latter. Learning either map takes less than a
minute, which is highly competitive, while high accuracy is the one expected from the Gauss--Newton approach. The remaining
difference in wall-clock time for these two problems is due to the assembly costs rather than to the
size of the compressed solve; cf.
Remark~\ref{remark on two parametric problems}. We display several solution instances for different parameter values in Fig.~\ref{fig:collage_darcy_affine} for the affine coefficient and Fig.~\ref{fig:collage_darcy_quadratic} for the quadratic-interaction coefficient.

\begin{figure}[H]
\centering
\includegraphics[width=1\linewidth]{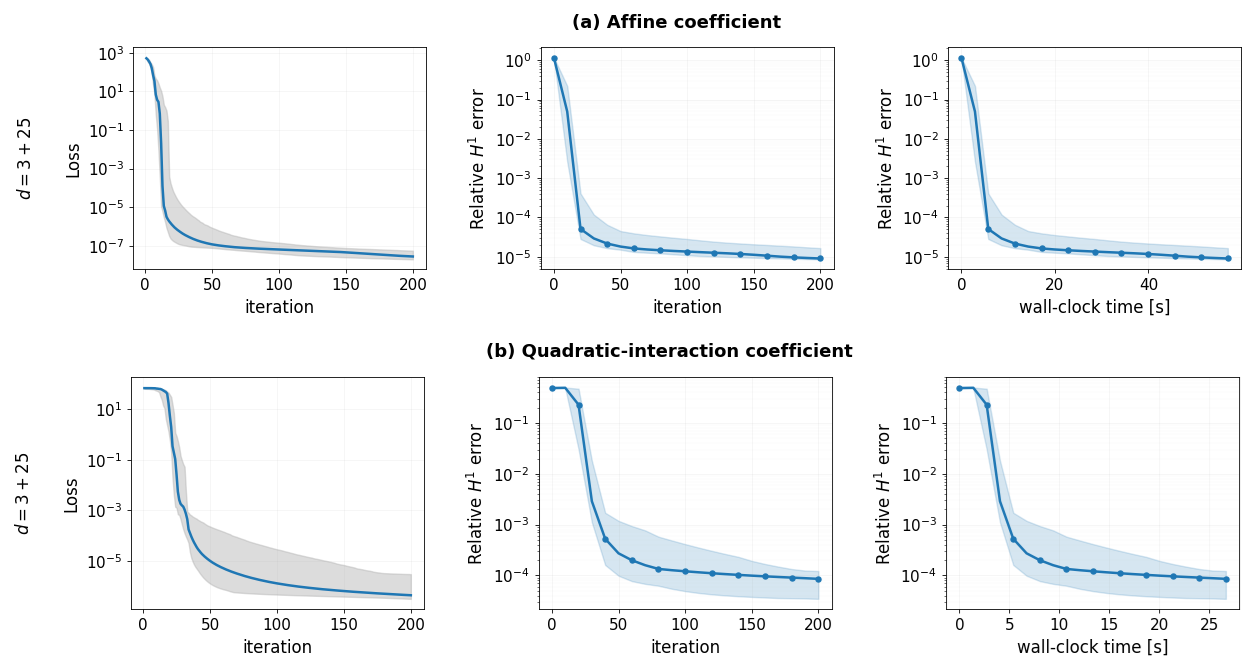}
\caption{Darcy problem: compressed Gauss--Newton for (a) the affine coefficient and (b) the quadratic-interaction
coefficient. Solid curves show the median over ten seeds, shaded regions the
interquartile range between the 25th and 75th percentiles. From left to right:
loss, relative $H^1$ error, and relative $H^1$ error versus wall-clock time; the
first two are plotted against the Gauss--Newton iteration count.}
\label{fig:ablation_darcy}
\end{figure}

\begin{figure}[H]
\centering
\includegraphics[width=0.75\linewidth]{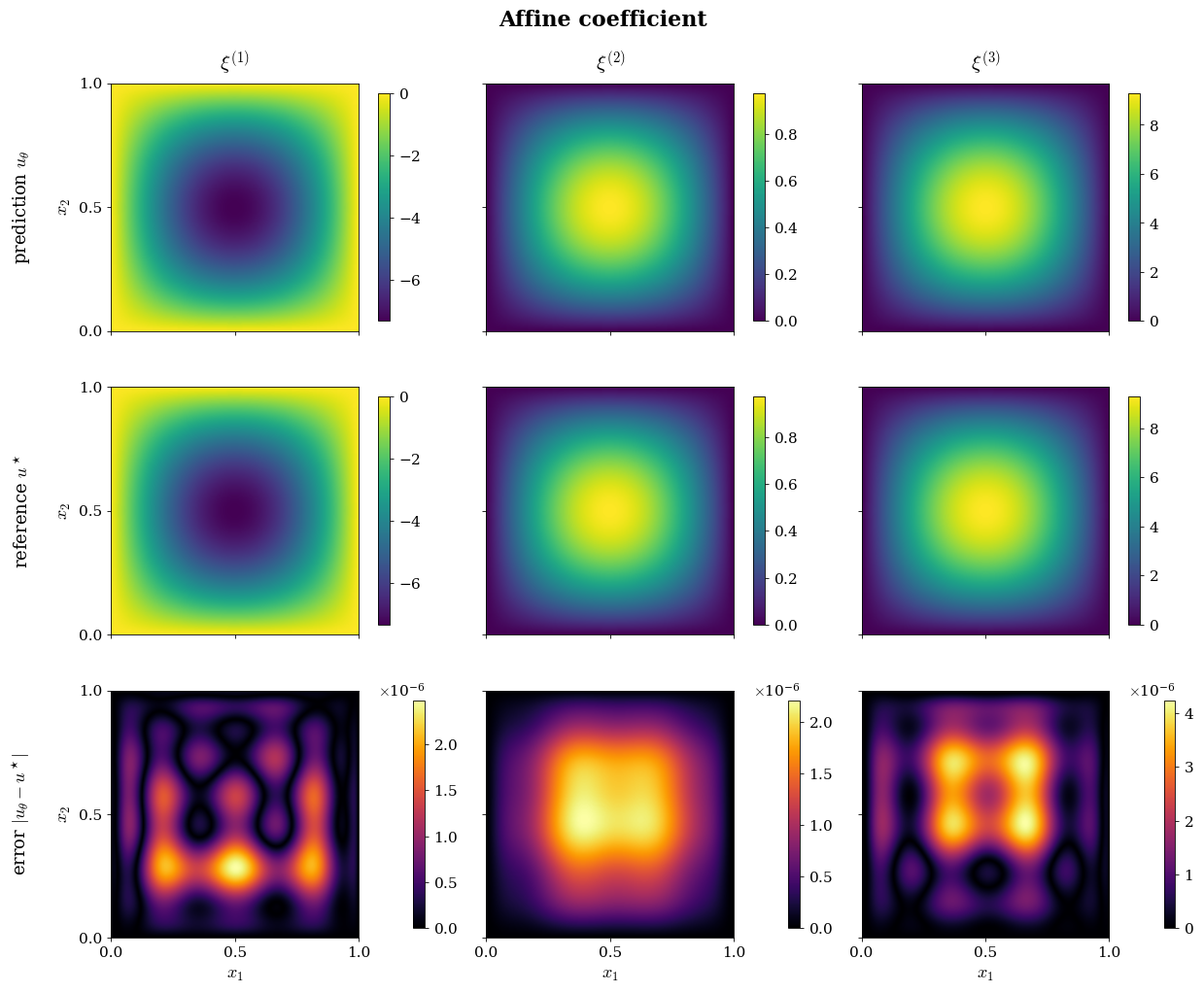}
\caption{Affine coefficient: Gauss--Newton TT-PINN prediction (top), reference solution
(middle) and pointwise absolute error (bottom) on the slice $x_3=0.5$, for three
random parameters $\xi^{(k)}\sim\operatorname{Unif}(-1,1)^{\dpa}$.}
\label{fig:collage_darcy_affine}
\end{figure}

\begin{figure}[H]
\centering
\includegraphics[width=0.75\linewidth]{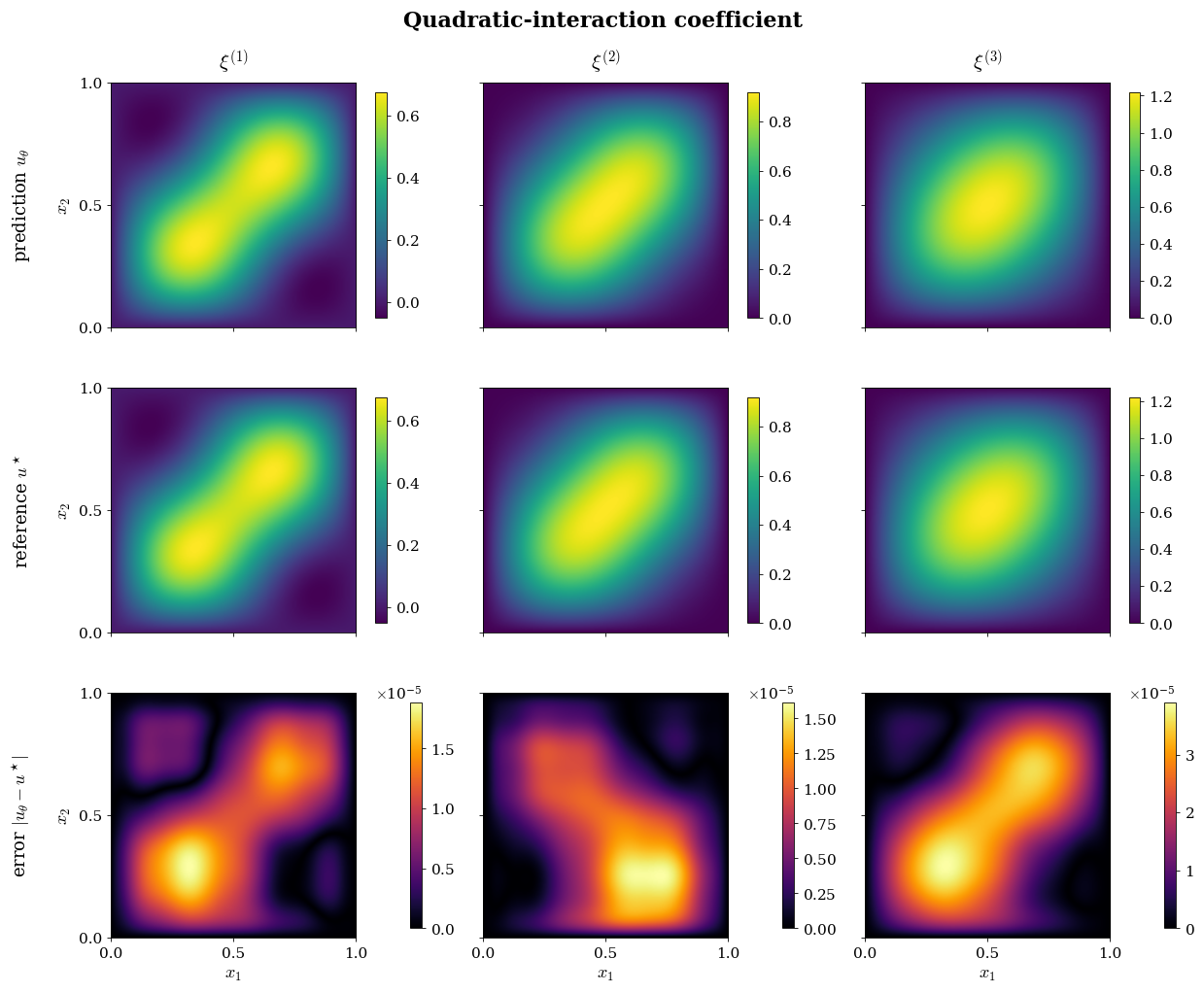}
\caption{Quadratic-interaction coefficient: Gauss--Newton TT-PINN prediction (top), reference
solution (middle) and pointwise absolute error (bottom) on the slice
$x_3=0.35$, for three random parameters
$\xi^{(k)}\sim\operatorname{Unif}(-1,1)^{\dpa}$. }
\label{fig:collage_darcy_quadratic}
\end{figure}

For comparison, we train the same TT-PINN with Adam. The Adam setup here is the same as in the heat equation example. The corresponding curves are shown in Fig.~\ref{fig:ablation_darcy_adam}. For both coefficients, Adam remains near the initial error level for about $30,000$ iterations before entering a rapid decay phase, reaching median relative $H^1$ errors of about $6\cdot10^{-3}$ for the affine and $1.6\cdot10^{-2}$ for the quadratic case. In contrast, compressed Gauss--Newton reaches $10^{-5}$ and $8\cdot10^{-5}$ error accuracy after $200$ iterations, respectively, while requiring five to eight times less wall-clock time ($57 \,\mathrm{s}$ and $26 \,\mathrm{s}$ versus $310 \,\mathrm{s}$ and $220 \,\mathrm{s}$). Therefore, the TT-compressed Gauss--Newton approach strikingly outperforms TT-Adam on these two problems.

\begin{figure}[H]
\centering
\includegraphics[width=1\linewidth]{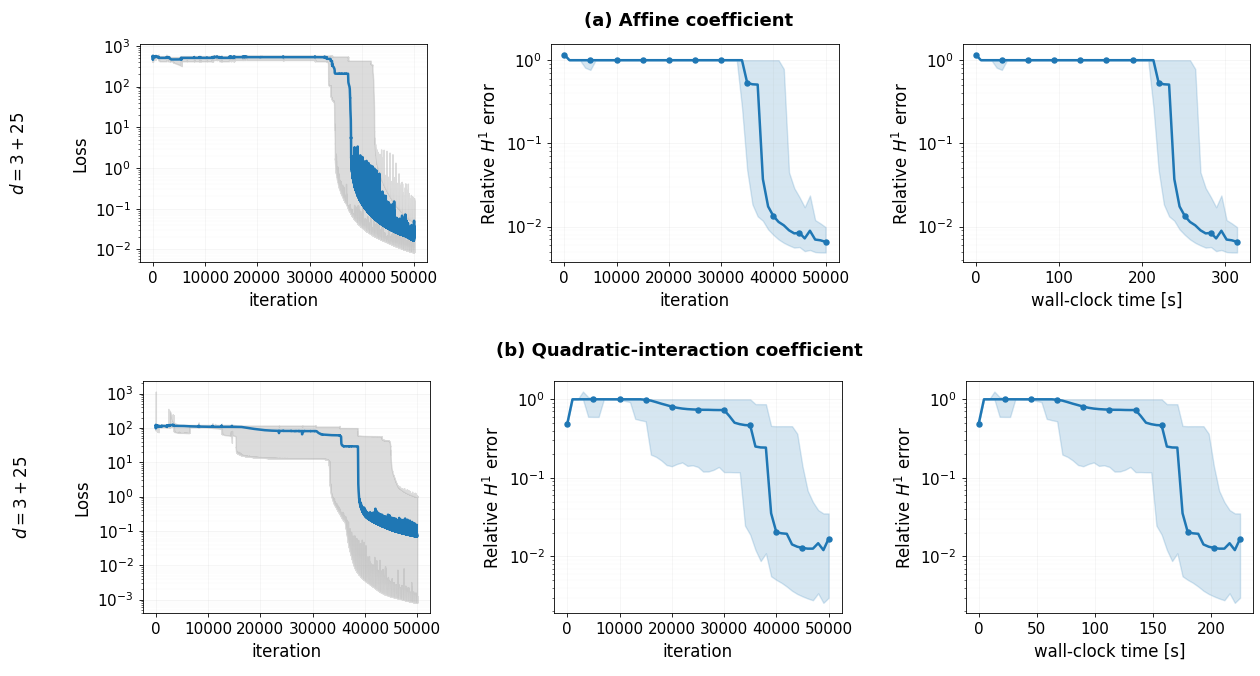}
\caption{Darcy problem: TT-Adam baseline for (a) the affine coefficient and (b) the
quadratic-interaction coefficient. Solid curves show the median over ten seeds,
shaded regions the interquartile range between the 25th and 75th percentiles.
From left to right: loss, relative $H^1$ error, and relative $H^1$ error versus
wall-clock time; the first two are plotted against the Adam iteration count.}
\label{fig:ablation_darcy_adam}
\end{figure}

\section{Conclusion and outlook}
\label{sec:conclusion}

In this work, we developed an abstract framework for tensor-structured
curvature-aware optimization of PINNs and, as an application of this framework,
derived full algebraic characterizations for the canonical polyadic and tensor train formats. The proposed Gauss--Newton solve is performed in the effective compressed
residual space, whose dimension is much smaller than that of the neural
parameter space in many applications of interest. Numerical experiments demonstrate
the high accuracy of the Gauss--Newton method and, thanks to tensor train compression,
which is the preferred format in this work, training times measured in seconds rather
than minutes on our high-dimensional benchmarks.


Our methodology assumes that the underlying high-dimensional parametric
PDE operator is separable. Within our problem setting, it will therefore be important to develop new or leverage existing low-rank approximation techniques to obtain separable approximations of non-separable operators with appropriate error control.  Further exploitation of the resulting nested Gramian
representations will also be of interest, given the wide range of
compression and structured matrix algebra techniques available in tensor
methods. Finally, joint adaptivity in tensor ranks and collocation grids
will aim to balance accuracy and computational cost while keeping the
effective compressed dimension small. These directions, together with
applications to more challenging uncertainty quantification problems,
will be the focus of our future work in this area.

\section*{Data availability}
The supporting code is available at \url{https://github.com/detzer0/tt-compressed-pinns}.

\section*{Acknowledgements} D. Korolev acknowledges the support of the Federal Ministry of Education and Research, Germany (funding reference: 01IS24081) under project HybridSolver. M. Eigel acknowledges funding from the German Federal Ministry of Education and Research, grant number FKZ 13N17160, ``Verbundprojekt: Quantum Read-Once-Memory -- Verwandlung von klassischen Daten zu Quantenzuständen -- Teilvorhaben: Quantenschaltkreis-Optimierung von Quantenzuständen durch Tensor-Netzwerke (QOQ-tn)'' and from the Deutsche Forschungsgemeinschaft (DFG, German Research Foundation) under Germany's Excellence Strategy -- The Berlin Mathematics Research Center MATH+ (EXC-2046/2, project ID: 390685689, Project PaA-7).

\section{Appendix: channel-compressed TT boundary terms}
\label{app:tt_boundary_contractions}

In this appendix, we propose an efficient way of assembling the Gauss--Newton system associated with the boundary term in \eqref{eq:continuous_loss}. To this end, we decompose the continuous Dirichlet loss into face contributions, discretize them on the grids \eqref{eq:boundary_face_grid}, and derive the corresponding TT contractions in a common sensitivity space.

\subsection{Boundary loss discretization}
By Remark~\ref{rem:trace}, the lateral boundary $\Lat$ consists of the
$2\dsp$ faces $\Lat_b^\sigma$, where $b=1,\ldots,\dsp$ and
$\sigma\in\{-,+\}$. For each pair $(b,\sigma)$, define
\begin{equation}\label{eq:continuous_face_residual}
\begin{aligned}
\mathcal L_{b,\sigma}(\theta)
:=\frac12\|r_{b,\sigma}(\theta)\|_{L^2(\Lat_b^\sigma)}^2, \qquad
r_{b,\sigma}(\theta)
:=\sqrt{\lambda_{\mathrm b}}
\left(
\operatorname{tr}_{\Lat_b^\sigma}u_\theta
-g|_{\Lat_b^\sigma}
\right).
\end{aligned}
\end{equation}
The boundary term in \eqref{eq:continuous_loss} then satisfies
\begin{equation}\label{eq:boundary_loss_face_decomposition}
\frac{\lambda_{\mathrm b}}2
\|\tr u_\theta-g\|_{L^2(\Lat)}^2
=
\sum_{b=1}^{\dsp}\sum_{\sigma\in\{-,+\}}
\mathcal L_{b,\sigma}(\theta).
\end{equation}
It therefore suffices to derive the contribution of one fixed pair
$(b,\sigma)$ and then sum over all pairs. For this face, the product
structure in Remark~\ref{rem:trace} gives
\begin{equation}\label{eq:continuous_face_integral}
\begin{aligned}
\mathcal L_{b,\sigma}(\theta)
=
\frac12
\int_{\D_1}\cdots
\int_{\D_{b-1}}
\int_{\D_{b+1}}\cdots
\int_{\D_d}
&\bigl|
r_{b,\sigma}(\theta)
(z_1,\ldots,z_{b-1},z_b^\sigma,z_{b+1},\ldots,z_d)
\bigr|^2\\
&\hspace{10mm}\mathrm dz_d\cdots\mathrm dz_{b+1}
\,\mathrm dz_{b-1}\cdots\mathrm dz_1 .
\end{aligned}
\end{equation}
We retain the factor grids $\X_m$ of \eqref{eq:factor_grids}.
and the boundary grids  in \eqref{eq:boundary_face_grid}, and we use equal-weight collocation averages as in
\eqref{eq:discrete_loss}. Applying these averages coordinate by coordinate to \eqref{eq:continuous_face_integral} gives
\begin{equation}\label{eq:discrete_face_quadrature}
\widehat{\mathcal L}_{b,\sigma}(\theta)
:=\frac{1}{2N_{b,\sigma}}
\sum_{i_{-b}}
\bigl|r_{b,\sigma}(\theta)
(z_{1,i_1},\ldots,z_b^\sigma,\ldots,z_{d,i_d})\bigr|^2,
\end{equation}
where we use the shortcut notation
\begin{equation}\label{eq:face_multiindex_sum}
\sum_{i_{-b}}
:=
\sum_{i_1=1}^{n_1}\cdots
\sum_{i_{b-1}=1}^{n_{b-1}}
\sum_{i_{b+1}=1}^{n_{b+1}}\cdots
\sum_{i_d=1}^{n_d}.
\end{equation}
Collecting the residual values in the lexicographic order of
\eqref{eq:multiindex}, we obtain
\begin{equation}\label{eq:boundary_residual_loss}
\begin{aligned}
\widehat{\mathcal L}_{b,\sigma}(\theta)
=\frac{1}{2N_{b,\sigma}}\|\rvec_{b,\sigma}(\theta)\|_2^2 , \qquad \rvec_{b,\sigma}(\theta)
:=\underset{z_i\in\X_{\Lat_b^\sigma}}{\operatorname{col}}
\bigl(r_{b,\sigma}(\theta)(z_i)\bigr)
\in\R^{N_{b,\sigma}}, \quad .
\end{aligned}
\end{equation}
Define
$\J_{b,\sigma}:=\diff_\theta\rvec_{b,\sigma}(\theta)
\in\R^{N_{b,\sigma}\times P}$. Then
\begin{equation}\label{eq:face_gradient_gramian}
\begin{aligned}
\nabla\widehat{\mathcal L}_{b,\sigma}
&=\frac{1}{N_{b,\sigma}}\J_{b,\sigma}^\top\rvec_{b,\sigma}
\in\R^P, \qquad
\DGram_{b,\sigma}
:=\frac{1}{N_{b,\sigma}}\J_{b,\sigma}^\top\J_{b,\sigma}
\in\R^{P\times P}.
\end{aligned}
\end{equation}
Following the channel factorization approach of
Theorem~\ref{prop:local_channel_compression}, one could factorize the
interior PDE and face Jacobians separately as
\begin{align*}
\J_{\rvec}
=\Psi_\D^{\mathrm{ch}}\Cmat_\D^{\mathrm{ch}},
\qquad
\J_{b,\sigma}
=\Psi_{b,\sigma}^{\mathrm{ch}}\Cmat_{b,\sigma}^{\mathrm{ch}},
\end{align*}
where
$\Psi_\D^{\mathrm{ch}}\in
\R^{N_\D\times\Neff^{\mathrm{ch},\D}}$,
$\Cmat_\D^{\mathrm{ch}}\in
\R^{\Neff^{\mathrm{ch},\D}\times P}$,
$\Psi_{b,\sigma}^{\mathrm{ch}}\in
\R^{N_{b,\sigma}\times\Neff^{\mathrm{ch},b,\sigma}}$, and
$\Cmat_{b,\sigma}^{\mathrm{ch}}\in
\R^{\Neff^{\mathrm{ch},b,\sigma}\times P}$. To compute the Gauss--Newton Gramian of the total discrete loss, we first
form the Jacobian of the vertically concatenated interior and face residuals,
whose separate channel factorizations give
\begin{align*}
\begin{bmatrix}
\J_{\rvec}\\
\J_{1,-}\\
\J_{1,+}\\
\vdots\\
\J_{\dsp,+}
\end{bmatrix}
=
\begin{bmatrix}
\Psi_\D^{\mathrm{ch}} & 0 & 0 & \cdots & 0\\
0 & \Psi_{1,-}^{\mathrm{ch}} & 0 & \cdots & 0\\
0 & 0 & \Psi_{1,+}^{\mathrm{ch}} & \cdots & 0\\
\vdots & \vdots & \vdots & \ddots & \vdots\\
0 & 0 & 0 & \cdots & \Psi_{\dsp,+}^{\mathrm{ch}}
\end{bmatrix}
\begin{bmatrix}
\Cmat_\D^{\mathrm{ch}}\\
\Cmat_{1,-}^{\mathrm{ch}}\\
\Cmat_{1,+}^{\mathrm{ch}}\\
\vdots\\
\Cmat_{\dsp,+}^{\mathrm{ch}}
\end{bmatrix}.
\end{align*}
Defining
\begin{align*}
\J^{\mathrm{ch,sep}}
&:=
\operatorname{col}
\bigl(
\J_{\rvec},
\J_{1,-},
\J_{1,+},
\ldots,
\J_{\dsp,+}
\bigr), \qquad
\Cmat^{\mathrm{ch,sep}}
:=
\operatorname{col}
\bigl(
\Cmat_\D^{\mathrm{ch}},
\Cmat_{1,-}^{\mathrm{ch}},
\Cmat_{1,+}^{\mathrm{ch}},
\ldots,
\Cmat_{\dsp,+}^{\mathrm{ch}}
\bigr),
\end{align*}
the corresponding compressed dimension is then given by
\begin{align*}
\Neff^{\mathrm{ch,sep}}
=
\Neff^{\mathrm{ch},\D}
+
\sum_{b=1}^{\dsp}\sum_{\sigma\in\{-,+\}}
\Neff^{\mathrm{ch},b,\sigma}.
\end{align*}
However, this dimension grows additively with the number of faces, although the
individual channel factorizations share many local sensitivities. Indeed,
for the face $\Lat_b^\sigma$ and any $s\ne b$, the same factor-grid
sensitivities $\mathsf H_{s,0}$ already occur in the interior
factorization, while only the $b$-th coordinate additionally requires the
endpoint sensitivity at $z_b^\sigma$. Thus, the above construction counts
common sensitivity blocks repeatedly, increasing the compressed dimension
unnecessarily. To avoid this duplication, we seek a common matrix
$\Cmat^{\mathrm{ch}}\in\R^{\Neff^{\mathrm{ch}}\times P}$ such that
\begin{align}\label{eq:common_channel_fact}
\J_{\rvec}
=\Psi^{\mathrm{ch},\D}\Cmat^{\mathrm{ch}},
\qquad
\J_{b,\sigma}
=\Psi^{\mathrm{ch},b,\sigma}\Cmat^{\mathrm{ch}}.
\end{align}
This construction will be outlined in the next section. 

\subsection{Efficient boundary Gramian factorization}

We now outline the construction of 
\eqref{eq:common_channel_fact}. To account for the endpoint evaluations
required by the face residuals, we introduce the following factor evaluation
grids
\begin{equation}\label{eq:boundary_face_appendix}
\widehat\X_m
:=
\X_m\cup\mathcal E_m
=
\{z_{m,i_m}\}_{i_m=1}^{\widehat n_m},
\qquad
\widehat n_m:=|\widehat\X_m|,
\end{equation}
where $\mathcal E_m$ contains the endpoints required by the traces. Here,
the first $n_m$ points are indexed according to \eqref{eq:factor_grids}. For $b\le\dsp$ and $\sigma\in\{-,+\}$, let $e_b^\sigma$ denote the index of $z_b^\sigma$ in $\widehat\X_b$, i.e., $z_{b,e_b^\sigma}=z_b^\sigma$.

Since a Dirichlet trace residual in \eqref{eq:continuous_face_residual} involves the function itself, we include
$\mathcal D_{s,0}=I$ among the local operators. If $0\notin\Lambda_s$,
we augment $\Lambda_s$ by this operator and set
$b_{s,0}^{(\beta)}=0$ in
\eqref{eq:local_differential_channel_representation}, so that the
representation of the PDE operator $\A$ remains unchanged.
For the TT ansatz \eqref{eq:tt_ansatz}, set $r_0=r_d=1$,
$k_s=r_{s-1}r_s$, $a_s=\overline{\alpha_{s-1}\alpha_s}$, and
$L_s:=|\Lambda_s|$. The sensitivities
\eqref{eq:local_channel_sensitivities}, specialized as in
\eqref{eq:numerical_sensitivity_matrices}, are evaluated on
$\widehat\X_s$:
\begin{equation}\label{eq:boundary_face_sensitivities}
\begin{aligned}
\mathsf H_{s,\ell}(\overline{i_sa_s},p)
:=
\bigl(
\mathcal D_{s,\ell}\partial_{\theta_p^s}G_s
\bigr)(z_{s,i_s})(\alpha_{s-1},\alpha_s), \quad
\mathsf H_{s,\ell}
\in\R^{\widehat n_s k_s\times P_s},
\end{aligned}
\end{equation}
where $\ell\in\Lambda_s$ and $1\le i_s\le\widehat n_s$. Following \eqref{eq:local_channel_stacking}, define
\begin{equation}\label{eq:boundary_common_sensitivities}
\begin{aligned}
\Cmat_s^{\mathrm{ch}}
&:=
\operatorname{col}_{\ell\in\Lambda_s}\mathsf H_{s,\ell}
\in\R^{N_s^{\mathrm{ch}}\times P_s},
&N_s^{\mathrm{ch}}&:=L_s\widehat n_s k_s,\\
\Cmat^{\mathrm{ch}}
&:=
\diag(\Cmat_1^{\mathrm{ch}},\ldots,\Cmat_d^{\mathrm{ch}})
\in\R^{\Neff^{\mathrm{ch}}\times P},
&\Neff^{\mathrm{ch}}&:=\sum_{s=1}^dN_s^{\mathrm{ch}}.
\end{aligned}
\end{equation}
Thus, the common compressed dimension is
$\Neff^{\mathrm{ch}}=\sum_sL_s\widehat n_sk_s$, compared with
$\sum_sL_sn_sk_s$ for the interior residual alone, where the increase
accounts for the endpoint evaluations required by the face residuals.

For the face $\Lat_b^\sigma$, define
\begin{align*}
\mathcal I_{b,\sigma}
:=
\bigl\{
i=(i_1,\ldots,i_d):
i_b=e_b^\sigma,\;
1\le i_m\le n_m,\ m\ne b
\bigr\}.
\end{align*}
Let $\mathsf M_{a_s}\in\R^{r_{s-1}\times r_s}$ be the matrix units defined in
\eqref{eq:tt_matrix_units}. Define
$\Psi_{s,\ell}^{\mathrm{ch},b,\sigma}
\in\R^{N_{b,\sigma}\times\widehat n_s k_s}$ by
\begin{equation}\label{eq:boundary_channel_directions}
\begin{aligned}
\Psi_{s,\ell}^{\mathrm{ch},b,\sigma}
\bigl(i,\overline{j_sa_s}\bigr)
:={}&
\sqrt{\lambda_{\mathrm b}}\,
\delta_{\ell0} \,\delta_{i_sj_s}\,
\mathsf G_1(i_1)\cdots
\mathsf G_{s-1}(i_{s-1})
\mathsf M_{a_s}
\mathsf G_{s+1}(i_{s+1})\cdots
\mathsf G_d(i_d),
\end{aligned}
\end{equation}
where $i\in\mathcal I_{b,\sigma}$ and $1\le j_s\le\widehat n_s$. Here $\delta_{\ell0}$ makes all non-identity channel blocks vanish, while
$\delta_{i_sj_s}$ selects the corresponding factor-grid point, which for
$s=b$ is the endpoint $j_b=e_b^\sigma$. Stacking as in
\eqref{eq:local_channel_stacking}, set
\begin{align*}
\Psi_s^{\mathrm{ch},b,\sigma}
:=
[\Psi_{s,\ell}^{\mathrm{ch},b,\sigma}]_{\ell\in\Lambda_s}
\in\R^{N_{b,\sigma}\times N_s^{\mathrm{ch}}}, \qquad
\Psi^{\mathrm{ch},b,\sigma}
:=
[\Psi_1^{\mathrm{ch},b,\sigma}\ \cdots\
\Psi_d^{\mathrm{ch},b,\sigma}]
\in\R^{N_{b,\sigma}\times\Neff^{\mathrm{ch}}}.
\end{align*}
Consequently, we obtain the factorization
\begin{equation}\label{eq:boundary_jacobian_factorization}
\J_{b,\sigma}
=
\Psi^{\mathrm{ch},b,\sigma}\Cmat^{\mathrm{ch}}
\in\R^{N_{b,\sigma}\times P}.
\end{equation}
For the interior Jacobian, the columns of
$\Psi^{\mathrm{ch},\D}\in\R^{N_\D\times\Neff^{\mathrm{ch}}}$
corresponding to the additional endpoint evaluations are zero. Hence the
interior and all face Jacobians use the same matrix
$\Cmat^{\mathrm{ch}}$. In view of \eqref{eq:face_gradient_gramian}, define
\begin{equation}\label{eq:boundary_compressed_quantities}
\begin{aligned}
\Upsilon^{\mathrm{ch},b,\sigma}
:=
\frac{1}{N_{b,\sigma}}
(\Psi^{\mathrm{ch},b,\sigma})^\top
\Psi^{\mathrm{ch},b,\sigma}
\in\R^{\Neff^{\mathrm{ch}}\times\Neff^{\mathrm{ch}}}, \quad
\rho_{\mathrm c}^{\mathrm{ch},b,\sigma}
:=
\frac{1}{N_{b,\sigma}}
(\Psi^{\mathrm{ch},b,\sigma})^\top
\rvec_{b,\sigma}(\theta)
\in\R^{\Neff^{\mathrm{ch}}}.
\end{aligned}
\end{equation}
Then the discrete boundary Gramian and the boundary gradient term are given by
\begin{align*}
\DGram_{b,\sigma}
=
(\Cmat^{\mathrm{ch}})^\top
\Upsilon^{\mathrm{ch},b,\sigma}
\Cmat^{\mathrm{ch}}, \qquad
\nabla\widehat{\mathcal L}_{b,\sigma}
=
(\Cmat^{\mathrm{ch}})^\top
\rho_{\mathrm c}^{\mathrm{ch},b,\sigma}.
\end{align*}
To evaluate these quantities without forming the direction matrices, we
define the forward and backward face transfers
\begin{align*}
\Tfwd_m^{b,\sigma}
&:
\R^{r_{m-1}\times r_{m-1}}
\to
\R^{r_m\times r_m},
&
\Tbwd_m^{b,\sigma}
&:
\R^{r_m\times r_m}
\to
\R^{r_{m-1}\times r_{m-1}}
\end{align*}
by the following formulas
\begin{equation}\label{eq:boundary_weighted_transfers}
\begin{aligned}
\Tfwd_m^{b,\sigma}(\mathsf X)
&:=
\begin{cases}
\displaystyle
\frac1{n_m}\sum_{j=1}^{n_m}
\mathsf G_m(j)^\top\mathsf X\mathsf G_m(j),
&m\ne b,\\[2mm]
G_b(z_b^\sigma)^\top\mathsf XG_b(z_b^\sigma),
&m=b,
\end{cases}\\
\Tbwd_m^{b,\sigma}(\mathsf Y)
&:=
\begin{cases}
\displaystyle
\frac1{n_m}\sum_{j=1}^{n_m}
\mathsf G_m(j)\mathsf Y\mathsf G_m(j)^\top,
&m\ne b,\\[2mm]
G_b(z_b^\sigma)\mathsf YG_b(z_b^\sigma)^\top,
&m=b,
\end{cases}
\end{aligned}
\end{equation}
which are the face counterparts of
\eqref{eq:forward_transfer_operator}--\eqref{eq:backward_transfer_operator}. Define
\begin{equation}\label{eq:boundary_interfaces}
\begin{aligned}
\mathsf L_{<1}^{b,\sigma}&=1,
&
\mathsf L_{<m+1}^{b,\sigma}
&=
\Tfwd_m^{b,\sigma}(\mathsf L_{<m}^{b,\sigma}),
&&m=1,\ldots,d-1,\\
\mathsf R_{>d}^{b,\sigma}&=1,
&
\mathsf R_{>m-1}^{b,\sigma}
&=
\Tbwd_m^{b,\sigma}(\mathsf R_{>m}^{b,\sigma}),
&&m=d,\ldots,2.
\end{aligned}
\end{equation}
Then
$\mathsf L_{<s}^{b,\sigma}\in
\R^{r_{s-1}\times r_{s-1}}$ and
$\mathsf R_{>s}^{b,\sigma}\in\R^{r_s\times r_s}$.

Compared with the interior TT contractions, the only additional distinction
is whether the active coordinate $s$ coincides with the boundary coordinate
$b$. If $s\ne b$, the active coordinate is averaged over its original factor
grid. If $s=b$, it is fixed at the prescribed endpoint
$e_b^\sigma$. We encode these two cases by
\begin{equation}\label{eq:boundary_average_selector}
\mathsf D_s^{b,\sigma}
:=
\begin{cases}
\displaystyle
\frac1{n_s}
\diag\bigl(I_{n_s},0_{\widehat n_s-n_s}\bigr),
&s\ne b,\\[2mm]
\mathsf e_{e_b^\sigma}\mathsf e_{e_b^\sigma}^\top,
&s=b,
\end{cases}
\qquad
\mathsf D_s^{b,\sigma}
\in\R^{\widehat n_s\times\widehat n_s},
\end{equation}
where $\mathsf e_j\in\R^{\widehat n_s}$ denotes the $j$-th Euclidean
basis vector.

\begin{figure}[H]
\centering
\begin{tikzpicture}[
  x=1.32cm,y=1.30cm,
  core/.style={draw,rounded corners=2pt,minimum width=1.02cm,
               minimum height=0.62cm,inner sep=1pt,font=\footnotesize,fill=white},
  mu/.style={core,fill=gray!25},
  bcore/.style={core,fill=orange!18,minimum width=1.14cm,inner sep=0.5pt},
  dots/.style={font=\footnotesize,inner sep=1pt},
  bond/.style={line width=0.5pt},
  share/.style={densely dotted,gray!70,line width=0.5pt},
  bline/.style={draw=orange!80!black,line width=0.8pt},
  cut/.style={dashed,gray!75,line width=0.6pt},
  reg/.style={fill=gray!8,rounded corners=2pt},
  bridge/.style={fill=blue!7,rounded corners=2pt},
  breg/.style={fill=orange!10},
  pin/.style={circle,fill=black!80,inner sep=0pt,minimum size=3.2pt},
  bpin/.style={rectangle,draw=orange!80!black,fill=orange!30,
               minimum size=4.6pt,inner sep=0pt},
  lab/.style={font=\footnotesize,inner sep=1pt}
]
\fill[reg]    (0.50,-0.42) rectangle (3.46,1.70);
\fill[bridge] (3.54,-0.42) rectangle (8.46,1.70);
\fill[breg]   (5.42,-0.42) rectangle (6.58,1.70);
\fill[reg]    (8.54,-0.42) rectangle (10.50,1.70);

\node[core]  (t1)  at (1,1.28)  {$\mathsf G_1$};
\node[dots]  (t2)  at (2,1.28)  {$\cdots$};
\node[core]  (t3)  at (3,1.28)  {$\mathsf G_{s-1}$};
\node[mu]    (t4)  at (4,1.28)  {$\mathsf M_{a_s}$};
\node[dots]  (t5)  at (5,1.28)  {$\cdots$};
\node[bcore] (t6)  at (6,1.28)  {$\mathsf G_b(z_b^\sigma)$};
\node[dots]  (t7)  at (7,1.28)  {$\cdots$};
\node[core]  (t8)  at (8,1.28)  {$\mathsf G_t(i_t)$};
\node[dots]  (t9)  at (9,1.28)  {$\cdots$};
\node[core]  (t10) at (10,1.28) {$\mathsf G_d$};

\node[core]  (b1)  at (1,0)  {$\mathsf G_1$};
\node[dots]  (b2)  at (2,0)  {$\cdots$};
\node[core]  (b3)  at (3,0)  {$\mathsf G_{s-1}$};
\node[core]  (b4)  at (4,0)  {$\mathsf G_s(i_s)$};
\node[dots]  (b5)  at (5,0)  {$\cdots$};
\node[bcore] (b6)  at (6,0)  {$\mathsf G_b(z_b^\sigma)$};
\node[dots]  (b7)  at (7,0)  {$\cdots$};
\node[mu]    (b8)  at (8,0)  {$\mathsf M_{a_t'}$};
\node[dots]  (b9)  at (9,0)  {$\cdots$};
\node[core]  (b10) at (10,0) {$\mathsf G_d$};

\foreach \a/\bb in {1/2,2/3,3/4,4/5,5/6,6/7,7/8,8/9,9/10} {
  \draw[bond] (t\a.east)--(t\bb.west);
  \draw[bond] (b\a.east)--(b\bb.west);
}

\foreach \k in {1,3,10} \draw[share] (t\k.south)--(b\k.north);

\draw[bond] (t4.south)--(b4.north);
\node[pin] at (4,0.64) {};
\node[lab,anchor=west] at (4.08,0.64) {$i_s$};

\draw[bline] (t6.south)--(b6.north);
\node[bpin] at (6,0.64) {};
\node[lab,anchor=west,text=orange!80!black] at (6.08,0.64) {$z_b^\sigma$};

\draw[bond] (t8.south)--(b8.north);
\node[pin] at (8,0.64) {};
\node[lab,anchor=west] at (8.08,0.64) {$i_t$};

\draw[cut] (6.70,-0.42)--(6.70,1.70);

\node[lab,anchor=south] at (1.98,1.76) {$\mathsf L_{<s}^{b,\sigma}$};
\node[lab,anchor=south] at (6.70,1.76) {$\mathsf Z_b$};
\node[lab,anchor=south] at (9.52,1.76) {$\mathsf R_{>t}^{b,\sigma}$};

\node[lab,anchor=north] at (4,-0.48) {$\mathsf D_s^{b,\sigma}(i_s,i_s)$};
\node[lab,anchor=north] at (8,-0.48) {$\mathsf D_t^{b,\sigma}(i_t,i_t)$};
\draw[decorate,decoration={brace,mirror,amplitude=4pt},line width=0.5pt]
  (3.54,-0.92)--(8.46,-0.92)
  node[lab,midway,below=5pt] {bridge $\mathsf Z_s\to\cdots\to\mathsf Z_t$};

\end{tikzpicture}
\caption{Double-layer contraction diagram for an off-diagonal boundary Gramian block with $s<b<t$.}
\label{fig:boundary_offdiag_contraction}
\end{figure}
\begin{proposition}[Algebraic characterization of the channel-compressed TT boundary terms]
\label{prop:tt_boundary_channel_contractions}
Fix $b\in\{1,\ldots,\dsp\}$ and $\sigma\in\{-,+\}$. The following statements hold.
\begin{enumerate}[label=\textup{(\roman*)}]

\item
For every $s$ and $\ell,\ell'\in\Lambda_s$, the diagonal channel blocks of
$\Upsilon^{\mathrm{ch},b,\sigma}$ satisfy
\begin{equation}\label{eq:boundary_gramian_diagonal}
\Upsilon_{ss}^{\mathrm{ch},b,\sigma,(\ell,\ell')}
=
\lambda_{\mathrm b}\,
\delta_{\ell0}\delta_{\ell'0}\,
\mathsf D_s^{b,\sigma}
\otimes
\bigl(
\mathsf L_{<s}^{b,\sigma}
\otimes
\mathsf R_{>s}^{b,\sigma}
\bigr)
\in
\R^{\widehat n_s k_s\times\widehat n_s k_s}.
\end{equation}

\item
Let $s<t$, and fix $1\le i_s\le\widehat n_s$,
$1\le i_t\le\widehat n_t$ and
$a_s=\overline{\alpha_{s-1}\alpha_s}$,
$a_t'=\overline{\alpha_{t-1}'\alpha_t'}$. Define
\begin{align}
\mathsf Z_s
&:=
\mathsf D_s^{b,\sigma}(i_s,i_s)
\mathsf M_{a_s}^\top
\mathsf L_{<s}^{b,\sigma}
\mathsf G_s(i_s)
\in\R^{r_s\times r_s},
\label{eq:boundary_bridge_initialization}\\
\mathsf Z_m
&:=
\Tfwd_m^{b,\sigma}(\mathsf Z_{m-1})
\in\R^{r_m\times r_m},
\qquad s<m<t,
\label{eq:boundary_bridge_propagation}\\
\mathsf Z_t
&:=
\mathsf D_t^{b,\sigma}(i_t,i_t)
\mathsf G_t(i_t)^\top
\mathsf Z_{t-1}
\mathsf M_{a_t'}
\in\R^{r_t\times r_t}.
\label{eq:boundary_bridge_termination}
\end{align}
Then
\begin{equation}\label{eq:boundary_gramian_offdiagonal}
\Upsilon_{st}^{\mathrm{ch},b,\sigma,(\ell,\ell')}
\bigl(\overline{i_sa_s},\overline{i_ta_t'}\bigr)
=
\lambda_{\mathrm b}\,
\delta_{\ell0}\delta_{\ell'0}
\bigl\langle
\mathsf Z_t,
\mathsf R_{>t}^{b,\sigma}
\bigr\rangle_{\mathrm F}.
\end{equation}
The propagation step is absent when $t=s+1$, and
$\Upsilon_{ts}^{\mathrm{ch},b,\sigma,(\ell',\ell)}
=(\Upsilon_{st}^{\mathrm{ch},b,\sigma,(\ell,\ell')})^\top$.

\item
Suppose additionally that the discrete face residual admits the TT
representation
\begin{equation}\label{eq:boundary_residual_tt}
\rvec_{b,\sigma}(\theta)(i)
=
\mathsf Y_1^{b,\sigma}(i_1)\cdots
\mathsf Y_d^{b,\sigma}(i_d),
\qquad i\in\mathcal I_{b,\sigma},
\end{equation}
where
$\mathsf Y_m^{b,\sigma}(i_m)\in\R^{q_{m-1}\times q_m}$ and $q_0=q_d=1$.
Define the mixed interfaces by
\begin{equation}\label{eq:boundary_mixed_interfaces}
\begin{aligned}
\mathsf L_{<1}^{b,\sigma,\rho}
&=1,
&
\mathsf L_{<m+1}^{b,\sigma,\rho}
&:=
\begin{cases}
\displaystyle
\frac1{n_m}\sum_{j=1}^{n_m}
\mathsf G_m(j)^\top
\mathsf L_{<m}^{b,\sigma,\rho}
\mathsf Y_m^{b,\sigma}(j),
&m\ne b,\\[2mm]
G_b(z_b^\sigma)^\top
\mathsf L_{<b}^{b,\sigma,\rho}
\mathsf Y_b^{b,\sigma}(e_b^\sigma),
&m=b,
\end{cases}
\\
\mathsf R_{>d}^{b,\sigma,\rho}
&=1,
&
\mathsf R_{>m-1}^{b,\sigma,\rho}
&:=
\begin{cases}
\displaystyle
\frac1{n_m}\sum_{j=1}^{n_m}
\mathsf G_m(j)
\mathsf R_{>m}^{b,\sigma,\rho}
\mathsf Y_m^{b,\sigma}(j)^\top,
&m\ne b,\\[2mm]
G_b(z_b^\sigma)
\mathsf R_{>b}^{b,\sigma,\rho}
\mathsf Y_b^{b,\sigma}(e_b^\sigma)^\top,
&m=b.
\end{cases}
\end{aligned}
\end{equation}
Set
\begin{equation}\label{eq:boundary_residual_local_matrix}
\mathsf P_s^{b,\sigma}(i_s)
:=
\mathsf D_s^{b,\sigma}(i_s,i_s)
\mathsf L_{<s}^{b,\sigma,\rho}
\mathsf Y_s^{b,\sigma}(i_s)
\bigl(\mathsf R_{>s}^{b,\sigma,\rho}\bigr)^\top.
\end{equation}
Then
\begin{equation}\label{eq:boundary_compressed_residual}
\rho_{\mathrm c,s}^{\mathrm{ch},b,\sigma,(\ell)}
\bigl(\overline{i_s\alpha_{s-1}\alpha_s}\bigr)
=
\sqrt{\lambda_{\mathrm b}}\,
\delta_{\ell0}\,
\mathsf P_s^{b,\sigma}(i_s)(\alpha_{s-1},\alpha_s).
\end{equation}
Finally, the face loss $\widehat{\mathcal L}_{b,\sigma}
=\frac12\,\mathsf S_d^{b,\sigma}$ is obtained by the same contraction with both layers replaced by the residual train:
\begin{align*}
\mathsf S_0^{b,\sigma}
=1, \qquad
\mathsf S_m^{b,\sigma}
=
\begin{cases}
\displaystyle
\frac1{n_m}\sum_{j_m=1}^{n_m}
\bigl(\mathsf Y_m^{b,\sigma}(j_m)\bigr)^\top
\mathsf S_{m-1}^{b,\sigma}
\mathsf Y_m^{b,\sigma}(j_m),
&m\ne b,\\[2mm]
\bigl(\mathsf Y_b^{b,\sigma}(e_b^\sigma)\bigr)^\top
\mathsf S_{b-1}^{b,\sigma}
\mathsf Y_b^{b,\sigma}(e_b^\sigma),
&m=b,
\end{cases}
\end{align*}
\end{enumerate}
\end{proposition}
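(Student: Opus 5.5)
The plan is to reduce every statement to the interior TT results (Theorem~\ref{thm:tt_gramian_forward_backward}, Proposition~\ref{prop:tt_compressed_gradient}, Lemma~\ref{lem:averaged_tt_recursion}). The only modification is that the $b$-th coordinate is pinned at the endpoint $e_b^\sigma$ rather than averaged. I will first rewrite the columns of $\Psi^{\mathrm{ch},b,\sigma}$ from \eqref{eq:boundary_channel_directions} as scalar TT contractions over the face index set $\mathcal I_{b,\sigma}$. The factor $\delta_{\ell0}$ kills all non-identity channels, which immediately produces the prefactors $\delta_{\ell0}\delta_{\ell'0}$ and $\lambda_{\mathrm b}$ in \eqref{eq:boundary_gramian_diagonal}, \eqref{eq:boundary_gramian_offdiagonal}, and $\sqrt{\lambda_{\mathrm b}}\,\delta_{\ell0}$ in \eqref{eq:boundary_compressed_residual}. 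It therefore suffices to treat $\ell=\ell'=0$.

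Next I normalise the face sum. Write $1/N_{b,\sigma}=\prod_{m\neq b}n_m^{-1}$, so that each coordinate $m\neq b$ carries an equal-weight average over its first $n_m$ grid points and coordinate $b$ carries a single unweighted evaluation at $z_b^\sigma$. With this, the face version of Lemma~\ref{lem:averaged_tt_recursion} holds: the recursion uses the per-coordinate weight measure $\frac1{n_m}\sum_{j\le n_m}$ for $m\neq b$ and the Dirac at $e_b^\sigma$ for $m=b$. The induction in its proof goes through verbatim because it only uses linearity and associativity. This identifies \eqref{eq:boundary_weighted_transfers} as the correct forward and backward transfers. The adjoint identity of Proposition~\ref{prop:tt_transfer_operators} holds by the same trace-cyclicity computation applied to either weight. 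Consequently $\mathsf L_{<s}^{b,\sigma}$ and $\mathsf R_{>s}^{b,\sigma}$ in \eqref{eq:boundary_interfaces} are the face-averaged left and right interfaces.

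For (i), the active coordinate $s$ appears in both columns with the same pinned index. The product $\delta_{j_si_s}\delta_{j_si_s'}$, combined with the coordinate-$s$ weight, yields exactly $\mathsf D_s^{b,\sigma}(i_s,i_s')$. If $s\neq b$, this is $n_s^{-1}\delta_{i_si_s'}$ restricted to interior indices, since the endpoint rows never occur among the face points for $m\neq b$. If $s=b$, only $i_s=i_s'=e_b^\sigma$ survives, with weight one. The remaining coordinates factor into left and right parts as in Proposition~\ref{prop:tt_inactive_contraction_structure}, and the Kronecker identity gives $\mathsf L_{<s}^{b,\sigma}\otimes\mathsf R_{>s}^{b,\sigma}$. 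For (ii), I would repeat Steps~1--5 of the proof of Theorem~\ref{thm:tt_gramian_forward_backward}, replacing \eqref{eq:pinned_update} by its weighted form. At the two active coordinates, the pinned contribution is multiplied by $\mathsf D_s^{b,\sigma}(i_s,i_s)$, respectively $\mathsf D_t^{b,\sigma}(i_t,i_t)$, in place of $1/n_s$ and $1/n_t$. When $b$ lies strictly between $s$ and $t$, the bridge passes through $\Tfwd_b^{b,\sigma}$, i.e.\ the endpoint evaluation, as shown in Fig.~\ref{fig:boundary_offdiag_contraction}. The tail is closed by the adjoint identity, and symmetry follows by transposition as before. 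Part (iii) is Proposition~\ref{prop:tt_compressed_gradient} with the same substitutions, using \eqref{eq:boundary_mixed_interfaces}. The loss formula is Lemma~\ref{lem:averaged_tt_recursion} with both layers equal to $\mathsf Y^{b,\sigma}$.

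The main obstacle is the bookkeeping of the augmented grids $\widehat\X_s$ and their weights. Specifically, two points need care. First, for $s\neq b$ the endpoint columns $j_s>n_s$ must vanish, which holds because no face point has $i_s>n_s$ when $s\neq b$; the zero block of $\mathsf D_s^{b,\sigma}$ encodes exactly this. Second, for $s=b$ the $G_b$ factor at the active slot must be evaluated at $z_b^\sigma=z_{b,e_b^\sigma}$, consistently with $\mathsf G_b(e_b^\sigma)=G_b(z_b^\sigma)$. I would verify both points entrywise, by writing out the single-coordinate sum before invoking the recursion.
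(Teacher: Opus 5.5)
Your proposal is correct and takes essentially the same route as the paper. The paper also reruns the double-layer contractions of Theorem~\ref{thm:tt_gramian_forward_backward} and Proposition~\ref{prop:tt_compressed_gradient} with the face transfers \eqref{eq:boundary_weighted_transfers}, encodes the active-coordinate pinning by $\mathsf D_s^{b,\sigma}$, and closes the tail with the adjoint identity. Your explicit normalization $1/N_{b,\sigma}=\prod_{m\ne b}n_m^{-1}$ and the resulting face version of Lemma~\ref{lem:averaged_tt_recursion} make precise what the paper leaves implicit.
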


\begin{proof}
The proof follows the double-layer TT contraction arguments of
Theorem~\ref{thm:tt_gramian_forward_backward} and
Proposition~\ref{prop:tt_compressed_gradient}, but now using the face transfers
\eqref{eq:boundary_weighted_transfers}. The distinction at the active
coordinate is encoded by $\mathsf D_s^{b,\sigma}$ in
\eqref{eq:boundary_average_selector}.

For a diagonal block, contracting over the coordinates $m<s$ and $m>s$
gives the interfaces $\mathsf L_{<s}^{b,\sigma}$ and
$\mathsf R_{>s}^{b,\sigma}$ from \eqref{eq:boundary_interfaces}.
At the active coordinate, the face contraction of the factors
$\delta_{i_sj_s}$ in \eqref{eq:boundary_channel_directions} gives
$\mathsf D_s^{b,\sigma}$ from \eqref{eq:boundary_average_selector}.
By the same Kronecker-product argument as in the interior case, 
\begin{align*}
\frac{1}{N_{b,\sigma}}
\bigl(\Psi_{s,\ell}^{\mathrm{ch},b,\sigma}\bigr)^\top
\Psi_{s,\ell'}^{\mathrm{ch},b,\sigma}
&=
\lambda_{\mathrm b}\,
\delta_{\ell0}\delta_{\ell'0}\,
\mathsf D_s^{b,\sigma}
\otimes
\bigl(
\mathsf L_{<s}^{b,\sigma}
\otimes
\mathsf R_{>s}^{b,\sigma}
\bigr),
\end{align*}
which is \eqref{eq:boundary_gramian_diagonal}.

We next consider an off-diagonal block with $s<t$ and follow the
double-layer contraction in
Fig.~\ref{fig:boundary_offdiag_contraction} from left to right.

\medskip
\noindent\emph{Step 1:}
Contracting all coordinates $m<s$ by the face transfers
\eqref{eq:boundary_weighted_transfers} gives
\begin{align*}
\mathsf L_{<s}^{b,\sigma}
=
\Tfwd_{s-1}^{b,\sigma}\circ\cdots\circ
\Tfwd_1^{b,\sigma}
\bigl(\mathsf L_{<1}^{b,\sigma}\bigr).
\end{align*}
If $b<s$, the corresponding transfer already evaluates the $b$-th
coordinate at $z_b^\sigma$.

\medskip
\noindent\emph{Step 2:}
At the first active coordinate $s$, the pinned update
\eqref{eq:pinned_update} from Lemma~\ref{lem:averaged_tt_recursion}
contributes $1/n_s$ if $s\ne b$, while for $s=b$ the coordinate is evaluated
only at the endpoint $z_b^\sigma$ with index $e_b^\sigma$. Thus,
\begin{align*}
\mathsf Z_s
=
\mathsf D_s^{b,\sigma}(i_s,i_s)
\mathsf M_{a_s}^{\top}
\mathsf L_{<s}^{b,\sigma}
\mathsf G_s(i_s),
\end{align*}
which is \eqref{eq:boundary_bridge_initialization}.

\medskip
\noindent\emph{Step 3:}
For $s<m<t$, the bridge is propagated by $\mathsf Z_m
=
\Tfwd_m^{b,\sigma}(\mathsf Z_{m-1}).$ If the bridge crosses the boundary coordinate $b$, the usual average is
replaced by $\mathsf Z_b
=
G_b(z_b^\sigma)^\top
\mathsf Z_{b-1}
G_b(z_b^\sigma)$, which is the $m=b$ case of
\eqref{eq:boundary_bridge_propagation}.

\medskip
\noindent\emph{Step 4:}
At the second active coordinate $t$, we proceed as in Step~2, with the two
layers interchanged. Thus,
\[
\mathsf Z_t
=
\mathsf D_t^{b,\sigma}(i_t,i_t)
\mathsf G_t(i_t)^\top
\mathsf Z_{t-1}
\mathsf M_{a_t'},
\]
which is \eqref{eq:boundary_bridge_termination}.

\medskip
\noindent\emph{Step 5:}
Finally, the adjoint identity for the face transfers gives
\begin{align*}
\Big\langle
\Tfwd_d^{b,\sigma}\circ\cdots\circ
\Tfwd_{t+1}^{b,\sigma}(\mathsf Z_t),
\mathsf R_{>d}^{b,\sigma}
\Big\rangle_{\mathrm F}
=
\big\langle
\mathsf Z_t,
\mathsf R_{>t}^{b,\sigma}
\big\rangle_{\mathrm F},
\end{align*}
which yields \eqref{eq:boundary_gramian_offdiagonal} upon multiplication by
$\lambda_{\mathrm b}\delta_{\ell0}\delta_{\ell'0}$. Figure~\ref{fig:boundary_offdiag_contraction} illustrates the case $s<b<t$. Other positions of $b$ are handled by the same face transfers \eqref{eq:boundary_weighted_transfers}: endpoint evaluation enters the left or right interface when $b<s$ or $b>t$, and the selectors when $b=s$ or $b=t$.

For the compressed residual, replacing the second model layer by the residual
cores $\mathsf Y_m^{b,\sigma}$ gives the mixed interfaces
\eqref{eq:boundary_mixed_interfaces} and, at the active coordinate,
\begin{align*}
\rho_{\mathrm c,s}^{\mathrm{ch},b,\sigma,(\ell)}
\bigl(\overline{i_s\alpha_{s-1}\alpha_s}\bigr)
&=
\sqrt{\lambda_{\mathrm b}}\,
\delta_{\ell0}\,
\mathsf D_s^{b,\sigma}(i_s,i_s)
\Big[
\mathsf L_{<s}^{b,\sigma,\rho}
\mathsf Y_s^{b,\sigma}(i_s)
\bigl(\mathsf R_{>s}^{b,\sigma,\rho}\bigr)^\top
\Big]_{\alpha_{s-1},\alpha_s}\\
&=
\sqrt{\lambda_{\mathrm b}}\,
\delta_{\ell0}\,
\mathsf P_s^{b,\sigma}(i_s)
(\alpha_{s-1},\alpha_s),
\end{align*}
which proves \eqref{eq:boundary_compressed_residual}.
Replacing both layers by the residual train yields the face-loss
contraction, completing the proof.
\end{proof}

We note that for a finite sum of TT residual trains, the compressed residuals are added
trainwise and the loss is evaluated over all train pairs. Stacking
\eqref{eq:boundary_compressed_residual} over $s$ and $\ell$ recovers
$\rho_{\mathrm c}^{\mathrm{ch},b,\sigma}$ defined in
\eqref{eq:boundary_compressed_quantities}. Summing the interior loss
\eqref{eq:discrete_loss} and the face losses
\eqref{eq:discrete_face_quadrature} gives
\begin{equation}\label{eq:boundary_total_loss}
\widehat{\mathcal L}(\theta)
=
\widehat{\mathcal L}_\D(\theta)
+
\sum_{b=1}^{\dsp}
\sum_{\sigma\in\{-,+\}}
\widehat{\mathcal L}_{b,\sigma}(\theta).
\end{equation}
On the common sensitivity space of \eqref{eq:boundary_common_sensitivities},
define the interior quantities
\begin{align}\label{eq:boundary_interior_compressed_quantities}
\Upsilon^{\mathrm{ch},\D}
:=
\frac1{N_\D}
(\Psi^{\mathrm{ch},\D})^\top
\Psi^{\mathrm{ch},\D}, \qquad
\rho_{\mathrm c}^{\mathrm{ch},\D}
:=
\frac1{N_\D}
(\Psi^{\mathrm{ch},\D})^\top
\rvec_\D.
\end{align}
By \eqref{eq:common_channel_fact}, the interior and face Jacobians share the
same $\Cmat^{\mathrm{ch}}$. Hence, using
\eqref{eq:boundary_interior_compressed_quantities} and
\eqref{eq:boundary_compressed_quantities}, their compressed contributions
can be added directly:
\begin{equation}\label{eq:boundary_total_compressed}
\begin{aligned}
\Upsilon^{\mathrm{ch}}
:=
\Upsilon^{\mathrm{ch},\D}
+
\sum_{b=1}^{\dsp}
\sum_{\sigma\in\{-,+\}}
\Upsilon^{\mathrm{ch},b,\sigma}, \qquad
\rho_{\mathrm c}^{\mathrm{ch}}
:=
\rho_{\mathrm c}^{\mathrm{ch},\D}
+
\sum_{b=1}^{\dsp}
\sum_{\sigma\in\{-,+\}}
\rho_{\mathrm c}^{\mathrm{ch},b,\sigma}.
\end{aligned}
\end{equation}
Consequently, by \eqref{eq:common_channel_fact} and
\eqref{eq:boundary_total_compressed},
$\DGram
=
(\Cmat^{\mathrm{ch}})^\top
\Upsilon^{\mathrm{ch}}
\Cmat^{\mathrm{ch}}$ and $\nabla\widehat{\mathcal L}
=
(\Cmat^{\mathrm{ch}})^\top
\rho_{\mathrm c}^{\mathrm{ch}}$, consistently with
Theorem~\ref{prop:local_channel_compression}. Hence, the compressed solver
of Section~\ref{sec:woodbury} applies unchanged.

\bibliographystyle{plain} 
\bibliography{bib}
\end{document}